\tikzset{double line with arrow/.style args={#1,#2}{decorate,decoration={markings,%
mark=at position 0 with {\coordinate (ta-base-1) at (0,1pt);
\coordinate (ta-base-2) at (0,-1pt);},
mark=at position 1 with {\draw[#1] (ta-base-1) -- (0,1pt);
\draw[#2] (ta-base-2) -- (0,-1pt);
}}}}
\newsavebox\CBox
\newcommand\hcancel[2][0.5pt]{%
  \ifmmode\sbox\CBox{$#2$}\else\sbox\CBox{#2}\fi%
  \makebox[0pt][l]{\usebox\CBox}%
  \rule[0.5\ht\CBox-#1/2]{\wd\CBox}{#1}}
  \renewcommand{\gg}{{\mathfrak{g}}}
  \newcommand{\bsfT}
\author{Stephen DeBacker}
\address{University of Michigan\\
Ann Arbor, MI 48109-1043, USA}
\email{smdbackr@umich.edu}
\let\@wraptoccontribs\wraptoccontribs
  \subjclass[2020]{Primary 20G25; Secondary 22E35}
  \date{\today}
  \title{Totally Ramified Maximal Tori and Bruhat-Tits theory}
\begin{document}

\begin{abstract}
  Suppose $k$ is a nonarchimedean local field, $K$ is a maximally unramified extension of $k$, and $\bG$ is a connected reductive $k$-group.     If $\bT$ is a $K$-minisotropic maximal $k$-torus in $\bG$, then we use Bruhat-Tits theory to describe the stable classes in the $\bG$-orbit of $\bT$,  the rational classes in the $\bG$-orbit of $\bT$, and the  $k$-embeddings, up to rational conjugacy, into $\bG$ of $\bT$.  We also provide, via Bruhat-Tits theory, a complete and explicit description of: the rational conjugacy classes of $K$-minisotropic maximal tame $k$-tori in $\bG$; the stable classes of $K$-minisotropic maximal tame $k$-tori in $\bG$; and the $k$-embeddings, up to rational conjugacy, into $\bG$ of a $K$-minisotropic maximal tame $k$-torus of $\bG$.
\end{abstract}

\maketitle

\section*{Introduction}

Suppose $k$ is a nonarchimedean local field, $K$ is a maximally unramified extension of $k$, and $\bG$ is a connected reductive $k$-group.   If $\bT$ is a maximal $k$-torus in $\bG$, then $\bT^K$\!,  the maximal $K$-split torus in $\bT$, is defined over $k$ and $\bT$ is a maximal $K$-minisotropic $k$-torus in $\bL = C_\bG(\bT^K)$, the centralizer in $\bG$ of $\bT^K$.   The group $\bL$ is an unramified twisted Levi subgroup of $\bG$; that is, $\bL$ is a $k$-group that occurs as the Levi component of a parabolic $K$-subgroup of $\bG$.  Consequently, an approach to parameterizing the rational conjugacy classes of  maximal tori in $\bG$ is to
\begin{itemize}
    \item parameterize the rational conjugacy classes of unramified twisted Levi subgroups of $\bG$ and
    \item for each  unramified twisted Levi subgroup $\bL$ of $\bG$ parameterize the $\bL(k)$-conjugacy classes of $K$-minisotropic maximal $k$-tori in $\bL$.
\end{itemize}
This paper takes up the latter problem.  The former problem is the subject of~\cite{debacker:unramified}.
Future work will take up the problem of parameterizing, via Bruhat-Tits theory, the rational classes of  tame tori in a connected reductive $k$-group.

In an attempt to improve the exposition,  we will assume in this introduction  that $\bG$ is semisimple and $k$-split.

Recall that a $K$-torus $\bT$ in $\bG$ is called $K$-anisotropic provided that $\X^*(\bT)$ does not contain non-trivial characters that are defined over $K$.   Such tori are sometimes also referred to as totally ramified $K$-tori.    Let $G$ denote the group of $K$-rational points of $\bG$.

We begin by studying the set of $G$-conjugacy classes of $K$-anisotropic maximal $K$-tori in $\bG$.  Whereas unramified tori interact very nicely with Bruhat-Tits theory (for example, the apartment of a maximal unramified torus in $\bG$ is always an affine subspace of the building of $G$ which decomposes as a union of facets for $G$), the same is very much not true for totally ramified tori.   However, as shown in Section~\ref{sec:pointswild}, we can always associate to $\bT$, a totally ramified maximal $K$-torus in $\bG$, a unique point $x_T$ in the reduced Bruhat-Tits building of $G$.   Moreover, this point $x_T$ has the remarkable property that its $G$-orbit intersects the closure of each alcove in the reduced building  of $G$ exactly once.
We then combine results of Haines and Rapoport~\cite{haines-rapoport:parahoric}  and Kottwitz~\cite[Section 7]{kottwitz:isocrystalII}  to establish the following fundamental  fact: the stabilizer in $G$ of $x_T$ can be written as the product of $T$ and the parahoric subgroup attached to $x_T$.

Without some assumptions on $k$ and $\bG$, we cannot say much about $x_T$ outside of groups of type $A_n$.   However, under the assumption that the characteristic of $\ff$ does not divide the order of the Weyl group, in Section~\ref{sec:tamepoints}  we explicitly describe the point attached to our totally ramified maximal $K$-torus $\bT$ of $\bG$.  This can be done because, under these tameness assumptions, the $G$-conjugacy classes of totally ramified maximal $K$-tori are parameterized by the elliptic conjugacy classes in the Weyl group.   On the other hand, elliptic conjugacy classes in the Weyl group have unique (normalized) Kac coordinates, and  these coordinates determine, up to $G$-conjugacy,  the point $x_T$ (see Lemma~\ref{lem:kacinbarC}).

Having described, albeit under modest tameness assumptions, the complete set of $G$-conjugacy classes of totally ramified maximal $K$-tori, in Section~\ref{sec:existenceKktori} we   turn to the question of when such a $G$-conjugacy class  contains a torus that is defined over $k$.  We show that this will happen if and only if $\dorbit_T$, the $G$-conjugacy class of $\bT$, is stable under the action of $\Gal(K/k)$.

In Section~\ref{sec:existenceintame}  we again invoke a tameness assumption and use the parameterization of Section~\ref{sec:tamepoints} to convert the criterion described above into an easy to check condition about Weyl group conjugacy classes.   We then use this condition to show that (when $\bG$ is $k$-split) every $G$-orbit of totally ramified $K$-tori in $\bG$ contains a torus which is defined over $k$.  See Corollary~\ref{cor:5.4.4} for the precise statement.

When we assume that our totally ramified maximal torus $\bT$ is defined over $k$, we can prove several results about $\dorbit_T^k$, the set of tori in $\dorbit_T$ that are defined over $k$.  In particular, in Section~\ref{sec:resultsonKktori} we establish natural parameterizations of both  the $k$-stable classes in $\dorbit_T^k$ and  the rational classes in $\dorbit_T^k$.  We also establish a natural parameterization of the $k$-embeddings, up to rational conjugacy, of $\bT$ into $\bG$.   Recall that a $k$-embedding of $\bT$ into $\bG$ is a $k$-morphism $f\colon \bT \rightarrow \bG$ for which there exists $g \in G$ such that $f(t) = g t g\inv$ for all $t \in T$.
Along the way to establishing these results, in Section~\ref{subsec:omega} we show that there is a natural identification of a quotient of $T$ with the $\Omega$ group of $G$.  Recall that the $\Omega$ group of $G$ is the quotient of the stabilizer of an Iwahori subgroup by the Iwahori subgroup.

Our main result  is Theorem~\ref{thm:main}; it provides a complete accounting of the rational classes, $k$-stable classes, and $k$-embeddings of all tame totally ramified maximal $k$-tori in $\bG$.   We also work out several examples in Section~\ref{sec:tameexamples}.   In particular, up to $G^{\Fr}$-conjugacy,
we enumerate all of the tame totally ramified $k$-tori in $\bG$ and describe all of their $k$-embeddings into $\bG$ for $\bG$ being  $\SL_n$, unramified $\SU_n$,  $\Sp_4$, $\Gtwo$, or ramified $\SU_3$.
These calculations are quite challenging (for me).  Although I've done all of the calculations multiple times, I am sure that errors remain.  So, be cautious.

Motivated by the examples discussed above, in Section~\ref{sec:isogenous} we look at how the parameterizations introduced in Section~\ref{sec:resultsonKktori}  behave under certain isogenies.

 Finally, in Section~\ref{sec:Kminisotropicoxeter} we establish the existence of a totally ramified torus  for which the associated point in the building is the barycenter of an alcove.  We call such a torus a $K$-minisotropic Coxeter torus, and we explore some of its properties.

The results of Sections~\ref{sec:pointswild} and~\ref{sec:tamepoints} hold under less restrictive hypotheses on $k$; see the discussion at the start of each of these sections.

\section*{Acknowledgements}  I thank Mark Reeder for sharing with me the key ideas of \S\ref{sec:markssec}.  I thank Cheng-Chiang Tsai for providing proofs of Proposition~\ref{prop:parahoric} and Corollary~\ref{cor:goodatKpoints}.  This paper benefited greatly from discussions with Jeffrey Adams, Jeffrey Adler, Ram Ekstrom, Jessica Fintzen, Jacob Haley, Tasho Kaletha, David Kazhdan, Karol Koziol, Paul Levy, Mark Reeder, David Schwein, Loren Spice (see, in particular, Remark~\ref{rem:loren}),  John Stembridge, and Cheng-Chiang Tsai.  It is a pleasure to thank all of these people.  I also thank the   American Institute of Mathematics whose SQuaRE program created a wonderful  environment where this research was partially carried out.

\section{Notation, the Tits group, and some facts about tori}

Suppose $k$ is a local field with non-trivial discrete valuation $\nu$ and residue field $\ff$ of characteristic $p$ and cardinality $q$.
Let $\bG$ denote a connected reductive $k$-group.    Let $\absW$ denote the absolute Weyl group of $\bG$.   Let $\bG'$ denote the derived group of $\bG$, $\bG_{\scon}$ the simply connected cover of $\bG'$, and $\bG_{\ad}$ the adjoint group of $\bG$.
Let $\eta \colon \bG_{\scon} \rightarrow \bG$ denote the composition of the $k$-maps $\bG_{\scon} \rightarrow \bG'$ and $\bG' \hookrightarrow \bG$.  Let $\bZ$ denote the center of $\bG$.

\subsection{Basic notation}
Let $\bar{k}$ denote a fixed separable closure of $k$, let $\nu$ denote the unique extension of $\nu$ to $\bar{k}$, and let $\varpi \in k$ be a uniformizer.  Let $K$ denote the maximal unramified extension of $k$ in $\bar{k}$, and let $\ffc$ denote the residue field of $K$; it is an algebraic closure of $\ff$. Let $R_K$ denote the ring of integers in $K$.   Let $I = \Gal(\bar{k}/K)$ denote the inertial subgroup of $\Gal(\bar{k}/k)$.  Let  $\Fr$ be a topological generator for $\Gal(K/k) \cong \Gal(\bar{k}/k)/I$, which we identify with $\Gal(\ffc/\ff)$.
We suppose that $\Fr\inv(x) = x^q$ for all $x \in \ffc$.   Choose a lift of $\Fr$ to an element, which we will also call $\Fr$, of $\Gal(\bar{k}/k)$.   Let $L$ denote the completion of $K$, and let $R_L$ denote the ring of integers in $L$.

Let $\bA$ denote a maximal $K$-split $k$-torus in $\bG$ that contains a maximal $k$-split torus of $\bG$; such a torus exists and is unique up to rational conjugacy (see~\cite[Theorem~6.1]{prasad:unramified} or in~\cite[Theorem~3.4.1]{debacker:unramified} take an unramified torus corresponding to a pair of the form $(F,\bfT) \in I^m$  with $F$ an alcove).
Since $\bG$ is $K$-quasi-split, there exists a Borel $K$-subgroup $\bB$ that contains $\bA$, and hence $\Ebtorus = C_{\bG}(\bA)$ is a maximally $K$-split $k$-torus in $\bG$ that is contained in $\bB$.   Let
 $\Phi = \Phi(\bG,\bA)$  denote the set of roots of $\bG$ with respect to $\bG$ and $\bA$ and $\Pi = \Pi(\bG,\bA, \bB)$ the set of simple roots with respect to $\bG$, $\bA$, and $\bB$.   Similarly,
 let  $\Eroot = \Phi(\bG,\Ebtorus)$  denote the set of roots of $\bG$ with respect to $\bG$ and $\Ebtorus$ and $\Esimple = \Pi(\bG,\Ebtorus, \bB)$ the set of simple roots with respect to $\bG$, $\Ebtorus$, and $\bB$.

Let $\Esplits$ denote the splitting field over $K$ of $\Ebtorus$.

If $\bH$ is an algebraic $K$-group, then we let $H$ denote the group of $K$-points of $\bH$, and we let $\tilde{H}$ denote the group of $\Esplits$ points of $\bH$.
If $\bsH$ is an $\ffc$-group, we will often denote the group of $\ffc$-points of $\bsH$ by $\bsH$ as well.

We let $\tame$ denote the maximal tame extension of $k$ in $\bar{k}$, and we let $\sigma$ denote a topological generator of $\Gal(\tame/K)$.

If $\mathcal{G}$ is a group and $x,y \in \mathcal{G}$, then $\lsup{x}y = xyx\inv$.   
If $\tau$ is an automorphism of $\mathcal{G}$, then
two elements $x, y \in \mathcal{G}$ are said to be $\tau$-conjugate (in $\mathcal{G}$) provided that there exists $g \in \mathcal{G}$ such that $g\inv x \tau(g) = y$.

If $X$ and $Y$ are sets, $f \colon X \rightarrow Y$ is a function, $A \subset X$, and $B \subset Y$,  then $f[A] \subset Y$ denotes the image of $A$ under $f$ while $f\inv[B] \subset X$ denotes the preimage of $B$.

If $F$ is a field, $\bH$ is an $F$-group,  and $\tau$ is an $F$-automorphism of $\bH$, then we let 
$\Fix_\tau(\bH)$ denote the $\tau$-fixed points in $\bH$ and we let $\bH^\tau$ denote $\Fix_\tau(\bH)^\circ$, the connected component of $\Fix_\tau(\bH)$.  Both $\Fix_\tau(\bH)$ and $\bH^\tau$  are  $F$-groups.   An $F$-homomorphism between reductive $F$-groups $\bH$ and $\bH'$ is called an isogeny provided that it is surjective with finite central kernel.

\subsection{Notation for Bruhat-Tits theory} \label{sec:BTnotation}
Let $\Psi = \Psi(\bG, \bA, K, \nu)$ denote the set of affine roots  of $\bG$ with respect to  $\bA$, $K$, and $\nu$. For $\psi \in \Psi$ we let $\dot{\psi} \in \Phi$ denote its gradient. The elements of $\Psi$ define the facet structure on the apartment $\AA(A) = \AA(\bA, K)$.  Fix a $\Fr$-stable alcove $C$  in   $\AA(A)$ and let $\Delta = \Delta(\bG, \bA, K, \nu, C)$ denote the set of simple affine roots in $\Psi$ determined by $C$.  Without loss of generality, we assume that $\Pi = \Pi(\bG,\bA, \bB)$ is a subset of $\{ \dot{\psi} \, | \, \psi \in \Delta\}$.

For each facet $H$ in the Bruhat-Tits building $\BB(G) = \BB(\bG,K)$ of $G$ we let $G_{H,0}$ denote the parahoric subgroup attached to $H$ and we let $G_{H,0^+}$ denote the pro-unipotent radical of $G_{H,0}$.  The  quotient $G_{H,0}/G_{H,0^+}$ is the group of $\ffc$-points of a connected reductive $\ffc$-group which we denote by $\bfG_H$.

Suppose $H$ is a facet  in $\AA(\bA, K)$ such that $H \subset \bar{C}$.
Let $\bfA = \bfA_H$  denote the torus in $\bfG_H$ corresponding to the image of the parahoric subgroup $A_0 = A \cap G_{H,0} $ of $A$  in $\bfG_H$.  (We may drop the subscript $H$ since these tori are independent of the choice of $H$ in $\AA(\bA,K)$.)  Let $\bfB^C_H$ denote the Borel subgroup of $\bfG_H$ corresponding to $G_{C,0}/G_{H,0^+}$.  Note that $\bfA$ is a maximal  $\ff$-torus in $\bfB^C_H$, and hence in $\bfG_H$.   Note that $\X^*(\bA)$ may be canonically identified with $\X^*(\bfA_H)$.

Let $F_0$ denote the unique facet contained in the closure of $\bar{C}$ such that (a) $F_0$ is special and (b) the set of simple roots determined by $\bfG_{F_0}$, $\bfA$, and $\bfB^C_{F_0}$ corresponds to $\Pi$. See~\cite[Section 2.10]{prasad-raganuthan:I}  and~\cite[Section 7.1]{prasad-raganuthan:II} for the existence of such a facet when $\bG$ is  absolutely simple and simply connected; since the existence of such a minimal facet is independent of isogeny class, the general case follows by considering the almost simple factors of $\bG$. The uniqueness of such a facet follows from condition (b).  Note that $F_0$ will be a special facet over the splitting field of $\Ebtorus$, and so is absolutely special in $\BB(\bG,K)$; see~\cite[Section 5]{haines-richarz:smoothness} or~\cite[Appendix G]{kaletha:supercuspidal}
for more about absolutely special facets.

\subsection{A realization of the Tits group}  \label{sec:realization}

Recall that $\Esplits$ denotes the splitting field of $\Ebtorus$ over $K$.   Set $\EGal = \Gal(\Esplits/K)$ and $\Egroup=\bG(\Esplits)$, $\EbtorusErat = \Ebtorus(\Esplits)$, and  $\EBorel = \bB(\Esplits)$.  We may and do identify $\absW $ with $N_{\Egroup}(\EbtorusErat)/\EbtorusErat$.  For a facet $F \subset \BB(\Egroup)$, let $\Equotient_{F}$ denote the connected reductive $\ffc$-group whose group of $\ffc$-rational points is $\Egroup_{F,0}/\Egroup_{F,0^+}$.
Let $\bshA$  denote the $\ffc$-torus in $\Equotient_{F_0}$ whose group of $\ffc$-points coincides with the  image of the parahoric subgroup $\EbtorusErat_0 = \EbtorusErat \cap \Egroup_{F_0,0}$  of $\EbtorusErat$ in $\Equotient_{F_0}(\ffc)$.  Similarly, Let $\tilde{\bfB}$  denote the Borel $\ffc$-group in $\Equotient_{F_0}$ whose group of $\ffc$-points coincides with the  image of $\EBorel \cap \Egroup_{F_0,0}$ in $\Equotient_{F_0}(\ffc)$.
     Choose a pinning $(\bG, \Ebtorus, \bB, \{X_a\}_{a \in \Esimple})$ that is $\EGal$-stable and compatible with $F_0$.   When we say the pinning is compatible with $F_0$, we mean that  $X_a \in \Lie(\Egroup_{F_0,0})$ for all $a \in \Esimple$ and if $\bar{X}_a$ denotes the image of $X_a$ in $\Lie(\Equotient_{F_0})$, then  $(\Equotient_{F_0}, \bshA, \tilde{\bfB}, \{\bar{X}_a\}_{a \in \Esimple})$  is a pinning of $(\Equotient_{F_0}, \bshA ,  \tilde{\bfB})$.

For $a \in \Esimple$ we let $\bG_a$ denote the $\Esplits$-subgroup of $\bG$ generated by the root groups $\bU_a$ and $\bU_{-a}$ in $\bG$ and set $\Ebtorus_a = \bG_a \cap \Ebtorus$.    Since for all nontrivial $u \in \bU_a(\Esplits)$, the intersection $u \bU_{-a}(\Esplits) u \cap N_{\bG_a(\Esplits)}(\Ebtorus_a)$ has cardinality one, the pinning $(\bG, \Ebtorus, \bB, \{X_a\}_{a \in \Esimple})$  uniquely identifies an element $n_a \in N_{\bG_a(\Esplits)}(\Ebtorus_a) \leq N_{\Egroup}(\Ebtorus) $.  We have $n_a^2 = \check{a}(-1) \in \EbtorusErat$ where $\check{a} \in \X_*(\Ebtorus)$ is the coroot associated to $a$.  Suppose $\tau \leq \EbtorusErat$ denotes the elementary abelian two-group  generated by $\{ \check{a}(-1) \, | \, a \in \Esimple\}$ and $\titsW$ denotes the (finite) subgroup of $N_{\Egroup}(\Ebtorus)$ generated by $\{ n_{a} \, | \, a \in \Esimple\}$.
Note that since our pinning is $\EGal$-stable, we have that $\EGal$ acts on both $\tau$ and $\titsW$.
If $\pi \colon N_{\Egroup}(\Ebtorus) \rightarrow \absW$ denotes the usual projection, then thanks to~\cite{tits:normal} we have an exact sequence
$$1 \longrightarrow \tau \longrightarrow \titsW \stackrel{\pi}{\longrightarrow} \absW \longrightarrow 1.$$
 of finite groups with $\EGal$-action. Note that $\tau$ is a subgroup of $\Egroup_{F_0,0}$, and, since our pinning is compatible with $F_0$, we have that  $\titsW$ is also a subgroup of $\Egroup_{F_0,0}$.   The group $\titsW$ is called the Tits group (with respect to our chosen pinning).

\begin{remark} \label{rem:1.3.1}
Since the root groups $\bU_a$ for $a \in \Esimple$ belong to $\bG'$, we conclude that $\titsW \leq \Egroup'_{F_0,0} \leq \Egroup'$.
\end{remark}

\subsection{Facts about tori}
We recall some facts about tori that will be used later.  Recall that every $K$-torus in $\bG$ splits over a separable extension~\cite[Corollary~12.19]{milne:algebraic}.

\subsubsection{On the structure of parahoric subgroups of tori}

Suppose $\bT$ is a  $k$-torus.  Since $\bT$ is abelian and any two maximal $K$-split tori in $\bT$ are $T = \bT(K)$-conjugate, there is a unique maximal $K$-split $K$-torus, $\bT^K$,  in $\bT$.  Since it is unique and $\bT$ is a $k$-torus, $\bT^K$ is a $k$-torus as well.   Recall that $R_K$ denotes the ring of integers in $K$.   The maximal bounded subgroup of $T^K$ is $T^K_0 = \bT^K(R_K)$, and  $T^K_0$ is \emph{the} parahoric subgroup of $T^K$.

Let $T_0$ denote the parahoric subgroup of $T$ and let $T_{0^+}$ denote its pro-unipotent radical.  The quotient $T_0/T_{0^+}$ is isomorphic to  the $\ffc$-points of an $\ff$-torus $\overline{\bfT}$.   Moreover, $T^K_0 = T_0 \cap T^K$ and the image of $T^K_0$ in $T_0/T_{0^+}$ may be identified with  the $\ffc$-points of  $\overline{\bfT}$.   From this we conclude that $T_0 = T^K_0 T_{0^+}.$

Since $T_0$, $T^K_0$, and $T_{0^+}$ are $\Fr$-stable and $\cohom^1(\Fr,T^K_0 \cap T_{0^+})$ is trivial, we have $T_0^{\Fr} = ({T^{K}_0})^{\Fr} \, T_{0^+}^{\Fr}$.

\begin{remark}
Let $\ttt$ denote the Lie algebra of $\bT$.  If $\bG$ is $K$-split, we can (canonically) write $\ttt = \bar{\ttt} \oplus \bar{\ttt}^{\perp}$ where $\bar{\ttt}$ is the Lie algebra of ${\bT}^K$ and $\bar{\ttt}^{\perp} = (C_{\gg}(\bar{\ttt}))' \cap \ttt$, where $(C_{\gg}(\bar{\ttt}))'$ denotes the derived Lie algebra of $C_{\gg}(\bar{\ttt})$.   If $\bG$ is not $K$-split, then we need to replace $\bar{\ttt}$ with the center of $C_{\gg}(\bar{\ttt})$.
\end{remark}

\subsubsection{Splitting fields of tori in quasi-split groups}

Suppose $F$ is a field and $\bH$ is an $F$-quasi-split group.   Let $(\bB,\bS)$ be a Borel-torus pair for $\bH$; so $\bB$ is a Borel $F$-subgroup of $\bH$ and $\bS$ is a maximal $F$-torus of $\bH$ that is contained in $\bB$.

\begin{lemma}  \label{lem:tamealltheway}    Suppose $\bT$ is a maximal $F$-torus in $\bH$.   If $E$ is the splitting field of $\bT$, then $\bS$ is $E$-split.
\end{lemma}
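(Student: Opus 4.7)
The plan is to reduce everything to two clean steps: (1) show that the hypothesis on $\bT$ forces $\bH \otimes_F E$ to be $E$-split, and then (2) use the quasi-split structure around $\bS$ to conclude that $\bS$ itself is $E$-split.

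For step (1), I would observe that $\bT \otimes_F E$ is a split $E$-torus (that is the hypothesis), and that it remains a maximal torus of $\bH \otimes_F E$, since base change preserves maximality of tori in a connected reductive group. By the very definition of split, a connected reductive group that contains a split maximal torus is split, so $\bH_E := \bH \otimes_F E$ is $E$-split.

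For step (2), note that $\bS_E \subset \bB_E$, both defined over $E$. Because $\bH_E$ is split, any two Borel $E$-subgroups of $\bH_E$ are conjugate by an element of $\bH(E)$; in particular, if I fix a split maximal $E$-torus $\bS_0$ of $\bH_E$ sitting inside some Borel $E$-subgroup $\bB_0$, then there is $h \in \bH(E)$ with $h\bB_E h\inv = \bB_0$, and hence $h\bS_E h\inv$ is a maximal $E$-torus of $\bB_0$. Inside the connected solvable $E$-group $\bB_0$, any two maximal $E$-tori are conjugate by the $E$-points of its unipotent radical $\bU_0$, so there is $u \in \bU_0(E)$ with $uh\bS_E(uh)\inv = \bS_0$. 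Therefore $\bS_E$ is $\bH(E)$-conjugate to the split torus $\bS_0$, which forces $\bS_E$ to be split, i.e., $\bS$ splits over $E$.

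The only subtle point I expect is the temptation to say something stronger, like ``every maximal torus in $\bH_E$ is $E$-split''---this is false (already in $\mathrm{GL}_2$), so the argument really has to use that $\bS$ lies in an $F$-Borel. Everything else is standard structure theory of split and quasi-split reductive groups, so no serious obstacle arises.
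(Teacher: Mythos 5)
Your proof is correct and is essentially the paper's own argument: both rest on the two standard conjugacy facts that Borel $E$-subgroups are conjugate under $\bH(E)$ and that maximal $E$-tori inside a Borel (connected solvable) $E$-group are conjugate over its rational points, so that $\bS$ ends up $\bH(E)$-conjugate to a split torus. The only cosmetic difference is that the paper conjugates $\bT$ (together with a Borel $E$-subgroup containing it) into the given $\bB$ and compares with $\bS$ there, whereas you first note $\bH_E$ is split and conjugate $\bB$ onto an auxiliary pair $(\bB_0,\bS_0)$; the content is the same.
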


\begin{proof}
  Since $\bT$ is $E$-split, there exists a Borel $E$-subgroup $\bB'$ of $\bH$ such that $\bT \leq \bB'$.  Since all Borel $E$-subgroups of $\bG$ are $\bH(E)$-conjugate~\cite[Theorem~20.9 (i)]{borel:linear}, there exists $h \in \bH(E)$ such that $\lsup{h}\bT \leq \bB$.  Since $\lsup{h}\bT$ and $\bS$ are  maximal $E$-tori in $\bB$, they are $\bB(E)$-conjugate~\cite[Proposition~20.5]{borel:linear}.   Consequently, since $\bS$  is $\bH(E)$-conjugate to the $E$-split torus $\bT$, we conclude that $\bS$ is $E$-split as well.
\end{proof}

\section{Points in the reduced building attached to \texorpdfstring{$K$-minisotropic}{K-minisotropic} maximal tori}  \label{sec:pointswild}

In this section we loosen our restrictions on $k$:  it can be any complete field with nontrivial discrete valuation $\nu$ and perfect residue field $\ff$ such that $K$ is strictly Henselian and has cohomological dimension $\leq 1$.

Recall from \S\ref{sec:BTnotation} that $C$ is
an alcove in the apartment $\AA(A)$ in $\BB(G)$.
Let $C'$ denote  the image of $C$ in $\BB^{\red}(G)$, the reduced building of $G$.
Similarly, $\AA'(A)$ will denote the image of $\AA(A)$ in $\BB^{\red}(G)$.

\begin{defn}
A $K$-torus $\bT$ in $\bG$ is said to be $K$-minisotropic provided that $\bX^*(\bT/\bZ)^{\Gal(\bar{k}/K)}$ is trivial.
\end{defn}

In this section we show that to every $K$-minisotropic maximal torus in $\bG$ we can associate a unique point in the closure of the alcove $C'$.  We then describe the stabilizer of this point in Lemma~\ref{lemma:KHR} (see also Remark~\ref{rem:KHR}).

Under the assumption that the torus splits over a tame extension, we describe this point explicitly in Section~\ref{sec:tamepoints}.

\subsection{Results on \texorpdfstring{$G$-conjugacy}{G-conjugacy}}  \label{sec:corr}

  An element $\gamma \in \bG(\bar{k})$  is called \emph{semisimple} provided that it belongs to a maximal torus in $\bG$.   The element $\gamma$ is said to be \emph{strongly regular semisimple} provided that  there exists a maximal torus $\bT$ in $\bG$ such that
  \begin{itemize}
      \item $\gamma \in \bT(\bar{k})$,
      \item $\alpha(\gamma) \neq 1$ for all $\alpha \in \Phi(\bG, \bT)$, and
      \item the stabilizer of $\gamma$ in the Weyl group of $\bT$ is trivial.
  \end{itemize}
  The set  $\bG^{\srss}(\bar{k})$ of strongly regular semisimple elements in $\bG(\bar{k})$  is both dense and open.   In fact, for any maximal $k$-torus $\bT$ of $\bG$ the   set $\bT(\bar{k}) \cap \bG^{\srss}(\bar{k})$ of strongly regular semisimple elements in $\bT(\bar{k})$  is both dense and open.  Since $\bT$ is unirational, this implies that $\bT(k)$ contains a strongly regular semisimple element.   If $\gamma \in \bG^{\srss}(\bar{k})$, then the centralizer of $\gamma$ is a maximal torus in $\bG$.    An element that satisfies the first two conditions, but not the third, is said to be \emph{regular semisimple}.

\begin{lemma}  \label{lem:gside}
 Suppose $\bH$ is a connected reductive $K$-group.  Two 
 elements of $H^{\srss} = \bH^{\srss}(K)$ are $H$-conjugate if and only if
 they are $\bH(\bar{k})$-conjugate.
 \end{lemma}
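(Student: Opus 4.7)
The plan is standard Galois-descent for conjugacy: reduce $\bH(\bar k)$-conjugacy to $\bH(K)$-conjugacy by showing that the obstruction lives in a vanishing Galois cohomology group.

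First, the ``only if'' direction is trivial, so assume $\gamma_1,\gamma_2 \in H^{\srss}$ are $\bH(\bar k)$-conjugate. Because each $\gamma_i$ is strongly regular semisimple, $\bT_i := C_{\bH}(\gamma_i)$ is a maximal torus of $\bH$, and, being the centralizer of a $K$-rational element, it is a maximal $K$-torus of $\bH$. Consider the transporter
\[
    X := \{\, g \in \bH \mid g\gamma_1 g\inv = \gamma_2 \,\},
\]
which is a closed subvariety of $\bH$ defined over $K$ (since $\gamma_1,\gamma_2 \in H$). By the assumption of $\bH(\bar k)$-conjugacy, $X(\bar k)$ is nonempty, and left multiplication by $C_{\bH}(\gamma_1) = \bT_1$ makes $X$ a $\bT_1$-torsor over $K$.

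Therefore the $\bH(K)$-conjugacy class of $\gamma_2$ within its $\bH(\bar k)$-conjugacy class is controlled by the pointed set $H^1(K,\bT_1)$ via the usual cocycle construction $\Fr \mapsto g\inv \Fr(g)$ for any $g \in X(\bar k)$. Now I invoke the standing hypothesis of this section: $K$ is strictly Henselian with $\cd(K) \le 1$. By Steinberg's theorem (see e.g.\ Serre, \emph{Cohomologie galoisienne}, III.2.3), fields of cohomological dimension $\le 1$ satisfy $H^1(K,\bM) = 1$ for every connected linear algebraic $K$-group $\bM$; in particular this applies to the $K$-torus $\bT_1$.

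Thus the $\bT_1$-torsor $X$ is trivial, hence has a $K$-point $g \in H$ with $g\gamma_1 g\inv = \gamma_2$, proving $H$-conjugacy. The only step that requires any care is verifying that $X$ is genuinely defined over $K$ and is a $\bT_1$-torsor; once that is in hand, Steinberg's vanishing theorem does all the real work. I expect no substantive obstacle, as this is the classical Kottwitz-style descent argument applied in the simplest possible (strongly regular, strictly Henselian) setting.
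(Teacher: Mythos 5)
Your argument is correct and is essentially the paper's: both proofs reduce $\bH(\bar{k})$-conjugacy to $H$-conjugacy by noting that the obstruction is a class in $H^1(K,\bT_1)$ for the centralizer torus $\bT_1=C_\bH(\gamma_1)$ (your transporter torsor is just the torsor attached to the cocycle $\gamma \mapsto g^{-1}\gamma(g)\in\bT_1(\bar k)$ used in the paper), and this group vanishes. One small precision: Steinberg's theorem as in Serre III.2.3 assumes $K$ perfect, which is not part of the standing hypotheses here (e.g.\ $K$ is imperfect in the equal-characteristic case), so you should instead invoke the vanishing of $H^1(K,\bT)$ for tori over fields of cohomological dimension $\leq 1$, as the paper does via Borel--Springer; also note the torus acting simply transitively on your transporter by left multiplication is $C_\bH(\gamma_2)$, with $\bT_1$ acting on the right.
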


 \begin{proof}
 Suppose $x,y \in H = \bH(K)$ are strongly regular semisimple.   It will be enough to show that if there exists $g \in \bH(\bar{k})$ for which $\lsup{g}x = y$, then there exists $h \in H$ for which $\lsup{h}x = y$.   Since $x$ is strongly regular semisimple, its centralizer is a maximal $K$-torus in $\bH$,  call it $\bT$.   For all $\gamma \in \Gal(\bar{k}/K)$ we have $\lsup{\gamma(g)\inv g} x = x$, hence $g\inv \gamma(g) \in \bT(\bar{k})$.  Thanks to~\cite[Section 8.6]{borel-springer:rationality}, we have that $\cohom^1(K,\bT)$ is trivial.   Hence there exists $t \in \bT(\bar{k})$ such that $\gamma(gt) = gt$ for all $\gamma \in \Gal(\bar{k}/K)$.  Set $h = gt$.
\end{proof}

\begin{remark}[Loren Spice]  \label{rem:loren}
Suppose that $k^\text{alg}/\bar k$ is an algebraic closure of $\bar{k}$.   Two  semisimple elements $\gamma$ and $\gamma'$ of  $\bG(\bar k)$ are $\bG(k^\text{alg})$-conjugate if and only if they are $\bG(\bar{k})$-conjugate.  Indeed, since every $\bar{k}$-torus is $\bar{k}$-split~\cite[Corollary~12.19]{milne:algebraic}, we may, and do, assume, after conjugating by an element of $\bG(\bar{k})$, that both $\gamma$ and $\gamma'$ belong to the group of $\bar{k}$-points of a $\bar{k}$-split, maximal torus $\bS$ in \(\bG_{\bar k}\).
Suppose that \(g \in \bG(k^\text{alg})\) satisfies \(\Int(g)\gamma = \gamma'\).  Since both $\bS$ and  \(\ \Int(g\inv)\bS \) are maximal $k^\text{alg}$-split tori in $\bH := C_{\bG}(\gamma)^\circ$, there exists $h \in \bH(k^\text{alg})$ such that $\Int(g\inv)\bS  = \Int(h)\bS$.
Then \(\Int(gh)\bS_{k^\text{alg}} = \bS_{k^\text{alg}}\), so that \(gh\) belongs to \(N_\bG(\bS)(k^\text{alg})\).  Since \(N_\bG(\bS)\) is smooth with identity component \(C_\bG(\bS) = \bS\), we have that there is some \(n \in N_\bG(\bS)(\bar k)\) whose image in the Weyl group \((N_\bG(\bS)/\bS)(k^\text{alg})\) is the same as the image of \(gh\); i.e., so that \(n\) belongs to \(gh\bS(k^\text{alg})\).  In particular, \(\Int(n)\gamma\) equals \(\Int(gh)\gamma = \Int(g)\gamma = \gamma'\).
 \end{remark}

 \begin{defn}
Suppose $\bT_i$ is a maximal $K$-torus in $\bG$.   We say that $\bT_1$ is \emph{$K$-stably conjugate} to $\bT_2$ in $\bG$ provided that  there exists $h \in \bG(\bar{k})$ such that $\Ad(h) \colon \bT_1 \rightarrow \bT_2$ is a $K$-morphism.
\end{defn}

 \begin{cor}  \label{cor:torusconj}
  Suppose $\bH$ is a connected reductive $K$-group.  Two maximal $K$-tori in $\bH$ are $H$-conjugate if and only if they are $K$-stably-conjugate in $\bH$.
 \end{cor}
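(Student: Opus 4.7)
The plan is to reduce torus conjugacy to conjugacy of strongly regular semisimple elements and then invoke Lemma~\ref{lem:gside}.  One direction is immediate: if $g \in H$ satisfies $\Ad(g)\bT_1 = \bT_2$, then $\Ad(g)\colon \bT_1 \to \bT_2$ is automatically a $K$-morphism, so $\bT_1$ and $\bT_2$ are $K$-stably conjugate.

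For the nontrivial direction, suppose $h \in \bH(\bar{k})$ is such that $\Ad(h)\colon \bT_1 \to \bT_2$ is a $K$-morphism.  The plan is to transport a strongly regular semisimple element from $\bT_1$ to $\bT_2$ using $h$, show that the image is defined over $K$, and then apply Lemma~\ref{lem:gside}.  First I would select $\gamma_1 \in \bT_1(K)\cap \bH^{\srss}(\bar k)$.  Such an element exists because the strongly regular semisimple locus of $\bT_1$ is open and dense in $\bT_1$, and $\bT_1$ is unirational (so $\bT_1(K)$ is Zariski dense in $\bT_1$), exactly as recalled in Section~\ref{sec:corr}.  Setting $\gamma_2 := \Ad(h)\gamma_1$, the fact that $\Ad(h)$ is a $K$-morphism from $\bT_1$ into $\bT_2$ forces $\gamma_2 \in \bT_2(K) \subset H$, and $\gamma_2$ is again strongly regular semisimple since strong regularity is preserved by $\bH(\bar k)$-conjugation.

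Now $\gamma_1$ and $\gamma_2$ are elements of $H^{\srss}$ that are $\bH(\bar k)$-conjugate via $h$, so by Lemma~\ref{lem:gside} they are $H$-conjugate: there exists $g \in H$ with $\Ad(g)\gamma_1 = \gamma_2$.  Because $\gamma_i$ is strongly regular semisimple, its centralizer in $\bH$ is precisely the unique maximal torus containing it, namely $\bT_i$.  Consequently $\Ad(g)\bT_1 = \Ad(g)C_\bH(\gamma_1)^\circ = C_\bH(\gamma_2)^\circ = \bT_2$, which exhibits the desired $H$-conjugacy.

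There is no real obstacle here beyond verifying that $\gamma_2$ lands in $\bT_2(K)$, which is exactly where the $K$-rationality hypothesis on $\Ad(h)$ is used; the rest is a direct application of Lemma~\ref{lem:gside} combined with the standard fact that a strongly regular semisimple element determines its ambient maximal torus as its connected centralizer.
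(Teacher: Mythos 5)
Your proof is correct and is essentially the paper's argument: the paper likewise notes that a maximal $K$-torus contains a strongly regular semisimple $K$-point and then invokes Lemma~\ref{lem:gside}, with the transport-of-centralizers step left implicit. You have simply written out those details (rationality of $\gamma_2$ via the $K$-morphism $\Ad(h)$, and $\Ad(g)\bT_1 = C_\bH(\gamma_2) = \bT_2$), so nothing further is needed.
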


 \begin{proof}
If $\bT$ is a maximal $K$-torus in $\bH$, then $T$ contains a strongly regular semisimple element.  The result follows from Lemma~\ref{lem:gside}.
 \end{proof}

\begin{corollary}  \label{cor:adwalk}
Two maximal $K$-tori in $\bG$ are $G$-conjugate if and only if the corresponding maximal $K$-tori in $\bG_{*}$ are $G_{*}$-conjugate.   Here $*$ is either $\scon$ or $\ad$.
\end{corollary}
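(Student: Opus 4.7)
\emph{Proof plan.} The plan is to invoke Corollary~\ref{cor:torusconj} to replace rational conjugacy by $K$-stable conjugacy in all three groups, and then check that stable conjugacy is transported under the natural isogenies $\eta\colon\bG_{\scon}\to\bG'\hookrightarrow\bG$ and $\bG\to\bG_{\ad}$. By Corollary~\ref{cor:torusconj}, two maximal $K$-tori of $\bG$ (resp.\ of $\bG_*$) are $G$-conjugate (resp.\ $G_*$-conjugate) if and only if they are $K$-stably conjugate. So it suffices to show that two maximal $K$-tori $\bT_1,\bT_2$ in $\bG$ are $K$-stably conjugate if and only if the corresponding maximal $K$-tori $\bT_{1,*},\bT_{2,*}$ in $\bG_*$ are $K$-stably conjugate. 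The ``correspondence'' is via taking images (for $*=\ad$) or preimages (for $*=\scon$) under the isogenies; in either case it is a bijection between the sets of maximal $K$-tori.

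First I would treat the adjoint case. The forward direction is essentially formal: if $h\in\bG(\bar{k})$ satisfies $\Ad(h)\bT_1=\bT_2$ and $h\inv\gamma(h)\in\bT_1(\bar{k})$ for every $\gamma\in\Gal(\bar{k}/K)$, then its image $h_{\ad}\in\bG_{\ad}(\bar{k})$ has the analogous properties for $\bT_{1,\ad}$ and $\bT_{2,\ad}$. For the converse, use surjectivity of $\bG(\bar{k})\to\bG_{\ad}(\bar{k})$ (which holds because $\bar{k}$ is algebraically closed and $\bZ$ is diagonalizable) to lift $h_{\ad}$ to some $h\in\bG(\bar{k})$. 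The torus $h\bT_1 h\inv$ is then a maximal $\bar{k}$-torus of $\bG$ that contains $\bZ$ and whose image in $\bG_{\ad}$ is $\bT_{2,\ad}$, hence it equals $\bT_2$; and $h\inv\gamma(h)$ lies in the preimage of $\bT_{1,\ad}(\bar{k})$ in $\bG(\bar{k})$, namely $\bT_1(\bar{k})$.

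The simply connected case goes through analogously, most cleanly by routing through the adjoint quotient. A stable conjugator in $\bG$ gives one in $\bG_{\ad}$ by the forward step above, and this can be lifted to $\bG_{\scon}(\bar{k})$ using surjectivity of the central isogeny $\bG_{\scon}(\bar{k})\to\bG_{\ad}(\bar{k})$. Conversely, a stable conjugator in $\bG_{\scon}$ pushes forward to $\bG_{\ad}$ and then lifts to $\bG$ as in the previous paragraph. In both directions the lifting is well-defined up to a central element, which is automatically contained in every maximal torus and hence does not spoil the $K$-morphism condition.

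I do not anticipate a serious obstacle; the only point requiring care is verifying that the condition ``$h\inv\gamma(h)\in\bT$'' behaves well under these lifts and pushforwards. This reduces to the standing observation that the kernels of $\bG_{\scon}\to\bG_{\ad}$ and of $\bG\to\bG_{\ad}$ are central, hence lie inside every maximal torus, so any indeterminacy in choosing a lift is absorbed by the torus.
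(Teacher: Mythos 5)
Your overall route is the same as the paper's: reduce to $K$-stable conjugacy via Corollary~\ref{cor:torusconj} and then transport stable conjugacy through the canonical isogenies. The gap is in the lifting step. In this paper $\bar{k}$ is a fixed \emph{separable} closure, and Section~\ref{sec:pointswild} explicitly allows $k$ (hence $K$) of positive characteristic; so your justification ``$\bG(\bar{k})\to\bG_{\ad}(\bar{k})$ is surjective because $\bar{k}$ is algebraically closed and $\bZ$ is diagonalizable'' does not apply. When $\operatorname{char} k = p$ divides the order of the center, the central isogeny need not be surjective on $\bar{k}$-points: the obstruction lives in flat cohomology $\coho^1_{\mathrm{fppf}}(\bar{k},\bZ)$, which vanishes for smooth $\bZ$ but not in general. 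Concretely, for $k=\mathbb{F}_p((t))$ the map $\SL_p(\bar{k})\to\PGL_p(\bar{k})$ is not surjective, since hitting a class represented by $g\in\GL_p(\bar{k})$ with $\det g = t$ would require a $p$-th root of $t$ in the separable closure. The same objection applies to your simply connected step, where you lift along $\bG_{\scon}(\bar{k})\to\bG_{\ad}(\bar{k})$. So as written the converse direction (stable conjugacy in $\bG_{\ad}$ implies stable conjugacy upstairs) is not established in the paper's full generality, though it is fine in characteristic zero or when $p$ does not divide $\dabs{\ker}$.

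The fix does not require lifting $h_{\ad}$ itself, only producing some $h\in\bG(\bar{k})$ that induces the same map on the tori. First note that $\gamma(h_{\ad})\inv h_{\ad}\in\bT_{1,\ad}(\bar{k})$ acts trivially on $\bT_1$ (the conjugation action of $\bT_{1,\ad}$ on $\bT_1$ is trivial because it is trivial after pulling back along the surjection $\bT_1\twoheadrightarrow\bT_{1,\ad}$), so $\Ad(h_{\ad})$ restricts to a $K$-isomorphism $\bT_1\to\bT_2$. Since $\bar{k}$ is separably closed, all maximal $\bar{k}$-tori of $\bG$ are $\bG(\bar{k})$-conjugate, so choose $g\in\bG(\bar{k})$ with $\lsup{g}\bT_1=\bT_2$; then $\Ad(g)\inv\circ\Ad(h_{\ad})$ stabilizes $\bT_1$ and acts on it through a Weyl group element, which is represented by some $n\in N_{\bG}(\bT_1)(\bar{k})$ because $\bT_1$ splits over $\bar{k}$. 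Taking $h=gn$ gives $\lsup{h}\bT_1=\bT_2$ with $\Ad(h)|_{\bT_1}=\Ad(h_{\ad})|_{\bT_1}$ a $K$-morphism, which is exactly the stable conjugacy required by the paper's definition; with this replacement your argument (and its reduction of the $\scon$ case to the $\ad$ case) goes through.
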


\begin{proof}
The canonical $k$-maps $  \bG \rightarrow \bG_{\ad}$ and $ \bG_{\scon} \rightarrow \bG_{\ad}$
establish natural bijective correspondences among (a) the set of maximal $K$-tori in $\bG$, (b) the set of maximal $K$-tori in $\bG_{\ad}$, and (c) the set of maximal $K$-tori in $\bG_{\scon}$.
Moreover, under these correspondences two maximal $K$-tori in $\bG$ are $K$-stably-conjugate if and only if the corresponding maximal $K$-tori in $\bG_{*}$ are $K$-stably-conjugate.   Here $*$ is either $\scon$ or $\ad$.

The result follows from Corollary~\ref{cor:torusconj}.
\end{proof}

 If we assume that $K$ has characteristic zero, then there are similar results on the Lie algebra level.

\begin{lemma}
Suppose $K$ has characteristic zero.
 Suppose $\bH$ is a connected reductive $K$-group with Lie algebra $\bhh$.  Two regular semisimple elements of $\bhh(K)$ are $H$-conjugate if and only if they are $\bH(\bar{k})$-conjugate.
 \end{lemma}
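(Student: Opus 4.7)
The plan is to mirror the proof of Lemma~\ref{lem:gside} but at the level of the Lie algebra. The trivial direction requires no comment, so we focus on showing that if $X, Y \in \bhh(K)$ are regular semisimple and $g \in \bH(\bar{k})$ satisfies $\Ad(g)X = Y$, then some $h \in H$ also satisfies $\Ad(h)X = Y$.

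First I would identify the relevant torus. In characteristic zero, the assumption that $X$ is regular semisimple forces the Lie algebra centralizer $\bhh^X$ to equal the Cartan subalgebra $\Lie(\bT)$, where $\bT$ is the unique maximal torus of $\bH_{\bar{k}}$ with $X \in \Lie(\bT)$. Consequently the connected centralizer $C_\bH(X)^\circ$, having Lie algebra $\bhh^X = \Lie(\bT)$, coincides with $\bT$. Moreover, any element of $C_\bH(X)(\bar{k})$ normalizes $\bT$, and its image in the Weyl group stabilizes the regular element $X$, hence is trivial; so $C_\bH(X) = \bT$ is already connected. Because $X \in \bhh(K)$, the centralizer $\bT = C_\bH(X)$ is defined over $K$, i.e., it is a maximal $K$-torus of $\bH$.

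Next I would run the Galois cocycle argument verbatim from Lemma~\ref{lem:gside}. For each $\gamma \in \Gal(\bar{k}/K)$, the $K$-rationality of $X$ and $Y$ gives
\[
\Ad(\gamma(g))\,X \;=\; \gamma(\Ad(g)\,X) \;=\; \gamma(Y) \;=\; Y \;=\; \Ad(g)\,X,
\]
so $c_\gamma := g^{-1}\gamma(g) \in C_\bH(X)(\bar{k}) = \bT(\bar{k})$. The map $\gamma \mapsto c_\gamma$ is a continuous $1$-cocycle. Since $K$ is strictly Henselian of cohomological dimension $\leq 1$, the triviality of $\cohom^1(K, \bT)$ (by~\cite[Section 8.6]{borel-springer:rationality}, as cited for Lemma~\ref{lem:gside}) yields $t \in \bT(\bar{k})$ with $c_\gamma = t^{-1}\gamma(t)$ for all $\gamma$. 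A direct computation then shows $h := gt^{-1}$ is fixed by every $\gamma$, so $h \in H$; and since $t \in \bT$ centralizes $X$, we have $\Ad(h)X = \Ad(g)\Ad(t^{-1})X = \Ad(g)X = Y$.

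There is no substantive obstacle beyond installing the correct char-$0$ ingredient: the one place where characteristic zero genuinely enters is the identification $C_\bH(X) = \bT$ for regular semisimple $X \in \bhh$, which makes the Lie algebra cocycle land in a torus just as the group cocycle does in Lemma~\ref{lem:gside}. Once that is in hand the remainder is purely formal, reusing the same vanishing of $\cohom^1(K, \bT)$ provided by the hypotheses on $K$.
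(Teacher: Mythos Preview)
Your proof is correct and follows exactly the approach the paper intends: the paper's own proof simply reads ``The proof is nearly identical to that of Lemma~\ref{lem:gside},'' and you have spelled out precisely that argument, including the characteristic-zero point that $C_\bH(X)$ is a maximal $K$-torus for regular semisimple $X$, which is what makes the cocycle land in a torus.
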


 \begin{proof}
The proof is nearly identical to that of Lemma~\ref{lem:gside}.
\end{proof}

   Let $\bgg$ denote the Lie algebra of $\bG$, and denote by $\bgg'$ the Lie algebra of $\bG'$ the derived group of $\bG$.  When $K$ has characteristic zero, we may and do identify
the Lie algebras of $\bG_{\scon}$ and $\bG_{\ad}$ with  $\bgg'$.

\begin{corollary}
\label{cor:conjugacy-derived}
Suppose $K$ has characteristic zero.   Suppose $X_1, X_2 \in \bgg'(K)$ are regular semisimple.   The following are equivalent:
\begin{itemize}
\item $X_1$ is $G$-conjugate to $X_2$.
\item  $X_1$ is $G_{sc}$-conjugate to $X_2$.
\item  $X_1$ is $G_{ad}$-conjugate to $X_2$.   \qed
\end{itemize}
\end{corollary}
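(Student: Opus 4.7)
The plan is to mirror the torus-side argument of Corollary~\ref{cor:adwalk}, but on the Lie algebra side. The preceding Lemma, applied in turn to $\bG$, $\bG_{\scon}$, and $\bG_{\ad}$, lets me replace each of the three $K$-rational conjugacy relations by the corresponding $\bar{k}$-conjugacy relation on regular semisimple elements of $\bgg'(\bar{k})$. So the three-way equivalence reduces to showing that $\bG(\bar{k})$-, $\bG_{\scon}(\bar{k})$-, and $\bG_{\ad}(\bar{k})$-conjugacy of regular semisimple elements of $\bgg'(\bar{k})$ all coincide.

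For that reduction I would use that the adjoint action of $\bG$ on $\bgg'$ factors through $\bG_{\ad}$, and likewise the adjoint action of $\bG_{\scon}$ factors through the central isogeny $\bG_{\scon} \to \bG_{\ad}$. It is therefore enough to check that both canonical maps $\bG(\bar{k}) \to \bG_{\ad}(\bar{k})$ and $\bG_{\scon}(\bar{k}) \to \bG_{\ad}(\bar{k})$ are surjective. Since $K$ has characteristic zero, so does $\bar{k}$, and in particular $\bar{k}$ is algebraically closed. Both maps then arise from smooth surjective morphisms of algebraic $\bar{k}$-groups, hence are surjective on $\bar{k}$-points. Consequently the adjoint actions of $\bG(\bar{k})$, $\bG_{\scon}(\bar{k})$, and $\bG_{\ad}(\bar{k})$ have identical orbits on $\bgg'(\bar{k})$, and chaining with the Lemma gives the desired three-way equivalence.

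The only delicate point is to verify, before invoking the preceding Lemma three times, that an element of $\bgg'(K) \subset \bgg(K)$ which is regular semisimple in $\bgg$ remains regular semisimple in the Lie algebras of $\bG_{\scon}$ and $\bG_{\ad}$. This is immediate from the identification of these three Lie algebras with $\bgg'$ in characteristic zero together with the compatibility of Cartan subalgebras and their stabilizers under the central isogenies $\eta\colon \bG_{\scon} \to \bG$ and $\bG \to \bG_{\ad}$. No serious obstacle is anticipated; the argument is essentially a Lie-algebra version of the bijection between maximal $K$-tori in $\bG$, $\bG_{\scon}$, and $\bG_{\ad}$ used in Corollary~\ref{cor:adwalk}.
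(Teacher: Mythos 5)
Your argument is correct and is precisely the (omitted) argument the paper intends: the corollary is stated with no written proof because it is meant to follow from the preceding Lemma applied to $\bG$, $\bG_{\scon}$, and $\bG_{\ad}$, reducing each rational conjugacy to $\bar{k}$-conjugacy, which coincides for all three groups since the adjoint action on $\bgg'$ factors through $\bG_{\ad}$ and the isogenies are surjective on $\bar{k}$-points (with $\bar{k}$ algebraically closed in characteristic zero). Your check that regularity of elements of $\bgg'(K)$ is insensitive to which of the three identified Lie algebras one works in is exactly the right point to flag, and it poses no difficulty.
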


Since, in characteristic zero, every maximal $k$-torus in $\bG$ or  $\bG_*$ arises as the centralizer of a regular semisimple element of $\bgg'(k)$, we have that a maximal $k$-torus $\bT$ in  $\bG$ corresponds to the maximal $k$-torus $\bT_{*}$ in $\bG_{*}$ provided that there exists a regular semisimple $X \in \bgg'(k)$ such that $\bT = C_{\bG}(X)$ and $\bT_{*} = C_{\bG_{*}}(X)$.  This correspondence is well defined since $C_{\bG}(C_{\bgg'}(X)) = C_{\bG}(X')$ and $C_{\bG_{*}}(C_{\bgg'}(X)) = C_{\bG_{*}}(X')$ for all regular semisimple $X'$ belonging to the  Cartan subalgebra $C_{\bgg'}(X)(K)$.  Thus, in characteristic zero Corollary~\ref{cor:conjugacy-derived} gives another proof of Corollary~\ref{cor:adwalk}.

\subsection{Points in the reduced building associated to \texorpdfstring{$K$-minisotropic maximal tori of $\bG$}{K-minisotropic maximal tori of G}}   \label{subsec:point}
Suppose $\bT$ is a $K$-minisotropic maximal torus in $\bG$.

Let $T = \bT(K)$.
We may associate to $\bT$ a point $x_T$ in the reduced building of $G$ as follows: if $E$ is a Galois extension of $K$ over which $\bT$ splits and $\AA'$ denotes the apartment of $\bT$ in $\BB^{\red}(\bG,E)$, then the set of $\Gal(E/K)$-fixed points of $\AA'$ in $\BB^{\red}(\bG,E)$ is a point.  If $E/K$ is not tame, it is possible that this point may not lie in $\BB^{\red}(G)$.   However, thanks to the non-positive curvature of buildings, there is a unique closest point, $x_T$, in $\BB^{\red}(G) = \BB^{\red}(\bG,K)$ to the $\Gal(E/K)$-fixed point of $\AA'$.  Moreover, thanks to uniqueness, the point $x_T \in  \BB^{\red}(G)$ is fixed by $T$.
Let $F$ denote the facet in the building of $G$ whose image under the projection to the reduced building contains $x_T$.

\begin{remark} \label{rem:indofisogeny} Since $\BB^{\red}(G)$ is independent of the isogeny type of $\bG'$, we have $\BB^{\red}(G) = \BB(G_{\scon}) = \BB(G_{\ad})$.    Thanks to Corollary~\ref{cor:adwalk},  the $G$-orbit of  $x_T$  is equal to the $G_{*}$-orbit of $ x_T$ where $*$ is either $\scon$ or $\ad$.  That is, the subset $G \cdot x_T$ in $\BB^{\red}(G)$ is independent of the isogeny type of  $\bG'$.
\end{remark}

\begin{lemma}   \label{lem:card}
The set  $G \cdot x_T \cap \bar{C}'$ has cardinality one.
\end{lemma}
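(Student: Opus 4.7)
The plan is to separately establish existence and uniqueness of the intersection $G\cdot x_T \cap \bar C'$.

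Existence follows from the transitivity of the Iwahori--Weyl group, and hence of $G$, on the set of alcoves of $\BB^{\red}(G)$: some $g\in G$ satisfies $g\cdot x_T\in \bar C'$.

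For uniqueness, suppose $y_1, y_2\in \bar C' \cap G\cdot x_T$, and write $y_i = g_i\cdot x_T$ with $\bT_i := g_i\bT g_i^{-1}$, so that $T_i$ fixes $y_i$.  First I would reduce uniqueness to showing that $y_1$ is fixed by the ``$\Omega$-group'' $\Omega$ of $G$ acting on $\bar C'$; this uses the standard fact that $\bar C'$ is a fundamental domain for the subgroup $G^\circ\leq G$ generated by the parahoric subgroups---equivalently, the kernel of the Kottwitz homomorphism $\kappa_G\colon G\to \Omega$---together with the decomposition of the Iwahori--Weyl group as $W_{\mathrm{aff}}\rtimes\Omega$.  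This implies that any two $G$-conjugate points of $\bar C'$ are $\Omega$-conjugate, so $y_2 = \omega\cdot y_1$ for some $\omega\in\Omega$.

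To show $\Omega$ fixes $y_1$, note that $T_1 \subseteq \mathrm{Stab}_G(y_1)$, and a fundamental-domain argument (writing any lift of an element through the decomposition $G = G^\circ \cdot N_G(I)$) gives $\kappa_G(\mathrm{Stab}_G(y_1)) \subseteq \mathrm{Stab}_\Omega(y_1)$.  Hence $\kappa_G(T_1)\subseteq \mathrm{Stab}_\Omega(y_1)$; since $\Omega$ is abelian, $\kappa_G(T_1)=\kappa_G(T)$.  The key claim $\kappa_G(T)=\Omega$ follows by factoring $\kappa_G|_T$ as
\[ T \twoheadrightarrow \X_*(\bT)_I \twoheadrightarrow \pi_1(\bG)_I = \Omega, \]
where both arrows are surjective: the first by the standard Kottwitz theory for tori over the strictly Henselian field $K$, and the second because $\X_*(\bT)\twoheadrightarrow \pi_1(\bG)$ is surjective and $I$-coinvariants preserve surjections.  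Therefore $\Omega$ fixes $y_1$ and $y_2 = y_1$.

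The chief nontrivial inputs are the Kottwitz-isomorphism identification $\Omega\cong\pi_1(\bG)_I$ and the fundamental-domain property of $\bar C'$ for $G^\circ$, taken from Haines--Rapoport and Kottwitz as cited in the introduction.  The main obstacle in the write-up will be ensuring the reduction to $\Omega$-fixity genuinely uses only these external inputs and not results proved later in the paper (such as the stabilizer formula $\mathrm{Stab}_G(x_T) = T\cdot G_{x_T,0}$, whose proof is meant to come after this lemma).
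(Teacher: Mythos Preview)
Your argument is correct, but the paper takes a much shorter route.  Rather than working inside $G$ and proving that $\Omega$ fixes $x_T$, the paper passes to the simply connected cover: by Remark~\ref{rem:indofisogeny} (which rests on Corollary~\ref{cor:adwalk}, the statement that $G$-conjugacy and $G_{\scon}$-conjugacy of maximal $K$-tori coincide), one has $G\cdot x_T = G_{\scon}\cdot x_T$ as subsets of $\BB^{\red}(G)=\BB(G_{\scon})$.  Since $G_{\scon}$ has trivial $\Omega$-group, $\bar C'$ is already a fundamental domain for $G_{\scon}$, and the lemma follows in one line.

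Your approach is not wasted effort, though.  The surjectivity $\kappa_G(T)=\Omega$ that you use is exactly the content of the paper's Lemma~\ref{lemma:KHR} (proved immediately after this lemma), and the $\Omega$-fixity of $x_T$ that you derive appears later in the paper as a corollary in Section~\ref{subsec:omega}.  So you have essentially anticipated those results and folded them into the proof of Lemma~\ref{lem:card}.  The paper's organization separates concerns: it first disposes of Lemma~\ref{lem:card} via the isogeny trick (which only needs the vanishing of $\cohom^1(K,\bT)$), and then develops the Kottwitz--Haines--Rapoport machinery for the stabilizer description.  Your route is more self-contained in that it avoids the comparison of conjugacy across isogenous groups, at the cost of front-loading the Kottwitz-homomorphism argument.
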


\begin{proof}
 Thanks to Corollary~\ref{cor:adwalk},  the $G$-orbit of  $x_T$  is equal to the $G_{\scon}$-orbit of $ x_T$.
 Since $\bar{C}'$ is a fundamental domain for the action of $G_{\scon}$ on $\BB(G_{\scon})$, the result follows.
\end{proof}

\subsection{Some consequences of results  of Haines and Rapoport and of Kottwitz}  \label{sec:someconsequences}

In~\cite[Section 7]{kottwitz:isocrystalII} Kottwitz shows that there is a functorial surjective homomorphism $\kappa_G \colon G \longrightarrow  \X^*(\hat{Z}(\bG)^I)$.    Here $I$ denotes $\Gal(\bar{k}/K)$, and $\hat{Z}(\bG)$ denotes the center of the dual group of $\bG$.   If $\pi_1(\bG)_I$  denotes the $I$-coinvariants of the fundamental group of $\bG$, then   $\X^*(\hat{Z}(\bG)^I) \cong \pi_1(\bG)_I$ and  we have a functorial surjective homomorphism $\kappa_G \colon G \longrightarrow  \pi_1(\bG)_I$  (see~\cite[Section 3.3]{kaletha:regular}).   In \cite[Proposition~3]{haines-rapoport:parahoric}  Haines and Rapoport show that for any facet $F'$ in $\BB(G)$ the restriction of $\kappa_G$ to $\Stab_G(F')$ has kernel $G_{F',0}$.

These results have many interesting consequences.  As an example, which can be vastly generalized as in~\cite[Corollary~3.3.1]{kaletha:regular}, we have $\eta[(G_{\scon})_{x,0}] \leq G_{x,0}$ where $\eta \colon \bG_{\scon} \rightarrow \bG$ denotes the composition $\bG_{\scon} \rightarrow \bG' \hookrightarrow \bG$ and $x$ is any point in $\BB(G_{\scon}) = \BB(G') = \BB^{\red}(G)$.  In this section, we record some additional consequences.

The following result plays a fundamental role throughout the remainder of this paper.

\begin{lemma}  \label{lemma:KHR}   Suppose $\bT$ is a $K$-minisotropic maximal torus in $\bG$.   Let $x_T$ denote the point in the reduced building of $G$ attached to $T$, and recall that $F$ denotes the facet in the building of $G$ whose image under the projection to the reduced building contains $x_T$.   We have
$$\Stab_{G} (F) = T G_{F,0}.$$
\end{lemma}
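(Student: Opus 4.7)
The plan is to prove the two inclusions $T G_{F,0}\subseteq\Stab_G(F)$ and $\Stab_G(F)\subseteq T G_{F,0}$ separately.  The $K$-minisotropic hypothesis enters in the first inclusion; the second is essentially formal once the Haines--Rapoport description of $G_{F,0}$ is in hand.

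For the first inclusion, since $G_{F,0}\subseteq\Stab_G(F)$ by definition, it suffices to show $T\subseteq\Stab_G(F)$.  Because $\bZ\subseteq\bT$, the image of $\bT$ in $\bG_{\ad}$ is $\bT/\bZ$, and the hypothesis $\X^*(\bT/\bZ)^{I}=0$ is precisely the condition that this image is $K$-anisotropic in $\bG_{\ad}$.  Hence the image of $T$ in $G_{\ad}$ is bounded, so fixes some point of $\BB(G_{\ad})=\BB^{\red}(G)$; by the construction of $x_T$ recalled in \S\ref{subsec:point}, this fixed point must coincide with $x_T$.  Now $\BB(G)$ is a $G$-equivariant trivial bundle over $\BB^{\red}(G)$ whose fiber is the real vector space attached to the maximal $K$-split central torus of $\bG$, and the facets of $\BB(G)$ are the preimages of the facets of $\BB^{\red}(G)$.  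Since $T$ acts simplicially on $\BB^{\red}(G)$ and fixes the point $x_T$, which lies in the relative interior of the unique facet $F'$ whose preimage is $F$, it preserves $F'$ and hence $F$.

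For the second inclusion, by the Haines--Rapoport result recalled in \S\ref{sec:someconsequences}, $G_{F,0}$ is the kernel of the restriction of Kottwitz's homomorphism $\kappa_G\colon G\sra\pi_1(\bG)_I$ to $\Stab_G(F)$.  Using $T\subseteq\Stab_G(F)$ from the first step, it therefore suffices to prove $\kappa_G|_T\colon T\to\pi_1(\bG)_I$ is surjective: given such surjectivity and $g\in\Stab_G(F)$, one may choose $t\in T$ with $\kappa_G(t)=\kappa_G(g)$, whence $gt\inv\in\Stab_G(F)\cap\ker\kappa_G=G_{F,0}$, so $g\in T G_{F,0}$.  By functoriality, $\kappa_G|_T$ is the composition $T\xra{\kappa_{\bT}}\pi_1(\bT)_I\to\pi_1(\bG)_I$, in which the first map is surjective by Kottwitz's theorem applied to the torus $\bT$, and the second is induced from the surjection $\pi_1(\bT)=\X_*(\bT)\twoheadrightarrow\X_*(\bT)/Q^{\vee}=\pi_1(\bG)$ (with $Q^{\vee}$ the coroot lattice), hence remains surjective after passing to $I$-coinvariants.

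The only non-formal step is the verification in the first inclusion that $T$ actually stabilizes $F$, rather than merely fixing a point in its image; this requires the explicit product structure on $\BB(G)$ over $\BB^{\red}(G)$ and the simplicial nature of the action.  Everything else is a formal manipulation with Kottwitz's functorial homomorphism.
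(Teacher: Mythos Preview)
Your proof is correct and takes essentially the same approach as the paper: both hinge on the surjectivity of $\pi_1(\bT)_I\to\pi_1(\bG)_I$ together with the Haines--Rapoport identification of $G_{F,0}$ with $\ker(\kappa_G)\cap\Stab_G(F)$. The only difference is that the paper simply cites \S\ref{subsec:point} for the fact that $T$ fixes $x_T$ (hence stabilizes $F$), whereas you re-derive it---and your phrase ``this fixed point must coincide with $x_T$'' is a slight slip, since boundedness alone gives \emph{some} fixed point rather than a unique one; what you actually need is just that $x_T$ itself is $T$-fixed, which is exactly what the uniqueness argument in \S\ref{subsec:point} supplies.
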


\begin{rem}   Recall that the building of $T$ does not always embed into that of $G$.
In the proof below we
use that $F$ is $T$-stable.
\end{rem}

\begin{proof}   
For any maximal $K$-torus $\bS$ in $\bG$, the map $\pi_1(\bS)_I \stackrel{\iota}{\rightarrow} \pi_1(\bG)_I$ is surjective.
Consequently, we have that  for all $g \in \Stab_{G} (F)$ there is a $t \in T$ such that $\kappa_G(g) = \iota ( \kappa_T(t))$. We then have $t\inv g \in \ker(\kappa_G)$.  Thus, $t\inv g \in  \Stab_G(F)  \cap \ker(\kappa_G)= G_{F,0}$.
\end{proof}

\begin{rem}   \label{rem:KHR}
This result can be restated as:
$$\Stab_{G} (x_T) = T G_{x_T,0}.$$
\end{rem}

\begin{corollary}  \label{cor:2.3.4} The map $T \rightarrow \Stab_G(F)/G_{F,0}$ is surjective with kernel $\eta[T_{\scon}] T_0$. Here $\bT_{\scon}$ is the torus in $\bG_{\scon}$ corresponding to $\bT$
\end{corollary}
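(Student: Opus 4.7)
The plan is to combine Lemma~\ref{lemma:KHR} with the Kottwitz-map formalism recalled in Section~\ref{sec:someconsequences}.  Lemma~\ref{lemma:KHR} already gives surjectivity, since $\Stab_G(F)=T\,G_{F,0}$, and identifies the kernel of the map with $T\cap G_{F,0}$.  The Haines-Rapoport description $G_{F,0}=\ker(\kappa_G)\cap\Stab_G(F)$ then yields $T\cap G_{F,0}=\ker(\kappa_G|_T)$, so everything reduces to establishing the identity
$$\ker(\kappa_G|_T)=\eta[T_{\scon}]\,T_0.$$

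The inclusion ``$\supseteq$'' is a formal consequence of functoriality.  Since $\kappa_G|_T$ factors through $\kappa_T$, whose kernel is $T_0$, we have $T_0\subseteq\ker(\kappa_G|_T)$.  Since $\bG_{\scon}$ is simply connected, $\pi_1(\bG_{\scon})=0$ and hence $\kappa_{G_{\scon}}=0$; functoriality of $\kappa$ applied to $\eta\colon\bG_{\scon}\to\bG$ therefore gives $\kappa_G\circ\eta=0$, so $\eta[T_{\scon}]\subseteq\ker(\kappa_G|_T)$.

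For the reverse inclusion I pass to the quotient modulo $T_0$ and use the standard presentation $\pi_1(\bH)=X_*(\bS)/\langle\text{coroots}\rangle$ (for a maximal torus $\bS$ of a connected reductive group $\bH$).  Since $\bT_{\scon}$ maps into $\bT$ with $X_*(\bT_{\scon})$ identified with the coroot lattice inside $X_*(\bT)$, one obtains a short exact sequence of $I$-modules
$$0\longrightarrow\pi_1(\bT_{\scon})\longrightarrow\pi_1(\bT)\longrightarrow\pi_1(\bG)\longrightarrow 0.$$
Taking $I$-coinvariants is right exact, so the induced sequence $\pi_1(\bT_{\scon})_I\to\pi_1(\bT)_I\to\pi_1(\bG)_I\to 0$ is exact at the middle term.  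Via the isomorphism $T/T_0\xrightarrow{\kappa_T}\pi_1(\bT)_I$, together with the surjectivity of $\kappa_{T_{\scon}}$ (which ensures that the image of $\pi_1(\bT_{\scon})_I$ in $\pi_1(\bT)_I$ agrees with the image of $\eta[T_{\scon}]$), this middle exactness translates back into the desired equality.

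The only mild subtlety worth flagging is that the functor $(-)_I$ is not in general left exact; however, only middle (i.e.\ right) exactness is needed here, and that is automatic, so no further input is required.
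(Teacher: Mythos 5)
Your proof is correct and follows essentially the same route as the paper: surjectivity from Lemma~\ref{lemma:KHR}, and the kernel computation via the short exact sequence $0 \to \X_*(\bT_{\scon}) \to \X_*(\bT) \to \pi_1(\bG) \to 0$, right exactness of $I$-coinvariants, and the Kottwitz/Haines--Rapoport identifications. Your explicit reduction to the identity $\ker(\kappa_G|_T)=\eta[T_{\scon}]\,T_0$ just spells out what the paper leaves implicit in ``the result follows.''
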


\begin{proof}
The surjectivity of the map follows from Lemma~\ref{lemma:KHR}.
Since taking coinvariants is right exact, the exact sequence
\[1 \longrightarrow \X_*(\bT_{\scon})  \stackrel{\eta_*}{\longrightarrow} \X_*(\bT) \longrightarrow \pi_1(\bG) \longrightarrow 1\]
yields
\[\X_*(\bT_{\scon})_I  {\longrightarrow} \X_*(\bT)_I \longrightarrow \pi_1(\bG)_I \longrightarrow 1.\]
Using the results of Kottwitz and Haines and Rapoport discussed above and the fact that $\pi_1(\bS) = \X_*(\bS)$ for any $K$-torus $\bS$, we conclude that we have an exact sequence
\[T_{sc}/(T_{sc})_0 \stackrel{\bar{\eta}}{\longrightarrow} T/T_0 \longrightarrow \Stab_G(F)/G_{F,0} \longrightarrow 1.\]
The result follows.
\end{proof}

\begin{lemma}  \label{lem:2.3.4}
Suppose $g \in G'$ and $y \in \BB(G')$ is special.  If $g  \in \Stab_{G'}(y)$, then $g \in G'_{y,0}$.
\end{lemma}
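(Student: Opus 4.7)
The plan is to combine the Haines--Rapoport description of parahorics (as kernels of the Kottwitz map on facet stabilizers) with a direct analysis of the extended affine Weyl group at a special vertex.

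First I would invoke \cite[Proposition~3]{haines-rapoport:parahoric} applied to $\bG'$: the restriction of $\kappa_{G'}\colon G'\to \pi_1(\bG')_I$ to $\Stab_{G'}(y)$ has kernel exactly $G'_{y,0}$. Hence it suffices to show $\kappa_{G'}(g) = 0$, or equivalently that the finite subquotient $\Omega_y := \Stab_{G'}(y)/G'_{y,0}$ of $\pi_1(\bG')_I$ is trivial.

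Next I would choose a maximal $K$-split torus $\bS'\subset \bG'$ whose apartment in $\BB(G')$ contains $y$, and set $\bT' := C_{\bG'}(\bS')$; since $K$ is strictly Henselian, $\bG'$ is $K$-quasi-split and $\bT'$ is a maximal $K$-torus. The extended affine Weyl group $\tilde{W} := N_{G'}(\bS')/\bT'(K)_0$ sits in a short exact sequence $1\to W_{\mathrm{aff}}\to \tilde{W}\to \pi_1(\bG')_I\to 1$, where $W_{\mathrm{aff}}$ is the reflection-generated affine Weyl group. Using that the parahoric $G'_{y,0}$ acts transitively on apartments through $y$, one obtains the decomposition $\Stab_{G'}(y) = G'_{y,0}\cdot\Stab_{N_{G'}(\bS')}(y)$. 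Passing to quotients, and using that $N_{G'}(\bS')\cap G'_{y,0}$ maps into $W_{\mathrm{aff}}$ (as parahoric elements lie in $\ker\kappa_{G'}$), identifies $\Omega_y$ with $\Stab_{\tilde{W}}(y)/W_{y,\mathrm{aff}}$, where $W_{y,\mathrm{aff}} := \Stab_{W_{\mathrm{aff}}}(y)$, and this matches the embedding $\Omega_y\hookrightarrow \pi_1(\bG')_I = \tilde{W}/W_{\mathrm{aff}}$ coming from $\kappa_{G'}$.

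Finally I would use that $y$ is special. Place the origin at $y$ so that $\tilde{W} = W_0\ltimes L$, with $W_0 := N_{G'}(\bS')/T'$ the relative Weyl group acting linearly through $y$ and $L$ the lattice of translations; likewise $W_{\mathrm{aff}} = W_0\ltimes Q^{\vee}$ for the relative coroot lattice $Q^{\vee}\subset L$. An element $(w,\lambda)\in\tilde{W}$ fixes $y = 0$ precisely when $\lambda = 0$, so $\Stab_{\tilde{W}}(y)$ equals the linear copy of $W_0$. Because $y$ is special, every reflection in $W_0$ is realized by an affine reflection through a hyperplane through $y$, so $W_0\subset W_{\mathrm{aff}}$ and in fact $W_0 = W_{y,\mathrm{aff}}$. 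Thus $\Omega_y = W_0/W_0 = 0$, and $g$ lies in $\Stab_{G'}(y)\cap \ker(\kappa_{G'}) = G'_{y,0}$.

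The main obstacle is the bookkeeping in the middle step: matching the group-theoretic quotient $\Stab_{G'}(y)/G'_{y,0}$ against the affine-Weyl-group quotient $\Stab_{\tilde{W}}(y)/W_{y,\mathrm{aff}}$ in a way compatible with the Kottwitz-theoretic embedding into $\pi_1(\bG')_I$. This requires invoking the Bruhat--Tits decomposition of the stabilizer, the transitivity of $G'_{y,0}$ on apartments through $y$, and the functoriality of $\kappa$ on the torus $\bT'$. Once that identification is in place, the geometric collapse at a special vertex is essentially a one-line calculation.
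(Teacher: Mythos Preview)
Your strategy is sound and close to the paper's, but there is a genuine gap in the final step. You write $\tilde W = W_0\ltimes L$ with ``$L$ the lattice of translations'' and conclude that $(w,\lambda)$ fixes $y=0$ exactly when $\lambda=0$. In fact $L = T'/T'_0 \cong \X_*(\bT')_I$, and for a general semisimple $\bG'$ (neither simply connected nor adjoint) this group can have torsion. Torsion elements act trivially on the apartment and therefore stabilize $y$, so $\Stab_{\tilde W}(y) = W_0 \ltimes L_{\mathrm{tors}}$ rather than $W_0$. To finish you must show that $L_{\mathrm{tors}}$ maps to zero in $\pi_1(\bG')_I$, i.e.\ lies in the image of $\X_*(\bT'_{\scon})_I$; you do not address this, and it is not automatic.

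The paper's proof is essentially your argument carried out where this obstruction vanishes: it first passes to $\bG_{\ad}$, whose maximally $K$-split maximal torus $\Ebtorus_{\ad}$ is induced (the simple roots form a Galois-permutation basis of $\X^*(\Ebtorus_{\ad})$), so $\X_*(\Ebtorus_{\ad})_I$ is torsion-free and the bounded part of $\EbtorusKrat_{\ad}$ coincides with its parahoric (this is the content of the cited \cite[4.4.3, 4.4.16]{bruhat-tits:II}). Your Iwahori--Weyl computation then goes through verbatim for $G_{\ad}$, giving $\Stab_{G_{\ad}}(y)=(G_{\ad})_{y,0}$. The descent to $G'$ uses that $\X^*(\hat Z(\bG')^I)\to \X^*(\hat Z(\bG_{\ad})^I)$ is injective, so $\kappa_{G_{\ad}}(\bar g)=0$ forces $\kappa_{G'}(g)=0$. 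In other words, the adjoint detour is exactly the missing ingredient that disposes of the torsion you overlooked; once you insert it, the two proofs coincide.
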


\begin{proof}
Let $\bar{g}$ denote the image of $g$ in $G_{\ad}$ and let $\Ebtorus_{\ad}$ denote the maximal $k$-torus in $\bG_{\ad}$ corresponding to $\Ebtorus$.  We first show that $\bar{g} \in (G_{\ad})_{y,0}$.     Since $\bar{g} \in \Stab_{G_{\ad}}(y)$, it is enough to show that $\Stab_{G_{\ad}}(y) \subset (G_{\ad})_{y,0}$.   Suppose $h \in \Stab_{G_{\ad}}(y)$.  Since $(G_{\ad})_{y,0}$ acts transitively on the apartments that contain $y$,  there exists $k \in (G_{\ad})_{y,0}$ such that $\lsup{kh} \Ebtorus_{\ad} = \Ebtorus_{\ad}$; that is, $kh  \in N_{G_{\ad}}(\Ebtorus_{\ad})$.  Since $y$ is special, there exists $j \in (G_{\ad})_{y,0}$ that has the same image in $\absW$ as $kh$.  Thus $j\inv k h \in \EbtorusKrat_{\ad} \cap \Stab_{G_{\ad}}(y)$.   However, from~\cite[Propositions~4.4.3 and 4.4.16]{bruhat-tits:II} we have $\EbtorusKrat_{\ad} \cap \Stab_{G_{\ad}}(y) = (\EbtorusKrat_{\ad})_0 \leq (G_{\ad})_{y,0}$.

Since $(G_{\ad})_{y,0}$ is the kernel of  the restriction to ${\Stab_{y}(G_{\ad})}$ of $\kappa_{G_{\ad}}$, $G'_{y,0}$ is the kernel of the restriction to $\Stab_{y}(G')$ of $\kappa_{G'}$, and
the map from $ \X^*(\hat{Z}(\bG')^I)$   to $ \X^*(\hat{Z}(\bG_{\ad})^I)$
  is injective, by functoriality we conclude that since $\bar{g} \in (G_{\ad})_{y,0}$ we must have  $g \in G'_{y,0}$.
\end{proof}

\begin{remark}  \label{rem:neededlater}
Suppose $\bH$ and $\bL$ are connected semisimple $k$-groups and $\rho \colon \bH \rightarrow \bL$ is an isogeny.   Suppose $x \in \BB(H) = \BB(L)$.
We have that  $\rho$ carries $\Fix_H(x)$ into $\Fix_L(x)$ and $(\res_{H}\rho)\inv[\Fix_L(x)]$, the preimage of $\Fix_L(x)$ under $\res_H\rho$, is a subgroup of $\Fix_H(x)$. Moreover, since the map from $\X^*(\hat{Z}(\bH)^I)$   to $\X^*(\hat{Z}(\bL)^I)$ is injective, using the results of Kottwitz and Haines-Rapoport, we conclude
that $(\res_H\rho)\inv[L_{x,0}]$ is $H_{x,0}$.    That is, for all $h \in \Stab_H(x)$ we have $h \in H_{x,0}$ if and only if $\rho(h) \in L_{x,0}$.
\end{remark}

\section{Points in the reduced building attached to tame  maximal \texorpdfstring{$K$-tori of $\bG$}{K-tori of G}}  \label{sec:tamepoints}

As in Section~\ref{sec:pointswild}, we loosen our restrictions on $k$:  it can be  any complete field with nontrivial discrete valuation $\nu$ and perfect residue field $\ff$ such that either (a) $\ff$ is finite or (b) $k$ is strictly Henselian and quasi-finite.  Note that in the latter case we have that $k = K$.  In either case, we have that $\Gal(\tame/K)$ has a topological generator $\sigma$, $K$ is strictly Henselian,  and  $K$ has cohomological dimension $\leq 1$ (see~\cite[II.3.3.c)]{serre:galois} for case (a) and~\cite[XIII, Props. 3 and 5]{serre:local} for case (b)).  If $\ff$ has characteristic zero, then we set $p = 0$.

A $K$-torus $\bT$ of $\bG$ will be called $\emph{tame}$ provided that $\bT$ is $\tame$-split.  A main result of this section is the explicit identification, up to $G$-conjugacy, of the point $x_T$ attached to a  tame $K$-minisotropic maximal torus of $\bG$.

\subsection{Tame elements in Weyl groups}
We let $x_0 \in \bar{C}'$ denote the image of $F_0$ in $\BB^{\red}(G)$.   (The facet $F_0 \subset \bar{C}$ was introduced in Section~\ref{sec:BTnotation}.)

If $\bG$ contains a tame maximal torus, then from Lemma~\ref{lem:tamealltheway} we know that $\Ebtorus$ splits over a tame extension.   Thus, in this section we assume that $\Esplits$, the splitting field of $\Ebtorus$ over $K$, is a tame extension of $K$.  Under this assumption there exists $\ell' \in \Z_{\geq 1}$ such that $\Esplits = {\tame}^{\sigma^{\ell'}}$, $\Esplits^{\sigma} = K$, and $\EGal$ is the cyclic group $\Gal(\tame/K) / \Gal(\tame / \Esplits)$.  We let $\barsigma$ denote the image of $\sigma$ in this quotient.   Denote by $\Aut(\Esimple)$ the automorphism group of the Dynkin diagram associated to $\Esimple$ and let $\tau_{\barsigma}$ denote the image of $\barsigma$ in $\Aut(\Esimple)$.

\begin{defn}
An element $w \in \absW$ is called \emph{tame} provided that  $p$ does not divide the order of $(w,\tau_{\barsigma})$ in $\absW \rtimes \Aut(\Esimple)$.
An element $n \in \titsW$ will be called \emph{tame} provided that  its image, under the projection map $\titsW \rightarrow \absW$, is tame.
\end{defn}

\begin{rem}
Since the order of $\Aut(\Esimple)$ always divides the order of $\absW$, we conclude from Cauchy's theorem that every element of $\absW$ is tame if and only if $p$ does not divide the order of $\absW$.
\end{rem}

\begin{lemma}
 Suppose $n \in \titsW$.  We have $n$ is tame if and only if $p$ does not divide the order of $n \barsigma$ in $\titsW \rtimes \EGal$.
\end{lemma}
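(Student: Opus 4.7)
My plan is to show that the ratio of the two orders is coprime to $p$, from which the claimed equivalence is immediate. Write $w\in\absW$ for the image of $n$, and set $m_1:=|(w,\tau_\barsigma)|$ in $\absW\rtimes\Aut(\Esimple)$ and $m_2:=|n\barsigma|$ in $\titsW\rtimes\EGal$. I factor the natural surjection through the intermediate quotient
\[
\titsW\rtimes\EGal\xrightarrow{\ \pi_1\ }\absW\rtimes\EGal\xrightarrow{\ \pi_2\ }\absW\rtimes\Aut(\Esimple),
\]
where $\pi_1$ is induced by the Tits projection $\titsW\twoheadrightarrow\absW$ (kernel $\tau$) and $\pi_2$ is induced by $\barsigma\mapsto\tau_\barsigma$. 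Writing $m_1':=|(w,\barsigma)|$ for the order at the intermediate stage, one has $m_1\mid m_1'\mid m_2$, so it suffices to show that each of $m_1'/m_1$ and $m_2/m_1'$ is coprime to $p$.

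For $m_1'/m_1$: this divides $|\ker\pi_2|$, itself a subgroup of $\EGal$. Since $\Esplits/K$ is tame by the standing hypothesis of this section, $|\EGal|$ is coprime to $p$, and hence so is $m_1'/m_1$.

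For $m_2/m_1'$: expanding in the semidirect product gives
\[
(n\barsigma)^{m_1'} \;=\; n\cdot\barsigma(n)\cdot\barsigma^2(n)\cdots\barsigma^{m_1'-1}(n)\cdot\barsigma^{m_1'}.
\]
From $(w,\barsigma)^{m_1'}=1$ one reads off both $\barsigma^{m_1'}=1$ in $\EGal$ and the triviality of the image of the leading product in $\absW$; hence $(n\barsigma)^{m_1'}\in\ker\pi_1=\tau$. Since $\tau$ has exponent dividing $2$, the order of $(n\barsigma)^{m_1'}$ is $1$ or $2$, which gives $m_2/m_1'\in\{1,2\}$. When $p$ is odd this is automatically coprime to $p$, completing the argument.

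The main obstacle I anticipate is handling the case $p=2$ in the second ratio, where a factor of $2$ in $m_2/m_1'$ would contribute a factor of $p$. To rule this out one must argue that $(n\barsigma)^{m_1'}$ is in fact trivial, not merely in $\tau$. The natural tool is the collapse of the generators $\check{a}(-1)$ of $\tau$ in residue characteristic $2$; the subtle point, which I expect the author to address using the compatibility of the chosen pinning with $F_0$ and the containment $\titsW\leq\Egroup_{F_0,0}$ from Section~\ref{sec:realization}, is to convert this collapse into the statement that the $p$-part of $m_2$ and $m_1$ agree even in mixed characteristic $2$.
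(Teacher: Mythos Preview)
Your approach matches the paper's: both reduce to the kernel of $\titsW \rtimes \EGal \twoheadrightarrow \absW \rtimes \Aut(\Esimple)$, which the paper identifies as $\tau \rtimes \langle\barsigma^j\rangle$ with $\tau$ elementary abelian of exponent $2$ and $\langle\barsigma^j\rangle\leq\EGal$ of order coprime to $p$. Your intermediate stop at $\absW \rtimes \EGal$ is a cosmetic difference. For odd $p$ both arguments are complete and correct.

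You are right that $p=2$ in mixed characteristic is a genuine problem, and your proposed fix does not work: when $\operatorname{char}K=0$ one has $-1\ne 1$ in $K$, so the generators $\check a(-1)$ of $\tau$ do not collapse. The paper handles this case only with the clause ``two divides the order of every nontrivial Weyl group,'' but that concerns $|\absW|$, not the order of the particular element $(w,\tau_{\barsigma})$, and it does not close the gap. In fact the lemma as stated fails here: for $\bG=\SL_2$ over $\Q_2$ (so $\EGal$ is trivial and $\barsigma=1$) take $n=\check\alpha(-1)\in\tau\subset\titsW$; its image $w=1$ gives $(w,\tau_{\barsigma})$ of order $1$, so $n$ is tame, yet $|n\barsigma|=|n|=2=p$. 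The subsequent applications of the lemma only require that \emph{some} lift $n$ of a given tame $w$ satisfies $p\nmid|n\barsigma|$, which is weaker than what is asserted here.
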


\begin{proof}
Recall that $\Esplits$ is a tame extension of $K$.   The kernel of the projection map from $\titsW \rtimes \EGal$ to $\absW \rtimes \Aut(\Esimple)$ is of the form $\tau \rtimes \langle \barsigma^j \rangle$ where $\tau \subset \EbtorusErat$ is an elementary abelian two group and $j$ divides the order of $\EGal$.
Since $p$ does not divide the order of $\EGal$ and two divides the order of every nontrivial Weyl group, the result follows.
\end{proof}

\begin{defn}   \label{defn:nsplits}
Suppose  $n \in \titsW$ is tame, and let $\ell$ denote the order of $n \barsigma$ in $\titsW \rtimes \EGal$.
Since $p$ does not divide $\ell$, we can form the tame degree $\ell$ extension  $\nsplits = {\tame}^{\sigma^\ell}$  of $K$; note that $\Esplits \leq \nsplits$.
\end{defn}

\subsection{On the \texorpdfstring{$\sigma$-conjugacy classes of tame elements of $\titsW$}{sigma-conjugacy classes of tame elements of W}}

The results of this section will feel familiar to those who have studied conjugacy classes in disconnected groups.  I was influenced by the presentation in~\cite{mohrdieck:conjugacy}.

Let
$\EbtorusErat_0$ denote the parahoric subgroup of $\EbtorusErat$ and $\EbtorusErat_{0^+}$ its pro-unipotent radical. Similarly,  let $\EbtorusKrat_0$ denote the parahoric subgroup of $\EbtorusKrat$ and $\EbtorusKrat_{0^+}$ its pro-unipotent radical.   Since $\Ebtorus$ is $\Esplits$-split, we have $\EbtorusErat_0 = \EbtorusErat \cap \Egroup_{x_0,0}$.

\begin{lemma}  \label{lem:ABsimplyconn}
If $n \in \titsW$ is tame, then there exists a $\Esplits$-Borel-torus pair $(\bA',\bB')$  such that \begin{itemize}
    \item  $n\sigma (\bA',\bB') = (\bA',\bB')$  and  \item $x_0 \in \AA'(\bA',\Esplits)$.
    \end{itemize}
\end{lemma}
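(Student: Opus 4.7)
The plan is to reduce the problem to finding a Borel-torus pair in the reductive quotient $\Equotient_{x_0}$ that is stable under the automorphism induced by $\theta := n\sigma$, and then to lift this pair canonically to an $\Esplits$-Borel-torus pair of $\bG$. The tameness hypothesis on $n$ enters precisely when we invoke the fixed-point theorem on the quotient.

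First, I would check that $\theta$ fixes the vertex $x_0$ and hence acts on the parahoric there. The facet $F_0$ is special, hence zero-dimensional, and $n \in \titsW \leq \Egroup_{F_0,0}$ fixes it; the element $\sigma$ also fixes $F_0$, by the $\EGal$-stability of the chosen pinning and its compatibility with $F_0$. Therefore $\theta$ preserves both $\Egroup_{x_0,0}$ and $\Egroup_{x_0,0^+}$, and descends to an automorphism $\bar\theta$ of the connected reductive $\ffc$-group $\Equotient_{x_0}$. The order of $\bar\theta$ divides $\ell$, the order of $n\barsigma$ in $\titsW \rtimes \EGal$, and so is coprime to $p$.

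Next, I would invoke Steinberg's fixed-point theorem: any finite-order automorphism of a connected reductive group over an algebraically closed field, of order coprime to the characteristic, stabilizes some Borel-torus pair. Applied to $\bar\theta$ this produces a $\bar\theta$-stable Borel-torus pair in $\Equotient_{x_0}$. Because $x_0$ is absolutely special, parahoric reduction at $x_0$ provides canonical bijections between the maximal $\ffc$-tori of $\Equotient_{x_0}$ (respectively, Borels of $\Equotient_{x_0}$ containing a specified maximal torus) and the maximal $\Esplits$-split tori of $\bG$ whose apartment contains $x_0$ (respectively, $\Esplits$-Borels of $\bG$ containing the lifted torus). Lifting the $\bar\theta$-stable pair yields $(\bA',\bB')$ with $x_0 \in \AA'(\bA',\Esplits)$; the pair is $\Esplits$-defined, since it is an $\Egroup_{x_0,0}$-conjugate of $(\Ebtorus,\bB)$ and $\Egroup_{x_0,0} \leq \bG(\Esplits)$. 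The uniqueness of the lift, together with the observation that $\theta(\bA',\bB')$ is another lift of the $\bar\theta$-stable pair, then forces $n\sigma(\bA',\bB') = (\bA',\bB')$.

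I expect the main difficulty to be the invocation of Steinberg's theorem in the second step: this is precisely where the tameness of $n$ is essential, for otherwise $\bar\theta$ need not stabilize any Borel-torus pair. The remaining steps are essentially bookkeeping, with the first step an observation about how $\theta$ interacts with the parahoric at $x_0$, and the final lifting being standard Bruhat-Tits theory at the absolutely special vertex $x_0$.
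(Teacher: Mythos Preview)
Your approach has a genuine gap at the lifting step. The claimed canonical bijection between maximal $\ffc$-tori of $\Equotient_{x_0}$ and maximal $\Esplits$-split tori of $\bG$ whose apartment contains $x_0$ is false. For a counterexample take $\bG = \SL_2$ split over $K = \Esplits$, let $x_0$ be the standard hyperspecial vertex, and set $g = \left(\begin{smallmatrix} 1 & \varpi \\ 0 & 1 \end{smallmatrix}\right) \in \Egroup_{x_0,0^+}$: then $\lsup{g}\Ebtorus \neq \Ebtorus$, both tori have $x_0$ in their apartment, and both reduce to the diagonal torus $\bshA$ in $\Equotient_{x_0}$ since $\bar{g} = 1$. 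The reduction map is surjective but its fibers are nontrivial orbits under $\Egroup_{x_0,0^+}$, so the ``uniqueness of the lift'' on which your final step relies does not hold.

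Your strategy can be repaired, but not for free: having chosen any lift $(\bA'_0, \bB'_0)$ of the $\bar\theta$-stable pair, one finds $\theta(\bA'_0, \bB'_0) = \lsup{u}(\bA'_0, \bB'_0)$ for some $u \in \Egroup_{x_0,0^+}$, and correcting this requires solving a twisted equation in the pro-$p$ group $\Egroup_{x_0,0^+}$ for the prime-to-$p$ automorphism $\theta$ --- a successive-approximation or cohomology-vanishing argument you have not supplied. The paper sidesteps this by a different device: it packages $n$ into a $1$-cocycle $\tau \in Z^1(\Gal(\tame/K), \bG(\tame))$, uses that $K$ has cohomological dimension $\leq 1$ to conclude the twisted form $\tautwist$ is $K$-isomorphic to $\bG$ and hence $K$-quasi-split, and then directly chooses a maximally $K$-split Borel-torus pair of $\tautwist$ whose apartment contains $x_0$ and whose torus contains the topologically semisimple element $\tilde n = N_{\ell'}^\sigma(n)$. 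The last condition forces the pair to be $\Esplits$-defined, and unwinding the twist gives the $n\sigma$-stable pair with no approximation needed.
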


\begin{remark}
Since the automorphism $n \sigma$ fixes a Borel-torus pair, it is an example of a quasi-semisimple (or quass) automorphism.
\end{remark}

\begin{proof}
 Recall that $\Esplits$ is a tame extension of $K$.

Choose tame $n \in \titsW$.  Let $\ell$ denote the order of $n \barsigma$ in $\titsW \rtimes \EGal$.
Define the continuous 1-cocycle $\tau \in Z^1(\Gal(\tame/K),\bG(\tame))$ by $$\tau(\sigma^j) = N_j^{\sigma}(n) := n \sigma(n) \sigma^2(n) \cdots \sigma^{j-2}(n)  \sigma^{j-1}(n).$$
Since the surjection $\Gal(\bar{k}/K) \rightarrow \Gal(\tame/K)$ yields $\cohom^1(\Gal(\bar{k}/K), \bG) \cong \cohom^1(\Gal(\tame/K),\bG(\tame))$ (see~\cite[I.5.8]{serre:galois} and use the fact that $\tame$ has cohomological dimension $\leq 1$), the 1-cocycle $\tau$ defines a twisted $K$-group, which we denote by $\tautwist$.  In fact,  since $K$ has cohomological dimension $\leq 1$, the groups $\tautwist$ and $\bG$ are isomorphic as $K$-groups.

Since $(\Ad(n) \circ \sigma)^\ell = \sigma^\ell$, we have $\tautwist(\nsplits) = \bG(\nsplits)$,  $\tautwist(K) = \bG(\nsplits)^{n \sigma}$, and $\tautwist(\Esplits) = \bG(\nsplits)^{\tilde{n} \sigma^{\ell'}}$ where $\tilde{n} = \tau(\sigma^{\ell'})$.    (Recall that $\Esplits = \tame^{\sigma^{\ell'}}$.)  Since ${n} \in \Egroup_{x_0,0}$, we have $\tilde{n} \in \Egroup_{x_0,0}$. In fact, since $\lsup{n\sigma}\tilde{n} = \tilde{n}$, we conclude that $\tilde{n} \in \Stab_{\tautwist(K)}(x_0)$.   From Remark~\ref{rem:1.3.1} we have that $n$, and hence $\tilde{n}$, belongs to $\bG'(\nsplits) = \tautwist'(\nsplits)$ and hence $\tilde{n} \in \tautwist'(K)$. From Lemma~\ref{lem:2.3.4} we conclude that $\tilde{n} \in \tautwist'(K)_{x_0,0} \leq \tautwist(K)_{x_0,0}$.

Since $x_0 \in \BB^{\red}(\bG,\nsplits) = \BB^{\red}(\tautwist,\nsplits)$ is fixed by both $\sigma$ and $n$, it is fixed by $n \sigma$.   Since $\tautwist$ is $K$-quasi-split, we can choose a maximally $K$-split $K$-torus $\tautwistA'$ in $\tautwist$ and a Borel $K$-subgroup $\tautwistB'$ of $\tautwist$ containing $\tautwistA'$ such that $x_0 \in \AA'(\tautwistA',K) = \AA'(\tautwistA',\nsplits)^{n\sigma}$ and the topologically semisimple element $\tilde{n} \in  \tautwist(K)_{x_0,0}$ belongs to $\tautwistA'(K)$.

Since $n \sigma \tautwistA' = \tautwistA'$ and $\tilde{n} \in \tautwistA'(K)$, we have $\sigma^{\ell'} \tautwistA' = \tilde{n}\inv (n\sigma)^{\ell'} \tautwistA' = \tautwistA'$.  Similarly, since $n \sigma \tautwistB' = \tautwistB'$ and $\tilde{n} \in \tautwistA'(K) \leq \tautwistB'(K)$, we have $\sigma^{\ell'} \tautwistB' = \tilde{n}\inv (n\sigma)^{\ell'} \tautwistB' = \tautwistB'$.

Thus, there is a $\Esplits$-Borel-torus pair  $(\bA', \bB')$ in $\bG$ such that $\bA'(\Esplits) = \tautwistA'(\Esplits)$ and  $\bB'(\Esplits) = \tautwistB'(\Esplits)$.   Note that   $n\sigma (\bA',\bB') = (\bA',\bB')$  and   $x_0 \in \AA'(\tautwistA',K) \subset \AA'(\tautwistA',\Esplits) =   \AA'(\bA',\Esplits)$.
 \end{proof}

\begin{cor}  \label{cor:Asharpimproved}
If $n \in \titsW$ is tame, then
$n$ is $\sigma$-conjugate by an element of $\Egroup_{x_0,0}$ to an element of $\EbtorusErat_{0}$.
\end{cor}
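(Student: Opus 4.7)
The plan is to combine Lemma~\ref{lem:ABsimplyconn} with the transitivity of the $\Egroup_{x_0,0}$-action on $\Esplits$-Borel-torus pairs of $\bG$ whose $\Esplits$-apartment contains $x_0$, and then transport the pair produced by that lemma to the standard pair $(\Ebtorus,\bB)$.

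First, apply Lemma~\ref{lem:ABsimplyconn} to obtain an $\Esplits$-Borel-torus pair $(\bA',\bB')$ in $\bG$ with $n\sigma(\bA',\bB') = (\bA',\bB')$ and $x_0 \in \AA'(\bA',\Esplits)$. The standard pair $(\Ebtorus,\bB)$ likewise has its $\Esplits$-apartment containing $x_0$. I would then produce $g \in \Egroup_{x_0,0}$ with $\lsup{g}(\Ebtorus,\bB) = (\bA',\bB')$ as follows: the images of the two pairs in the connected reductive $\ffc$-group $\Equotient_{x_0}$ are two $\ffc$-Borel-torus pairs, hence $\Equotient_{x_0}(\ffc)$-conjugate. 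Lifting via the surjection $\Egroup_{x_0,0} \twoheadrightarrow \Equotient_{x_0}(\ffc)$ realizes the conjugacy modulo the pro-unipotent radical $\Egroup_{x_0,0^+}$; a further adjustment inside $\Egroup_{x_0,0^+}$ (which acts transitively on the lifts of a fixed Borel-torus pair from the quotient) produces the desired $g$.

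Set $h = g\inv n \sigma(g)$. By Remark~\ref{rem:1.3.1} we have $n \in \titsW \leq \Egroup_{F_0,0} \subseteq \Egroup_{x_0,0}$, and since $\sigma$ fixes $x_0$ we also have $\sigma(g) \in \Egroup_{x_0,0}$; hence $h \in \Egroup_{x_0,0}$. Since $\Ebtorus$ is defined over $k$ and $\bB$ over $K$, we have $\sigma(\Ebtorus,\bB) = (\Ebtorus,\bB)$. Combining this with $\lsup{g}(\Ebtorus,\bB) = (\bA',\bB')$ and $n\sigma(\bA',\bB') = (\bA',\bB')$ yields the chain
\[
\lsup{h}(\Ebtorus,\bB) = \lsup{g\inv n\sigma(g)}\sigma(\Ebtorus,\bB) = \lsup{g\inv}\bigl(n\sigma(\bA',\bB')\bigr) = \lsup{g\inv}(\bA',\bB') = (\Ebtorus,\bB),
\]
so $h \in N_{\Egroup}(\Ebtorus,\bB) = \EbtorusErat$. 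Therefore $h \in \EbtorusErat \cap \Egroup_{x_0,0} = \EbtorusErat_0$, as required.

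The principal technical obstacle is the second step: producing $g$ inside the parahoric $\Egroup_{x_0,0}$ rather than merely inside $\Egroup$. This rests on the standard but nontrivial dictionary between maximal $\Esplits$-split tori (resp.\ Borel subgroups) of $\bG$ with apartment containing $x_0$ and maximal $\ffc$-tori (resp.\ Borel subgroups) in the reductive quotient $\Equotient_{x_0}$, together with the $\Egroup_{x_0,0}$-equivariance of this correspondence. Once this transitivity is in hand, the remaining verifications are essentially formal.
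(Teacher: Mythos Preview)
Your proof is correct and follows essentially the same approach as the paper's: invoke Lemma~\ref{lem:ABsimplyconn}, conjugate the standard pair $(\Ebtorus,\bB)$ to $(\bA',\bB')$ by an element of $\Egroup_{x_0,0}$, and compute that the resulting $\sigma$-conjugate of $n$ stabilizes $(\Ebtorus,\bB)$ and lies in $\Egroup_{x_0,0}$. The only difference is that you supply a justification (via the reductive quotient $\Equotient_{x_0}$) for the transitivity step that the paper simply asserts.
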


\begin{proof}
 Recall that $\Esplits$ is a tame extension of $K$.

Thanks to Lemma~\ref{lem:ABsimplyconn} we can find
a  $\Esplits$-Borel-torus pair $(\bA',\bB')$ in $\bG$ such that  $n \sigma (\bA',\bB') = (\bA',\bB')$   and $x_0 \in \AA'(\Ebtorus,\Esplits) \cap \AA'(\bA',\Esplits)$.
  Choose  $h \in \Egroup_{x_0,0}$ such that $h (\Ebtorus,\bB) = (\bA',\bB')$.   Since $\sigma(\Ebtorus,\bB) = (\Ebtorus,\bB)$ and $n\sigma(\bA',\bB') = (\bA', \bB')$, we then have
 $$h\inv n \sigma(h) (\Ebtorus,\bB) = h\inv n \sigma h (\Ebtorus,\bB) = h\inv n \sigma (\bA',\bB') = h\inv (\bA',\bB') = (\Ebtorus,\bB).$$
 Thus $h\inv n \sigma(h) \in \EbtorusErat \cap \Egroup_{x_0,0} = \EbtorusErat_0$.
\end{proof}

 \begin{lemma}  \label{lem:Asharpmodimproved}
If $n \in \titsW$ is tame, then
$n$ is $\sigma$-conjugate by an element of $\Egroup_{x_0,0} $ to an element of $\EbtorusKrat_0 \EbtorusErat_{0^+} = A_{0} \EbtorusErat_{0^+}$.
\end{lemma}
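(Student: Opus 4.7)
The plan is to reduce to a statement about the quotient torus $\bshA(\ffc) = \EbtorusErat_0 / \EbtorusErat_{0^+}$ and then solve an abelian $\barsigma$-conjugacy problem there. By Corollary~\ref{cor:Asharpimproved}, after $\sigma$-conjugating $n$ by an element of $\Egroup_{x_0,0}$ we may assume $n \in \EbtorusErat_0$; since $\EbtorusErat_0 \subseteq \Egroup_{F_0,0} = \Egroup_{x_0,0}$, it then suffices to produce $h \in \EbtorusErat_0$ for which $h\inv n\, \sigma(h) \in A_0 \EbtorusErat_{0^+}$. Because $\bA$ is $K$-split, $A_0 = \bA(R_K)$ surjects onto $\bfA(\ffc)$, and the preimage of $\bfA(\ffc)$ under the reduction map $\EbtorusErat_0 \twoheadrightarrow \bshA(\ffc)$ is exactly $A_0 \EbtorusErat_{0^+}$.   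So the task reduces to showing that the image $\bar n \in \bshA(\ffc)$ can be $\barsigma$-conjugated, by an element of $\bshA(\ffc)$ (which will then be lifted back to $\EbtorusErat_0$), into $\bfA(\ffc)$.

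The key point is the identity
\[\bshA(\ffc) \;=\; \bfA(\ffc)\cdot(1-\barsigma)\bshA(\ffc),\]
where $(1-\barsigma)(\bar h) := \bar h \cdot \barsigma(\bar h)\inv$. Because $n$ is tame, the order of $\barsigma$ (which divides the order of $n\barsigma$ in $\titsW \rtimes \EGal$) is prime to $p$. Thus the action of $\barsigma$ on $\X_*(\bshA)\otimes\Q = \X_*(\Ebtorus)\otimes\Q$ is semisimple, and $(\X_*(\bshA)\otimes\Q)^\barsigma = \X_*(\bfA)\otimes\Q$ admits a complementary $\barsigma$-stable summand on which $(1-\barsigma)$ is invertible. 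This yields an isogeny $\bshA \sim \bfA \times \bshA^{-}$, where $\bshA^{-}$ is the connected subtorus with cocharacter lattice $\X_*(\bshA)\cap\bigl((1-\barsigma)\X_*(\bshA)\otimes\Q\bigr)$ and on which $(1-\barsigma)$ restricts to an isogeny. Since $\ffc$ is algebraically closed, isogenies of tori are surjective on $\ffc$-points, giving $\bshA(\ffc) = \bfA(\ffc)\cdot\bshA^{-}(\ffc)$ and $(1-\barsigma)\bshA(\ffc) = \bshA^{-}(\ffc)$, and hence the claimed identity.

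Granted the identity, write $\bar n = \bar a \cdot \bar h\,\barsigma(\bar h)\inv$ with $\bar a \in \bfA(\ffc)$ and $\bar h \in \bshA(\ffc)$. Because $\bshA$ is abelian,
\[\bar h\inv\,\bar n\,\barsigma(\bar h) \;=\; \bar h\inv\bigl(\bar a\,\bar h\,\barsigma(\bar h)\inv\bigr)\barsigma(\bar h) \;=\; \bar a \;\in\; \bfA(\ffc).\]
Lifting $\bar h$ to any $h \in \EbtorusErat_0$ (using surjectivity of $\EbtorusErat_0 \to \bshA(\ffc)$), the element $h\inv n\,\sigma(h) \in \EbtorusErat_0$ has image $\bar a \in \bfA(\ffc)$ and therefore belongs to $A_0 \EbtorusErat_{0^+}$, completing the proof. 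The only real obstacle is establishing the identity $\bshA(\ffc) = \bfA(\ffc)\cdot(1-\barsigma)\bshA(\ffc)$: tameness enters precisely to ensure that $(1-\barsigma)$ restricts to a separable isogeny on the complementary subtorus, without which surjectivity on $\ffc$-points could fail.
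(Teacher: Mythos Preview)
Your proof is correct and follows the same overall strategy as the paper's: invoke Corollary~\ref{cor:Asharpimproved} to reduce to $n\in\EbtorusErat_0$, then pass to the reductive quotient $\bshA(\ffc)$ and solve the abelian $\barsigma$-conjugacy problem there by establishing the identity $\bshA(\ffc)=\bfA(\ffc)\cdot(1-\barsigma)\bshA(\ffc)$. The difference lies only in how this identity is proved. The paper shows that $\Fix_{\barsigma}(\bshA/\bfA)$ is finite (using that $\bfA$ has finite index in $\Fix_{\barsigma}(\bshA)$ and that $\cohom^1(\EGal,\bfA)\cong\Hom(\EGal,\bfA)$ is finite) and then cites Steinberg's theorem~\cite[Theorem~10.1]{steinberg:endomorphism} to conclude that the Lang map $\bar a\mapsto\barsigma(\bar a)^{-1}\bar a$ is surjective on $\bshA/\bfA$. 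You instead argue directly via the isogeny decomposition $\bfA\times\bshA^{-}\to\bshA$ determined by the $\barsigma$-eigenspace decomposition of $\X_*(\bshA)\otimes\Q$, noting that $(1-\barsigma)$ is an isogeny of $\bshA^{-}$ and hence surjective on $\ffc$-points. Your argument is more self-contained; the paper's has the advantage of packaging the verification into a single citation. Two minor remarks: semisimplicity of $\barsigma$ on $\X_*(\bshA)\otimes\Q$ follows already from $\barsigma$ having finite order (Maschke over $\Q$), and surjectivity of isogenies on $\ffc$-points holds over any algebraically closed field regardless of separability, so your closing sentence slightly overstates where tameness enters---it is really only needed to invoke Corollary~\ref{cor:Asharpimproved}.
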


\begin{proof}
 Recall that $\Esplits$ is a tame extension of $K$.

From Corollary~\ref{cor:Asharpimproved} we know that there exists $h \in \Egroup_{x_0,0}$ such that $h\inv n \sigma(h) \in \EbtorusErat_{0}$.

  We identify $\bfA$ in $\Equotient_{x_0}$ by  $\bfA \leq \bfG_{x_0}  = {\Equotient_{x_0}^{\barsigma}} \leq \Equotient_{x_0}$.
Note that $\bshA$ equals  
$C_{\Equotient_{x_0}}(\bfA)$ and $\bfA$ has finite index in $\Fix_{\barsigma}(\bshA)$.

The map $x \mapsto \barsigma(x)$ defines a finite order automorphism of ${\bshA}$.  Since $\bfA$ has finite index in $\Fix_{\barsigma}(\bshA)$ and $\cohom^1(\EGal,\bfA) \cong \Hom(\EGal,\bfA)$ is finite, we conclude that
  $\Fix_{\barsigma}({\bshA}/\bfA)$ is finite.  Thus,  by~\cite[Theorem~10.1]{steinberg:endomorphism}, we conclude that the map $\bar{a} \mapsto \barsigma(\bar{a})\inv \bar{a}$ from $\bshA/\bfA$ to itself is surjective.  Consequently,  there exists $\bar{y} \in \bshA$ such that, modulo $\bfA$,  $\barsigma(\bar{y})\inv \bar{y}$ is congruent to the image of $h\inv n \sigma(h)$ in $\bshA$. Let $y$ be an element of $\EbtorusErat_0$ that lifts $\bar{y}$. Since $\Ebtorus$ is abelian, we may replace $h$ by $hy$ to conclude that  $h\inv  n \sigma(h) \in  A_0 \EbtorusErat_{0^+}$.
\end{proof}

\begin{lemma}  \label{lem:Asharpdone}
Suppose $n \in \titsW$ is tame and $n \barsigma$ has order $\ell$ in $\titsW \rtimes \EGal$.  If $\xi$ is a primitive  $\ell^{\text{th}}$ root of unity in $K$, then
$n$ is $\sigma$-conjugate by an element of $\Egroup_{x_0,0}$ to an element of $A_{0}$ of the form $\lambda(\xi)$ where $\lambda \in \X_*(\bA)$.
\end{lemma}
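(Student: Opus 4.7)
The plan is to start from Lemma~\ref{lem:Asharpmodimproved}, push the $\sigma$-conjugate of $n$ further---from $A_0\EbtorusErat_{0^+}$ into $A_0$---by killing the topologically unipotent contribution via a Galois-cohomology vanishing, and then recognise what remains as a tensor $\lambda\otimes\xi$.

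First, apply Lemma~\ref{lem:Asharpmodimproved} to choose $h\in\Egroup_{x_0,0}$ with $a:=h\inv n\sigma(h)\in A_0\EbtorusErat_{0^+}$. Inside $\bG(\tame)\rtimes\langle\sigma\rangle$ we have $a\sigma=h\inv(n\sigma)h$, whence $(a\sigma)^\ell=h\inv(n\sigma)^\ell h$. The tameness hypothesis $(n\barsigma)^\ell=1$ in $\titsW\rtimes\EGal$ gives $N_\ell^\sigma(n)=N_\ell^{\barsigma}(n)=1$, and $\sigma^\ell\in\Gal(\tame/\Esplits)$ fixes $h\in\Egroup$; comparing both sides yields $N_\ell^\sigma(a)=1$ in $\EbtorusErat$.

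Second, invoke the Teichm\"uller decomposition $\EbtorusErat_0=\mu\times\EbtorusErat_{0^+}$ (with $\mu$ the $\sigma$-fixed prime-to-$p$ torsion), writing $a=a_s a_u$. The inclusion $a\in A_0\EbtorusErat_{0^+}$ forces $a_s\in A_0\cap\mu\subset A_0$, and $K$-splitness of $\bA$ makes $\sigma$ act trivially on $A_0$; hence $1=N_\ell^\sigma(a)=a_s^\ell\cdot N_\ell^\sigma(a_u)$. Disjointness of $\mu$ from $\EbtorusErat_{0^+}$ in the direct product splits this as $a_s^\ell=1$ and $N_\ell^\sigma(a_u)=1$. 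Letting $\ell'\mid\ell$ be the order of $\barsigma$ in $\EGal$, one has $\sigma^{\ell'}=1$ on $\EbtorusErat$, so $N_\ell^\sigma(a_u)=N_{\ell'}^\sigma(a_u)^{\ell/\ell'}$; invertibility of $\ell/\ell'$ on the pro-$p$ group $\EbtorusErat_{0^+}$ upgrades this to $N_{\ell'}^\sigma(a_u)=1$.

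The last equation is precisely the cocycle condition for $\langle\barsigma\rangle\cong\mathbb{Z}/\ell'\mathbb{Z}$ acting on $\EbtorusErat_{0^+}$. Since $\gcd(\ell',p)=1$ and $\EbtorusErat_{0^+}$ is pro-$p$ abelian, $\cohom^1(\langle\barsigma\rangle,\EbtorusErat_{0^+})=0$, yielding $u\in\EbtorusErat_{0^+}$ with $u\inv\sigma(u)=a_u\inv$. Since $\EbtorusErat$ is abelian, $(hu)\inv n\sigma(hu)=a\cdot u\inv\sigma(u)=a_s\in A_0$, and $hu\in\Egroup_{x_0,0}$. Finally, $K$-splitness of $\bA$ together with $\mu_\ell\subset K$ (valid because $\ffc$ is algebraically closed and $\gcd(\ell,p)=1$) gives $A_0[\ell]=\bA(K)[\ell]=\X_*(\bA)\otimes_{\mathbb{Z}}\mu_\ell$; cyclicity of $\mu_\ell$ collects any such element as a single $\lambda(\xi)$ with $\lambda\in\X_*(\bA)$ and $\xi$ the fixed primitive $\ell^{\text{th}}$ root of unity. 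I expect the main obstacle to be the cohomology vanishing used to produce $u$; the remaining steps are careful bookkeeping with norms, the Jordan decomposition, and the cyclicity of $\mu_\ell$.
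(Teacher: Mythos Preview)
Your proof is correct and takes a cleaner route than the paper's. One small wording issue: you describe $\mu$ as ``the $\sigma$-fixed prime-to-$p$ torsion'', but when $\bG$ is not $K$-split, $\barsigma$ acts nontrivially on $\X_*(\Ebtorus)$ and hence on $\mu\cong\X_*(\Ebtorus)\otimes\ffc^\times$; what you need (and actually use) is only that $\mu$ is $\sigma$-\emph{stable}, so that $N_\ell^\sigma$ respects the Teichm\"uller splitting, together with the fact that $a_s$ itself lands in $A_0$, which is $\sigma$-fixed. Your argument invokes exactly this, so nothing is lost. The cohomology vanishing you flag as the ``main obstacle'' is in fact routine: $\EbtorusErat_{0^+}$ is uniquely $\ell'$-divisible (Hensel's lemma over the Henselian field $\Esplits$, with $\gcd(\ell',p)=1$), so $\cohom^1$ of a group of order $\ell'$ with these coefficients is both $\ell'$-torsion and $\ell'$-divisible, hence trivial.

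The paper proceeds differently. Rather than killing $a_u$ in one stroke, it runs a successive-approximation argument through the Moy--Prasad filtration: at each depth $r_j$ it uses surjectivity of $1-\barsigma$ on the graded piece $\EbtorusErat_{r_j:r_j^+}/\EbtorusKrat_{r_j:r_j^+}$ to push the $\sigma$-conjugate of $n$ into $A_0\EbtorusKrat_{0^+}\EbtorusErat_{r_{j+1}}$, and only in the limit obtains an element that is genuinely $\sigma$-fixed. This limit lives over the completion $\tilde L=L\otimes_K\Esplits$, and the norm identity then reads simply $a^\ell=1$. The price is an extra descent step at the end, replacing $K$ by a suitable finite (hence complete) extension $k'$ of $k$ to bring $h_\infty$ back into $\Egroup_{x_0,0}$. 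Your single cohomological vanishing avoids the completion entirely and is more efficient; the paper's filtration argument is more hands-on and makes the approximation process visible, but buys nothing extra for this lemma.
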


\begin{proof}
 Recall that $\Esplits$ is a tame extension of $K$.

Let $\xi$ be a primitive  $\ell^{\text{th}}$ root of unity in $K$ and let $\bar{\xi}$ denote the image of $\xi$ in $\ffc$.

From Lemma~\ref{lem:Asharpmodimproved} we can choose $h \in \Egroup_{x_0,0}$ such that  $h\inv n \sigma(h) \in A_{0}\EbtorusErat_{0^+}$.

  Choose a sequence $0 < r_1 < r_2 < \cdots$ such that for all $s > 0$ we have  $\Egroup_{x_0,s} \neq \Egroup_{x_0,s^+}$ if and only if $s = r_j$ for some $j$.   Here  $\Egroup_{x_0,s}$ and $\Egroup_{x_0,s^+}$ are  Moy-Prasad subgroups~\cite{moy-prasad:jacquet, moy-prasad:unrefined} of $\Egroup$.   For $s > 0$ set $\EbtorusErat_s = \EbtorusErat \cap \Egroup_{x_0,s}$, $\EbtorusErat_{s^+} = \EbtorusErat \cap \Egroup_{x_0,s^+}$ and $\EbtorusErat_{s:s^+} = \EbtorusErat_s / \EbtorusErat_{s^+}$.
   Similarly, define $\EbtorusKrat_{s} =\EbtorusKrat \cap G_{x_0,s}$, $\EbtorusKrat_{s^+}= \EbtorusKrat \cap G_{x_0,s^+}$, and $\EbtorusKrat_{s:s^+} = \EbtorusKrat_s / \EbtorusKrat_{s^+}$.

 For every $j \geq 1$
the linear map $\bar{y} \mapsto (1- \barsigma)\bar{y}$ from the $\ffc$-vector space
$\EbtorusErat_{r_j:r_j^+}/\EbtorusKrat_{r_j:r_j^+}$ to itself is an isomorphism.   Thus
 for every element $x \in \EbtorusErat_{r_j}$ there exists ${y} \in  \EbtorusErat_{r_j}$ such that $\barsigma(y)\inv x y \in  \EbtorusKrat_{r_j} \EbtorusErat_{r_j^+}$.

 Applying this to $h \inv n \sigma(h) \in  A_0  \EbtorusErat_{0^+} = A_0  \EbtorusErat_{r_1}$, we conclude, using the fact that $\Ebtorus$ is abelian, that there exists $y_1 \in \EbtorusErat_{r_1}$  such that if $h_1 = hy_1$  we have $h_1\inv n \sigma(h_1) \in A_0 \EbtorusKrat_{0^+} \EbtorusErat_{r_2}$.    We may repeat this process to conclude that there  exists $y_j \in \EbtorusErat_{r_j}$ such that if $h_j = (h_{j-1})y_j$,   then   $h_j\inv n \sigma(h_j) \in A_0 \EbtorusKrat_{0^+} \EbtorusErat_{r_{j+1}}$.
Letting $j$ go to infinity we conclude that there is an $h_\infty \in \bG(\tilde{L})_{x_0,0}$ such that $h_\infty \inv n \sigma(h_\infty) \in \Ebtorus(L)_{0} =A_{0}\Ebtorus(L)_{0^+}$.
Here $L$ denotes the completion of $K$ and $\tilde{L} = L \otimes_K \Esplits$ denotes the completion of $\Esplits$; note that $\barsigma$ extends (continuously) to an automorphism of $\tilde{L}$ whose fixed point set is $L$.

 Choose $a \in A_{0}\Ebtorus(L)_{0^+}$
 such that  $h_\infty \inv n \sigma(h_\infty) = a$.
 Since the order of $n \barsigma$ in $\titsW \rtimes \EGal$ is $\ell$, we have
$$N_{\ell}^{\barsigma}(n) = n \barsigma(n) \barsigma^2(n) \cdots \barsigma^{\ell-2}(n) \barsigma^{\ell-1}(n) = 1.$$
Thus, since $a = \sigma^j (a)$ for all $j \in \Z_{\geq 1}$ and  $\sigma^\ell(h_\infty) = h_\infty$, we have
 $$a^{\ell} = \prod_{j=0}^{\ell-1} \sigma^j(a) = \prod_{j=0}^{\ell-1} \sigma^j(h_{\infty}\inv n \sigma(h_\infty)) = h_\infty\inv N_{\ell}^{\barsigma}(n) \sigma^{\ell}(h_\infty) =1.$$

 Let $\bar{a}$ denote the image of $a$ in $\bfA$.   
 Since $\bar{a}^\ell = 1$,  there exists $\lambda \in \X_*(\bfA)$ such that $\bar{a} = \lambda(\bar{\xi})$.   Since $\X_*(\bfA) = \X_*(\bA)$, we may and do think of $\lambda$ as  an element of  $\X_*(\bA)$.   Thus, $a = \lambda(\xi) \tilde{a}$ where $\tilde{a} \in \Ebtorus(L)_{0^+}$.   Since $1 = a^\ell = \tilde{a}^\ell$ and $\ell$ is coprime to $p$, we conclude that $\tilde{a} = 1$.   Thus, $a = \lambda(\xi) \in A_0$.

 If $k \neq K$, in which case $L$ may not be $K$, we
 need to show that we can take $h_\infty \in \Egroup_{x_0,0}$.   We proceed as follows.
Choose finite extensions $k' \leq \tilde{k}'$ of $k$ with $k' \leq K$ and $\tilde{k}' \leq \Esplits$ such that  $\Gal(\tilde{k}'/k')$ is isomorphic to $\Gal(\Esplits/K)$,  $\bA$ is a maximal $k'$ split torus in $\bG$, $\Ebtorus$ is a maximal $\tilde{k}'$-split torus in $\bG$,
$x_0 \in \BB(\bG,k')$, we have $h, n \in \bG(\tilde{k}')_{x_0,0}$, $\xi \in k'$, etc.  The entire proof above goes through with $\EbtorusErat$ replaced by $\Ebtorus(\tilde{k}')$, $\EbtorusKrat$ replaced by $\Ebtorus(k')$, etc.  Since $k'$ is complete, we conclude that we have $h_\infty \in \bG(\tilde{k}')_{x_0,0} \leq \Egroup_{x_0,0}$.
  \end{proof}

By taking the reductive quotient, we recover a very special version of the known general result that if $\bar{g} \in \bfG$ such that $\bar{g} \sigma$ has finite order coprime to $p$, then $g$ is $\sigma$-conjugate in $\bfG$ to an element of $\bfA$:

\begin{corollary}
    Suppose $n \in \titsW$ is tame and $n \barsigma$ has order $\ell$ in $\titsW \rtimes \EGal$. Let $\bar{n} \in N_{\bfG}(\bfA)$ denote the image of $n$ in $\bfG$.   If $\xi$ is a primitive  $\ell^{\text{th}}$ root of unity in $\ffc$, then
$\bar{n}$ is $\sigma$-conjugate by an element of $\bfG$ to an element of $\bfA$ of the form $\lambda(\xi)$ where $\lambda \in \X_*(\bfA)$.
\end{corollary}

\begin{proof}  This follows from Lemma~\ref{lem:Asharpdone} by looking at the reductive quotient.
\end{proof}

\begin{remark}  \label{rem:allaboutkac}
 Note that
the element $\lambda \in \X_*(\bA) = \X_*(\bfA)$ encodes what is known as the \emph{Kac coordinates}  or, equivalently, the \emph{Kac diagram} of the image of $\bar{n}\sigma $ in $\Aut(\Lie(\bfG))$.  Note that since $\mu(\xi) = 1$ for all $\mu \in \ell \X_*(\bA)$, the element $\lambda$ cannot be unique.   First introduced by Kac
in~\cite{kac:automorphisms} (see also~\cite[Chapter X, Section~5]{helgason:differential} or~\cite[Chapter 8]{kac:infinite}), the use of Kac coordinates for $p$-adic groups was initiated  in~\cite{reederetal:gradings} and~\cite{reederetal:epipelagic}.   We will have more to say about Kac coordinates in Section~\ref{sec:kaccoordinates}.
\end{remark}

 \subsection{From tame elements of \texorpdfstring{$\titsW$ to  maximal $K$-tori in $\bG$}{W to maximal K-tori in G}}  \label{sec:markssec}
 \label{sec:3.3}

 Mark Reeder is responsible for the key ideas of this section: the definition of the element $g_n$ and how to use $g_n$  to define the maximal $K$-torus $\bT_n$ of $\bG$.  I thank him for explaining these ideas to me; I am responsible for all errors.

 Choose a tame $n \in \titsW$, and suppose $n\barsigma$ has order $\ell_n$ in $\titsW \rtimes \EGal$.

Recall that  $\varpi \in k$ is a uniformizer.   Choose a uniformizer $\pi \in \nsplits$ such that  $\pi^{\ell_n} = \varpi$.   Set $\xi = \sigma(\pi)/\pi$; note that $\xi \in K$ is a primitive $\ell_n^{\text{th}}$ root of unity.
 Thanks to Lemma~\ref{lem:Asharpdone}
there exist $h \in \Egroup_{x_0,0}$ and $\lambda_n \in \X_*(\bA)$ such that $h\inv n \sigma(h) = \lambda_n(\xi)$.  Define $x_n \in \AA'(A)$ by $x_n = x_0  + \lambda_n/\ell_n$, define $g_n \in \bG(\nsplits)_{x_n,0}$ by $g_n = \lambda_n(\pi) h\inv \lambda_n(\pi)\inv$, and define the maximal $\nsplits$-split torus $\bT_n$  of $\bG$ by $\bT = \lsup{g_n} \Ebtorus$.

\begin{remark}
The point $x_n = \lambda_n(\pi) \cdot x_0$ is an absolutely special vertex in $\BB^{\red}(\bG,\nsplits)$.
\end{remark}

\begin{lemma}  \label{lem:thepoint}
The torus $\bT_n$ is defined over $K$.
\end{lemma}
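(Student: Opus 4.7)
The plan is to show that $\bT_n = \lsup{g_n}\Ebtorus$ is $\sigma$-stable as a subgroup of $\bG(\tame)$; since $\bT_n$ is manifestly defined over $\nsplits$ (we have $g_n \in \bG(\nsplits)$ and $\Ebtorus$ is a $k$-torus) and $\sigma$ topologically generates $\Gal(\tame/K)$, this will give descent of $\bT_n$ to a $K$-torus. Because $\sigma(\Ebtorus) = \Ebtorus$, the $\sigma$-stability of $\lsup{g_n}\Ebtorus$ is equivalent to the single condition
\[ g_n\inv \sigma(g_n) \in N_{\bG}(\Ebtorus)(\nsplits). \]

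The main step is therefore to compute $g_n\inv \sigma(g_n)$ explicitly. I would first use that $\lambda_n$ is a cocharacter of the $K$-split torus $\bA$, so that $\sigma(\lambda_n(\pi)) = \lambda_n(\sigma(\pi)) = \lambda_n(\xi)\lambda_n(\pi)$. Combined with the relation $\sigma(h) = n\inv h\, \lambda_n(\xi)$ (equivalently $h\,\lambda_n(\xi)\,\sigma(h)\inv = n$) coming from the defining equation $h\inv n \sigma(h) = \lambda_n(\xi)$, expanding $g_n\inv \sigma(g_n) = \lambda_n(\pi) h \lambda_n(\pi)\inv \cdot \sigma(g_n)$ and using that $\lambda_n(\pi)$ and $\lambda_n(\xi)$ commute in the abelian group $\bA$ should collapse the expression to
\[ g_n\inv \sigma(g_n) = \lambda_n(\pi)\, n\, \lambda_n(\pi)\inv \lambda_n(\xi)\inv. \]

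Once this identity is in hand, membership in $N_{\bG}(\Ebtorus)(\nsplits)$ is transparent: $n \in \titsW \subset N_{\Egroup}(\Ebtorus)$ by construction, while $\lambda_n(\pi)$ and $\lambda_n(\xi)$ lie in $\bA(\nsplits) \subset \Ebtorus(\nsplits)$ and so normalize $\Ebtorus$ trivially. I expect the only mild subtlety to be bookkeeping with the fields of definition, namely confirming that $\pi \in \nsplits$ and $\xi \in K$ so that the whole product lies in $\bG(\nsplits)$; this is built into the choices made in Definition~\ref{defn:nsplits} and the paragraph preceding Lemma~\ref{lem:thepoint}. No structural obstacle is anticipated, as all of the heavy lifting was already done in Lemma~\ref{lem:Asharpdone} to produce the element $h$ and cocharacter $\lambda_n$; the present lemma is the reward for that work.
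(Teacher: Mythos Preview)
Your proposal is correct and follows essentially the same approach as the paper: both compute $g_n\inv\sigma(g_n)$ explicitly and observe it lies in $N_{\bG(\nsplits)}(\Ebtorus)$, whence $\sigma(\bT_n)=\bT_n$. Your expression $\lambda_n(\pi)\, n\, \lambda_n(\pi)\inv \lambda_n(\xi)\inv$ coincides with the paper's $\lambda_n(\pi)\, n\, \lambda_n(\pi\xi)\inv$, which the paper then rewrites as $n a'$ with $a' \in \Ebtorus(\nsplits)$ to make the final conjugation step transparent.
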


\begin{proof}
We first compute $g_n\inv \sigma(g_n)$:
\begin{equation*}
\begin{split}
g_n\inv \sigma(g_n)&=  (\lambda_n(\pi) h \lambda_n(\pi)\inv) (\sigma(\lambda_n(\pi) h\inv \lambda_n(\pi)\inv)) =   \lambda_n(\pi) h  \lambda_n(\pi)\inv  \lambda_n(\xi) \lambda_n(\pi) \sigma(h)\inv \lambda_n(\pi \xi)\inv \\
& =  \lambda_n(\pi) h  \lambda_n(\xi) \sigma( h)\inv \lambda_n(\pi \xi)\inv  = \lambda_n(\pi) n \lambda_n(\pi \xi)\inv = n  [n\inv \lambda_n(\pi) n  \lambda_n(\pi \xi)] \\
&= n a'
 \end{split}
 \end{equation*}
 where $a' = {\lsup{n\inv}\lambda_n(\pi) }  \lambda_n(\pi \xi) \in \Ebtorus(\nsplits)$.  Consequently, $ \sigma(\bT_n) = \sigma(\lsup{g_n} \Ebtorus) = \lsup{g_n n a'} \Ebtorus = \lsup{g_n} \Ebtorus = \bT_n$, and so $\bT_n$ is defined over $K$.
\end{proof}

 While the maximal $K$-torus $\bT_n$ of $\bG$ depends on many choices, we now show that the $G$-conjugacy class of $\bT_n$ depends only on the $\sigma$-conjugacy class of the image of $n$ in $\absW$.

 Suppose $w \in \absW$. If $w'$ is $\sigma$-conjugate to $w$, then $w'$ and $w$ have the same order in $\absW \rtimes \EGal$.   Thus, if $w$ is tame, then every element in the $\sigma$-conjugacy class of $w$ is tame as well.  Let $\Wtame$ denote the set of tame elements in $\absW$.  Let $W_{\sim_\sigma}$ (resp. $\Wtamesim$) denote the set of $\sigma$-conjugacy classes in $\absW$ (resp. $\Wtame$).

\begin{defn}  We let $\mathcal{C}$ denote the set of $G$-conjugacy classes of maximal $K$-tori in $\bG$.
\end{defn}

 \begin{cor}  \label{cor:welldefined} The assignment $n \mapsto T_n$ from tame elements in $\titsW$ to maximal $K$-tori in $\bG$ induces an injective map
 $$\varphi_\sigma \colon \Wtamesim \rightarrow \mathcal{C}.$$
 \end{cor}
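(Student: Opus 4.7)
My plan is to derive both halves of the corollary from the single cocycle identity
\begin{equation*}
g_n\inv \sigma(g_n) = n \cdot a'_n, \qquad a'_n \in \Ebtorus(\nsplits),
\end{equation*}
recorded in the proof of Lemma~\ref{lem:thepoint}, which exhibits $n \mapsto \bT_n = \lsup{g_n}\Ebtorus$ as a Galois-descent style construction.

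For injectivity, I will suppose $\bT_n$ and $\bT_{n'}$ are $G$-conjugate via some $u \in G$. Setting $m := g_{n'}\inv u g_n$, the equality $\lsup{u}\bT_n = \bT_{n'}$ forces $m \in N_{\bG(\bar{k})}(\Ebtorus)$. Applying $\sigma$ to $u = g_{n'} m g_n\inv$, using $\sigma(u) = u$, and invoking the cocycle identity for both $g_n$ and $g_{n'}$, I obtain
\begin{equation*}
m = (n' a'_{n'}) \cdot \sigma(m) \cdot (n a'_n)\inv.
\end{equation*}
Passing to images in $\absW = N_{\bG(\bar{k})}(\Ebtorus)/\Ebtorus(\bar{k})$ annihilates the torus factors $a'_n$ and $a'_{n'}$ and produces $\bar{m}\,\bar{n} = \bar{n}' \sigma(\bar{m})$, equivalently $\bar{n} = \bar{m}\inv\,\bar{n}'\,\sigma(\bar{m})$, so that $\bar{n}$ and $\bar{n}'$ are $\sigma$-conjugate in $\absW$.

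For well-definedness, I will first verify that the $G$-conjugacy class of $\bT_n$ is independent of all the auxiliary choices made in Section~\ref{sec:markssec}: the element $h$, the cocharacter $\lambda_n$, the uniformizer $\pi$, and the $\tau$-ambiguity in the choice of lift of $\bar{n} \in \absW$ to $\titsW$. The least trivial case is replacing $\lambda_n$ by $\lambda_n + \ell_n \mu$ for $\mu \in \X_*(\bA)$, which conjugates $g_n$ by $\mu(\varpi) \in G$ and so leaves $\bT_n$ in its $G$-orbit. Granting this, given $\bar{n}, \bar{n}' \in \absW$ that are $\sigma$-conjugate, say $\bar{n}' = \bar{v}\inv \bar{n} \sigma(\bar{v})$, I will lift $\bar{v}$ to an element $\tilde{v} \in \titsW$ and set $n_\star := \tilde{v}\inv n \sigma(\tilde{v}) \in \titsW$; this is a lift of $\bar{n}'$ differing from $n'$ only within its $\tau$-coset, so by the independence just proved $\bT_{n_\star} \sim_G \bT_{n'}$. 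Taking $h_\star := \tilde{v}\inv h$ in the construction of $\bT_{n_\star}$, a direct substitution yields $h_\star\inv n_\star \sigma(h_\star) = h\inv n \sigma(h) = \lambda_n(\xi)$, so $\lambda_{n_\star} = \lambda_n$ and the same $\pi$ may be used; the resulting element equals $g_{n_\star} = g_n \cdot \lsup{\lambda_n(\pi)}\tilde{v}$, and since $\lsup{\lambda_n(\pi)}\tilde{v}$ lies in $N_{\bG(\nsplits)}(\Ebtorus)$, I conclude $\bT_{n_\star} = \lsup{g_n}\Ebtorus = \bT_n$.

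The main obstacle will be the independence-from-choices claim: the injectivity step is a clean one-line cocycle computation once the Galois-descent identity is in hand, but carefully tracking how each of $h$, $\lambda_n$, $\pi$, and the $\tau$-ambiguity of the $\titsW$-lift of $\bar{n}$ propagates into $g_n$, and verifying that in every case the resulting discrepancy is absorbed by $G$ on the left and by $N_{\bG(\nsplits)}(\Ebtorus)$ on the right, will require some systematic bookkeeping.
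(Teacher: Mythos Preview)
Your injectivity argument is essentially identical to the paper's: both set up the element $m = g_{n'}\inv u g_n \in N_{\bG(\tame)}(\Ebtorus)$ and read off the $\sigma$-conjugacy relation in $\absW$ from the cocycle identity.

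For well-definedness, however, you take a genuinely different route. The paper does not verify independence of the auxiliary choices $h$, $\lambda_n$, $\pi$, or the $\titsW$-lift at all. Instead it observes that the only property of $g_n$ ever used is $g_n\inv\sigma(g_n)\,\Ebtorus = n\,\Ebtorus$, and then proves in one stroke that for \emph{any} $g_1, g_2 \in \bG(\tame)$ with $g_i\inv\sigma(g_i)\,\Ebtorus = n_i\,\Ebtorus$ and $w_1 \sim_\sigma w_2$, the element $x = g_2\,n\,g_1\inv$ satisfies $x\inv\sigma(x) \in \bT_1(\tame)$, hence carries $K$-points of $\bT_1$ to $K$-points of $\bT_2$, and then invokes Lemma~\ref{lem:gside} (i.e.\ the vanishing of $\cohom^1(K,\bT)$) to upgrade this $\bG(\tame)$-conjugacy to a $G$-conjugacy. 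Applying this with $w_1 = w_2$ and two different choice-dependent $g_i$'s gives independence of choices for free. Your approach is more concrete---you even get the pleasant bonus that $\bT_{n_\star} = \bT_n$ on the nose rather than merely up to $G$-conjugacy---but the step you flag as ``systematic bookkeeping'' is not purely combinatorial: when $h$ changes (and with it $\lambda_n$ in a way not of the form $+\ell_n\mu$), or when the $\titsW$-lift changes, the discrepancy $g_n'\,g_n\inv$ lands in $G \cdot N_{\bG(\nsplits)}(\Ebtorus)$ only after you invoke $\cohom^1(K,\bT_n) = 0$, which is exactly the content of Lemma~\ref{lem:gside}. So your route is correct but not more elementary; it unpacks the paper's single application of Lemma~\ref{lem:gside} into several instances.
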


 \begin{proof}

For $w_i \in \Wtame$ with $i \in \{1,2\}$ we  let $n_i \in \titsW$ be in the preimage of $w_i$ under the natural projection $\titsW \rightarrow \absW$.   Recall that to the element $n_i$ we can associate $g_i \in \bG(\tame)$ such that $g_i\inv \sigma(g_i) \Ebtorus = n_i \Ebtorus$.  Let $\dorbit_i \in \mathcal{C}$ denote the $G$-orbit of $\bT_{n_i} = \lsup{g_i}\Ebtorus$.

We first show that the orbit $\dorbit_i$ depends only on the $\sigma$-conjugacy class of $w_i$.
Suppose ${\bT} \in \dorbit_i$.   Then $\bT$ is $\tame$-split and so there exists $g \in \bG(\tame)$ such that ${\bT} = \lsup{{g}}\Ebtorus$.   Let ${w}'$ denote the image of $g \inv \sigma(g) \in N_{\bG(\tame)}(\Ebtorus)$ in $\absW$.  Since ${\bT}$ is $G$-conjugate to  $\bT_i$, there exists $k_i \in G$ such that ${\bT} = \lsup{k_i}{\bT_i}$.   Thus $\lsup{g} \Ebtorus = \lsup{k_i {g_i}} \Ebtorus$.   Consequently, there exists $n \in N_{\bG(\tame)}(\Ebtorus)$ such that $g_i\inv k_i\inv {g} = n$, so $k_i\inv {g} = g_i n$.   Note that
$$w' = g\inv \sigma(g) \Ebtorus = (k_i\inv g)\inv \sigma(k_i\inv g) \Ebtorus =
(g_i n)\inv \sigma(g_i n) \Ebtorus = {n}\inv n_i  \sigma(n) \Ebtorus = u \inv w_i \sigma(u)$$
where $u$ denotes the image of $n$ in $\absW$.
Thus, $w'$ is $\sigma$-conjugate to $w_i$.

We now show that if $w_1$ is $\sigma$-conjugate to $w_2$, then $\dorbit_1 = \dorbit_2$.   If $w_1$ is $\sigma$-conjugate to $w_2$, then there exists $n \in N_{\Egroup}(\Ebtorus)$ such that $n\inv n_2 \sigma(n)\Ebtorus =  n_1 \Ebtorus$, or
$(g_2n)\inv \sigma(g_2n) \Ebtorus = g_1\inv \sigma(g_1) \Ebtorus$.  Consequently, there exists $t \in \bT_1(\tame)$ such that
$g_2 n g_1\inv t = \sigma(g_2 n g_1\inv) $.  Thus, for all $\gamma \in T_1$, we have
$$\sigma(\lsup{g_2ng_1\inv} \gamma)  = \lsup{\sigma(g_2ng_1\inv)} \gamma =  \lsup{g_2ng_1\inv t} \gamma = \lsup{g_2ng_1\inv} \gamma.$$
That is, $g_2ng_1\inv$ maps $K$-points of $T_1$ to $K$-points of $T_2$.   Thus, by Lemma~\ref{lem:gside}, we have that  $T_1$ and $T_2$ are $G$-conjugate.  That is, $\dorbit_1 = \dorbit_2$.    We can therefore define a  function $\varphi_\sigma \colon \Wtamesim \rightarrow  \mathcal{C}$ by sending the $\sigma$-conjugacy class of $w_i \in \Wtame$ to the $G$-conjugacy class of $\bT_i$.

Finally, we show that $\varphi_\sigma$ is injective. Suppose $c_1$ and $c_2$ are two $\sigma$-conjugacy classes in $\Wtame$ with ${\varphi_\sigma}(c_1) = {\varphi_\sigma}(c_2)$.
Suppose $w_i \in c_i$.  Since ${\varphi_\sigma}(c_1) = {\varphi_\sigma}(c_2)$, there exists $g \in G$ such that $\lsup{g}\bT_{1} = \bT_{2}$, and so $\lsup{g g_1}\Ebtorus = \lsup{g_2}\Ebtorus$.   Consequently, there exists $n' \in N_{\bG(\tame)}(\Ebtorus)$ such that $g_1\inv g\inv {g_2} = n'$, so $g\inv {g_2} = g_1 n'$.   Note that
$$n_2 \Ebtorus  = g_2\inv \sigma(g_2)  \Ebtorus = (g\inv g_2)\inv \sigma(g\inv g_2)  \Ebtorus=
(g_1 n')\inv \sigma(g_1 n') \Ebtorus  = {n'}\inv n_1  \sigma(n')  \Ebtorus.$$
Consequently, $c_1 = c_2$.
\end{proof}

\begin{defn}  An element $w \in \Wtame$ is called $\sigma$-elliptic provided that the $\sigma$-conjugacy class of $w$ does not intersect a proper parabolic subgroup $\absW_\theta$ of $\absW$ where $\theta \subset \Esimple$, $\theta \neq \Esimple$, and $\sigma (\theta) = \theta$.   Here $\absW_\theta$ is the subgroup of $\absW$ generated by the simple reflections corresponding to the roots in $\theta$.  An element $n \in \titsW$ will be called \emph{$\sigma$-elliptic} provided that its image in $\absW$ is 
$\sigma$-elliptic.    A $\sigma$-conjugacy class in $\absW$  is called $\sigma$-elliptic provided that some (hence any) element in $c$ is $\sigma$-elliptic.
\end{defn}

\begin{lemma} \label{lem:elliptictoweyl}
Suppose $n \in \titsW$ is tame.  We have
$n$ is $\sigma$-elliptic if and only if $\bT_n$ is $K$-minisotropic.  Moreover, if $\bT_n$ is $K$-minisotropic, then  $x_{T_n} = x_n$.
\end{lemma}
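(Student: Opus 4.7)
The plan is to first identify the Galois action on $\bT_n$ by transport of structure via $\Ad(g_n)$, then invoke the standard characterization of $\sigma$-ellipticity in the (twisted) Weyl group, and finally to use uniqueness of $\sigma$-fixed points in the apartment of $\bT_n$ together with tame descent for the Bruhat-Tits building to identify $x_{T_n}$ with $x_n$.

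First, I would compute the $\sigma$-action on $\X^*(\bT_n)$ through the $\nsplits$-isomorphism $\Ad(g_n^{-1})\colon \bT_n \xrightarrow{\sim} \Ebtorus$.  Recall from the proof of Lemma~\ref{lem:thepoint} that $g_n^{-1}\sigma(g_n) = n a'$ with $a' \in \Ebtorus(\nsplits)$.  Since $a'$ lies in the abelian group $\Ebtorus$, conjugation by $g_n^{-1}\sigma(g_n)$, restricted to $\Ebtorus$, agrees with conjugation by $n$.  A short unwinding then shows that the natural $\sigma$-action on $\X^*(\bT_n)$ corresponds, under this identification, to the action of $n\barsigma \in \titsW \rtimes \EGal$ on $\X^*(\Ebtorus)$, with $n$ acting through its image $w \in \absW$ and $\barsigma$ via the pinned Galois/diagram automorphism.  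Passing to the quotient $\X^*(\Ebtorus/\bZ)$ and using that $\bT_n$ splits over the tame extension $\nsplits$, so that $\Gal(\bar k/K)$ acts on $\X^*(\bT_n)$ through its quotient $\Gal(\nsplits/K) = \langle \sigma \rangle$, one obtains that $\bT_n$ is $K$-minisotropic if and only if $V^{n\barsigma} = 0$, where $V := (\X^*(\Ebtorus)/\X^*(\bZ)) \otimes \mathbb{Q}$.

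Next I would appeal to the standard dictionary between twisted ellipticity and nonexistence of fixed vectors: an element $w \in \absW$ is $\sigma$-elliptic if and only if $V^{w\barsigma} = 0$, as developed, e.g., in~\cite{mohrdieck:conjugacy}.  The easy direction is clear from the definitions: if $w \sim_\sigma w_0$ with $w_0 \in \absW_\theta$ for some proper $\barsigma$-stable $\theta \subsetneq \Esimple$, then the $\barsigma$-stable subspace of $V$ orthogonal to the $\mathbb{Q}$-span of $\theta$ is nonzero and fixed by $w_0\barsigma$, producing a fixed vector for $w\barsigma$ after $\sigma$-conjugation.  The converse uses a Steinberg-type fixed-point argument to exhibit a $\barsigma$-stable proper Levi containing a $\sigma$-conjugate of $w$ once a nonzero fixed vector is given.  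Combining with the previous step yields the first equivalence of the lemma.

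For the second assertion, assume $\bT_n$ is $K$-minisotropic.  The apartment of $\bT_n$ in $\BB^{\red}(\bG,\nsplits)$ is $g_n \cdot \AA'(\Ebtorus,\nsplits)$.  Since $g_n \in \bG(\nsplits)_{x_n,0}$ fixes $x_n$ and $x_n = \lambda_n(\pi)\cdot x_0$ lies in $\AA'(\Ebtorus,\nsplits)$, we have $x_n \in \AA'(\bT_n,\nsplits)$.  Tame descent for Bruhat-Tits buildings, applied to the tame Galois extension $\nsplits/K$, gives $\BB^{\red}(G) = \BB^{\red}(\bG,\nsplits)^\sigma$; since $x_n \in \BB^{\red}(G)$ by construction, $\sigma$ fixes $x_n$.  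The $\sigma$-fixed locus of the reduced apartment $\AA'(\bT_n,\nsplits)$ is an affine subspace whose space of directions, after tensoring with $\mathbb{R}$, is identified with the $\sigma$-fixed subspace of $\X_*(\bT_n/\bZ) \otimes \mathbb{R}$; this vanishes by $K$-minisotropy (and $\mathbb{Q}$-duality between characters and cocharacters).  Hence the $\sigma$-fixed locus is the single point $x_n$.  Since $x_n$ already lies in $\BB^{\red}(G)$, it is its own closest point, and so $x_{T_n} = x_n$.  The main obstacle is to verify cleanly the identification of the $\sigma$-action on $\X^*(\bT_n)$ with that of $n\barsigma$ on $\X^*(\Ebtorus)$, and to invoke the twisted ellipticity-vs-fixed-vector dictionary without circularity; once these are in place, the building-theoretic conclusion follows from tame descent and the uniqueness of the $\sigma$-fixed point in a minisotropic apartment.
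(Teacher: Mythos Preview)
Your argument is correct; the identification $x_{T_n}=x_n$ via tame descent and uniqueness of the $\sigma$-fixed point in the apartment is exactly what the paper does. The equivalence, however, is handled differently. The paper works directly with Levi subgroups: if (after $\sigma$-conjugation) the image $\bar n$ of $n$ lies in $\absW_\theta$ for a proper $\barsigma$-stable $\theta$, then the construction of $g_n$ can be carried out entirely inside $\bM_\theta$, so $\bT_n\le\bM_\theta$ visibly fails to be $K$-minisotropic; conversely, if $\bT_n$ is not $K$-minisotropic one conjugates it into some $\bM_\theta$, chooses $m\in\bM_\theta(\tame)$ with $\lsup{m}\Ebtorus=\bT_n$, and compares the cocycles $m^{-1}\sigma(m)$ and $g_n^{-1}\sigma(g_n)$ (as in the proof of Corollary~\ref{cor:welldefined}) to put $\bar n$ in $\absW_\theta$ up to $\sigma$-conjugacy. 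You instead reduce both conditions to the single statement $V^{w\barsigma}=0$ and invoke the fixed-vector characterization of twisted ellipticity as an external input. This is tidier, but note that the paper deduces precisely that characterization \emph{from} the present lemma (it is the very next corollary), so within the paper's logical order your argument would be circular; you would need to supply the short Weyl-chamber proof directly (move the fixed vector into the closed fundamental chamber and use that it is a strict fundamental domain for $\absW$). One small correction to your ``easy direction'': the annihilator of $\ddspan(\theta)$ is $\barsigma$-stable but not pointwise $w_0\barsigma$-fixed in general---what works is that it is spanned by the fundamental weights $\omega_\alpha$ with $\alpha\notin\theta$, which $\barsigma$ permutes, so their sum is the required nonzero fixed vector.
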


\begin{proof}
Let $\bar{n}$ denote the image of $n$ in $\absW$.

Suppose $n$ is not $\sigma$-elliptic.  Without loss of generality, we may assume that $\bar{n} \in\absW_\theta$ for some proper $\theta \subset \Esimple$ with $\sigma(\theta) = \theta$.  Let $\bM_\theta$ denote the Levi subgroup of $\bG$ that contains $\Ebtorus$ and corresponds to $\theta$. That is, $\bM_\theta$ is the centralizer of $(\cap_{\alpha \in \theta} \ker (\alpha))^\circ$. Note that $(\bar{n},\barsigma)$ is tame in $\absW_\theta \rtimes \EGal$ and  we can construct $g_n$ in $\bM_\theta(\tame)$ for which $g_n\inv \sigma(g_n)$ has image $\bar{n}$ in $\absW_\theta$. By construction, $\lsup{g_n}\Ebtorus \leq \bM_\theta$ is a $K$-torus which is not $K$-minisotropic for $\bG$.

Suppose $\bT_n$ is not $K$-minisotropic.   After conjugating by an element of $G$, we may assume that $\bT_n \subset \bM_\theta$ for some $\sigma$-stable proper $\theta \subset \Esimple$. Since $\lsup{g_n}\Ebtorus$ and $\Ebtorus$ are $\tame$-split tori in $\bM_\theta$, there exists $m \in \bM_\theta(\tame)$ such that $\lsup{g_n}\Ebtorus = \lsup{m}\Ebtorus$.  As in the proof of Corollary~\ref{cor:welldefined} we have that the images of $m\inv \sigma(m)$ and $g_n\inv \sigma(g_n)$ in $\absW$ are $\sigma$-conjugate.  Since the image of $m\inv \sigma(m)$ belongs to $\absW_\theta$, we conclude that $n$  is not $\sigma$-elliptic.

Suppose $\bT_n$ is $K$-minisotropic.  Since $T_n$ splits over the tame extension $\nsplits$, the point $x_n$ is the $\sigma$-fixed point of the image of $\AA'(\bT_n,\nsplits)$ in $\BB^{\red}(\bG,\nsplits)$.  Hence, we have $x_n = x_{T_n}$.
\end{proof}

\begin{cor}
 $w \in \Wtame$ is $\sigma$-elliptic if and only if the only fixed point for the action of $w\sigma$  on  $\X_*(\Ebtorus)/\X_*(\bZ)$ is $0$.
 \end{cor}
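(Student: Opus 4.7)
The plan is to combine Lemma~\ref{lem:elliptictoweyl}, which converts $\sigma$-ellipticity of $w$ into $K$-minisotropy of $\bT_n$ for any tame lift $n\in\titsW$ of $w$, with the standard duality between characters and cocharacters of a torus.

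First I choose a tame $n\in\titsW$ mapping to $w$. By Lemma~\ref{lem:elliptictoweyl} it is enough to prove that $\bT_n$ is $K$-minisotropic if and only if the $w\sigma$-fixed sublattice of $\X_*(\Ebtorus)/\X_*(\bZ)$ is zero. By definition $\bT_n$ is $K$-minisotropic iff $\X^*(\bT_n/\bZ)^{\Gal(\bar{k}/K)}=0$, and since $\bT_n$ splits over the tame extension $\nsplits$, this Galois-fixed condition is equivalent to $\X^*(\bT_n/\bZ)^{\sigma}=0$.

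Next I would transport the Galois action from $\bT_n$ back to $\Ebtorus$. Conjugation by $g_n$ is an isomorphism $\Ad(g_n)\colon \Ebtorus\to\bT_n$ over $\nsplits$, and a routine unwinding shows that under this isomorphism the $\sigma$-action on $\bT_n(\bar k)$ pulls back to the automorphism $y\mapsto m\,\sigma(y)\,m\inv$ of $\Ebtorus(\bar k)$, where $m=g_n\inv\sigma(g_n)$. From the computation in the proof of Lemma~\ref{lem:thepoint}, $m=na'$ with $a'\in\Ebtorus(\nsplits)$; because $a'\in\Ebtorus$ acts trivially on $\Ebtorus$ by conjugation, this pulled-back action on both $\X^*(\Ebtorus)$ and $\X_*(\Ebtorus)$ is exactly the composite $w\sigma$, where $w$ is the image of $n$ in $\absW$. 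Since $\bZ$ is central and stable under both $w$ and $\sigma$, the action descends to $\X^*(\Ebtorus/\bZ)$ and to $\X_*(\Ebtorus)/\X_*(\bZ)$, yielding a canonical identification
\[
\X^*(\bT_n/\bZ)^{\sigma}\;\cong\;\X^*(\Ebtorus/\bZ)^{w\sigma}.
\]

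Finally, I invoke duality. Because $w$ is tame, $(w,\barsigma)$ has order prime to $p$, hence $w\sigma$ acts with finite order on the rational vector space $\X^*(\Ebtorus/\bZ)\otimes\Q$. This action is therefore semisimple and is the $\Q$-dual of the $w\sigma$-action on $(\X_*(\Ebtorus)/\X_*(\bZ))\otimes\Q$; dual semisimple finite-order operators have fixed subspaces of equal dimension, so one vanishes iff the other does. Passing back to lattices, a torsion-free sublattice is zero iff its rationalization is zero, which gives the claimed equivalence.

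The main obstacle is the bookkeeping in the middle step: carefully verifying that the pulled-back $\sigma$-action on $\Ebtorus$ really is $w\sigma$, and in particular that the $\Ebtorus$-valued twist $a'$ produced in Lemma~\ref{lem:thepoint} contributes nothing because it acts trivially under conjugation. Once that identification is made, tameness supplies the semisimplicity needed to close the duality argument cleanly.
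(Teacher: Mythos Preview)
Your proof is correct and follows essentially the same path as the paper: lift $w$ to $n$, invoke Lemma~\ref{lem:elliptictoweyl} to turn $\sigma$-ellipticity into $K$-minisotropy of $\bT_n$, and then transport the $\sigma$-action on $\bT_n$ to the $w\sigma$-action on $\Ebtorus$ via $g_n$. The only difference is that the paper works directly on the cocharacter side by identifying the reduced apartment $\AA'(\bT_n,\nsplits)$ with $\AA'(\Ebtorus,\nsplits)$ (an affine space under $(\X_*(\Ebtorus)/\X_*(\bZ))\otimes\R$), so that $K$-minisotropy becomes ``unique fixed point in the apartment'' and hence $(\X_*(\Ebtorus)/\X_*(\bZ))^{w\sigma}=0$ immediately; you instead pass through $\X^*(\Ebtorus/\bZ)^{w\sigma}$ and then dualize. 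Your detour is harmless (indeed, $\dim\ker(A-I)=\dim\ker(A^{T}-I)$ for any $A$, so you do not even need semisimplicity), but the apartment argument is shorter.
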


 \begin{proof}
 Suppose $n \in \titsW$ is a lift of $w$.
 Since the action of $\sigma$ on $\AA(\bT,\nsplits)$ corresponds to the action of $w \sigma$ on $\AA(\Ebtorus,\nsplits)$, the result follows from Lemma~\ref{lem:elliptictoweyl}.
 \end{proof}

\begin{remark}  \label{rem:allindependent}
    Suppose $w \in \Wtame$ is $\sigma$-elliptic.  Since any two lifts of $w$ into $N_{\Egroup'}(\Ebtorus)$ are $\EbtorusErat$-conjugate, we conclude that any two lifts of $w$ into $\titsW \leq \Egroup'$ have the same order.  Thus, when $w \in \Wtame$ is $\sigma$-elliptic, the $\ell$ and $\nsplits$ of Definition~\ref{defn:nsplits} depend only on the $\sigma$-conjugacy class of $w$.
\end{remark}

 \begin{lemma}  \label{lem:thepoint2} If $w \in \Wtame$ is $\sigma$-elliptic, then we may choose $g_n$ in Lemma~\ref{lem:thepoint}  such that $g_n\inv \sigma(g_n) = z  \cdot \lsup{\lambda_n(\pi)}n $ for some $z \in \tilde{Z}_0$.
 \end{lemma}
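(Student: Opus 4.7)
The plan is to modify the choice of $g_n$ from Lemma~\ref{lem:thepoint} by right multiplication with a suitable element of $\Ebtorus(\nsplits)$.  Starting from $g_n\inv\sigma(g_n)=na'$ with $a'=\lsup{n\inv}\lambda_n(\pi)\cdot\lambda_n(\pi\xi)$, a short calculation in $\Ebtorus \cup n\Ebtorus$ (using that $\Ebtorus$ is abelian and that $n$ normalizes $\Ebtorus$) gives
\[
g_n\inv\sigma(g_n)\;=\;\lsup{\lambda_n(\pi)}n\cdot\lambda_n(\pi^2\xi)\;=\;z_0\cdot\lsup{\lambda_n(\pi)}n,
\]
with $z_0:=\lsup{n}\lambda_n(\pi^2\xi)\in\Ebtorus(\nsplits)$.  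So the shape $g_n\inv\sigma(g_n)=z\cdot\lsup{\lambda_n(\pi)}n$ already holds automatically with $z=z_0$, and the content of the lemma is to further modify $g_n$ so that $z$ can be taken in $\tilde{Z}_0$.

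Replacing $g_n$ by $g_n t$ for $t\in\Ebtorus(\nsplits)$ multiplies $z_0$ by the Lang factor $L(t):=\tilde\tau(t)\,t\inv$, where $\tilde\tau:=\Ad(\lsup{\lambda_n(\pi)}n)\circ\sigma$ is a semilinear automorphism of $\Ebtorus(\nsplits)$.  Because $\lambda_n(\pi)\in\Ebtorus$ acts trivially by conjugation on $\Ebtorus$, $\tilde\tau$ acts on $\X_*(\Ebtorus)$ precisely as $w\barsigma$, where $w\in\absW$ is the image of $n$; moreover $L$ is a group homomorphism since $\Ebtorus$ is abelian.  Taking $t=\mu(\pi)\cdot\nu(\xi)$ for cocharacters $\mu,\nu\in\X_*(\Ebtorus)$, and using that $\xi\in K$ so $\sigma(\xi)=\xi$, one computes
\[
z_0\cdot L(t)\;=\;\bigl(2\lsup{n}\lambda_n+(w\barsigma-1)\mu\bigr)(\pi)\cdot\bigl(\lsup{n}\lambda_n+w\barsigma\mu+(w\barsigma-1)\nu\bigr)(\xi).
\]
For this to lie in $\bZ(\nsplits)$ we need $(w\barsigma-1)\mu\equiv -2\lsup{n}\lambda_n\pmod{\X_*(\bZ)}$ together with $\lsup{n}\lambda_n+w\barsigma\mu+(w\barsigma-1)\nu\in\X_*(\bZ)+\ell_n\X_*(\Ebtorus)$.

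By $\sigma$-ellipticity, the operator $w\barsigma-1$ is invertible on $\X_*(\Ebtorus/\bZ)\otimes\mathbb{Q}$ and has finite cokernel on $\X_*(\Ebtorus/\bZ)$, so the first congruence admits a rational solution and, after possibly passing to a different representative $\lambda_n+\ell_n\eta$ in the Kac class of $\lambda_n$ (Remark~\ref{rem:allaboutkac}) to kill the finite cokernel, an integral $\mu$.  The second congruence is then solvable for $\nu$.  The resulting $z\in\bZ(\nsplits)$ is promoted into $\tilde{Z}_0$ by a further modification of $g_n$ by an element $t'\in\bZ^\circ(\nsplits)$: such a change shifts $z$ by $\sigma(t')(t')\inv$, because $\Ad(\lsup{\lambda_n(\pi)}n)$ fixes the center pointwise.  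By the Kottwitz–Haines–Rapoport machinery recalled in \S\ref{sec:someconsequences}, combined with the fact that $\bT_n$ is $K$-minisotropic (so that the Kottwitz image of $z$ in $\pi_1(\bZ^\circ)_I$ is forced to vanish), this residual contribution can be absorbed.  The main obstacle is precisely this interplay between integrality and boundedness: a priori the two conditions live at different parahoric levels, and reconciling them requires the full strength of $\sigma$-ellipticity, the Kac-coordinate flexibility in $\lambda_n$, and Kottwitz duality for the center.
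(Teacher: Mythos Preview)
Your approach has genuine gaps at both of its critical steps.  First, the integrality step: to solve $(w\barsigma-1)\mu\equiv -2\lsup{n}\lambda_n\pmod{\X_*(\bZ)}$ you invoke the freedom to replace $\lambda_n$ by $\lambda_n+\ell_n\eta$, but $\lambda_n$ is already pinned down by the construction (it determines $x_n$ and the whole setup), so you cannot freely adjust it to kill the cokernel of $w\barsigma-1$ on the integral lattice.  Second, the boundedness step: your final appeal to Kottwitz--Haines--Rapoport to push $z$ from $\bZ(\nsplits)$ into $\tilde{Z}_0$ is not justified---there is no evident mechanism by which $K$-minisotropy of $\bT_n$ forces the Kottwitz image of an arbitrary central element to vanish.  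There is also a side effect you do not address: modifying $g_n$ by $t=\mu(\pi)\nu(\xi)$ with $\mu\neq 0$ throws $g_n$ out of $\bG(\nsplits)_{x_n,0}$, which is needed in later applications (e.g.\ Lemma~\ref{lem:idwithnormal}(\ref{lem:idwithenlargedWeyl})).  (Incidentally, your computation of $z_0$ is off: from $g_n\inv\sigma(g_n)=\lambda_n(\pi)\,n\,\lambda_n(\pi\xi)\inv$ one gets $g_n\inv\sigma(g_n)=\lsup{\lambda_n(\pi)}n\cdot\lambda_n(\xi)\inv$, so $z_0=\lsup{\lambda_n(\pi)}(\lsup{n}\lambda_n(\xi)\inv)$, not $\lsup{n}\lambda_n(\pi^2\xi)$.)

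The paper avoids all of this by modifying not $g_n$ but the auxiliary element $h\in\Egroup_{x_0,0}$ from which $g_n=\lsup{\lambda_n(\pi)}h\inv$ is built.  One replaces $h$ by $ah$ with $a\in\EbtorusErat$, so everything stays inside the parahoric at $x_0$ and boundedness is automatic.  The condition to be solved becomes $a\,n\,\sigma(a)\inv n\inv = z\cdot\lsup{n}\lambda_n(\xi)$ with $z\in\tilde{Z}_0$, i.e.\ a Lang-type equation for the map $1-w\sigma$ on $\EbtorusErat$.  Since $w$ is $\sigma$-elliptic, $1-w\sigma$ is an isogeny of the $\ffc$-torus $\bshA/\tilde{\bfZ}$ and hence surjective on $\ffc$-points; one solves modulo the pro-unipotent radical and then lifts by the same successive-approximation argument used in Lemma~\ref{lem:Asharpdone}.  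A one-line computation then yields $g_n\inv\sigma(g_n)=z\cdot\lsup{\lambda_n(\pi)}n$.
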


 \begin{remark}
 Note that $g_n\inv \sigma(g_n) \in \bG(K_n)_{x_n,0}$.   Also, since $\absW \leq \Egroup'$, we can replace $\bG$ by $\bG'$ in every result of Section~\ref{sec:3.3} to this point.  Thus, we can choose $h, g_n \in \Egroup'$, hence $z \in \tilde{Z}_0 \cap \Egroup'$.
 \end{remark}

 \begin{proof}
 Let $\tilde{\bfZ}$ denote the maximal split torus in the center of $\Equotient_{x_0}$; note that this is the image of $ \tilde{Z}_0 \cap \Egroup_{x_0,0}$ in $\Equotient_{x_0}$.
Since $w$ is $\sigma$-elliptic, the map $1 - w \sigma \colon  \bshA/\tilde{\bfZ} \rightarrow \bshA/\tilde{\bfZ}$ is surjective.   Thus, by using an approximation argument as in the proof of Lemma~\ref{lem:Asharpdone}, we can produce $a \in \EbtorusErat$ and $z \in \tilde{Z}_0 $ such that  $a n \sigma(a)\inv n\inv = z \cdot \lsup{n} \lambda_n(\xi)$.  In the proof of Lemma~\ref{lem:thepoint} replace $h$, which has the property that $h \lambda_n(\xi) \sigma(h)\inv = n$, 
with  $\tilde{h} := ah$  and replace $g_n$ with $\tilde{g} := \lsup{\lambda_n(\pi)} \tilde{h}\inv$ to obtain
\[\tilde{g}_n\inv \sigma(\tilde{g}_n) =  \lsup{\lambda_n(\pi)}( a h  \lambda_n(\xi) \sigma( h)\inv \sigma(a)\inv \lambda_n(\xi)\inv) = \lsup{\lambda_n(\pi)}( a n \sigma(a)\inv n\inv n \lambda_n(\xi)\inv) = z \cdot \lsup{\lambda_n(\pi)}n.
\]
Note that $\lsup{\tilde{g}_n} \Ebtorus = \lsup{{g}_n} \Ebtorus$.
 \end{proof}

 As in~\cite[Theorem~4.1]{reederetal:epipelagic} is also possible to explicitly describe the reductive quotient at the point $x_n$:

\begin{lemma} Suppose $n \in \titsW$ is tame.   The map $\Ad(\lambda_n(\pi)) \colon \bG(\nsplits)_{x_0,0} \rightarrow \bG(\nsplits)_{x_n,0}$ 
identifies $\sfG_{x_n}$ with $\sfG_{x_0}^{\lambda_n(\xi)}$.
\end{lemma}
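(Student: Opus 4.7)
The plan is to use that $\Ad(\lambda_n(\pi))$ translates $x_0$ to $x_n$ in order to identify the ambient $\nsplits$-parahoric reductive quotients at $x_0$ and $x_n$, and then to match $\sfG_{x_n}$ with $\sfG_{x_0}^{\lambda_n(\xi)}$ inside this common group via a root-system comparison.

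First, since $\pi \in \nsplits$ is a uniformizer with $\nu(\pi) = 1/\ell_n$ and $\lambda_n \in \X_*(\bA)$, the element $\lambda_n(\pi) \in \bA(\nsplits)$ acts on the apartment $\AA(\bA,\nsplits) = \AA(\bA,K)$ by translation by $\lambda_n/\ell_n$; in particular $\lambda_n(\pi) \cdot x_0 = x_n$. Hence $\Ad(\lambda_n(\pi))$ carries $\bG(\nsplits)_{x_0,0}$ isomorphically onto $\bG(\nsplits)_{x_n,0}$, respects Moy--Prasad filtrations, and induces an $\ffc$-isomorphism of the corresponding $\nsplits$-reductive quotients. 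Both $\sfG_{x_n}$ and $\sfG_{x_0}^{\lambda_n(\xi)}$ embed naturally into this common quotient as $\ffc$-subgroups with maximal torus $\bfA$.

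Next, I would match root data. The root system of $\sfG_{x_n}$ consists of those $a \in \Phi(\bG,\bA)$ for which some affine $K$-root $a+m$ (with $m \in \Z$) vanishes at $x_n = x_0 + \lambda_n/\ell_n$, equivalently $\ell_n \mid a(\lambda_n)$. The root system of $\sfG_{x_0}^{\lambda_n(\xi)}$ consists of those $a$ with $\bar{\xi}^{a(\lambda_n)} = 1$, again equivalently $\ell_n \mid a(\lambda_n)$, since $\bar{\xi}$ has exact order $\ell_n$. For each such common root $a$, the formula $\Ad(\lambda_n(\pi))\, u_a(t) = u_a(\pi^{a(\lambda_n)} t)$ shows that $\Ad(\lambda_n(\pi))$ carries the root subgroup of $\sfG_{x_0}^{\lambda_n(\xi)}$ attached to $a$ onto the affine root subgroup of $\sfG_{x_n}$ attached to $a - a(\lambda_n)/\ell_n$. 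Compatibility with the $\ff$-structures is then forced by the intertwining identity
\[
\sigma\bigl(\Ad(\lambda_n(\pi))(y)\bigr) = \Ad(\lambda_n(\pi))\bigl(\Ad(\lambda_n(\xi))(\sigma(y))\bigr),
\]
valid for $y \in \bG(\nsplits)_{x_0,0}$ because $\sigma(\pi) = \xi\pi$ and $\lambda_n(\pi), \lambda_n(\xi)$ commute in the abelian torus $\Ebtorus$. On the $x_0$-side this gives a twisted Frobenius $\Ad(\lambda_n(\bar{\xi})) \circ \barsigma$, which restricts to plain $\barsigma$ on the centralizer $\sfG_{x_0}^{\lambda_n(\xi)}$ since $\Ad(\lambda_n(\bar{\xi}))$ acts trivially there.

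The main obstacle is bookkeeping the affine root structure in the non-$K$-split case, where the $K$-affine roots come from Galois-stable sums over $\Esimple$-orbits rather than integer translates of elements of $\Phi$. I would handle this by first carrying out the root-system matching over $\Esplits$, where $\bA$ is part of the split torus $\Ebtorus$ and the comparison is transparent, and then descending to $K$ using the intertwining identity above to guarantee that the resulting identification is Galois-equivariant.
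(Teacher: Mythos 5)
Your proposal is correct and follows essentially the same route as the paper: the paper's proof also reduces to the torus and the root subgroups $U_\alpha$ and rests on exactly your intertwining identity, in the form $\sigma(\lsup{\lambda_n(\pi)}X) = \xi^{\langle \lambda_n,\alpha\rangle}\,\lsup{\lambda_n(\pi)}(\sigma(X))$ on the absolute root spaces, so that $\Ad(\lambda_n(\pi))$ preserves $K$-rationality on $U_\alpha$ precisely when $\xi^{\langle\lambda_n,\alpha\rangle}=1$, i.e.\ precisely for the roots of $\sfG_{x_0}^{\lambda_n(\xi)}$. Your extra care about the non-$K$-split affine-root bookkeeping (working over $\Esplits$ and descending) is handled in the paper by the observation that $\langle\lambda_n,\dot{\alpha}\rangle = \langle\lambda_n,\sigma\dot{\alpha}\rangle$ because $\lambda_n \in \X_*(\bA)$, which is the same fix.
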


\begin{proof}
Since $\Ebtorus(\nsplits) \cap \bG(\nsplits)_{x_0,0} = \Ebtorus(\nsplits) \cap \bG(\nsplits)_{x_n,0}$ and $\lambda_n(\xi)$ centralizes $\Ebtorus(\nsplits)$,
it suffices to look at what happens on root subgroups of $\bfG_{x_0}$.

Suppose $\alpha \in \Phi(\bG,\bA)$.  Choose  $\dot{\alpha} \in \Phi(\bG,\Ebtorus)$ satisfying $\res_{\bA}(\dot{\alpha}) = \alpha$.  Since $\lambda_n \in \X_*(\bA)$, we have $\langle \lambda_n, \alpha\rangle = \langle \lambda_n, \dot{\alpha} \rangle = \langle \lambda_n ,\sigma \dot{\alpha}\rangle $ where $\langle \, , \, \rangle \colon \X_*(\Ebtorus) \times \X^*(\Ebtorus)$ denotes the natural pairing.  For all $X$ in the root space $\gg(\nsplits)_{\dot{\alpha}}$, we have $\sigma(\lsup{\lambda_n(\pi)}X) = \xi^{\langle \lambda_n, \alpha \rangle} \cdot \lsup{\lambda_n(\pi)}(\sigma(X))$.  Thus, if $U_\alpha$ denotes the unipotent subgroup of $G$ corresponding to $\alpha$, then $\Ad(\lambda_n(\pi))$ maps  $U_\alpha \cap G_{x_0,0}$ to $U_\alpha \cap G_{x_n,0}$ if and only if  $\xi^{\langle \lambda_n, \alpha \rangle} = 1$.  Since $x_0$ is absolutely special, 
the result follows.
\end{proof}

\begin{lemma}   \label{lem:tobesharpened}
Suppose $n \in \titsW$ is tame.
We have $G_{x_0,0}^{\lambda_n} \leq G_{x_n,0}$.
\end{lemma}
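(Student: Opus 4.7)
The plan is to derive the containment from the Kottwitz--Haines--Rapoport characterization of parahoric subgroups recalled in Section~\ref{sec:someconsequences}, together with the fact that $\lambda_n$, viewed as a cocharacter of $\bA$, translates the apartment $\AA'(\bA)$ nontrivially along the direction $x_n - x_0$. I interpret the notation $G_{x_0,0}^{\lambda_n}$ as (at least) the fixed points of the conjugation action of the one-parameter subgroup $\lambda_n$ on $G_{x_0,0}$, so that every $g \in G_{x_0,0}^{\lambda_n}$ commutes with $\lambda_n(\varpi) \in A$.

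First I would show that such a $g$ fixes $x_n$. Because $\lambda_n(\varpi) \in \bA(K)$ acts on $\AA'(\bA)$ by the nonzero translation along $\lambda_n$, and because $g$ commutes with $\lambda_n(\varpi)$ while fixing $x_0$, the computation
\[
g \cdot \bigl(\lambda_n(\varpi) \cdot x_0\bigr) = \lambda_n(\varpi) \cdot g \cdot x_0 = \lambda_n(\varpi) \cdot x_0
\]
shows that $g$ also fixes the translate $\lambda_n(\varpi) \cdot x_0$. Since $g$ acts on $\AA'(\bA)$ by an affine isometry fixing two distinct points on the line through $x_0$ in the direction of $\lambda_n$, it fixes the entire line, and in particular fixes $x_n = x_0 + \lambda_n/\ell_n$.

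Next I would invoke Kottwitz and Haines--Rapoport. Fixing the point $x_n \in \BB^{\red}(G)$ forces $g$ to stabilize the facet $F_n$ of $\BB(G)$ whose image in the reduced building contains $x_n$; thus $g \in \Stab_G(F_n)$. The theorem of Haines and Rapoport (recalled in Section~\ref{sec:someconsequences}) asserts that $G_{F_n,0} = G_{x_n,0}$ equals the kernel of the Kottwitz homomorphism $\kappa_G$ restricted to $\Stab_G(F_n)$. Since $g \in G_{x_0,0}$ already lies in $\ker \kappa_G$, we conclude $g \in G_{x_n,0}$, as claimed.

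The hard part will be pinning down the precise interpretation of the superscript $\lambda_n$ so that the above argument applies. The translation step requires only commutation with \emph{some} element of $\bA$ whose action on the apartment translates $x_0$ nontrivially along $\lambda_n$, which is immediate under the centralizer reading and also holds (inside $\bG(\nsplits)$) under the reading via $\lambda_n(\pi)$, for which $\lambda_n(\pi) \cdot x_0 = x_n$ directly. Should $G_{x_0,0}^{\lambda_n}$ instead denote only the fixed points under $\Ad(\lambda_n(\xi))$, direct commutation with a translating element is lost, and one would need a supplementary argument tracing the Moy--Prasad filtration and using the Kac-coordinate normalization $x_n \in \bar{C}$ to control the affine root subgroups on which $\lambda_n$ acts by a nontrivial multiple of $\ell_n$; in every case the final appeal to Haines--Rapoport is unchanged.
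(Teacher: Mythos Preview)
Your argument is essentially correct and follows the same idea as the paper, but there is one small slip: you assert that $g$ ``acts on $\AA'(\bA)$ by an affine isometry,'' yet $g \in G_{x_0,0}$ need not normalize $\bA$ and hence need not preserve this apartment. The fix is immediate: $g$ acts by an isometry on the full building $\BB^{\red}(G)$, which is CAT(0), so the unique geodesic between the two fixed points $x_0$ and $\lambda_n(\varpi)\cdot x_0 = x_0 + \lambda_n$ (namely the segment inside $\AA'(\bA)$) is fixed pointwise, and in particular $x_n$ is fixed. Your subsequent appeal to Haines--Rapoport is then valid.

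The paper's proof is shorter and avoids Haines--Rapoport. Rather than arguing ``$g$ fixes $x_n$, then apply $\kappa_G$,'' it notes that $k = \Ad(\lambda_n(\varpi))k \in G_{x_0+\lambda_n,0}$ directly, and then the standard convexity of parahorics along geodesics yields $k \in G_{x_0 + r\lambda_n,0}$ for all $r$, in particular for $r = 1/\ell_n$. Your route via fixed points and $\kappa_G$ is a legitimate alternative to this convexity step. Finally, your closing worries about the meaning of the superscript are unnecessary: in this section $G^{\lambda_n}$ is the centralizer of the cocharacter (compare the proof of Lemma~\ref{lem:leviandgenlevi}, where $M(F_n) = G^{\lambda_n}$), so your interpretation is the intended one.
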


\begin{proof}
If $k \in G_{x_0,0}^{\lambda_n}$, then $k \in G_{x_0 + r \lambda_n,0}$ for all $r \in \R$.   Thus $G_{x_0,0}^{\lambda_n} \leq G_{x_0 + \lambda_n/\ell_n,0} = G_{x_n,0}$.
\end{proof}

When $n$ is $\sigma$-elliptic, we revisit and sharpen this result in Section~\ref{sec:kaccoordinates}.

\subsection{Parameterizing maximal tame \texorpdfstring{$K$-tori of $\bG$}{K-tori of G}}

Recall that a $K$-torus $\bT$ of $\bG$ will be called $\emph{tame}$ provided that $\bT$ is $\tame$-split.   Note that if $\bT$ is tame, then every $G$-conjugate of $\bT$ is tame as well.   We let $\Ctame$ denote the set of $G$-conjugacy classes of tame maximal $K$-tori of $\bG$.

\begin{lemma} \label{lem:imageofphi}
The image of the injective map
 $$\varphi_\sigma \colon \Wtamesim \rightarrow \mathcal{C}$$
of Corollary~\ref{cor:welldefined}
 is $\Ctame$.
\end{lemma}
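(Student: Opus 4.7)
The plan is to prove the two containments separately. The inclusion $\varphi_\sigma(\Wtamesim) \subseteq \Ctame$ is immediate from the construction of Section~\ref{sec:3.3}: for any tame $n \in \titsW$ the torus $\bT_n = \lsup{g_n}\Ebtorus$ splits over the tame extension $\nsplits$ of $K$, so it is itself tame.

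For the reverse inclusion $\Ctame \subseteq \varphi_\sigma(\Wtamesim)$, I would start with an arbitrary tame maximal $K$-torus $\bT$ of $\bG$ and produce a tame $w \in \absW$ whose $\sigma$-conjugacy class is carried by $\varphi_\sigma$ to the $G$-class of $\bT$. Since $\bT$ and $\Ebtorus$ both split over $\tame$, there exists $g \in \bG(\tame)$ with $\bT = \lsup{g}\Ebtorus$; set $a = g\inv\sigma(g) \in N_{\bG(\tame)}(\Ebtorus)$ and let $w$ be its image in $\absW$. First I would check that $w$ is tame: the $\sigma$-action on $\X^*(\bT)$ has order prime to $p$ because $\bT$ is $\tame$-split, and conjugation by $g$ identifies this action with the action of $(w,\barsigma)$ on $\X^*(\Ebtorus)$. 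Consequently $(w,\barsigma)$ acts on $\X^*(\Ebtorus)$ with order prime to $p$, and the characterization of tameness recalled in this section yields that $w$ is tame.

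Next, fix any lift $n \in \titsW$ of $w$ (itself tame by the same characterization) and form the associated $g_n \in \bG(\tame)$ and $\bT_n$; set $b = g_n\inv\sigma(g_n)$, which also has image $w$ in $\absW$. The key computation is that the intertwiner $h := g_n g\inv \in \bG(\tame)$ satisfies
\[
 h\inv\sigma(h) \;=\; \lsup{g}(ba\inv) \;\in\; \lsup{g}\Ebtorus(\tame) \;=\; \bT(\tame),
\]
because $ba\inv$ has trivial image in $\absW$ and hence lies in $\Ebtorus(\tame)$. For any $\gamma \in T = \bT(K)$, the fact that $h\inv\sigma(h) \in \bT(\tame)$ centralizes $\bT$ gives $\sigma(\lsup{h}\gamma) = \lsup{h}\gamma$, so $\lsup{h}\gamma \in T_n$. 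Taking $\gamma$ strongly regular semisimple (available by unirationality of $\bT$) and applying Lemma~\ref{lem:gside} produces $k \in G$ with $\lsup{k}\gamma = \lsup{h}\gamma$; comparing centralizers then gives $\lsup{k}\bT = \bT_n$, so the $G$-class of $\bT$ equals the image under $\varphi_\sigma$ of the $\sigma$-class of $w$.

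The only step I expect to require real care is the verification that $w$ is tame, since it turns on correctly transporting the Galois action on $\X^*(\bT)$ back to a $(w,\barsigma)$-action on $\X^*(\Ebtorus)$ and reading off the order; once this is in hand, the rest is a clean Galois descent by a strongly regular semisimple element, closely mirroring the second half of the proof of Corollary~\ref{cor:welldefined}.
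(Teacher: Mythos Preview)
Your plan is correct and follows essentially the same route as the paper: establish the inclusion $\varphi_\sigma(\Wtamesim)\subseteq\Ctame$ by construction, then for any tame $\bT$ choose $g\in\bG(\tame)$ with $\bT=\lsup{g}\Ebtorus$, read off $w$, verify $w$ is tame, and conclude $\varphi_\sigma$ hits the $G$-class of $\bT$ (the last step you spell out explicitly, while the paper simply invokes the well-definedness established in Corollary~\ref{cor:welldefined}).

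The one place where your argument differs in substance is the verification that $w$ is tame. The paper chooses $g\in\bG(E)$ for the finite tame splitting field $E=\tame^{\sigma^j}$ and computes directly that
\[
(w\barsigma)^j \;=\; \bigl(\text{image of } g\inv\sigma(g)\cdot\sigma(g\inv\sigma(g))\cdots\sigma^{j-1}(g\inv\sigma(g))\bigr)\cdot\barsigma^j \;=\; \bigl(\text{image of } g\inv\sigma^j(g)\bigr)\cdot 1 \;=\; 1,
\]
so the order of $w\barsigma$ divides the prime-to-$p$ integer $j$. Your transport-of-structure argument instead deduces that $(w,\barsigma)$ acts on $\X^*(\Ebtorus)$ with prime-to-$p$ order; to conclude that the \emph{element} $(w,\tau_{\barsigma})\in\absW\rtimes\Aut(\Esimple)$ has prime-to-$p$ order you are implicitly using that this semidirect product acts faithfully on $\X^*(\Ebtorus)$ (equivalently, on $\Eroot$). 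That is true and standard, but it is an extra ingredient; the paper's telescoping computation sidesteps it entirely and is the cleaner route here.
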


\begin{proof}  By construction, the image of $\varphi_\sigma$  lies in $\Ctame$.  So, we only need to show that if $\dorbit \in \Ctame$, then there exists $c \in \Wtamesim$ such that $\varphi_\sigma(c) = \dorbit$.  Fix $\dorbit \in \Ctame$.   Choose $\bT \in \dorbit$.   Let $E \leq \tame$ be the Galois extension over which $\bT$ splits.   From Lemma~\ref{lem:tamealltheway} we have that $\Ebtorus$ splits over $E$, and so $\Esplits \leq E$.   Choose $g \in \bG(E)$ such that $\bT = \lsup{g} \Ebtorus$.   Let $w$ denote the image of $g\inv \sigma(g)$ in $\absW$.  If $E = \tame^{\sigma^j}$, then $\dabs{\Gal(\Esplits/K)}$ divides $j$ and $w^j$ is the image of
$$( g\inv \sigma(g))^{j} = (g\inv \sigma(g)) \sigma(g\inv \sigma(g)) \sigma^2 (g\inv \sigma(g)) \cdots \sigma^{(j-1)} (g\inv \sigma(g))= g\inv \sigma^{j}(g) = 1$$
in $\absW$.   Thus, the order of  $w \barsigma \in\absW \times \EGal$ divides $j$, hence $w$ is tame.  Let $c$ denote the $\sigma$-conjugacy class of $w$.   Note that $c \in \Wtamesim$ and $\varphi_\sigma (c) = \dorbit$.
\end{proof}

\begin{defn}  \label{defn:Ktame} We will say that $\bG$ is \emph{$K$-tame} provided that $\mathcal{C} = \Ctame$.
\end{defn}

\begin{cor}   \label{cor:CandW}
Recall that we are assuming that $\Esplits$ is a tame extension of $K$.
We have
$\bG$ is $K$-tame if and only if $\Wtame =\absW$.  Moreover, if $\bG$ is $K$-tame, then the map $\varphi_\sigma \colon\absW_{\sim_\sigma} \rightarrow \mathcal{C}$ is bijective.
\end{cor}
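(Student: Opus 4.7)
My plan is to split the statement into two parts. The ``moreover'' clause will follow once the main equivalence is established: Corollary~\ref{cor:welldefined} already gives injectivity of $\varphi_\sigma$ on $\Wtamesim$ and Lemma~\ref{lem:imageofphi} identifies its image as $\Ctame$, so if $\Wtame = \absW$ and $\Ctame = \mathcal{C}$ (the two consequences of $K$-tameness under the equivalence) then $\varphi_\sigma$ becomes a bijection $\absW_{\sim_\sigma} \to \mathcal{C}$. Hence the substance is the first equivalence, which by the Remark following Definition~\ref{defn:nsplits} is the assertion ``$\bG$ is $K$-tame if and only if $p \nmid |\absW|$''.

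For the direction $p \nmid |\absW| \Rightarrow \bG$ is $K$-tame, I would take any maximal $K$-torus $\bT$ of $\bG$ and let $E/K$ be its finite Galois splitting field. By Lemma~\ref{lem:tamealltheway} applied to the $K$-quasi-split group $\bG$ one has $\Esplits \subseteq E$, so I can choose $g \in \bG(E)$ with $g\Ebtorus g\inv = \bT$. The assignment $\gamma \mapsto w_\gamma$, where $w_\gamma \in \absW$ is the image of $g\inv\gamma(g) \in N_{\bG(E)}(\Ebtorus)$, is a $1$-cocycle on $\Gal(E/K)$ relative to the natural action on $\absW$, which factors through $\EGal$. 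Restricting to the wild inertia subgroup $P \trianglelefteq \Gal(E/K)$ (which maps trivially to $\EGal$, since $\Esplits \subset \tame$), the map $w|_P$ becomes an honest group homomorphism from a pro-$p$ group into $\absW$. Since $p \nmid |\absW|$ this image is trivial, and tracing the Galois action on $\X_*(\bT) \cong \X_*(\Ebtorus)$ through the cocycle shows that $P$ acts trivially on $\X_*(\bT)$. Consequently $\bT$ splits over the maximal tame subextension of $E/K$, so $\bT$ is tame.

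For the converse I argue the contrapositive. Suppose $p \mid |\absW|$ and pick $w \in \absW$ of order $p$ via Cauchy's theorem. Since $K$ is complete, strictly Henselian, and has residue characteristic $p > 0$, it admits a cyclic totally wildly ramified extension $E/K$ of degree $p$ (by Artin--Schreier when $\mathrm{char}\,K = p$, and a Kummer-type construction otherwise). Fixing a generator $\tau$ of $\Gal(E/K)$, the assignment $\tau \mapsto w$ defines a continuous homomorphism $\phi \colon \Gal(E/K) \to \absW$, which is automatically a $1$-cocycle since $\Gal(E/K)$ maps trivially to $\EGal$ and so acts trivially on $\absW$. From the long exact sequence attached to $1 \to \Ebtorus \to N_{\bG}(\Ebtorus) \to \absW \to 1$, combined with $\coho^1(K,\Ebtorus) = \coho^2(K,\Ebtorus) = 0$ (cohomological dimension $\leq 1$), one obtains $\coho^1(K, N_{\bG}(\Ebtorus)) \cong \coho^1(K,\absW)$, so $\phi$ lifts to a class in $\coho^1(K, N_{\bG}(\Ebtorus))$ parameterizing a $\bG(K)$-conjugacy class of maximal $K$-tori. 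The resulting torus has splitting field containing the wild extension $E$ and so is not tame, contradicting $K$-tameness of $\bG$.

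The principal obstacle is this converse direction: converting an order-$p$ element of $\absW$ into an actual wild maximal $K$-torus in $\bG$. This requires both the existence of wild cyclic degree-$p$ extensions of $K$ (standard in residue characteristic $p > 0$) and the non-abelian cohomological lifting of a $\absW$-valued cocycle to $N_{\bG}(\Ebtorus)$, which is enabled by the cohomological-dimension hypothesis.
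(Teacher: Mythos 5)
Your forward direction ($\Wtame = \absW$ implies every maximal $K$-torus is tame) and your derivation of the ``moreover'' clause are correct and are essentially the paper's argument, recast as an explicit cocycle computation over $K$. The genuine gap is in the converse. You define $\phi$ only on $\Gal(E/K)$ and then inflate it to a class in $\coho^1(K,\absW)$; but the action of $\Gal(\bar{k}/K)$ on $\absW$ factors through $\EGal = \Gal(\Esplits/K)$, and since $E$ and $\Esplits$ are linearly disjoint over $K$, the group $\Gal(\bar{k}/E)$ still surjects onto $\EGal$, so inflation requires $\phi$ to take values in $\absW^{\Gal(\bar{k}/E)} = \absW^{\tau_{\barsigma}}$. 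Thus, unless $\bG$ is $K$-split, your order-$p$ element $w$ must be fixed by $\tau_{\barsigma}$ (equivalently, working on $\Gal(E\Esplits/K)\cong \Gal(E/K)\times\EGal$, a cocycle whose tame component is trivial forces $\barsigma(w)=w$), and such a $w$ need not exist even when $p$ divides $|\absW|$: for type ${}^{2}A_3$ --- ramified $\SU_4$, where $\Esplits/K$ is quadratic ramified and tame for $p=3$ --- one has $|\absW|=24$, but $\absW^{\tau_{\barsigma}}$ is the centralizer of $w_0$ in $S_4$, of order $8$, which contains no element of order $3$. The corollary is still true in that case (one can use a cocycle whose value on a generator of the tame factor is $w_0$, which twists $\barsigma$ into the identity), but your construction as written breaks down precisely in the non-$K$-split case. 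This is why the paper, after reducing to the $k$-quasi-split situation and handling the quasi-finite case by citation, delegates this direction to Part 1 of Fintzen's Corollary~2.6, which is exactly a careful version of the construction you are attempting.

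Two smaller points need repair even in the split case. First, a class in $\coho^1(K, N_{\bG}(\Ebtorus))$ corresponds to a maximal $K$-torus of $\bG$ only when its image in $\coho^1(K,\bG)$ is trivial; this does hold here because $\coho^1(K,\bG)$ vanishes for connected reductive groups over the strictly Henselian field $K$, but it is a step you must invoke, and what the vanishing of $\coho^2$ of (every twisted form of) $\Ebtorus$ actually yields is surjectivity of $\coho^1(K, N_{\bG}(\Ebtorus)) \rightarrow \coho^1(K,\absW)$ as pointed sets via twisting, not an isomorphism. Second, in mixed characteristic one has $\mu_p \not\subset K$ in general, so your degree-$p$ cyclic wildly ramified extension is not produced by Kummer theory; it does exist (for instance, base-change to $K$ the degree-$p$ subfield of $\Q_p(\mu_{p^2})$, or use Artin--Schreier in equal characteristic, as you say), so this is only a matter of justification rather than a fatal flaw.
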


\begin{proof}
From Lemma~\ref{lem:imageofphi} it is enough to prove that $\bG$ is $K$-tame if and only if $\Wtame =\absW$.

If $K$ is quasi-finite, then this is~\cite[Lemma~4.2.1]{debacker:unramified}.   Thus, for the remainder of the proof we may and do assume that $\ff$ is finite.

We first show that we may assume that $\bG$  is $k$-quasi-split. Indeed, $\bG$ is an inner form of a $k$-quasi-split group $\bG^*$.  This means there is a $\bar{k}$-isomorphism $\varphi \colon \bG^* \rightarrow \bG$ such that for all $\gamma \in   \Gal(\bar{k}/k)$ there exists $g_\gamma \in \bG^*(\bar{k})$ such that  $c_{\gamma} = \varphi \inv \circ \gamma \circ \varphi = \Ad(g_\gamma) \in \bG^*_{\ad}(\bar{k})$.  Since $K$ has cohomological dimension $\leq 1$, we may assume that $c_\gamma  = 1 \in G^*_{\ad}$  for all $\gamma \in  I = \Gal(\bar{k}/K)$.   Thus $\gamma(\varphi) = \varphi$ for all $\gamma \in I$ and  we conclude that $\varphi \colon \bG^* \rightarrow \bG$ is a $K$-isomorphism that identifies $G^*$ with $G$.   Thus, without loss of generality, we may replace $\bG$ with $\bG^*$ and assume $\bG$ is $k$-quasi-split.

Suppose $\absW = \Wtame$.   Suppose $\bT \in \dorbit \in \mathcal{C}$.  The action of $\Gal(\bar{k}/k)$ on $\bT$ factors through $\absW \rtimes \Aut(\Esimple) \cong N_{\bG}(\bT)/\bT \rtimes \Aut(\Esimple)$.  Since $\absW = \Wtame$, we conclude that $p$ does not divide the order of $\absW \rtimes \Aut(\Esimple)$, hence $\Gal(\bar{k}/\tame)$ acts trivially on $\bT$.  That is, $\bT$ splits over a tame extension of $K$ and $\dorbit \in \Ctame$.

Suppose $\mathcal{C} = \Ctame$.    It will be enough to show that if $p \mid \dabs{\Wtame}$, then there is a $K$-torus in
$\bG$  that doesn't split over $\tame$.    Since we are assuming that $\bG$ is  $k$-quasi-split, this follows from Part 1. of~\cite[Corollary~2.6]{fintzen:tame}.
\end{proof}

\begin{rem}
Suppose $\ff$ is a finite field.   In ~\cite[Theorem 2.4]{fintzen:tame} Fintzen explores the extent to which the condition $\Wtame = \absW$ can be relaxed when we are interested in maximal $k$-tori rather than maximal $K$-tori.   One can think of this as asking: when does the $G$-orbit of a maximal $K$-torus of $\bG$ that splits over a tame extension of $K$ contain a maximal $k$-torus of $\bG$?   This question is addressed in Sections~\ref{sec:existenceKktori} and~\ref{sec:existenceintame}.
\end{rem}

\begin{remark}   \label{rem:oncohomology} Suppose $\ff$ is a finite field.
In the notation of the proof of Corollary~\ref{cor:CandW} we have for all $\gamma \in \Gal(\bar{k}/K)$ that
$$\Ad(g_{\Fr}) = c_{\Fr} = \varphi\inv \circ \Fr \circ ( \varphi \circ \Ad({g}_{\Fr\inv \gamma  \Fr})) = c_{\gamma \Fr} = c_{\gamma} \gamma(c_{\Fr}) = \Ad({g}_\gamma) \circ \gamma(\Ad({g}_\Fr)) = \gamma(\Ad({g}_{\Fr})).$$
Here we are using that $\Fr$ normalizes $\Gal(\bar{k}/K)$ and $\Ad(g_\gamma) = 1 \in G^*_{\ad}$ for all $\gamma \in \Gal(\bar{k}/K)$.   Thus we have $\Ad({g}_{\Fr}) \in G^*_{\ad}$.   Moreover, as groups with $\Gal(K/k)$-action we can identify $G$ with $G^*$ where $\Fr$ acts on $G^*$ by $\Ad(g_\Fr) \circ \Fr$.   Since $\ff$ is finite, there is an alcove $D$ in $\BB(G^*) = \BB(G)$ that is $\Ad(g_\Fr) \circ \Fr$-stable.   Choose $h \in G^*$ such that $h C = D$.  Since $c_\Fr \in \cohom^1(\Fr,G^*_{\ad})$ is cohomologous to $\Ad(h\inv) c_\Fr \Ad(\Fr (h))$, we may replace $\Ad(g_\Fr)$ with $\Ad(h\inv) \Ad(g_\Fr) \Ad(\Fr(h))$ and assume $\Ad(g_\Fr)$ stabilizes $C$.   That is, at the level of $K$-points all inner forms of $\bG$ can be thought of as $G^*$ with $\Fr$ acting by  $\Ad(g) \circ \Fr$ for some $\Ad(g) \in \Stab_{G^*_{\ad}}(C)$.  Moreover, one can show that for every $\Ad(g) \in \Stab_{G^*_{\ad}}(C)$, the group $G^*$ with $\Fr$ acting by  $\Ad(g) \circ \Fr$ defines an inner form of $\bG$.   Since, by Lang-Steinberg, we have $(G_{\ad})_{C,0} = (1 - \Ad(g) \Fr)(G_{\ad})_{C,0}$, it follows that every element of the coset $\Ad(g) (G_{\ad})_{C,0}$ is cohomologous to $\Ad(g)$.  So, for example, $\Ad(g)$ is cohomologous to an element of $N_{G_{\ad}}(\bA)$: since $C \subset \AA(\Ad(g) \cdot A)$, there exists $\Ad(k) \in (G_{\ad})_{C,0}$ such that $(\Ad(g)\Ad(k)) \cdot \bA = \bA$.
\end{remark}

\subsection{Kac coordinates} \label{sec:kaccoordinates}
As discussed in Remark~\ref{rem:allaboutkac} the
 element $\lambda_n \in \X_*(\bA)$ encodes  \emph{Kac coordinates} of the image of $n \sigma $ in $\Aut(\gg)$.  We  illustrate how this works for an $n$ that is $\sigma$-elliptic.

 The point $x_n = x_0 + \lambda_n/\ell_n = \lambda_n(\pi) \cdot x_0$ is an element of $\AA'(A)$ and so there exists  $d \in N_G(\bA)$ such that $y_n := d \cdot x_n \in \bar{C}'$.   Note that $\psi(x_n) = \psi(x_0) + \langle \dot{\psi}, \lambda_n \rangle / \ell_n \in \Q$ for all $\psi \in \Psi$.  Thus, it makes sense to define  $j_n$ to be the least natural number for which $\psi(x_n) \in \frac{1}{j_n} \Z$ for all $\psi \in \Psi$.  The $\dabs{\Delta}$-tuple $(j_n \cdot \psi(y_n) \, | \, \psi \in \Delta)$ defines \emph{Kac coordinates} of $n$.  If the point $x_n$ belongs to the closure of $C'$, then the coordinates are called \emph{normalized Kac coordinates}.   Tables of some Kac coordinates (or, equivalently, Kac diagrams) and values of $j_n$ may be found in~\cite{adams-he-nie:from}, \cite{Bouwknegt:Lie}, \cite{Levy:KW}, ~\cite{reeder:thomae}, \cite{reederetal:gradings}, and \cite{reederetal:epipelagic}.  An exhaustive list of Kac diagrams for $\sigma$-elliptic Weyl group elements may be produced by combining the results of~\cite[\S9]{adams-he-nie:from} and~\cite[\S4]{debacker-haley:kac}.

 Thanks to Lemmas~\ref{lem:card} and~\ref{lem:elliptictoweyl},   since $n$ is  $\sigma$-elliptic we have that $x_n$ is uniquely determined, up to the action of $N_{G}(\bA)$,  by $j_n$ and the Kac coordinates of $n$.  In fact,  we can do better as the following lemma shows (see also~\cite[Lemma~6.4]{adams-he-nie:from}).

 \begin{lemma} \label{lem:kacinbarC} Suppose $n \in \titsW$ is tame and $\sigma$-elliptic.  We can choose  $\lambda_n \in \X_*(\bG' \cap \bA)$ such that $x_n = x_0 + \lambda_n/\ell_n \in \bar{C}'$.   In fact, this uniquely identifies $\lambda_n$.
 \end{lemma}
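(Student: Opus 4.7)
The plan is to establish existence and uniqueness of $\lambda_n$ separately, using the fact (from Lemmas~\ref{lem:card} and~\ref{lem:elliptictoweyl}) that $x_{T_n} = x_n$ and that $G \cdot x_n$ meets $\bar{C}'$ in a unique point.

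For existence, start with $\lambda \in \X_*(\bA)$ produced by Lemma~\ref{lem:Asharpdone}, so that $\lambda(\xi) = h\inv n \sigma(h)$ for some $h \in \Egroup_{x_0,0}$.  By the Remark following Lemma~\ref{lem:thepoint2}, we may arrange $h, n \in \Egroup'$, placing $\lambda(\xi)$ in $(\bG' \cap \bA)(K)$.  To promote $\lambda$ into the sublattice $\X_*(\bG' \cap \bA) \subseteq \X_*(\bA)$, consider the $K$-split quotient torus $\bA/(\bG' \cap \bA)^\circ$.  The associated short exact sequence of free abelian cocharacter lattices
\[
0 \longrightarrow \X_*(\bG' \cap \bA) \longrightarrow \X_*(\bA) \longrightarrow \X_*(\bA/(\bG' \cap \bA)^\circ) \longrightarrow 0
\]
splits; the image $\bar{\lambda}$ of $\lambda$ in the rightmost term satisfies $\bar{\lambda}(\xi) = 1$, which forces $\bar{\lambda}$ into $\ell_n$ times the lattice (since the quotient is split and $\xi$ is a primitive $\ell_n^{\mathrm{th}}$ root of unity).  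Using the splitting, subtract an appropriate element of $\ell_n \X_*(\bA)$ from $\lambda$ --- which does not alter $\lambda(\xi)$, as $\mu(\xi) = 1$ for $\mu \in \ell_n \X_*(\bA)$ --- to arrange $\lambda \in \X_*(\bG' \cap \bA)$.

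Next, bring $x_n := x_0 + \lambda/\ell_n$ into $\bar{C}'$.  By Remark~\ref{rem:indofisogeny} the $G$-orbit of $x_n$ equals its $G_{\scon}$-orbit, and by Lemma~\ref{lem:card} this orbit meets $\bar{C}'$ in a unique point $y$.  The action of $G_{\scon}$ on $\AA'(A)$ factors through the affine Weyl group, whose rotational component lies in $\absW$ and whose translation lattice is the coroot lattice $Q^\vee \subseteq \X_*(\bG' \cap \bA)$, with $\bar{C}'$ a strict fundamental domain.  Writing $y = w(x_n) + q$ for $w$ in the (relative) Weyl group and $q \in Q^\vee$, and using that $x_0$ is fixed by $\absW$, we obtain $y = x_0 + (w\lambda + \ell_n q)/\ell_n$.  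Setting $\lambda_n := w\lambda + \ell_n q$ yields an element of $\X_*(\bG' \cap \bA)$ (both $w\lambda$ and $\ell_n q$ are, since $\absW$ stabilizes $\X_*(\bG' \cap \bA)$ and $Q^\vee \subseteq \X_*(\bG' \cap \bA)$) with $x_0 + \lambda_n/\ell_n = y \in \bar{C}'$.

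For uniqueness, suppose $\lambda_n, \lambda_n' \in \X_*(\bG' \cap \bA)$ both arise from the construction and place their points in $\bar{C}'$.  By Corollary~\ref{cor:welldefined}, all tori $\bT_n$ coming from the construction are $G$-conjugate, so both $x_0 + \lambda_n/\ell_n$ and $x_0 + \lambda_n'/\ell_n$ lie in the single $G$-orbit $G \cdot x_{T_n}$.  Lemma~\ref{lem:card} forces equality of these two points, and since $\X_*(\bG' \cap \bA)$ injects into the vector space underlying $\AA'(A)$ (having trivial intersection with the span of central cocharacters, which is the kernel of the projection to the reduced apartment), we conclude $\lambda_n = \lambda_n'$.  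The main technical hurdle is the first step --- lifting $\lambda$ into $\X_*(\bG' \cap \bA)$ without disturbing the identity $\lambda(\xi) = h\inv n \sigma(h)$ --- which hinges on the ability, furnished by the Remark after Lemma~\ref{lem:thepoint2}, to realize the whole construction inside $\bG'$.
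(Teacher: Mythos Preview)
Your argument is essentially correct and close in spirit to the paper's, but one step should be tightened.  When you replace $\lambda$ by $\lambda_n := w\lambda + \ell_n q$, you do not check that this $\lambda_n$ still arises from the construction of Section~\ref{sec:markssec} --- i.e., that there exists $h' \in \Egroup_{x_0,0}$ with $(h')^{-1} n \sigma(h') = \lambda_n(\xi)$.  (You verify this carefully for the first modification but not the second.)  The fix is immediate: lift $w$ to $m \in N_G(\bA) \cap G_{x_0,0}$, possible since $x_0$ is special, and set $h' = hm^{-1}$; then $(h')^{-1} n \sigma(h') = m\,\lambda(\xi)\, m^{-1} = (w\lambda)(\xi) = \lambda_n(\xi)$, the last step using $(\ell_n q)(\xi) = 1$.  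The paper's proof makes exactly this substitution explicit.

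The two routes differ mainly in organization.  The paper first moves $x_n$ into $\bar{C}'$ via an element $d = am \in N_G(\Ebtorus)$ with $m \in \titsW$ (absorbed into $h$ by replacing $h$ with $hm^{-1}$) and $a$ acting as a translation by some $\mu$ (absorbed into $\lambda_n$ as the $\ell_n$-multiple $\ell_n\mu$); it then handles the $\bG'$-condition in a single sentence, by remarking that the whole construction of Section~\ref{sec:markssec} can be run inside $\bG'$.  Your route reverses the order and is more explicit about the $\bG'$-step, via the lattice-splitting argument, at the cost of a little extra work.  Your uniqueness argument likewise spells out the injection of $\X_*(\bG' \cap \bA)$ into the vector space underlying the reduced apartment, which the paper leaves implicit.
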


 \begin{proof}
 Choose $d \in N_G(\Ebtorus)$ such that $y_n = d \cdot x_n \in \bar{C}'$.
Write $d =  am $ where $ m \in  N_{G_{x_0,0}}(\bA)$ and $a \in \EbtorusKrat$.
 The element $a$ acts on $\AA'(A)$ by translation, say $\mu \in \X_*(\bA)$.  Then $y_n = am \cdot x_n = x_0 + \mu + m \cdot \lambda_n/\ell_n$.   After replacing the $h$ and  $\lambda_n$ that are used to define $g_n$ at the start of Section~\ref{sec:markssec} by $hm\inv$ and ${m} \cdot \lambda_n$, we may assume that $y_n = x_0 + (\ell_n \mu + \lambda_n)/\ell_n$.   Since $(\ell_n \mu + \lambda_n)(\xi) = \lambda_n(\xi)$, we may also replace $\lambda_n$ with $\ell_n \mu + \lambda_n$.     Thus, $y_n = x_n = x_0 + \lambda_n/\ell_n \in \bar{C}'$.

 Since $\titsW$ is in  $\Egroup_{x_0,0} \cap \Egroup' \leq \Egroup'$, we can carry out all of the proofs involving $\titsW$, $\lambda_n$, etc. inside of $\bG'$.  In particular, we can assume $\lambda_n \in \X_*(\bG' \cap \bA)$.   The statement about uniqueness follows immediately from the fact that since $n$ is $\sigma$-elliptic, we have that the $G$-orbit of $x_n$ intersects $\bar{C}'$ exactly once.
\end{proof}

When $\lambda_n$ is chosen such that  $x_n = x_0 + \lambda_n /\ell_n \in \bar{C}'$, it is 
often, but not always, true that we can improve on Lemma~\ref{lem:tobesharpened} by identifying $\sfG_{x_n}$ with $\sfG_{x_0}^{\lambda_n}$.  An example where we can't identify them is
discussed in  Example~\ref{ex:tameG2}: for the group $\Gtwo$ and the element  $-1 \in A_1 \times \tilde{A}_1$ in the Weyl group we have $\sfG_{x_{n}}$ is isomorphic to $\SO_4$ while $\sfG_{x_0}^{\lambda_{n}}$ is isomorphic to $\GL_2$.     In Lemma~\ref{lem:leviandgenlevi} we discuss conditions under which we can identify $\sfG_{x_n}$ with $\sfG_{x_0}^{\lambda_n}$.

\begin{defn}
 For a facet $F$ in $\AA'(A)$,  let $\bM(F)$ denote
the Levi $K$-subgroup  generated by $\Ebtorus$ and  the root groups $\bU_{\alpha}$  where $\alpha$ runs over those roots in $\Phi(\bG,\Ebtorus)$ for which there exists  $\psi \in \Psi$ such that $\dot{\psi} = \res_{\bA} \alpha$ and $\psi$ is constant on $F$.
\end{defn}

\begin{lemma}   \label{lem:leviandgenlevi}
Suppose $n \in \titsW$ is tame and $\sigma$-elliptic.  Choose $\lambda_n$ such that $x_n = x_0 + \lambda_n/\ell_n \in \bar{C}'$.
 Let $F_n$ denote the facet in $\AA'(A)$ to which $x_n$ belongs. If $F_n$  contains $x_0$ in its closure,
then we can identify  $\sfG_{x_n}$ with $\sfG_{x_0}^{\lambda_n}$ via the equality $G_{x_0,0}^{\lambda_n} = M(F_n)_{x_n,0}$.
\end{lemma}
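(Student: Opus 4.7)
The plan is to use the closure hypothesis $x_0 \in \overline{F_n}$ to identify $\bM(F_n)$ with the Levi $C_{\bG}(\lambda_n)$, and then to observe that the Bruhat--Tits filtration on this Levi is insensitive to translation by $\lambda_n$, so that its parahoric at $x_0$ and at $x_n$ coincide.  First I would pin down the roots of $\bM(F_n)$: a root $\alpha \in \Phi(\bG,\Ebtorus)$ contributes its root group to $\bM(F_n)$ exactly when $\res_{\bA}\alpha$ is constant on $F_n$ with integer value (so that an appropriate shift lies in $\Psi$).  Since $F_0$ is absolutely special, $\alpha(x_0) \in \Z$, and the closure hypothesis together with the affine linearity of $\res_{\bA}\alpha$ forces the constant on $F_n$ to equal $\alpha(x_0)$; since $x_n = x_0 + \lambda_n/\ell_n$ with $\lambda_n \in \X_*(\bA)$, this reduces to the numerical condition $\langle \lambda_n,\alpha\rangle = 0$, and conversely that condition together with $\alpha(x_0) \in \Z$ exhibits an affine root in $\Psi$ constant on $F_n$.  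Hence $\bM(F_n) = C_{\bG}(\lambda_n)$ as Levi $K$-subgroups of $\bG$.

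Next I would compare the parahorics of this Levi at $x_0$ and at $x_n$.  For any $\psi \in \Psi$ whose gradient is the restriction of a root of $\bM(F_n)$, one has $\psi(x_n) - \psi(x_0) = \langle \dot\psi,\lambda_n\rangle/\ell_n = 0$; combined with the fact that $\Ebtorus \cap G_{x,0}$ does not depend on the chosen point $x$ in the apartment, this yields $\bM(F_n) \cap G_{x_0,0} = \bM(F_n) \cap G_{x_n,0}$.  On the other hand, $G_{x_0,0}^{\lambda_n}$ is the subgroup of $G_{x_0,0}$ centralized by $\lambda_n$ (the interpretation used implicitly in Lemma~\ref{lem:tobesharpened}), so $G_{x_0,0}^{\lambda_n} = C_{\bG}(\lambda_n)\cap G_{x_0,0} = \bM(F_n)\cap G_{x_0,0}$.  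Combining the two identities gives the desired equality $G_{x_0,0}^{\lambda_n} = M(F_n)_{x_n,0}$, and compatibility with the isomorphism $\bfG_{x_0}^{\lambda_n(\xi)} \xrightarrow{\sim} \bfG_{x_n}$ induced by $\Ad(\lambda_n(\pi))$ in the preceding lemma is automatic, since $\lambda_n(\pi)$ centralizes $\bM(F_n) = C_{\bG}(\lambda_n)$.

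I expect the main obstacle to be a careful accounting of what the closure hypothesis actually buys: one must verify both that it forces the integrality of the constant value taken by $\res_{\bA}\alpha$ on $F_n$ (so that the requisite affine root in $\Psi$ genuinely exists, and we really get the equality $\bM(F_n) = C_{\bG}(\lambda_n)$ rather than a strict containment), and that the intersection $\bM(F_n)\cap G_{x,0}$ coincides with the Bruhat--Tits parahoric $M(F_n)_{x,0}$ of the Levi $\bM(F_n)$ without any Kottwitz-component discrepancy.  The $\Gtwo$ example flagged just before the lemma shows that without the hypothesis $x_0 \in \overline{F_n}$ one can have $\bM(F_n) \supsetneq C_{\bG}(\lambda_n)$ (and $\SO_4$ is indeed larger than $\GL_2$), so the role of the hypothesis is precisely to eliminate that defect.
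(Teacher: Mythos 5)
Your proposal is correct and takes essentially the same route as the paper: both rest on identifying $\bM(F_n)$ with $C_{\bG}(\lambda_n) = \bG^{\lambda_n}$ using the hypothesis $x_0 \in \bar{F}_n$ (the paper phrases this via the ray $x_0 + r\lambda_n$ meeting $F_n$ in an open segment, you via evaluating affine roots at $x_0$ and $x_n$), and then on the observation that the filtration of this Levi does not change between $x_0$ and $x_n$. Your write-up simply makes explicit two points the paper compresses into ``and so'' --- the use of the (absolute) specialness of $x_0$ for the inclusion $\bG^{\lambda_n} \subseteq \bM(F_n)$, and the standard compatibility $\bM(F_n) \cap G_{x,0} = M(F_n)_{x,0}$ --- both of which are correct and appropriately flagged.
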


\begin{proof}
If $x_0 = \bar{F}_n$, then since $x_0 = x_0 + \lambda_n$, we conclude that $\lambda_n$ is central.  The result follows.

Suppose $x_0 \neq \bar{F}_n$. Since $F_n$ contains $x_n = x_0 + \lambda_n/\ell_n$ we conclude that the intersection of the ray $\vec{r} = \{x_0 + r \lambda_n  \, | \, r \geq 0 \}$ with $F_n$ contains an open segment of $\vec{r}$.
 Thus $M(F_n) = G^{\lambda_n}$ and so $M(F_n)_{x_n,0}  = G_{x_0,0}^{\lambda_n}$.      Since the reductive quotient of $M(F_n)_{x_n,0}$ is $\bfG_{x_n}$, the result follows.
\end{proof}

 \subsection{Examples}
 We illustrate the results of this section by looking at a number of examples.   Since the tori of interest depend on the $\sigma$-conjugacy  class of the image of $n$ in $\absW$ (rather than $n$), below we label the points and tori that occur  by their corresponding $\sigma$-conjugacy  class in $\absW$.   Also, recall that when illustrating the various $x_T$ that arise,  the isogeny class of $\bG'$ doesn't matter (see Remark~\ref{rem:indofisogeny}).

\begin{example}  Suppose $\bG$ has $\Phi \cong \Eroot \cong A_{n-1}$ and either $\ff$ has characteristic zero or $p$ does not divide $n$.  Fix a basis   $\Pi = \{ \alpha_1,  \alpha_2, \ldots ,  \alpha_{n-1} \}$ for the root system of $\bG$ with respect to the torus $\bA$.  Let $w = \prod_{i=1}^{n-1}  w_i$ denote a Coxeter element where $w_i$ is the simple reflection in $\absW$ corresponding to $\alpha_i \in \Pi$.  Let $c$ denote the $\absW$-conjugacy class of $w$ in $\absW$ and let $\dorbit_c$ denote the corresponding $G$-conjugacy class of $K$-minisotropic maximal $K$-tori in $\bG$.  The barycenter of $C'$ is the  unique point in the alcove $C'$  corresponding to $\dorbit_c$.
\end{example}

\begin{example} \label{ex:tamesp4} Suppose either $\ff$ has characteristic zero or $p > 2$ and $\bG$ has type $C_2$.  Without loss of generality we assume $\bG$ is $\Sp_4$.
For each $\Sp_4(K)$-conjugacy class of  $K$-anisotropic maximal tori in $\Sp_4(K)$ we identify, in Figure~\ref{fig:Sp4embeddings}, the unique point in an alcove corresponding to that class.

Fix a basis $\Pi = \{ \alpha, \beta \}$ for the root system of $\Sp_4$ with respect to the torus $\bA$.  Assume $\alpha$ is the short root. The  (absolutely) special vertex $x_0$ of $\Sp_4(K)$ in Figure~\ref{fig:Sp4embeddings} lies on hyperplanes defined by affine roots with gradients $\alpha$ and $\beta$, and the pictured alcove is the unique alcove that contains $x_0$ in its closure and on which these affine roots are positive.   If $w_\alpha$ and $w_\beta$ denote the simple reflections corresponding to $\alpha$ and $\beta$, then the two elliptic conjugacy classes in the Weyl group are $C_2 = \{w_\alpha w_\beta, w_\beta w_\alpha \}$ and $-1 \in A_1 \times {A}_1 = \{w_\alpha w_\beta w_\alpha w_\beta \}$.   Let $\dorbit_{C_2}$ and $\dorbit_{-1}$ denote the corresponding $\Sp_4(K)$-conjugacy classes of $K$-minisotropic maximal $K$-tori in $\Sp_4$.  Since the square of every element in $C_2$ is $-1$,  we can read off the unique points in the alcove that correspond to $\dorbit_{C_2}$ and $\dorbit_{-1}$ from~\cite[Section 8.2.2]{reederetal:gradings}.
For $\dorbit_{C_2}$ the point is   $x_{C_2} =
x_0 + (3 \check{\alpha} + 4 \check{\beta})/8 $, and for $\dorbit_{-1}$ the point is $x_{-1} =
x_0 + (\check{\alpha} + 2 \check{\beta})/4$.

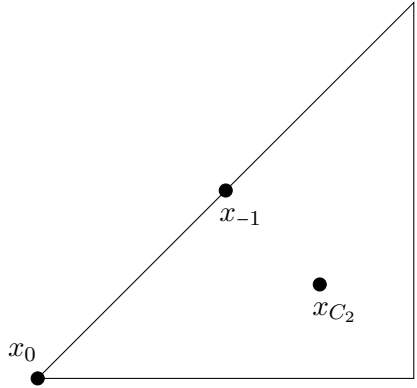
\begin{figure}[ht]
\centering
\begin{tikzpicture}
\draw (0,0) 
  -- (5,0)
  -- (5,5)
  -- cycle;
 
  \draw[black,fill=black] (0,0) circle (.5ex);
\draw(-.2,.1) node[anchor=south]{$x_0$};

  \draw[black,fill=black] (3.75,1.25) circle (.5ex);
\draw(3.95,1.15) node[anchor=north]{$x_{C_2}$};

  \draw[black,fill=black] (2.5,2.5) circle (.5ex);
\draw(2.7,2.4) node[anchor=north]{$x_{-1}$};

\end{tikzpicture}
\caption{The embeddings of buildings of tame $K$-minisotropic maximal tori in  type $C_2$ \label{fig:Sp4embeddings}}
\end{figure}

\end{example}

\begin{example}   \label{ex:tameG2}
Suppose either $\ff$ has characteristic zero or $p > 3$.  For each $\Gtwo(K)$-conjugacy class of  $K$-anisotropic maximal tori in $\Gtwo$ we identify, in Figure~\ref{fig:g2embeddings}, the unique point in an alcove corresponding to that class.

Fix a basis $\Pi = \{ \alpha, \beta \}$ for the root system of $\Gtwo$ with respect to the torus $\bA$.  Assume $\alpha$ is the short root. The  special vertex $x_0$ of $\Gtwo(K)$ in Figure~\ref{fig:g2embeddings} lies on hyperplanes defined by affine roots with gradients $\alpha$ and $\beta$, and the pictured alcove is the unique alcove that contains $x_0$ in its closure and on which these affine roots are positive.   If $w_\alpha$ and $w_\beta$ denote the simple reflections corresponding to $\alpha$ and $\beta$, then the three elliptic conjugacy classes in the Weyl group are $G_2 = \{w_\alpha w_\beta, w_\beta w_\alpha \}$, $A_2 = \{w_\alpha w_\beta w_\alpha w_\beta  , w_\beta w_\alpha w_\beta  w_\alpha \}$ and $-1 \in A_1 \times \tilde{A}_1 = \{w_\alpha w_\beta w_\alpha w_\beta  w_\alpha w_\beta\}$.   Let $\dorbit_{G_2}$, $\dorbit_{A_2}$, and $\dorbit_{-1}$ denote the corresponding $\Gtwo(K)$-conjugacy classes of $K$-minisotropic maximal $K$-tori in $\Gtwo$.   From~\cite[Section 5.1]{reederetal:epipelagic} the unique points in the alcove that correspond to $\dorbit_{G_2}$, $\dorbit_{A_2}$, and $\dorbit_{-1}$  are: $x_{G_2} =
x_0 + (3 \check{\alpha} + 5 \check{\beta})/6 $; $x_{A_2} = 
x_0 + (\check{\alpha} + 2 \check{\beta})/3$; and $x_{-1} =
x_0 + (\check{\alpha} + 2 \check{\beta})/2$.

\begin{figure}[ht]
\centering
\begin{tikzpicture}

\draw (0,0)
  -- (4.04,7)
  -- (0,7)
  -- cycle;

   \draw[black,fill=black] (0,0) circle (.5ex);
\draw(-.4,-.2) node[anchor=south]{$x_0$};

  \draw[black,fill=black] (0,7) circle (.5ex);
\draw(.45,6.9) node[anchor=north]{$x_{-1}$};

  \draw[black,fill=black] (0,4.67) circle (.5ex);
\draw(.4,4.57) node[anchor=north]{$x_{A_2}$};

  \draw[black,fill=black] (2.02,5.83) circle (.5ex);
\draw(2.32,5.73) node[anchor=north]{$x_{G_2}$};

\end{tikzpicture}
\caption{The embeddings of buildings of tame $K$-minisotropic maximal tori in  $\Gtwo$ \label{fig:g2embeddings}}
\end{figure}

\end{example}

\begin{example}  \label{ex:ramsu3}
 Suppose  $p > 3$ and $\bG$ has type $\lsup{2}A_2$.  Without loss of generality we assume $\bG$ is simply connected.
For each $G$-conjugacy class of  $K$-anisotropic maximal  tori in a ramified $\SU(3)$ we identify, in Figure~\ref{fig:su3embeddings}, the unique point in an alcove corresponding to that class.
 Thinking of $\SU(3,K)$ as the $\sigma$-fixed points of $\SL_3(\Esplits)$, the dotted equilateral triangle is a $\sigma$-stable alcove of $\SL_3(\Esplits)$.   If $\Esimple = \{\alpha, \beta\}$, then $\sigma(\alpha) = \beta$.
The absolutely special vertex, $x_0$, of $\SU(3)$ pictured in Figure~\ref{fig:su3embeddings}  lies on the hyperplanes defined by  affine roots with gradients $\alpha$ and $\beta$ while the other vertex, $z$, lies on a hyperplane defined by an affine root with gradient $\alpha + \beta$.   Since $p \neq 2$, the reductive quotient at $x_0$ is $\PGL_2$ and at $z$ it is $\SL_2$.

The $\sigma$-conjugacy classes in $\absW$ are $c_{1} = \{1, w_\alpha w_{\beta}, w_\beta w_{\alpha} \}$, $c_{w_\alpha} = \{ w_\alpha, w_\beta \}$, and $c_{w_0} = \{ w_0 = w_\alpha w_\beta w_\alpha \}$.   Note that $w_\alpha \sigma$ acts by $-1 \times w_\beta w_\alpha$ on $\X_*(\Ebtorus)$ and $w_0  \sigma$ acts by $-1$ on $\X_*(\Ebtorus)$.  In particular, both  $c_{w_\alpha}$ and $c_{w_0}$ are $\sigma$-conjugacy classes of $\sigma$-elliptic elements in $\absW$.    Let $\dorbit_{c_{w_\alpha}}$ and $\dorbit_{c_{w_0}}$ denote the corresponding $\SU_3(K)$-conjugacy classes of $K$-minisotropic maximal $K$-tori in $\SU_3$.  From~\cite[Table~9]{reederetal:gradings} the unique points in the alcove that correspond to $\dorbit_{c_{w_\alpha}}$ and $\dorbit_{c_{w_0}}$  are: $x_{c_{w_\alpha}} = x_0 + \frac{\check{\alpha} + \check{\beta}}{6}$ and $x_{c_{w_0}} = x_0$.

\begin{figure}[ht]
\centering
\begin{tikzpicture}

\draw[dotted, gray] (0,0)
  -- (5.196,3)
  -- (5.196,-3)
  -- cycle;
 \draw (0,0) -- (5.196,0);

\draw[black,fill=black] (0,0) circle (.5ex);
\draw(0,0) node[anchor=south]{$x_0$};

 \draw[black,fill=black] (0,0) circle (.5ex);
\draw(0,0) node[anchor=north]{$x_{c_{w_0}}$};

\draw[black,fill=black] (5.196,0) circle (.5ex);
\draw(5.196,0) node[anchor=south]{$z$};

\draw(3.481,0) node[anchor=north]{$x_{c_{w_\alpha}}$};
\draw[black,fill=black] (3.481,0) circle (.5ex);

\end{tikzpicture}
\caption{The embeddings of buildings of tame $K$-minisotropic maximal tori in  type $\lsup{2}A_2$ \label{fig:su3embeddings}}
\end{figure}

\end{example}

\begin{remark}   
In at least one respect, Example~\ref{ex:ramsu3} is a little misleading.  In general, facets do not behave well under extensions that are not unramified.  For example, for ramified $\SU(4)$ an alcove over $K$ intersects multiple alcoves over $\Esplits$.    It can also happen, even when $k$ is residually quasi-split, that an alcove over $k$ is not contained in any alcove over a totally ramified splitting extension (consider, for example, $\SL_1(D)$ for $D$ an index two division algebra over $k$).
\end{remark}

\begin{example}   \label{ex:tame3D4} 
Suppose either $\ff$ has characteristic zero or $p > 3$. Suppose $\bG$ is a  group of type $\lsup{3}D_4$. For each $\bG(K)$-conjugacy class of  $K$-anisotropic maximal tori in $\bG$ we identify, in Figure~\ref{fig:3D4embeddings}, the unique point in an alcove corresponding to that class.

Fix a basis $\Esimple = \{ \alpha_1, \alpha_2, \alpha_3, \alpha_4 \}$ for the root system of $D_4$ with respect to the torus $\Ebtorus$.  Assume $\alpha_2$  corresponds to the node with three adjacent edges in the Dynkin diagram of $D_4$ and that  ${\sigma}$ acts by ${\sigma}(\alpha_1) = \alpha_3$, 
${\sigma}(\alpha_3) = \alpha_4$, 
and
${\sigma}(\alpha_4) = \alpha_1$.
The four $\sigma$-elliptic $\sigma$-conjugacy classes in $\absW$ correspond to the conjugacy classes $F_4$, $F_4(a_1)$, $A_2 \times \tilde{A}_2$, and $C_3 \times A_1$ in the Weyl group of type $F_4$, and we will label them as such (this correspondence is discussed prior to~\cite[Lemma~4.11]{Levy:KW} and the labeling is that of~\cite[Table~8]{carter:weyl}).   
  If $w_i$ denotes the simple reflection corresponding to $\alpha_i$, then from~\cite[Theorem~7.5 and \S7.22]{he:minimal} the $F_4$  class is represented by $w_1 w_2$, the $F_4(a_1)$  class is represented by $w_3w_2w_1w_3$, the $A_2 \times \tilde{A}_2$  class is represented by $w_1w_2w_4w_3w_2w_1w_2w_4$,  and the $C_3 \times A_1$ class is represented by $w_3w_2w_1w_2w_3w_2$.

 Let $C'$ denote an alcove of  $\AA(\bA,K)$,
 and let $x_0$ denote the absolutely special vertex  in the closure of $C'$.  If $\check{\omega}_i \in \X_*(\Ebtorus) \otimes \R$ denotes the fundamental coweight for $\alpha_i$, then the vertices of $C'$ can be taken to be $x_0$, $x_0 + \vec{v}_{1}$, and $x_0 + \vec{v}_{2}$ where $\vec{v}_1 = (\check{\omega}_1 + \check{\omega}_3 + \check{\omega}_4)/6$ and $\vec{v}_{2} = \check{\omega}_2 / 3$  (see~\cite[\S4.4]{reeder:torsion}).
The reductive quotient at $x_0 + \vec{v}_{1}$ is of type $A_1 \times \tilde{A}_1$, and at $x_0 + \vec{v}_{2}$ it is of type $A_2$.
 The unique points in the alcove that correspond to $\dorbit_{F_4}$, $\dorbit_{F_4(a_1)}$, $\dorbit_{A_2 \times \tilde{A}_2}$, and $\dorbit_{C_3 \times A_1}$  are: $x_{F_4} = x_0 + \vec{v}_1/2 + \vec{v}_2/4$; $x_{F_4(a_1)} = 
x_0 + \vec{v}_2/2$; $x_{A_2 \times \tilde{A}_2} = 
x_0 + \vec{v}_2$; and $x_{C_3 \times A_1} =
x_0 + \vec{v}_1$.

\begin{figure}[ht]
\centering
\begin{tikzpicture}

\draw (0,0)
  -- (4.04,7)
  -- (0,7)
  -- cycle;

   \draw[black,fill=black] (0,0) circle (.5ex);
\draw(-.4,-.2) node[anchor=south]{$x_0$};

 \draw[black,fill=black] (0,7) circle (.5ex);
\draw(.65,6.9) node[anchor=north]{$x_{C_3 \times A_1}$};

 \draw[black,fill=black] (2.02,3.5) circle (.5ex);
\draw(2.60,3.4) node[anchor=north]{$x_{F_4(a_1)}$};

 \draw[black,fill=black] (4.04,7) circle (.5ex);
\draw(4.6,6.9) node[anchor=north]{$x_{A_2 \times \tilde{A}_2}$};

  \draw[black,fill=black] (1.01,5.25) circle (.5ex);
\draw(1.31,5.15) node[anchor=north]{$x_{F_4}$};

\end{tikzpicture}
\caption{The embeddings of buildings of tame $K$-minisotropic maximal tori in  $^3D_4$ \label{fig:3D4embeddings}}
\end{figure}

\end{example}
 
\begin{remark}
In the tame case, the location of a point $x_{T_n}$ in $\bar{C}'$ that occurs for a $K$-minisotropic maximal torus  of $\bG$ depends only on the existence of $\ell_n^{\text{th}}$ roots of unity in $K$.  Thus, in the tame setting the set of points that occur depends only on $\Esimple$ and the image of $\sigma$ in the automorphism group of the Dynkin diagram associated to $\Esimple$.
It seems likely that no additional points occur when we remove the tameness assumption, but I don't know how to prove this.  See also Remark~\ref{rem:moreinwildcase} and the discussion immediately preceding Definition~\ref{defn:coxetertorus}.
\end{remark}

\subsection{A comment on other vertices in \texorpdfstring{$\bar{C}'$}{bar C}}
In this subsection we assume that $\bG$ is $K$-split; that is, $\Ebtorus = \bA$.
See Remark~\ref{rem:thingsbreakdown} for a discussion of the situation when $\bG$ is not $K$-split.

Suppose $y$  is a vertex in $\bar{C}'
$.   Let $\Phi(y) \leq \Phi$ denote the root system of $\bfG_{y}$ with respect to $\bfA$.   Let $\bG(y)$ denote the connected reductive group generated by $\bA = \Ebtorus$ and the root groups $\bU_\alpha$ where  $\alpha$ runs over those elements of $\Phi(\bG,\bA)$ belonging to $\Phi(y)$.
As affine spaces, we may and do identify the apartment of $A$ in $\BB^{\red}(G(y))$ with $\AA'(A)$; under this identification, every $G(y)$-facet in $\AA'(A)$ will be a union of $G$-facets.

Note that since $\bG$ is $K$-split, we have that $y$ is absolutely special in $\AA'(A) \subset \BB^{\red}(G(y))$.   Let $\titsW(y) \leq G(y)_{y,0}$ denote a Tits group for $\absW(y) := N_{\bG(y)}(\Ebtorus)/\Ebtorus = N_{\bG(y)}(\bA)/\bA$.   Let $C(y)'$ denote the unique $G(y)$-alcove in $\AA'(A)$ that contains $C'$.  The chamber $C(y)'$ determines a set of simple affine roots $\Delta(y)$ for  $\Psi(\bG(y), \bA, K, \nu)$.

Suppose $n \in \titsW(y)$ is tame and $\sigma$-elliptic.  Let  $\ell_n$ denote the order of $n$ and choose a uniformizer $\pi \in K_n = \tame^{\sigma^{\ell_n}}$.   Define a $\ell_n^{\text{th}}$ root of unity by $\sigma(\pi) = \xi \pi$.  Thanks to the work above with $y$ in place of $x_0$,  we can find $\lambda_n \in \X_*(\bT)$ and $h\in G(y)_{y,0}$ such that
\begin{itemize}
    \item $h\inv n \sigma(h) = \lambda_n(\xi)$ and
    \item $y_n = y + \lambda_n/\ell_n \in \overline{C(y)}'$.
\end{itemize}
  If we define   $g_n = \lambda_n(\pi) h\inv \lambda_n(\pi)\inv \in \bG(y)(K_n)_{y_n,0}$, then $n$ and  $g_n\inv \sigma(g_n) \in N_{\bG(y)(K_n)}(\bA)$ have the same image in $\absW(y)$.  The maximal $K$-torus $\lsup{g_n}\bA = \lsup{g_n}\Ebtorus$  of $\bG(y)$ is $K$-minisotropic and the unique point in $\BB^{\red}(G(y))$ associated to it is $y_n$.

\begin{example}   \label{ex:so4intameG2} We offer a cautionary example to show that the point identified in $\bar{C}'$ for a given $G$-conjugacy class of maximal $K$-minisotropic tori might not be the point identified in $\overline{C(y)}'$ for a corresponding $G(y)$-conjugacy class.
Adopt the notation of Example~\ref{ex:tameG2}. Suppose $y$ is the vertex at the right angle in  Figure~\ref{fig:g2embeddings}; i.e., the vertex there with the label $x_{-1}$.  In Figure~\ref{fig:so4gtwo} we identify the unique point, denoted $y_{-1}$, in $\overline{C(y)}'$ corresponding to the Coxeter element $-1 \in A_1 \times \tilde{A}_1$ in the Weyl group of $\bG(y) \cong \SO_4$.  The dotted lines show the $\Gtwo$-facet structure, and the shaded region is $C'$.  Note that $y$ and $y_{-1}$ are $\Gtwo$-conjugate.

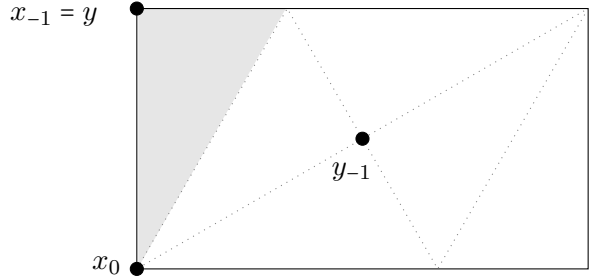
\begin{figure}[ht]
\centering
\begin{tikzpicture}

    \fill[fill=gray!20] (0.02,0.02)--(1.98,3.462)--(.02,3.462);

\draw (0,0)
  -- (6,0)
   -- (6,3.464)
  -- (0,3.464)
  -- cycle;

\draw[dotted, gray] (0,0)
  -- (2,3.464)
  -- (4,0)
  -- (6,3.464)
  -- cycle;

   \draw[black,fill=black] (0,0) circle (.5ex);
\draw(-.4,-.2) node[anchor=south]{$x_0$};

 \draw[black,fill=black] (0,3.464) circle (.5ex);
\draw(-1.1,3.1) node[anchor=south]{$x_{-1} = y$};

  \draw[black,fill=black] (3,1.732) circle (.5ex);
\draw(2.86,1.55) node[anchor=north]{$y_{-1}$};

\end{tikzpicture}
\caption{An embedding of the building of a $K$-minisotropic maximal torus in $\SO_4$ (inside $\Gtwo$) \label{fig:so4gtwo}}
\end{figure}

\end{example}

\begin{remark}  We continue to assume that $\bG$ is $K$-split.  Observe that for $y \neq x_0$, the pinning $(\bG,  \bA = \Ebtorus , \bB, \{X_a\}_{a \in \Pi})$ of $\bG$ from Section~\ref{sec:realization} does not define a pinning of $\bfG_y$.  However, if $\bB(y)$ denotes the Borel subgroup of $\bG(y)$ generated by $\bA$ and those root groups  $\bU_{\alpha}$  where $ \alpha$ is the gradient of some $\psi \in \Delta(y)$ with $\psi(y) = 0$, then  it is possible to choose a pinning $(\bG(y), \bA = \Ebtorus, \bB(y), \{Y_a\}_{a \in \Pi(y)})$ of $\bG(y)$ that is compatible, in the obvious sense, with $y$ such that $Y_b = X_b$ for all  $b \in \Pi \cap \Pi(y)$.  In fact, we can fix  pinnings  in such a way that if $y, y'$ are two vertices in $\bar{C}'$ with  pinnings $(\bG(y), \bA, \bB(y), \{Y_a\}_{a \in \Pi(y)})$  and $(\bG(y'), \bA, \bB(y'), \{Y'_a\}_{a \in \Pi(y')})$ that are compatible with $y$ and $y'$ respectively, then $Y_b = Y'_b$ for all  $b \in \Pi(y) \cap \Pi(y')$.
\end{remark}

\begin{remark} \label{rem:thingsbreakdown}
 If $\bG$ is not $K$-split, then one needs to be more careful.  For example, suppose $y$ is a non-special vertex in the building $\BB(\bG,K)$ where $\bG = \SU(4)$ splits over a quadratic extension $E$ of $K$.  In this case $y$ is special in $\BB(\bG,E)$, and there does not exist  a full rank reductive subgroup $\bH$ of $\bG$ such that $y$ is absolutely special in $\BB(\bH,K)$ and $\bfG_y = \bfH_y$.   In general,  one might hope to choose a subgroup $\bH$ of $\bG$ such that (a) the $K$-rank of $\bH$ is the same as the $K$-rank of $\bG$, (b)  $y$ is absolutely special in $\BB(\bH,K)$, and (c)  we have $\bfG_y = \bfH_y$; this is being investigated in~\cite{ekstrom:thesis}.
\end{remark}

\section{On the existence of \texorpdfstring{$K$-minisotropic maximal $k$-tori of $\bG$}{K-minisotropic maximal k-tori of G}}  \label{sec:existenceKktori}

From now until the end of the paper we assume that $k$ is a nonarchimedean local field; in particular, $\ff$ is finite.  Recall that $\Fr$ is an element of $\Gal(\bar{k}/k)$ that lifts a topological generator of $\Gal(K/k)$.

In this section we provide, in Lemma~\ref{lem:overk}, a criterion for determining when the $G$-orbit of a $K$-minisotropic maximal $K$-torus  of $\bG$ contains a torus defined over $k$.
Recall that $\mathcal{C}$ denotes the set of $G$-conjugacy classes of maximal $K$-tori in $\bG$.

\begin{lemma}  \label{lem:Frobonorbits}
If $\dorbit \in \mathcal{C}$, then $\Fr(\dorbit) \in \mathcal{C}$.
\end{lemma}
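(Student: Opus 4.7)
The plan is to prove this by pure functoriality: since $\Fr$ restricts to a generator of $\Gal(K/k)$, it is a field automorphism of $K$, and hence acts naturally on $K$-schemes, $K$-points, and all notions defined via $K$-structure. The heart of the argument is that conjugation commutes with the action of $\Fr$ and that the action preserves the property of being defined over $K$.

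More concretely, first I would check that if $\bT$ is a maximal $K$-torus in $\bG$, then so is $\Fr(\bT)$. Maximality is a geometric condition preserved by any algebraic automorphism, so only $K$-rationality requires attention. For this, I would use that $I = \Gal(\bar{k}/K)$ is normal in $\Gal(\bar{k}/k)$, so for any $\gamma \in I$ we have $\Fr\inv \gamma \Fr \in I$, and hence $\gamma(\Fr(\bT)) = \Fr((\Fr\inv \gamma \Fr)(\bT)) = \Fr(\bT)$; i.e., $\Fr(\bT)$ is $I$-stable, hence defined over $K$. Similarly, for $g \in G = \bG(K)$, the element $\Fr(g)$ again lies in $G$ because $\Fr(K) = K$.

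Next I would show that $G$-conjugacy of maximal $K$-tori is preserved. If $\bT_1, \bT_2 \in \dorbit$ with $\lsup{g}\bT_1 = \bT_2$ for some $g \in G$, then applying $\Fr$ gives $\lsup{\Fr(g)}\Fr(\bT_1) = \Fr(\bT_2)$, and $\Fr(g) \in G$, so $\Fr(\bT_1)$ and $\Fr(\bT_2)$ are $G$-conjugate. Thus $\Fr(\dorbit)$ is contained in a single $G$-conjugacy class $\dorbit'$ (namely the class of $\Fr(\bT)$ for any fixed $\bT \in \dorbit$).

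Finally, to conclude that $\Fr(\dorbit) = \dorbit' \in \mathcal{C}$ (and not merely a subset of it), I would invoke the same argument with $\Fr\inv$: given $\bS \in \dorbit'$, pick $g \in G$ with $\lsup{g}\Fr(\bT) = \bS$, and apply $\Fr\inv$ to write $\bS = \Fr(\lsup{\Fr\inv(g)}\bT)$, where $\lsup{\Fr\inv(g)}\bT \in \dorbit$. Thus $\bS \in \Fr(\dorbit)$. There is no real obstacle here; the statement is essentially a sanity check that the Galois group $\Gal(K/k)$ acts on $\mathcal{C}$, and the whole argument is two or three lines once one unwinds the definitions.
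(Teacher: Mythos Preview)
Your proposal is correct and takes essentially the same approach as the paper: both arguments reduce to the observation that $\Fr(K)=K$ (so $\Fr(G)=G$) and that $\Fr(\bT)$ is again a maximal $K$-torus. The only cosmetic difference is that the paper verifies the latter by writing $\Fr(\bT)=C_{\bG}(\Fr(\gamma))$ for a strongly regular semisimple $\gamma\in T$, whereas you invoke normality of $I$ in $\Gal(\bar{k}/k)$ directly; your version also spells out the surjectivity $\Fr(\dorbit)=\dorbit'$, which the paper leaves implicit.
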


\begin{proof}
Suppose $\bT \in \dorbit \in \mathcal{C}$. Choose a strongly regular semisimple $\gamma \in T$.   Note that $\Fr(\bT) = C_{\bG}(\Fr(\gamma))$ is again a maximal $K$-torus of $\bG$.  Since $\Fr(G) = G$, the result follows.
\end{proof}

\begin{lemma}  \label{lem:Fr(T)isOK} If $\bT$ is a $K$-minisotropic maximal torus in $\bG$, then the torus $\Fr(\bT)$ is also a $K$-minisotropic maximal torus in $\bG$.
\end{lemma}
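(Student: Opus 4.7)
The plan is as follows. First, I would observe that $\Fr(\bT)$ is automatically a maximal $K$-torus of $\bG$: it is contained in $\Fr(\bG) = \bG$ because $\bG$ is $k$-defined, and it is again a maximal torus because $\Fr$ gives a group automorphism of $\bG(\bar{k})$ that carries tori to tori and preserves maximality; it is $K$-defined because $I = \Gal(\bar{k}/K)$ is normal in $\Gal(\bar{k}/k)$, so for every $\gamma \in I$ the element $\Fr^{-1}\gamma\Fr$ lies in $I$, whence
\[\gamma(\Fr(\bT)) = \Fr((\Fr^{-1}\gamma\Fr)(\bT)) = \Fr(\bT).\]
(Lemma~\ref{lem:Frobonorbits} already packages most of this.)  So the real content is to verify the $K$-minisotropic condition for $\Fr(\bT)$.

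For this, I would introduce the pull-back isomorphism on character lattices
\[\phi \colon \X^*(\Fr(\bT)) \longrightarrow \X^*(\bT), \qquad \chi \longmapsto \chi \circ \Fr.\]
Because $\bZ$ is $k$-defined, it is $\Fr$-stable, so $\phi$ restricts to an abelian-group isomorphism $\X^*(\Fr(\bT)/\bZ) \to \X^*(\bT/\bZ)$.  Unwinding the definition of the Galois action on characters shows that, for all $\gamma \in I$ and $\chi \in \X^*(\Fr(\bT))$, one has
\[\phi(\gamma \cdot \chi) = (\Fr^{-1}\gamma\Fr) \cdot \phi(\chi).\]
In other words, $\phi$ intertwines the $I$-action on the source with the $I$-action on the target twisted by the automorphism $\gamma \mapsto \Fr^{-1}\gamma\Fr$ of $I$.

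Since twisting an action by an automorphism of the acting group does not change the fixed-point subset, $\phi$ identifies $\X^*(\Fr(\bT)/\bZ)^I$ with $\X^*(\bT/\bZ)^I$.  The latter is trivial by hypothesis, so the former is also trivial and $\Fr(\bT)$ is $K$-minisotropic as desired.  The only substantive ingredient is the normality of $I$ inside $\Gal(\bar{k}/k)$, which is what makes the $\Fr$-twisted $I$-action have the same invariants as the untwisted one; everything else is a purely formal unpacking of the definitions.
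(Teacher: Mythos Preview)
Your proposal is correct and follows essentially the same approach as the paper: both arguments rest on the normality of $I$ in $\Gal(\bar{k}/k)$, first to see that $\Fr(\bT)$ is a $K$-torus via $\gamma(\Fr(\bT)) = \Fr((\Fr^{-1}\gamma\Fr)(\bT))$, and then to identify $I$-invariants in the character lattices of $\bT$ and $\Fr(\bT)$ via the pull-back/push-forward by $\Fr$. Your write-up is in fact slightly more careful than the paper's in that you explicitly pass to $\X^*(\,\cdot\,/\bZ)$ and note that $\bZ$ is $\Fr$-stable, whereas the paper's proof works tacitly with $\X^*(\bT)$.
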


\begin{proof}   For all $\delta \in \Gal(\bar{k}/K)$ we have $\delta(\Fr(\bT)) = \Fr (\delta' (\bT)) = \Fr (\bT)$ where $\delta ' = \Fr \inv \delta \Fr \in \Gal(\bar{k}/K)$.  Thus, $\Fr(\bT)$ is a maximal $K$-torus in $\bG$.    An element  $\chi \in \X^*(\bT)$ is fixed by $\Gal(\bar{k}/K)$ if and only if $\Fr(\chi) \in \X^*(\Fr(\bT)) $ is fixed by $\Gal(\bar{k}/K)$; so $\Fr(\bT)$ is also a $K$-minisotropic torus.
\end{proof}

\begin{lemma}   \label{lem:overk}
Assume that $\bT$ is a $K$-minisotropic maximal torus in $\bG$, not necessarily defined over $k$.
 Let $\dorbit$ denote the $G$-orbit of $\bT$.   There exists a torus in $\dorbit$  that is $\Fr$-stable if and only if $\Fr(\dorbit) = \dorbit$.
\end{lemma}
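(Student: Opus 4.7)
The forward direction will be immediate from Lemma~\ref{lem:Frobonorbits}: if $\bT' \in \dorbit$ satisfies $\Fr(\bT') = \bT'$, then $\Fr(\dorbit)$ is the $G$-orbit of $\Fr(\bT') = \bT'$, namely $\dorbit$ itself. For the converse, assuming $\Fr(\dorbit) = \dorbit$, my plan is to first produce an $\Fr$-fixed point of $\BB^{\red}(G)$ in the orbit $G \cdot x_T$, and then use Lemma~\ref{lemma:KHR} together with Lang-Steinberg to upgrade this to an $\Fr$-stable torus in $\dorbit$.

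For the fixed point, I would first note that the construction $\bT \mapsto x_T$ is equivariant for the Galois action on $\bG$, so that $x_{\lsup{g}\bT} = g \cdot x_T$ for any $g \in G$ and $x_{\Fr(\bT)} = \Fr(x_T)$ (with the right-hand side attached to a $K$-minisotropic torus by Lemma~\ref{lem:Fr(T)isOK}). Hence the orbit $G \cdot x_T$ depends only on $\dorbit$, and in particular is $\Fr$-stable. Since $\bar{C}'$ is $\Fr$-stable by our choice of $C$ in Section~\ref{sec:BTnotation}, and $G \cdot x_T \cap \bar{C}'$ consists of a single point by Lemma~\ref{lem:card}, that point, call it $y$, must be $\Fr$-fixed. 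After replacing $\bT$ by a $G$-conjugate lying in $\dorbit$, I may assume $x_T = y$.

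Next, both $\bT$ and $\Fr(\bT)$ lie in $\dorbit$ with attached point $y$, so there is $g \in \Stab_G(y)$ with $\Fr(\bT) = \lsup{g}\bT$. By Remark~\ref{rem:KHR}, $\Stab_G(y) = T \, G_{y,0}$, and since $T$ centralizes $\bT$ I may take $g = k \in G_{y,0}$. The proof will then be complete if I can find $h \in G_{y,0}$ satisfying $h\inv \Fr(h) = k\inv$, because in that case $\bT' := \lsup{h}\bT \in \dorbit$ satisfies
\[
\Fr(\bT') = \lsup{\Fr(h)}\Fr(\bT) = \lsup{\Fr(h) k}\bT = \lsup{h}\bT = \bT'.
\]

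The main obstacle is producing such an $h$, and I expect this to come from Lang-Steinberg inside the parahoric $G_{y,0}$. Since $y$ is $\Fr$-fixed, $\Fr$ acts on $G_{y,0}$ and preserves its Moy-Prasad filtration $G_{y,0} \supset G_{y,0^+} \supset G_{y,r_1} \supset \cdots$; the top quotient is $\bfG_y(\ffc)$ for a connected reductive $\ff$-group $\bfG_y$, while each successive quotient is a connected commutative $\ff$-group. Lang's theorem gives the surjectivity of $x \mapsto x\inv \Fr(x)$ on each of these quotients, and a standard successive-approximation argument using the completeness of $G_{y,0}$ then produces the desired $h$ with $h\inv \Fr(h) = k\inv$.
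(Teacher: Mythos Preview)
Your proof is correct and follows essentially the same route as the paper's: reduce to $x_T \in \bar{C}'$, use Lemma~\ref{lem:card} to see the point is $\Fr$-fixed, invoke Lemma~\ref{lemma:KHR} to arrange the conjugating element lies in the parahoric, and then apply Lang--Steinberg. The paper is terser about the Lang--Steinberg step (it is simply asserted for $G_{x_T,0}$), but your filtration argument is the standard justification and matches the paper's intent.
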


\begin{proof}
If there is a torus in $ \dorbit$ that is $\Fr$-stable, then without loss of generality we may assume $\Fr(\bT) = \bT$.  This immediately implies  $\Fr(\dorbit) = \dorbit$.

Suppose $\Fr(\dorbit) = \dorbit$.
Without loss of generality the point $x_T$ corresponding to $\bT$ belongs to $\bar{C}'$.
Since  $\Fr(\dorbit) = \dorbit$, there exists $h \in G$ such that $\Fr(\bT) = \lsup{h}\bT$.  Since $\bar{C}'$ is $\Fr$-stable, we have $\Fr(x_T)  \in \bar{C}'$.  On the other hand, by uniqueness we have  $h \cdot  x_T = \Fr(x_T)$. Thanks to Lemma~\ref{lem:card}  we conclude that $h \cdot x_T = x_T$ and so $h \in \Stab_G(x_T)$.   From Lemma~\ref{lemma:KHR} we know $\Stab_G(x_T) = G_{x_T,0} T$, so without loss of generality we may assume $h \in G_{x_T,0}$.  From Lang-Steinberg, there exists $k \in G_{x_T,0}$ for which $h\inv = k\inv \Fr(k)$.   So $\Fr(T) = \lsup{h}\bT = \lsup{\Fr(k\inv) k} \bT$, which implies $\Fr(\lsup{k}\bT) = \lsup{k}\bT$.
\end{proof}

\begin{cor}  \label{cor:rationalinclass} Suppose $\gamma \in G$ is $K$-elliptic and strongly regular semisimple.    If the $G$-orbit of $\gamma$ is $\Fr$-stable, then the $G$-orbit of $\gamma$ contains a Frobenius-fixed point.
\end{cor}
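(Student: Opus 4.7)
The plan is to combine Lemma~\ref{lem:overk} with a Lang-Steinberg argument inside the parahoric attached to the centralizer of $\gamma$. Set $\bT := C_\bG(\gamma)$, which is a $K$-minisotropic maximal $K$-torus in $\bG$ since $\gamma$ is $K$-elliptic and strongly regular semisimple. Because $\Fr(C_\bG(\gamma)) = C_\bG(\Fr(\gamma))$ and the centralizer map is $G$-equivariant, the hypothesis that the $G$-orbit of $\gamma$ is $\Fr$-stable immediately implies the $G$-orbit of $\bT$ is $\Fr$-stable. Lemma~\ref{lem:overk} then produces $g \in G$ with $\Fr(\lsup{g}\bT) = \lsup{g}\bT$, and after replacing $\gamma$ by $\lsup{g}\gamma$ I may assume $\bT$ is itself a $k$-torus; by uniqueness of the attached point this forces $\Fr(x_T) = x_T$.

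Strong regularity then forces any element of $G$ conjugating $\Fr(\gamma)$ to $\gamma$ to lie in $N_G(\bT)$: if $n \Fr(\gamma) n\inv = \gamma$, then $\lsup{n}\bT = C_\bG(\lsup{n}\Fr(\gamma)) = C_\bG(\gamma) = \bT$. Such an $n$ exists because $\Fr(\gamma)$ belongs to the $G$-orbit of $\gamma$. Since $n$ normalizes $\bT$ it fixes $x_T$, so Lemma~\ref{lemma:KHR} allows us to write $n = tk$ with $t \in T$ and $k \in G_{x_T,0}$; because $t$ centralizes $\gamma$, the factor $k = t\inv n$ still satisfies $k \Fr(\gamma) k\inv = \gamma$.

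I would then apply Lang-Steinberg to the $\Fr$-stable parahoric $G_{x_T,0}$, which is the group of $R_K$-points of a smooth affine $R_k$-group scheme with connected special fiber, so that $H^1(\Fr, G_{x_T,0}) = 1$. This yields $h \in G_{x_T,0}$ with $h\inv \Fr(h) = k$, and a direct computation gives
\[
\Fr(\lsup{h}\gamma) = (hk)\Fr(\gamma)(hk)\inv = h(k\Fr(\gamma)k\inv)h\inv = \lsup{h}\gamma,
\]
so $\lsup{h}\gamma \in G^{\Fr}$ is the desired element of the $G$-orbit of $\gamma$.

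The main obstacle is bridging the torus-level conclusion of Lemma~\ref{lem:overk} to the desired element-level statement: once $\bT$ has been straightened to a $k$-torus, the residual obstruction between $\gamma$ and $\Fr(\gamma)$ is measured by a class in the Weyl group of $\bT$ that need not vanish on its own. Lemma~\ref{lemma:KHR} is precisely what lets us repackage that class as a class inside the connected parahoric $G_{x_T,0}$, where Lang-Steinberg makes it trivial.
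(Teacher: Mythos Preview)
Your proof is correct and follows essentially the same approach as the paper's: reduce via Lemma~\ref{lem:overk} to the case where $\bT = C_\bG(\gamma)$ is a $k$-torus, observe that any conjugating element lies in $N_G(\bT) \leq \Stab_G(x_T)$, use Lemma~\ref{lemma:KHR} to replace it by an element of the parahoric $G_{x_T,0}$, and finish with Lang--Steinberg. The only cosmetic difference is the direction in which you write the conjugation and the explicit factorization $n = tk$ where the paper simply says ``without loss of generality $n \in G_{x_T,0}$''.
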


\begin{proof}
Let $\bT$ denote the  centralizer of $\gamma$.  Since $\Fr(\bT)$ is the centralizer of $\Fr(\gamma)$ and $\Fr(\gamma)$ is $G$-conjugate to $\gamma$, we conclude that the $G$-orbit of $\bT$ is $\Fr$-stable.  Thanks to Lemma~\ref{lem:overk} there exists $h\in G$ such that $\lsup{h}T$ is Frobenius fixed.  Replacing $\gamma$ with $\lsup{h}\gamma$ we may and do assume that $\gamma \in T$ and $\bT$ is defined over $k$.  Note that $\Fr(\gamma) \in T$ and $\Fr(\gamma) = \lsup{n}\gamma$ for some $n \in G$.  Since $\gamma$ is strongly regular semisimple, we must have $n \in N_G(T) \leq \Stab_G(x_T)$.     From Lemma~\ref{lemma:KHR} we know $\Stab_G(x_T) = G_{x_T,0} T$, so without loss of generality we may assume $n \in G_{x_T,0}$.  From Lang-Steinberg, there exists $j \in G_{x_T,0}$ for which $n\inv = j\inv \Fr(j)$.   The element $\lsup{j}\gamma$ is Frobenius fixed.
\end{proof}

\begin{remark}
When the derived group of $\bG$ is simply connected, Corollary~\ref{cor:rationalinclass} may be derived from~\cite[Theorem~4.1 and Lemma~3.3]{kottwitz:rational}.
\end{remark}

\section{On the existence of  \texorpdfstring{$K$-minisotropic maximal $k$-tori of $\bG$}{K-minisotropic maximal k-tori of G} in the tame situation}  \label{sec:existenceintame}

 Note that $\Fr \inv \sigma \Fr = \sigma^q$.   Moreover, $\Fr$ acts on the set of tame extensions of $K$ as well as $\absW$ and $\Ebtorus$.

\subsection{A simple criterion}

In this section we  translate, in the tame setting, the criterion of Lemma~\ref{lem:overk} into a condition on the $\sigma$-conjugacy classes in the Weyl group.

\begin{lemma}  \label{lem:Frobonorbitstame}
If $\dorbit \in \Ctame$, then $\Fr(\dorbit) \in \Ctame$.
\end{lemma}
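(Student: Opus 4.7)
The plan is to combine Lemma~\ref{lem:Frobonorbits} (which already gives $\Fr(\dorbit) \in \mathcal{C}$) with the observation that $\Fr$ preserves the maximal tame extension $\tame$ of $k$. So all that remains is to check that the property of being tame is preserved by $\Fr$.

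Concretely, I would pick a representative $\bT \in \dorbit$; by hypothesis $\bT$ is split by $\tame$. Since $\tame/k$ is Galois, $\Fr$ restricts to an automorphism of $\tame$, so $\Fr(\tame) = \tame$. The torus $\Fr(\bT)$ is then split by $\Fr(\tame) = \tame$, so $\Fr(\bT)$ is a tame $K$-torus in $\bG$. (It is already a maximal $K$-torus by the argument in Lemma~\ref{lem:Fr(T)isOK}, or equivalently by Lemma~\ref{lem:Frobonorbits}.) Consequently the $G$-conjugacy class $\Fr(\dorbit)$ lies in $\Ctame$.

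There is essentially no obstacle here; the only thing to be careful about is being explicit that the ``tameness'' of $\bT$ is an invariant not just of $\bT$ as an abstract torus but of its embedding into $\bG$, and that $\Fr$, acting simultaneously on $\bG$ and on $\bar k$ (and hence on $\tame$), carries $\tame$-split subtori of $\bG$ to $\tame$-split subtori of $\bG$. This follows from the fact that an isomorphism $\bT \times_K \tame \xrightarrow{\sim} \mathbb G_m^{\dim \bT} \times_K \tame$ is transported by $\Fr$ to a corresponding isomorphism for $\Fr(\bT)$. The proof is therefore a one-line argument.
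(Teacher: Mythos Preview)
Your proof is correct and is essentially the same as the paper's: both invoke Lemma~\ref{lem:Frobonorbits} and then observe that $\Fr$ carries a tame splitting field to a tame splitting field. The only cosmetic difference is that the paper picks a specific tame splitting field $E$ and notes $\Fr(E)$ is tame, whereas you work directly with $\tame$ and use $\Fr(\tame)=\tame$.
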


\begin{proof}
Suppose $\bT \in \dorbit \in \Ctame$.    Let $E$ be a tame extension of $K$ over which $\bT$ splits.
Thanks to Lemma~\ref{lem:Frobonorbits} we know that $\Fr(\bT)$ is again a maximal $K$-torus of $\bG$.  Since $\Fr(\bT)$ splits over the tame extension $\Fr(E)$ of $K$, we have $\Fr(\dorbit) \in \Ctame$.
\end{proof}

Since $\sigma^q$ is a generator for $\Gal(\tame/K)$, it also stabilizes every tame extension of $K$; in particular, it stabilizes $\Esplits$.
 Let $\Wtamesimq$ denote the set of $\sigma^q$-conjugacy classes in $\Wtame$ and let $\varphi_{\sigma^q} \colon \Wtamesimq \rightarrow \mathcal{C}$ denote the resulting  map from the set of $\sigma^q$-conjugacy classes in $\Wtame$ to the set of $G$-conjugacy classes of maximal $K$-tori in $\bG$.

For $d \in \Z_{\geq 1}$ and $w \in\absW$ set
$$N_d(w) = w \cdot \sigma(w) \cdots \sigma^{(d-2)}(w) \cdot \sigma^{(d-1)} (w).$$ The map $N_q$ induces  a well-defined map from  $W_{\sim_{\sigma}}$ to $W_{\sim_{\sigma^q}}$ and $\Fr$ defines a map from $W_{\sim_{\sigma^q}}$ to $W_{\sim_{\sigma}}$.

\begin{lemma}   \label{lem:equivmapnotsplitexperiment}
The diagram below commutes.
\begin{center}
\begin{tikzcd}
\Wtamesim \arrow[r, "N_q"] \arrow[d, "\varphi_\sigma"]
&\Wtamesimq \arrow[r, "\Fr"] \arrow[d, "\varphi_{\sigma^q}"]
&\Wtamesim \arrow[d,  "\varphi_{\sigma}" ] \\
\Ctame \arrow[r,  "\Id" ]
& \Ctame  \arrow[r, "\Fr"]
& \Ctame  \\
\end{tikzcd}
\end{center}
Moreover, each map is a bijection.
\end{lemma}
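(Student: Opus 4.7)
The proof proceeds in three steps: (i) observe that the three ``easy'' maps ($\varphi_\sigma$, $\varphi_{\sigma^q}$, and the bottom $\Fr$) are bijections; (ii) verify commutativity of each square by a direct computation with the element $g_n$ of Section~\ref{sec:markssec}; (iii) read off bijectivity of the remaining (top-row) maps from the previous two steps. The bottom $\Id$ is trivially bijective, and the bottom $\Fr$ is a bijection on $\Ctame$ by Lemma~\ref{lem:Frobonorbitstame} (with inverse induced by $\Fr^{-1}$). The vertical $\varphi_\sigma$ is a bijection by Corollary~\ref{cor:welldefined} and Lemma~\ref{lem:imageofphi}. Since $q$ is a power of $p$ and $|\EGal|$ is coprime to $p$, the element $\sigma^q$ generates the same closed procyclic subgroup of $\Gal(\tame/K)$ as $\sigma$, so the entire development of Section~\ref{sec:tamepoints} applies verbatim with $\sigma^q$ replacing $\sigma$; in particular $\varphi_{\sigma^q}$ is also a bijection.

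For the left square, fix $c \in \Wtamesim$ with representative $w$, lift $n \in \titsW$, and choose $g_n \in \bG(\tame)$ with $g_n^{-1}\sigma(g_n)\Ebtorus = n\Ebtorus$, so that $\bT_n = \lsup{g_n}\Ebtorus$ represents $\varphi_\sigma(c)$. Iterating the cocycle relation gives
\[
g_n^{-1}\sigma^q(g_n)\Ebtorus \;=\; \bigl(n\,\sigma(n)\,\sigma^2(n)\cdots\sigma^{q-1}(n)\bigr)\Ebtorus \;=\; N_q(n)\Ebtorus,
\]
whose image in $\absW$ is $N_q(w)$. Hence the single torus $\bT_n$ also represents $\varphi_{\sigma^q}(N_q(c))$, so $\varphi_{\sigma^q}\circ N_q = \varphi_\sigma$, which is exactly the commutativity of the left square.

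For the right square, fix $c \in \Wtamesimq$ with representative $w$, lift $n$, and take $\bT_n = \lsup{g_n}\Ebtorus$ with $g_n^{-1}\sigma^q(g_n)\Ebtorus = n\Ebtorus$; this represents $\varphi_{\sigma^q}(c)$. Since $\Ebtorus$ is defined over $k$, we have $\Fr(\bT_n) = \lsup{\Fr(g_n)}\Ebtorus$. Using the identity $\Fr^{-1}\sigma\Fr = \sigma^q$,
\[
\Fr(g_n)^{-1}\sigma(\Fr(g_n))\,\Ebtorus \;=\; \Fr\bigl(g_n^{-1}\sigma^q(g_n)\bigr)\,\Ebtorus \;=\; \Fr(n)\,\Ebtorus,
\]
whose image in $\absW$ is $\Fr(w)$. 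Thus $\Fr(\bT_n)$ represents $\varphi_\sigma(\Fr(c))$, establishing $\Fr\circ\varphi_{\sigma^q} = \varphi_\sigma\circ\Fr$. Commutativity of both squares, combined with the bijectivity of $\varphi_\sigma$, $\varphi_{\sigma^q}$, $\Id$, and the bottom $\Fr$, then forces $N_q = \varphi_{\sigma^q}^{-1}\circ\varphi_\sigma$ and the top $\Fr = \varphi_\sigma^{-1}\circ\Fr\circ\varphi_{\sigma^q}$ to be bijections.

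The main obstacle is really bookkeeping: confirming that $N_q$ and $\Fr$ are well-defined on the indicated sets, i.e.\ that they respect both the $\sigma$- vs.\ $\sigma^q$-conjugacy equivalence and the tameness condition. Well-definedness on conjugacy classes is a one-line norm computation (if $w' = u^{-1} w \sigma(u)$ then $N_q(w') = u^{-1} N_q(w)\sigma^q(u)$, and similarly for $\Fr$ using $\Fr\sigma^q\Fr^{-1} = \sigma$). Preservation of tameness follows from the identity $(w,\barsigma)^q = (N_q(w), \barsigma^q)$ in $\absW \rtimes \Aut(\Esimple)$ and the fact that $\barsigma$ and $\barsigma^q$ generate the same cyclic subgroup (since $\gcd(q, |\EGal|) = 1$), so the $p$-part of the order of the pertinent element is unchanged.
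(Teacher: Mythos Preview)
Your proof is correct and follows essentially the same approach as the paper: verify the vertical maps are bijections via Lemma~\ref{lem:imageofphi} (applied to both generators $\sigma$ and $\sigma^q$), check each square by the cocycle/telescoping computation with $g_n^{-1}\sigma^q(g_n)$ and with $\Fr(g_n)^{-1}\sigma(\Fr(g_n))$ using $\Fr^{-1}\sigma\Fr=\sigma^q$, then deduce bijectivity of the top-row maps from commutativity. Your extra paragraph on well-definedness and tameness preservation is more explicit than the paper's treatment but entirely in the same spirit.
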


\begin{proof}
The statement (and proof) of Lemma~\ref{lem:imageofphi} holds when $\sigma$ is replaced by any generator of $\Gal(\tame/K)$, so we conclude that all three vertical arrows are bijections.

Suppose $c$ is a $\sigma$-conjugacy class in $\Wtame$.
  Then there exists $g \in \bG(\tame)$ such that the image of $g\inv \sigma(g)$ in $\absW$ is an element of $c$.   We have $(\Id \circ \varphi_{\sigma})(c) =  \{ \lsup{h g} \Ebtorus \, | \, h \in G \}$.  The image of
$ g\inv \sigma^q(g) = (g\inv \sigma(g)) \sigma(g\inv \sigma(g)) \cdots \sigma^{q-1}(g\inv \sigma(g))$
in $\absW$ belongs to $N_q(c)$ and so
$$(\varphi_{\sigma^q} \circ N_q)(c) = \varphi_{\sigma^q}(N_q(c)) = \{ \lsup{h g} \Ebtorus \, | \, h \in G \}.$$
Thus $\varphi_{\sigma^q} \circ N_q = \Id \circ \varphi_\sigma$.

Suppose $\tilde{c}$ is a $\sigma^q$-conjugacy class in $\Wtame$.
Let $\tilde{\dorbit} = \varphi_{\sigma^q}(\tilde{c}) \in \mathcal{C}$.      There exists $\tilde{g} \in \bG(\tame)$ such that the image of $\tilde{g}\inv \sigma^q(\tilde{g})$ in $\absW$ is an element of $\tilde{c}$.   We have $(\Fr \circ \varphi_{\sigma^q})(\tilde{c}) =  \Fr(\tilde{\dorbit}) = \{ \lsup{h \Fr(\tilde{g})} \Ebtorus \, | \, h \in G \}$.
The image of
$ \Fr(\tilde{g}\inv \sigma^q(\tilde{g})) = \Fr(\tilde{g})\inv \sigma(\Fr(\tilde{g}))$ in $\absW$ belongs to $\Fr(\tilde{c})$ and so
$$(\varphi_\sigma \circ \Fr)(\tilde{c}) = \varphi_\sigma( \Fr(\tilde{c})) = \{ \lsup{h \Fr(\tilde{g})} \Ebtorus \, | \, h \in G \}.$$
Since this is $\Fr(\tilde{\dorbit})$, we have $\varphi_{\sigma} \circ \Fr = \Fr \circ \varphi_{\sigma^q}$.

Since $G$ is $\Fr$-stable, the map $\Fr\inv \colon \Ctame \rightarrow \Ctame$ is the inverse to $\Fr \colon \Ctame \rightarrow \Ctame$.  The result follows.
\end{proof}

\begin{cor}   \label{cor:tameoverkexperiment}   Suppose $c \in \Wtamesim$ is $\sigma$-elliptic.
  There exists a torus in $ \varphi_\sigma (c) $  that is $\Fr$-stable if and only if $c = (\Fr \circ N_q )(c)$.
\end{cor}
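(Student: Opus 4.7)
The plan is to combine Lemma~\ref{lem:overk} with the commutativity in Lemma~\ref{lem:equivmapnotsplitexperiment}. Since $c$ is $\sigma$-elliptic, Lemma~\ref{lem:elliptictoweyl} guarantees that every torus in $\dorbit := \varphi_\sigma(c)$ is $K$-minisotropic, so Lemma~\ref{lem:overk} applies and says that $\dorbit$ contains a $\Fr$-stable torus if and only if $\Fr(\dorbit) = \dorbit$.

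It therefore suffices to translate the condition $\Fr(\dorbit) = \dorbit$ into a condition on $c$ itself. From the right-hand square of the diagram in Lemma~\ref{lem:equivmapnotsplitexperiment} we have $\varphi_\sigma \circ \Fr = \Fr \circ \varphi_{\sigma^q}$, and from the left-hand square we have $\varphi_{\sigma^q} \circ N_q = \varphi_\sigma$. Composing these gives
\begin{equation*}
\Fr \circ \varphi_\sigma \;=\; \Fr \circ \varphi_{\sigma^q} \circ N_q \;=\; \varphi_\sigma \circ \Fr \circ N_q,
\end{equation*}
so $\Fr(\varphi_\sigma(c)) = \varphi_\sigma((\Fr \circ N_q)(c))$. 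Since $\varphi_\sigma$ is injective (Corollary~\ref{cor:welldefined}, or again Lemma~\ref{lem:equivmapnotsplitexperiment}), the equality $\Fr(\dorbit) = \dorbit$ is equivalent to $(\Fr \circ N_q)(c) = c$. Chaining the two equivalences yields the corollary.

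There is no real obstacle: the work was done in setting up Lemma~\ref{lem:equivmapnotsplitexperiment} (which provides the commutative diagram and bijectivity) and Lemma~\ref{lem:overk} (which handles the descent from $K$-orbits to $k$-rationality for $K$-minisotropic tori). The only point to be careful about is that the hypothesis ``$c$ is $\sigma$-elliptic'' is what lets us apply Lemma~\ref{lem:overk} — without $K$-minisotropicity, $\Fr$-stability of the $G$-orbit does not by itself produce a $\Fr$-fixed torus in the orbit by the argument there, which uses the unique point $x_T \in \bar C'$ and its stabilizer description from Lemma~\ref{lemma:KHR}.
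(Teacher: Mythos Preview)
Your proof is correct and follows essentially the same approach as the paper's own proof: the paper simply states that Lemma~\ref{lem:equivmapnotsplitexperiment} gives $\Fr(\varphi_\sigma(c)) = \varphi_\sigma(c)$ if and only if $c = (\Fr \circ N_q)(c)$, and then invokes Lemmas~\ref{lem:elliptictoweyl} and~\ref{lem:overk}. You have spelled out the diagram chase and the role of injectivity more explicitly, but the logic is identical.
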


\begin{proof}
From Lemma~\ref{lem:equivmapnotsplitexperiment} we know that $\Fr(\varphi_\sigma (c) ) = \varphi_\sigma (c) $ if and only if $c = (\Fr \circ N_q )(c)$.  The result follows from Lemmas~\ref{lem:elliptictoweyl} and~\ref{lem:overk}.
\end{proof}

\subsection{An application of the rationality of Weyl groups}   \label{sec:rational}

A finite group $H$ is said to be \emph{rational} provided that for any $h \in H$ we have that $h^j$ is $H$-conjugate to $h$ whenever $j$ is relatively prime to the order of $h$.   There are many equivalent definitions of rationality, and the nomenclature is explained by the fact~\cite[Section 13.1]{serre:linear} that $H$ is rational if and only if every irreducible character of $H$ takes values in $\Q$.

It is known that every Weyl group is a rational group (see, for example,~\cite[Lemma~2.1.2]{debacker-haley:kac},~\cite[Section 3 of Chapter 2 and Chapter 5]{kletzing:structure}, or~\cite[Theorem~8.5]{springer:regular}).

\begin{cor}   \label{cor:equivmaptame}
If $\bG$ is  $K$-split, then
the diagram
\begin{center}
\begin{tikzcd}
\Wtamesimnon \arrow[r, "\Fr"] \arrow[d, "\varphi_{\sigma}"]
&\Wtamesimnon \arrow[d,  "\varphi_{\sigma}" ] \\
 \Ctame  \arrow[r, "\Fr"]
& \Ctame
\end{tikzcd}
\end{center}
commutes and  each map is a bijection.  Here $\Wtamesimnon$ is the set of $\absW$-conjugacy classes in $\Wtame$. Moreover, the vertical maps are independent of the choice of topological generator $\sigma$ of $\Gal(\tame /K)$.  When $\bG$ is  $k$-split, all maps are independent of the choice of topological generators $\Fr$ for $\Gal(K/k)$ and $\sigma$ for $\Gal(\tame /K)$.
\end{cor}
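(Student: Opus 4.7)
The plan is to deduce Corollary~\ref{cor:equivmaptame} from Lemma~\ref{lem:equivmapnotsplitexperiment} by showing that its middle column collapses onto its left column under the $K$-split hypothesis. Since $\bG$ is $K$-split, $\Ebtorus$ is $K$-split, so $\Esplits = K$, the group $\EGal$ is trivial, $\tau_{\barsigma} = 1$, and $\sigma$ acts trivially on $\absW$. Consequently both $\sigma$-conjugacy and $\sigma^q$-conjugacy in $\absW$ reduce to ordinary $\absW$-conjugacy, giving $\Wtamesim = \Wtamesimq = \Wtamesimnon$ as sets of equivalence classes.

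Next I would analyze the norm map. Because $\sigma$ acts trivially on $\absW$, the formula $N_q(w) = w \cdot \sigma(w) \cdots \sigma^{q-1}(w)$ simplifies to $N_q(w) = w^q$. For tame $w \in \Wtame$ we have $p \nmid |w|$ while $q$ is a power of $p$, so $\gcd(q, |w|) = 1$. By the rationality of the Weyl group recalled in \S\ref{sec:rational}, $w^q$ is $\absW$-conjugate to $w$; hence $N_q$ is the identity map on $\Wtamesimnon$. Invoking Lemma~\ref{lem:equivmapnotsplitexperiment} then immediately gives both the commutativity of the stated square and the bijectivity of all four of its maps.

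For independence of the vertical map from the choice of topological generator $\sigma$ of $\Gal(\tame/K)$, the key again is rationality. Any other topological generator $\sigma'$ agrees, on the finite quotient $\Gal(\tame/K)/\Gal(\tame/K_n)$ relevant to a tame $w$ of order $\ell = \ell_n$ coprime to $p$, with $\sigma^m$ for some integer $m$ coprime to $\ell$. Tracing through the construction of Section~\ref{sec:markssec}, passing from $\sigma$ to $\sigma^m$ replaces the cocycle value by $N_m(n)$, whose image in $\absW$ is $w^m$; rationality gives $w^m \sim w$ in $\absW$, so the resulting $G$-orbit of tori is unchanged and $\varphi_\sigma = \varphi_{\sigma'}$ on $\Wtamesimnon$. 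When moreover $\bG$ is $k$-split, $\bA$ is a $k$-split maximal torus, so $\Fr$ acts trivially on $\X^*(\bA)$ and hence on $\absW$; the horizontal $\Fr$ is then the identity on $\Wtamesimnon$, and independence from the choice of $\Fr$ follows by the same argument applied now to integer powers of $\Fr$.

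I expect the main obstacle to be the independence-from-$\sigma$ step: one must verify at the cocycle level that replacing $\sigma$ by $\sigma^m$ changes the uniformizer $\pi$, the root of unity $\xi = \sigma(\pi)/\pi$, the cocharacter $\lambda_n$, and the element $g_n$ in a coordinated way that nonetheless leaves the $G$-orbit of $\bT_n$ unchanged. The reduction to the identity $w^m \sim w$ in $\absW$, via the norm map $N_m$ and rationality, makes this tractable but still requires careful bookkeeping with the construction of \S\ref{sec:markssec}.
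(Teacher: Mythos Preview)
Your proposal is correct and follows essentially the same route as the paper: reduce to Lemma~\ref{lem:equivmapnotsplitexperiment} via $N_q(w)=w^q$ and the rationality of $\absW$, then handle independence from $\sigma$ by reducing to the relation $w^m\sim w$ on a finite quotient where $\sigma'=\sigma^m$ with $(m,\ell)=1$. One simplification the paper makes that you may want to adopt: for the independence-from-$\sigma$ step, rather than tracing through the construction of \S\ref{sec:markssec} (with $\pi$, $\xi$, $\lambda_n$, $g_n$), the paper simply fixes a single $g\in\bG(\tame)$ with $\lsup{g}\Ebtorus\in\varphi_\sigma(c)$ and computes $g^{-1}\sigma'(g)$ directly, noting that $(g^{-1}\sigma(g))^k$ and $g^{-1}\sigma^k(g)$ have the same image $w^k$ in $\absW$; well-definedness of $\varphi_{\sigma'}$ (Corollary~\ref{cor:welldefined}) then finishes immediately, so the bookkeeping you anticipate in your final paragraph is unnecessary. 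Also, for the $k$-split case you do not need an ``integer powers of $\Fr$'' argument: since $\Ebtorus$ is $k$-split, all of $\Gal(\bar{k}/k)$ acts trivially on $\X^*(\Ebtorus)$ and hence on $\absW$, so any choice of $\Fr$ already acts trivially.
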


\begin{proof}
Since $\bG$ is $K$-split, we have $\bA = \Ebtorus$ and $\sigma$ acts trivially on $\absW$.   Thus, $N_q(\dot{w}) = \dot{w}^q$ for all $\dot{w} \in\absW$.

Choose $w \in \Wtame$.
Since $\absW$ is a rational group and $q$ is relatively prime to the order of $w$, we have that $w^{q}$ and $w$ are conjugate in $\absW$.  Thus $N_q(c) = c$ for all $c \in \Wtamesim$.
The claims about the commutativity of the diagram and bijectivity of its maps follow from Lemma~\ref{lem:equivmapnotsplitexperiment}.

 We now show that the vertical maps are independent  of the choice of the topological generator $\sigma$.  Let $\sigma'$ be another choice of topological generator for $\Gal(\tame/K)$.   Suppose $w \in c \in \Wtamesimnon$.   Choose $g \in \bG(\tame)$ such that $g\inv \sigma(g) \in N_{\bG(\tame)}(\bA)$ has image $w$ in $\absW$.   Let $E/K$ be the splitting field of $\lsup{g}\bA$.   Set $\ell = [E:K]$.  Without loss of generality, we may assume that $g \in \bG(E)$.  Note that for all $k \in \Z_{\geq 1}$,  the elements $(g\inv \sigma(g))^k$ and $g\inv \sigma^k(g)$ of $N_{\bG(E)}(\bA)$ have the same image in $\absW$, namely $w^k$.    Putting $k = \ell$, we conclude that  $w^{\ell} = 1$.  Let $\bar{\sigma}$ (resp. $\bar{\sigma}'$) denote the image of $\sigma$ (resp. $\sigma'$) in $\Gal(E/K)$.   Since $\bar{\sigma}'$ generates $\Gal(E/K)$, there exists $m \in \Z_{\geq 1}$ with $(m,\ell) = 1$ such that $\bar{\sigma}' = \bar{\sigma}^m$.   Putting $k = m$, we conclude that the image $g\inv \sigma'(g) \in N_{\bG(E)}(\bA)$ is $w^m$.   Since $m$ is relatively prime to the order of $w$ and  $\absW$ is a rational group,  $w$ and $w^m$ are conjugate in $\absW$.  Thus $w^m \in c$.  We conclude that $\varphi_{\sigma} (c) = \varphi_{\sigma'} (c) = \lsup{G}(\lsup{g}\bA)$.

Suppose now that $\bG$ is $k$-split. In this case any choice of $\Fr$ acts trivially on $\absW$; hence the horizontal maps are both the identity.   Hence, when $\bG$ is $k$-split, the maps in the diagram are independent of the choice of both $\Fr$ and $\sigma$.
\end{proof}

\begin{remark}  Suppose $\bG$ is not $K$-split and $\sigma'$ is another choice of topological generator for $\Gal(\tame/K)$.  If the image of $\sigma$ and $\sigma'$ in $\Aut(\Esimple)$ agree, then by arguing as in the proof of Corollary~\ref{cor:equivmaptame}  it follows from~\cite[Proposition~2.6]{adams-he-nie:from}  that  $\varphi_\sigma$ and 
$\varphi_{\sigma'}$ induce the same map from $\Wtamesimnon$ to $\Ctame$.
\end{remark}

\begin{remark}   \label{rem:G2example} If $\bG$ is $k$-split, then from Corollary~\ref{cor:equivmaptame} the condition $c = (\Fr \circ N_q)(c)$  of Corollary~\ref{cor:tameoverkexperiment} holds for every tame elliptic conjugacy class $c$ of the Weyl group.   However, if we remove the  assumption that $\bG$ is $k$-split, then this condition is no longer guaranteed to hold.   For example, suppose $k'$ is an unramified degree three extension of $k$ and let $\bG = R_{k'/k} \Gtwo$.  As a $K$-group, we have $\bG_K \cong \Gtwo \times \Gtwo \times \Gtwo$, and $\Fr$-acts by cyclically permuting the three copies of $\Gtwo$.  By placing one of each of the three distinct elliptic conjugacy classes $A_1 \times \tilde{A}_1$, $A_2$, and $\Gtwo$ on each of the three $\Gtwo$ factors of $\bG$, we obtain an elliptic conjugacy class in the Weyl group of $R_{k'/k} \Gtwo$ which is not $\Fr \circ N_q$-stable.
\end{remark}

\subsection{Cautionary example}

Suppose $\bT$ is a maximal $K$-torus in $\bG$. We know from our work above that if $\bT$ is $K$-minisotropic and $\Fr(\lsup{G}\bT) = \lsup{G}\bT$, then   $\lsup{G}\bT$ contains a $k$-torus.  At the other extreme, we also know  that if $\bT$ is a maximally $K$-split maximal $K$-torus in $\bG$, then  $\lsup{G}\bT$ is $\Fr$-stable and,  since every maximal $k$-split torus of $\bG$ is contained in a maximally $K$-split maximal $k$-torus of $\bG$, $\lsup{G}\bT$ contains a $k$-torus.

These examples and the results above suggest that there might be an elegant general theory relating the existence of  $k$-tori in an orbit $\dorbit$ in $\Ctame$  to the $\Fr$-invariance of $\dorbit$. In this section we present an example showing that the natural condition $\Fr (\dorbit) = \dorbit$ does not guarantee that $\dorbit$ contains a $k$-torus.

\begin{example}
Suppose that the residual characteristic of $k$ is larger than three and let $\bH$ be a connected reductive group of type $A_{2}$ such that $H^{\Fr} \cong \SL_1(D)$ where $D$ is a division algebra of index $3$ over $k$.  Recall that we may identify  $H = \bH(K)$ with $\SL_3(K)$.  Let $\bA$ denote a maximal $K$-split $k$-torus in $\bH$; it is unique up to $H^\Fr$-conjugacy.  Let $\absW = N_{\bH}(\bA)/\bA$.  Suppose $C$ is the alcove in $\AA(A) \subset \BB(H)$ for which $C^{\Fr} \neq \emptyset$.  Let $\{\psi_0, \psi_1 ,  \psi_{2} \}$ be the simple affine $K$-roots determined by $\bH$, $\bA$, $\nu$, and $C$.   We assume that the $\psi_i$ are labeled such that $\Fr(\psi_i) = \psi_{i+1}$ mod $3$.

Let $\bT$ denote a maximal $K$-torus in $\bH$ that corresponds, under Corollary~\ref{cor:CandW}, to the $\absW$-conjugacy class of the simple reflection $w_{\dot{\psi}_1}$.   Since this $\absW$-conjugacy class is $\Fr$-stable, from Corollary~\ref{cor:equivmaptame} we conclude that $\Fr(\bT)$ is $H$-conjugate to $\bT$.  Since $\bT$ splits over a quadratic extension of $K$ and all maximal $k$-tori in $\bG$ correspond to extensions $E \leq D$ whose ramification degree over $k$ is not $2$, we conclude that $\lsup{H}\bT$ cannot contain a $k$-torus.
\end{example}

\subsection{Results when \texorpdfstring{$\bG$ is $K$-tame}{G is K-tame}}  We gather here the statements of the main results of this section under the hypothesis that $\bG$ is $K$-tame.  Recall that when $\bG$ is $K$-tame we have $\absW = \Wtame$ and $\mathcal{C} = \Ctame$ (see Definition~\ref{defn:Ktame} and Corollary~\ref{cor:CandW}).    The following three results follow immediately from Lemma~\ref{lem:equivmapnotsplitexperiment}, Corollary~\ref{cor:tameoverkexperiment}, and Corollary~\ref{cor:equivmaptame}.

\begin{lemma}   \label{lem:equivmapnotsplit}
Suppose $\bG$ is $K$-tame.  The diagram below commutes.
\begin{center}
\begin{tikzcd}
\absW_{\sim_{\sigma}} \arrow[r, "N_q"] \arrow[d, "\varphi_\sigma"]
&\absW_{\sim_{\sigma^q}} \arrow[r, "\Fr"] \arrow[d, "\varphi_{\sigma^q}"]
& \absW_{\sim_{\sigma}} \arrow[d,  "\varphi_{\sigma}" ] \\
\mathcal{C} \arrow[r,  "\Id" ]
& \mathcal{C}  \arrow[r, "\Fr"]
& \mathcal{C}  \\
\end{tikzcd}
\end{center}
Moreover, each map is a bijection. \qed
\end{lemma}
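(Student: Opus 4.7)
The plan is to deduce this statement immediately from the analogous Lemma~\ref{lem:equivmapnotsplitexperiment}, which establishes exactly the same commutative diagram and bijectivity claims but with the a priori smaller sets $\Wtame$ and $\Ctame$ in place of $\absW$ and $\mathcal{C}$. So the only real task is to observe that under the $K$-tame hypothesis these sets coincide.

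First, I would invoke the hypothesis: by Definition~\ref{defn:Ktame}, saying $\bG$ is $K$-tame means precisely that $\mathcal{C} = \Ctame$. Next, by Corollary~\ref{cor:CandW}, this is equivalent to the condition $\Wtame = \absW$; in particular every element of $\absW$ is tame. Once these two identifications are made, the passage from $\Wtame_{\sim_\sigma}$ to $\absW_{\sim_\sigma}$ (and similarly for $\sigma^q$) is nothing more than a relabeling of the top row of the diagram, and the passage from $\Ctame$ to $\mathcal{C}$ relabels the bottom row.

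I would then note that all of the maps in the displayed diagram are defined exactly as in Lemma~\ref{lem:equivmapnotsplitexperiment}: the vertical arrows $\varphi_\sigma$ and $\varphi_{\sigma^q}$ are the maps from Corollary~\ref{cor:welldefined} applied to $\sigma$ and $\sigma^q$ respectively, and the horizontal arrows $N_q$ and $\Fr$ are defined on $\sigma$-conjugacy classes (resp.\ on $G$-conjugacy classes of maximal $K$-tori) without reference to the tameness hypothesis. Consequently the commutativity of both squares and the bijectivity of every arrow are inherited verbatim from Lemma~\ref{lem:equivmapnotsplitexperiment}. There is no genuine obstacle here, since the content of the statement is entirely packaged into Lemma~\ref{lem:equivmapnotsplitexperiment} and Corollary~\ref{cor:CandW}; the proof is a two-line application of these results.
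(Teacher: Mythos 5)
Your proposal is correct and matches the paper's own treatment: the paper derives Lemma~\ref{lem:equivmapnotsplit} immediately from Lemma~\ref{lem:equivmapnotsplitexperiment}, using that $K$-tameness gives $\mathcal{C} = \Ctame$ (Definition~\ref{defn:Ktame}) and $\absW = \Wtame$ (Corollary~\ref{cor:CandW}), which is exactly your two-step relabeling argument.
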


\begin{cor}   \label{cor:tameoverk}  Suppose that $\bG$ is $K$-tame.   Suppose $c \in\absW_{\sim_{\sigma}}$ is $\sigma$-elliptic.
  There exists a torus in $ \varphi_\sigma (c) $  that is $\Fr$-stable if and only if $c = (\Fr \circ N_q )(c)$. \qed
\end{cor}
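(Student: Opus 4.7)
The plan is to observe that this corollary is simply the specialization of Corollary~\ref{cor:tameoverkexperiment} to the $K$-tame setting. Under the hypothesis that $\bG$ is $K$-tame, Corollary~\ref{cor:CandW} tells us that $\Wtame = \absW$, and hence $\Wtamesim = \absW_{\sim_\sigma}$; in particular, every $\sigma$-conjugacy class $c \in \absW_{\sim_\sigma}$ is automatically a tame $\sigma$-conjugacy class. Similarly, $\Ctame = \mathcal{C}$, so the map $\varphi_\sigma \colon \absW_{\sim_\sigma} \to \mathcal{C}$ of Lemma~\ref{lem:equivmapnotsplit} coincides with the map $\varphi_\sigma \colon \Wtamesim \to \Ctame$ to which Corollary~\ref{cor:tameoverkexperiment} applies.

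Given this, the proof is immediate: apply Corollary~\ref{cor:tameoverkexperiment} to the $\sigma$-elliptic class $c \in \absW_{\sim_\sigma} = \Wtamesim$, obtaining that $\varphi_\sigma(c)$ contains an $\Fr$-stable torus if and only if $c = (\Fr \circ N_q)(c)$. There is no additional content beyond recognizing that the hypotheses of the earlier corollary are satisfied.

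Since there is no genuine obstacle, I would write the proof as a single sentence invoking Corollary~\ref{cor:tameoverkexperiment} together with the identification $\Wtamesim = \absW_{\sim_\sigma}$ supplied by Corollary~\ref{cor:CandW}. The only thing to double-check is that the commutative diagram of Lemma~\ref{lem:equivmapnotsplit} really does let us interpret the condition $\Fr(\varphi_\sigma(c)) = \varphi_\sigma(c)$ as $c = (\Fr \circ N_q)(c)$ --- but this is precisely what the commutativity and injectivity of $\varphi_\sigma$ in that diagram give us, and it is exactly the mechanism used in the proof of Corollary~\ref{cor:tameoverkexperiment}.
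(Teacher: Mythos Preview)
Your proposal is correct and matches the paper's approach exactly: the paper states this corollary with a \qed and notes that it follows immediately from Corollary~\ref{cor:tameoverkexperiment} once one uses the $K$-tame hypothesis (via Corollary~\ref{cor:CandW}) to identify $\Wtamesim$ with $\absW_{\sim_\sigma}$.
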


\begin{cor}   \label{cor:equivmap}
If $\bG$ is $K$-tame and $K$-split, then
the diagram
\begin{center}
\begin{tikzcd}
\absW_{\sim} \arrow[r, "\Fr"] \arrow[d, "\varphi_{\sigma}"]
&\absW_\sim \arrow[d,  "\varphi_{\sigma}" ] \\
 \mathcal{C}  \arrow[r, "\Fr"]
& \mathcal{C}
\end{tikzcd}
\end{center}
commutes and each map is a bijection.  Here $\absW_\sim$ is the set of $\absW$-conjugacy classes in $\absW$.  When $\bG$ is $K$-tame and $k$-split, all maps are independent of the choice of generators $\Fr$ for $\Gal(K/k)$ and $\sigma$ for $\Gal(\tame /K)$. \qed
\end{cor}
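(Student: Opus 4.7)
The plan is to observe that Corollary~\ref{cor:equivmap} is essentially a notational specialization of Corollary~\ref{cor:equivmaptame} obtained by invoking the hypothesis that $\bG$ is $K$-tame. No new argument is needed; the work is really in the earlier corollaries, and the task here is to verify that the relevant objects coincide under our stronger assumption.

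First I would recall the definition of $K$-tame: by Definition~\ref{defn:Ktame}, $\bG$ is $K$-tame precisely when $\mathcal{C} = \Ctame$. Under the standing assumption of Section~\ref{sec:tamepoints} that $\tilde{K}$ is a tame extension of $K$ (which here is automatic since $\bG$ is $K$-split, forcing $\Ebtorus = \bA$ and $\tilde{K} = K$), Corollary~\ref{cor:CandW} then gives $\Wtame = \absW$, hence $\Wtamesimnon = \absW_{\sim}$. So the two identifications $\Wtamesimnon = \absW_{\sim}$ and $\Ctame = \mathcal{C}$ hold, and the diagram in Corollary~\ref{cor:equivmap} is literally the diagram in Corollary~\ref{cor:equivmaptame} rewritten.

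Next I would invoke Corollary~\ref{cor:equivmaptame} directly: it asserts both commutativity and bijectivity of every map in that diagram, as well as independence of the vertical maps from the choice of $\sigma$, and (when $\bG$ is $k$-split) independence of all maps from the choice of $\Fr$ and $\sigma$. Each of these statements transports verbatim to the diagram of Corollary~\ref{cor:equivmap} via the identifications above, since in the $K$-split case the action of $\Fr$ on $\absW$ is trivial (so the horizontal maps really are maps $\absW_{\sim} \to \absW_{\sim}$, not just their restrictions to the tame classes), and the rationality-of-Weyl-groups argument used in the proof of Corollary~\ref{cor:equivmaptame} applies to every element of $\absW = \Wtame$.

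There is no real obstacle: the only thing to check is that no hypothesis of Corollary~\ref{cor:equivmaptame} is violated. The hypothesis ``$\bG$ is $K$-split'' is assumed, and the additional ``$k$-split'' clause is invoked only for the independence-of-$\Fr$ statement, matching exactly the corresponding clause in Corollary~\ref{cor:equivmap}. Thus the proof reduces to one line: apply Corollary~\ref{cor:equivmaptame}, using $\Wtamesimnon = \absW_{\sim}$ and $\Ctame = \mathcal{C}$ from Corollary~\ref{cor:CandW} and Definition~\ref{defn:Ktame}. \qed
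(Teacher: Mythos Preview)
Your proposal is correct and matches the paper's approach exactly: the paper presents Corollary~\ref{cor:equivmap} with a \qed\ and no separate proof, noting just before it that the result follows immediately from Corollary~\ref{cor:equivmaptame}. Your explanation that the $K$-tame hypothesis gives $\Wtame = \absW$ (via Corollary~\ref{cor:CandW}) and $\Ctame = \mathcal{C}$ (by Definition~\ref{defn:Ktame}), so that the diagram is literally that of Corollary~\ref{cor:equivmaptame}, is precisely the intended one-line derivation.
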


 \begin{cor} \label{cor:5.4.4}
  If $\bG$ is $K$-tame and $k$-split, then  every $G$-orbit of  $K$-minisotropic maximal $K$-tori in $\bG$ contains a torus defined over $k$.
 \end{cor}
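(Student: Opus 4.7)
The plan is to chase through the parameterization of $K$-minisotropic maximal $K$-tori by $\sigma$-elliptic conjugacy classes in the Weyl group, and then invoke the rationality of Weyl groups to verify the obstruction identified in Corollary~\ref{cor:tameoverkexperiment}.

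First, I would unpack what the hypotheses mean at the level of $\absW$. Since $\bG$ is $k$-split it is also $K$-split, so $\Esplits = K$, the group $\EGal$ is trivial, $\tau_{\barsigma}=1$, and both $\sigma$ and $\Fr$ act trivially on $\absW$. In particular, $\sigma$-conjugacy in $\absW$ coincides with ordinary conjugacy, and $N_q(w) = w \cdot \sigma(w) \cdots \sigma^{q-1}(w)$ reduces to $w^q$. Moreover, since $\bG$ is $K$-tame and $K$-split, Corollary~\ref{cor:CandW} gives $\absW = \Wtame$, which in this split setting means $p \nmid |w|$ for every $w \in \absW$. Because $q$ is a power of $p$, this ensures $\gcd(q,|w|) = 1$ for every $w \in \absW$.

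Next, fix a $G$-orbit $\dorbit$ of $K$-minisotropic maximal $K$-tori in $\bG$. Because $\bG$ is $K$-tame we have $\mathcal{C} = \Ctame$, so $\dorbit \in \Ctame$, and by Corollary~\ref{cor:CandW} the map $\varphi_\sigma \colon \absW_{\sim_\sigma} \to \mathcal{C}$ is a bijection. Let $c \in \absW_{\sim_\sigma}$ be the unique class with $\varphi_\sigma(c) = \dorbit$; by Lemma~\ref{lem:elliptictoweyl}, $K$-minisotropicity of members of $\dorbit$ forces $c$ to be $\sigma$-elliptic. Pick any representative $w \in c$.

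The key step is then to apply the rationality of Weyl groups (recalled at the start of Section~\ref{sec:rational}): since $q$ is coprime to the order of $w$, the element $w^q$ is $\absW$-conjugate to $w$. Hence $N_q(c) = c$ in $\absW_{\sim_\sigma}$, and because $\Fr$ acts trivially on $\absW$ in the $k$-split case, we obtain $(\Fr \circ N_q)(c) = c$. By Corollary~\ref{cor:tameoverkexperiment}, this is exactly the criterion for $\varphi_\sigma(c) = \dorbit$ to contain a $\Fr$-stable torus, i.e.\ a torus defined over $k$. This completes the plan.

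The argument is essentially a diagram chase through Lemma~\ref{lem:equivmapnotsplit} once rationality is in hand; the only ``obstacle'' is recognizing that the $k$-split hypothesis forces the horizontal $\Fr$-map in the commutative diagram to be trivial, so that the entire question reduces to whether $w$ and $w^q$ lie in the same Weyl conjugacy class — which rationality settles uniformly.
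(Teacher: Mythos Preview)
Your proof is correct and follows essentially the same approach as the paper's. The paper's proof simply invokes Corollary~\ref{cor:equivmap} (which already packages the rationality argument showing $N_q(c)=c$ and that $\Fr$ acts trivially on $\absW_\sim$ in the $k$-split case) and then Lemma~\ref{lem:overk}; you have unpacked the content of Corollary~\ref{cor:equivmap} explicitly and applied Corollary~\ref{cor:tameoverkexperiment} directly, which amounts to the same thing.
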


 \begin{proof}
Since $\bG$ is $k$-split, the top horizontal map of the diagram in Corollary~\ref{cor:equivmap} becomes the identity.  The result follows from Lemma~\ref{lem:overk}.
 \end{proof}

\section{Results about \texorpdfstring{$K$-minisotropic maximal $k$-tori in $\bG$}{K-minisotropic maximal k-tori in G}} \label{sec:resultsonKktori}

Suppose that $\bT$ is a $K$-minisotropic maximal $k$-torus of $\bG$.
 The set of $\Fr$-stable tori in the $G$-orbit of $\bT$ is denoted $\CTk$.  In this section we parameterize three sets related to  $\CTk$:    we parameterize the $G^\Fr$-conjugacy classes in $\CTk$;   we parameterize the $k$-stable-conjugacy classes in $\CTk$ (see Definition~\ref{defn:kstability});  and we  parameterize, up to $G^\Fr$-conjugacy, the $k$-embeddings of $\bT$ into $\bG$ (see Definition~\ref{defn:embedding}).

Recall from \S\ref{subsec:point} that we can associate a
point $x_T$ in $\BB^{\red}(G)$ to $T$.
Since $\bT$ is defined over $k$, we have  $\Fr (T) = T$ and so by uniqueness we conclude that $x_T$ is Frobenius fixed.  Thus, we can assume   $x_T  \in \bar{C}'^\Fr \subset  \BB^{\red}(G)^\Fr = \BB^{\red}(\bG,k)$.   Recall that $F$ is the unique facet in $\BB(G)$ whose image in $\BB^{\red}(G)$  contains $x_T$.

\subsection{Points in the reduced building associated to  \texorpdfstring{$K$-minisotropic maximal $k$-tori of $\bG$}{K-minisotropic maximal k-tori of G}}

Suppose $\bT$ is a $K$-minisotropic maximal $k$-torus of $\bG$.

\begin{lemma}   \label{lem:cardk}   
The set
$$G^{\Fr} \cdot x_T \cap \bar{C}'{}^{\Fr}$$
has cardinality one.
\end{lemma}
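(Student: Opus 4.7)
The plan is to observe that this lemma is essentially a formal consequence of Lemma~\ref{lem:card} together with the setup preceding the statement, so no substantive new argument is required. The strategy reduces to verifying both that the set in question is nonempty and that it has at most one element.

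First, I would establish \emph{nonemptiness}. By the paragraph introducing $x_T$, we have already arranged $x_T \in \bar{C}'^{\Fr}$ using that $\bT$ is defined over $k$ (so $\Fr(T) = T$) and the uniqueness of $x_T$ in its $\Fr$-orbit. Since $x_T \in G^{\Fr} \cdot x_T$ trivially, it follows that $x_T$ lies in the intersection $G^{\Fr} \cdot x_T \cap \bar{C}'^{\Fr}$.

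Next I would handle \emph{uniqueness} by a containment argument. Any element of $G^{\Fr} \cdot x_T \cap \bar{C}'^{\Fr}$ automatically lies in $G \cdot x_T \cap \bar{C}'$, since $G^{\Fr} \subseteq G$ and $\bar{C}'^{\Fr} \subseteq \bar{C}'$. By Lemma~\ref{lem:card}, the latter intersection has cardinality one, so the former intersection has cardinality at most one.

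Combining the two steps yields the claimed cardinality. There is no real obstacle here; the lemma is essentially bookkeeping that packages the $K$-level result (Lemma~\ref{lem:card}) with the $\Fr$-fixedness of $x_T$ coming from the $k$-rationality of $\bT$. The nontrivial content has already been absorbed into Lemma~\ref{lem:card} (which itself rests on Corollary~\ref{cor:adwalk} and the fact that $\bar{C}'$ is a fundamental domain for $G_{\mathrm{sc}}$ on $\BB(G_{\mathrm{sc}})$).
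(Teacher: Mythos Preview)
Your proposal is correct and follows essentially the same approach as the paper's own proof: the paper also invokes Lemma~\ref{lem:card} to conclude that $G^{\Fr}\cdot x_T \cap \bar{C}'^{\Fr}$ has at most one point, and then notes $x_T \in \bar{C}'^{\Fr}$ to conclude nonemptiness.
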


\begin{remark}  The proof below also works if $k$ is not residually quasi-split, but we must replace the alcove $C'^{\Fr}$ in $\BB^{\red}(\bG,k)$ with an alcove $D$ in $\BB^{\red}(\bG,k)$ that is contained in the closure of ${C}'$.   The statement would then read:
$$\bG(k) \cdot x_T \cap \bar{D} \, \text{
has cardinality one.}$$
\end{remark}

\begin{proof}
From Lemma~\ref{lem:card} we know that
$G \cdot x_T \cap \bar{C}'$ has cardinality one, and so $G^{\Fr} \cdot x_T \cap \bar{C}'{}^{\Fr}$ has at most one point.  Since $x_T \in \bar{C}'{}^{\Fr}$, the result follows.
\end{proof}

\begin{lemma}  \label{lem:xtconj} Suppose $\bT'$ is a maximal $k$-torus in $\bG$.  If $\bT'$ is $G$-conjugate to $\bT$, then there exist $h \in G^{\Fr}$ and $k' \in \Stab_{G}(x_T)$ such that $\lsup{k' h} \bT' = \bT$.
\end{lemma}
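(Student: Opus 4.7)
The plan is to first move $x_{T'}$ into the fundamental domain $\bar C'^{\Fr}$ by an element of $G^{\Fr}$, and then exploit the uniqueness in Lemma~\ref{lem:card} to force the resulting associated point to equal $x_T$. First I would use that $C$ was chosen $\Fr$-stable in \S\ref{sec:BTnotation}, so its image $\bar C'^{\Fr}$ in $\BB^{\red}(G)^{\Fr} = \BB^{\red}(\bG,k)$ is the closure of an alcove for the $k$-group, and by standard Bruhat-Tits descent $G^{\Fr}$ acts transitively on alcoves there. Hence there exists $h \in G^{\Fr}$ with $h \cdot x_{T'} \in \bar C'^{\Fr}$.

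Next, the construction of $x_T$ in \S\ref{subsec:point} is natural under $G$-conjugation: if $\bT_1 = \lsup{g}\bT_2$, then passing to a common splitting field shows $g\cdot\AA'(\bT_2,E) = \AA'(\bT_1,E)$, and uniqueness of the closest $K$-rational point gives $g \cdot x_{T_2} = x_{T_1}$. Applying this to $\lsup{h}\bT'$ yields $h \cdot x_{T'} = x_{\lsup{h}\bT'}$. Since $\lsup{h}\bT'$ lies in the $G$-orbit of $\bT$, the point $h \cdot x_{T'}$ lies in $G\cdot x_T \cap \bar C'$, which has cardinality one by Lemma~\ref{lem:card}. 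Thus $h \cdot x_{T'} = x_T$, i.e., $\lsup{h}\bT'$ has associated point $x_T$.

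Finally, both $\lsup{h}\bT'$ and $\bT$ lie in the $G$-orbit of $\bT$ (and are $K$-minisotropic), so we may choose $k' \in G$ with $\lsup{k'}(\lsup{h}\bT') = \bT$. Invoking naturality a second time, $k' \cdot x_T = k' \cdot x_{\lsup{h}\bT'} = x_{\bT} = x_T$, so $k' \in \Stab_G(x_T)$. Then $\lsup{k'h}\bT' = \bT$, completing the proof.

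The main obstacle is the first step: ensuring that $\bar C'^{\Fr}$ really serves as a fundamental domain for $G^{\Fr}$ on $\BB^{\red}(\bG,k)$. This is not a calculation, but an appeal to descent for Bruhat-Tits buildings together with the $\Fr$-stability of the alcove $C$ fixed at the outset. Once that is in hand, the remaining steps are immediate consequences of Lemma~\ref{lem:card} and the naturality of the point $x_T$ under $G$-conjugation.
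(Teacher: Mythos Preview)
Your proof is correct and follows essentially the same route as the paper: move $x_{T'}$ into $\bar C'^{\Fr}$ by some $h\in G^{\Fr}$, invoke Lemma~\ref{lem:card} to force $h\cdot x_{T'}=x_T$, then pick any $k'\in G$ with $\lsup{k'h}\bT'=\bT$ and observe $k'\cdot x_T=x_T$. The paper's version is terser---it leaves implicit both the existence of $h$ and the naturality of $x_T$ under conjugation---but your added justifications are sound.
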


\begin{proof}
Since $\bT'$ is $G$-conjugate to $\bT$, we have that $\bT'$ is a  $K$-minisotropic maximal $k$-torus in $\bG$.  Thus, we may associate to $\bT'$ a point $x_{T'}$ in $\BB^{\red}(\bG,k)$.   Choose $h \in G^{\Fr}$ such that $hx_{T'} \in \bar{C}'^{\Fr}$.

From Lemma~\ref{lem:card} we know $hx_{T'} = x_T$.   Since $\bT$ and $\bT'$ are $G$-conjugate, there exists $k' \in G$ such that $\bT = \lsup{k' h} \bT'$.  So $x_T = k' h x_{T'} = k' x_T$, which implies $k' \in \Stab_G(x_T)$.
\end{proof}

\begin{lemma} \label{lem:FrpointsofGorbit}  Recall that in this section $x_T$ is Frobenius fixed.
    \[ G \cdot x_T \cap \BB^{\red}(G)^{\Fr} = G^{\Fr} \cdot x_T\]
\end{lemma}

\begin{proof}
    It is enough to show that   \( G \cdot x_T \cap \BB^{\red}(G)^{\Fr} \subset G^{\Fr} \cdot x_T\).  Suppose $g \in G$ such that $g \cdot x_T$ is Frobenius fixed.  Then $g \cdot x_T$ belongs to some alcove of $\BB^{\red}(G)^{\Fr}$ and so there exists $h \in G^{\Fr}$ such that $h\inv g \cdot x_T \in \bar{C}'^{\Fr}$.  From Lemma~\ref{lem:card} the point $h\inv g \cdot x_T$ must be $x_T$ and so ${g}\cdot x_T = h \cdot x_T \in G^{\Fr} \cdot x_T$.
\end{proof}

\subsection{\texorpdfstring{$k$-embeddings of $\bT$ into $\bG$}{k-embeddings of T into G}}

For many questions in harmonic analysis we want to understand the ways to $k$-embed $\bT$ into $\bG$ up to $G^\Fr$-conjugacy.

 \begin{defn}  \label{defn:embedding} Suppose $\bS$ is a maximal $k$-torus in $\bG$.   A \emph{$k$-embedding} of $\bS$ into $\bG$ is a map $f \colon \bS \rightarrow \bG$ such that
 \begin{enumerate}
     \item there exists $g \in G$ such that $f(s) = {\lsup{g}s}$ for all $s \in S$ and
     \item $f$ is a $k$-morphism.
 \end{enumerate}
 \end{defn}

\begin{rem}  
    In Definition~\ref{defn:embedding} it would be more standard to take $g \in \bG(\bar{k})$.  However, since $f$ is completely determined by what it does to any strongly regular semisimple element of $T^\Fr$, from Lemma~\ref{lem:gside} we can restrict our attention to $g \in G$.
\end{rem}

If $f \colon \bT \rightarrow \bG$ is a $k$-embedding, then $f$ is completely determined by where it sends any given strongly regular semisimple element of $T^\Fr$.    So, we begin by studying strongly regular semisimple elements.

\begin{definition}
Suppose $\gamma \in G^{\Fr}$ is strongly regular semisimple.  An element $\gamma' \in G^\Fr$ is \emph{$k$-stably-conjugate} to $\gamma$ provided that there exists $g \in G$ such that $\lsup{g}\gamma = \gamma'$.  Let $\Ogammak$ denote the set of elements of $G^{\Fr}$ that are $k$-stably-conjugate to $\gamma$.
\end{definition}

\begin{definition}
$T_F := T \cap G_{F,0}$
\end{definition}

In the notation of Corollary~\ref{cor:2.3.4} we have that $T_F = \eta[T_{\scon}]T_0$.  
In general, $T_F$ is neither $T$ nor is it the parahoric subgroup $T_0$.  For example, for $p>2$ and a $K$-minisotropic maximal  $k$-torus in $\SL_2$ that splits over a quadratic ramified extension, we have $T/T_F$ is trivial and $T_F/T_0$ is isomorphic to $\Z / 2 \Z$.  However,  for $p > 2$ and a $K$-minisotropic maximal $k$-torus in $\PGL_2$ that splits over a quadratic ramified extension, we have $T/T_F$  is isomorphic to $\Z / 2 \Z$ and $T_F/T_0$ is trivial.

\begin{defn}
For a $\Fr$-module $A$, let $A_{\Fr}$ denote the $\Fr$-coinvariants.  That is
$$A_\Fr =  A/ (1 - \Fr)A.$$
\end{defn}

Thanks to Lang-Steinberg, we have $T_0 \leq (1-\Fr)T_F \leq T_F$.  We can have  $T_0 \neq (1-\Fr)T_F$ and $(1-\Fr)T_F \neq T_F$; to see this suppose $p > 3$ and consider  ramified elliptic maximal $k$-tori in $\SL_3$ for various choices of $k$.

\begin{definition}
Let $\bar{\bfT}_F := T_F/T_0$.
\end{definition}

Since $(1- \Fr)T_0 = T_0$, we have $\bar{\bfT}_F \cong (T_F)_{\Fr}$.   We will often use this identification.

\begin{lemma} \label{lem:K-stableclasses2orig}
Suppose $\gamma \in T^{\Fr}$ is strongly regular semisimple.  There is natural bijection between the set of $G^{\Fr}$-conjugacy classes in $\Ogammak$   and the Frobenius coinvariants of $T_F$.
That is
$$  \Ogammak /  \text{$G^{\Fr}$-conjugacy}  \, \, \longleftrightarrow \, \, (\bar{\bfT}_F)_{\Fr}.$$
\end{lemma}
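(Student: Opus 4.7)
The plan is to construct a natural map $\Phi \colon \Ogammak / G^{\Fr} \to (T_F)_{\Fr}$ by pulling back the standard non-abelian cohomology cocycle along the geometry of $x_T$, and then check bijectivity by comparison with the classical parameterization of $G^{\Fr}$-orbits in $\Ogammak$ by $H^1(\langle\Fr\rangle, T)$.

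First I would recall the standard framework. Writing $\gamma' = g\gamma g^{-1}$ with $g \in G$, strong regularity of $\gamma$ forces the cocycle $c_g := g^{-1}\Fr(g)$ to lie in $C_G(\gamma) = T$, and a routine computation shows that $[\gamma'] \mapsto [c_g]$ yields a well-defined injection $\Phi' \colon \Ogammak / G^{\Fr} \hookrightarrow T_{\Fr}$ whose image is precisely $\ker\bigl(H^1(k,T) \to H^1(k,G)\bigr)$. So the lemma will follow once I show that the natural map $(T_F)_{\Fr} \to T_{\Fr}$ induced by inclusion $T_F \hookrightarrow T$ covers exactly this kernel: injectivity of the refined map $\Phi$ is then automatic from that of $\Phi'$, and the identification $(T_F)_{\Fr} \cong (\bar{\bfT}_F)_{\Fr}$ follows by applying right-exact $\Fr$-coinvariants to $1 \to T_0 \to T_F \to \bar{\bfT}_F \to 1$ combined with $(1-\Fr)T_0 = T_0$.

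That the image lies in the kernel is routine: for $t \in T_F \subseteq G_{F,0}$, Lang--Steinberg applied in turn to the connected reductive quotient $\bfG_F$ and to the pro-unipotent radical $G_{F,0^+}$ yields $H^1(\langle\Fr\rangle, G_{F,0}) = 1$, so $t$ is a coboundary in $G_{F,0}$. The heart of the proof is the reverse containment. Given $t = g^{-1}\Fr(g) \in T$ with $g \in G$, I would form the $k$-torus $\bT' := g\bT g^{-1}$ with associated point $x_{T'} = g \cdot x_T$ and apply Lemma~\ref{lem:cardk} to produce $h \in G^{\Fr}$ with $hx_{T'} = x_T$. Replacing $g$ by $hg$ preserves $c_g$ (since $\Fr(h)=h$) but places $g$ in $\Stab_G(x_T) = TG_{F,0}$ by Lemma~\ref{lemma:KHR}. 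Decomposing $g = su$ with $s \in T$ and $u \in G_{F,0}$, and setting $s_F := s^{-1}\Fr(s) \in (1-\Fr)T$, an immediate computation gives $t = u^{-1} s_F \Fr(u)$. The key point is that $T$ normalizes $G_{F,0}$ (since $G_{F,0}$ is normal in $\Stab_G(F)$ and $F$ is $T$-stable, as used in the proof of Lemma~\ref{lemma:KHR}); hence $s_F\Fr(u)s_F^{-1} \in G_{F,0}$, and the corrected element $t' := t \cdot s_F^{-1} = u^{-1}\bigl(s_F\Fr(u)s_F^{-1}\bigr)$ lies in $T \cap G_{F,0} = T_F$, while $[t]=[t']$ in $T_{\Fr}$. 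Surjectivity of $\Phi$ is then transparent: for $t \in T_F$, write $t = u^{-1}\Fr(u)$ with $u \in G_{F,0}$ and take $\gamma' := u \gamma u^{-1}$, which is $\Fr$-fixed because $t \in T$ centralizes $\gamma$ and manifestly maps to $[t]$.

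The main obstacle will be the bookkeeping required to verify that $[t'] \in (T_F)_{\Fr}$ is independent of the choices of representative $g$, of the element $h \in G^{\Fr}$, and of the decomposition $g = su$ (unique only modulo $T_F$). All of these reduce to routine cocycle computations exploiting the abelianness of $T$ and the normalization property just discussed; the essential geometric inputs are the uniqueness provided by Lemma~\ref{lem:cardk} together with the Haines--Rapoport--Kottwitz structure of $\Stab_G(x_T)$ given by Lemma~\ref{lemma:KHR} and Corollary~\ref{cor:2.3.4}.
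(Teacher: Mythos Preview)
Your approach is correct and essentially the same as the paper's, just packaged in cohomological language. Both arguments rest on the identical geometric inputs: use Lemma~\ref{lem:card} (note: you cite \ref{lem:cardk}, but what you actually need is the $G$-orbit statement of Lemma~\ref{lem:card}, since $x_{T'}=gx_T$ is only \emph{a priori} in $G\cdot x_T$, not $G^{\Fr}\cdot x_T$; this is exactly how Lemma~\ref{lem:xtconj} is proved) together with Lemma~\ref{lemma:KHR} to arrange that the conjugator lies in $G_{F,0}$, then observe the resulting cocycle lies in $T_F$ and invoke Lang--Steinberg for surjectivity. The paper does this directly---absorbing the $T$-factor of $\Stab_G(x_T)$ immediately since it centralizes $\gamma$, which spares your $t'=t\cdot s_F^{-1}$ adjustment---and then checks injectivity by an explicit cocycle computation; you instead route injectivity through the classical injection $\Phi'\colon \Ogammak/G^{\Fr}\hookrightarrow T_{\Fr}$, which is a small economy. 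As a bonus your framing makes the content of Lemma~\ref{lem:countingkernelpoints} (injectivity of $(\bar{\bfT}_F)_{\Fr}\to T_{\Fr}$) an immediate corollary rather than a separate argument.
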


\begin{remark}
Lemma~\ref{lem:K-stableclasses2orig} implies that $(\bar{\bfT}_F)_{\Fr}$ is a finite group.   This also follows from the fact that  $\bar{\bfT}_F^\circ = (1-\Fr)\bar{\bfT}_F^\circ$.
\end{remark}

\begin{proof}
If $\gamma' \in \Ogammak$, then  there is an $h \in G$ such that $\gamma' = \lsup{h} \gamma$.   Since $\bT$ is $k$-stably-conjugate to $C_{\bG} (\gamma')$, thanks to Lemma~\ref{lem:xtconj} there is a $g \in G^{\Fr}$ such that the point in $\BB^{\red}(G)$ attached to both  $\bT$ and $\lsup{g} (C_{\bG} (\gamma'))$ is $x_T$.   That is, we may replace $\gamma'$ by $\lsup{gh}\gamma$ and assume, thanks to Lemma~\ref{lemma:KHR}, that $\gamma' = \lsup{k} \gamma$ for $k \in G_{F,0}$.   Since $\lsup{k}\gamma = \Fr(\lsup{k}\gamma) = \lsup{\Fr(k)}\gamma$, we have $k\inv \Fr(k) \in T_F$.  Moreover, if $\gamma' = \lsup{k'} \gamma$ for $k' \in G_{F,0}$, then $k\inv k' \in T_F$ which implies that $k' =  ks$ for some $s \in T_F$.  Hence $(k')\inv \Fr(k')  = s\inv (k\inv \Fr(k)) \Fr( s) =  (k\inv \Fr(k)) (s\inv \Fr(s))$.  That is, we have a well-defined map $\lambda \colon \Ogammak \rightarrow T_F /(1 - \Fr)T_F = (T_F)_{\Fr}$.

On the other hand, if $t \in T_F$, then, thanks to Lang-Steinberg, there is a $k \in G_{F,0}$ such that $k\inv \Fr(k) = t$.  Note that $\lsup{k}\gamma \in \Ogammak$ and $\lambda(\lsup{k}\gamma) $ is the image of $t$ in $T_F /(1 - \Fr)T_F$.

To complete the proof, we need to show that if $\gamma_1, \gamma_2 \in \Ogammak$, then $\gamma_1$ is $G^{\Fr}$-conjugate to $\gamma_2$ if and only if $\lambda(\gamma_1) = \lambda(\gamma_2)$.  As above, after possibly conjugating by an element of $G^{\Fr}$, we can write $\gamma_i = \lsup{k_i}\gamma$ with $k_i \in G_{F,0}$.

Suppose  $\lambda(\gamma_1) = \lambda(\gamma_2)$.  Then there exists $s \in T_F$  such that
$ (k_1\inv \Fr(k_1)) (s\inv \Fr(s)) = k_2\inv \Fr(k_2)$.
Note that $\gamma_1 = \lsup{k_1 s k_2\inv} \gamma_2$ and $\Fr(k_1 s k_2\inv) = k_1 s k_2\inv$, so $\gamma_1$ is $G^{\Fr}$-conjugate to $\gamma_2$.

Suppose $\gamma_1$ is $G^{\Fr}$-conjugate to $\gamma_2$.  Then, thanks to Lemma~\ref{lem:xtconj}, there exists $h \in G^{\Fr}_{F,0}$ such that $\lsup{h} \gamma_1 = \gamma_2$.    Note that $k_2\inv h k_1 \in T_F$.  Thus, there is a $t \in T_F$ such that $k_2\inv h k_1 =t\inv$, or $k_2 = h k_1 t$.   Note that
$$k_2\inv \Fr(k_2) = t\inv k_1\inv h \inv \Fr(h) \Fr(k_1) \Fr(t) = (k_1\inv \Fr(k_1)) (t\inv \Fr(t)) .$$
Hence,  $\lambda(\gamma_1) = \lambda(\gamma_2)$.
\end{proof}

 \begin{cor}   \label{cor:onembbeddings}
 Suppose $t_1, t_2, \ldots , t_d \in T_F$ represent the elements of $(T_F)_{\Fr}$ and choose $k_i \in G_{F,0}$ such that $k_i\inv \Fr(k_i) = t_i$.  Then
 $$\{f_i\colon \bT \rightarrow \bG \, | \, \text{ $f_i(t) = \lsup{k_i}t$ for $t \in \bT$ }\}$$
is a complete set of representatives for the $G^{\Fr}$-conjugacy classes of $k$-embeddings  of $\bT$ into $\bG$.
 \end{cor}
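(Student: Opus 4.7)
The plan is to reduce the statement to Lemma~\ref{lem:K-stableclasses2orig} by setting up a natural bijection between $G^{\Fr}$-conjugacy classes of $k$-embeddings $\bT \to \bG$ and $G^{\Fr}$-conjugacy classes in $\Ogammak$ for a fixed strongly regular semisimple $\gamma \in T^{\Fr}$. Such a $\gamma$ exists by the density/unirationality discussion that followed the definition of strong regularity in \S\ref{sec:corr}.

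First I would define $\Phi \colon \{k\text{-embeddings } f \colon \bT \to \bG\} \to \Ogammak$ by $\Phi(f) = f(\gamma)$. This is well defined: $\gamma \in T^{\Fr}$ and $f$ is a $k$-morphism, so $f(\gamma) \in G^{\Fr}$, and by definition of a $k$-embedding $f(\gamma) = \lsup{g}\gamma$ for some $g \in G$, placing it in $\Ogammak$. The map $\Phi$ is injective because $\bT = C_{\bG}(\gamma)$: if $f(t) = \lsup{g}t$ and $f'(t) = \lsup{g'}t$ with $f(\gamma) = f'(\gamma)$, then $g\inv g' \in C_{G}(\gamma) = T$, and inner conjugation by a central element of $\bT$ is trivial on $\bT$, so $f = f'$. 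For $G^{\Fr}$-equivariance: if $h \in G^{\Fr}$ and $f'(t) = \lsup{h}f(t)$, then $f'(\gamma) = \lsup{h}f(\gamma)$; conversely, given any $k$-embeddings $f_1, f_2$ with $f_2(\gamma) = \lsup{h}f_1(\gamma)$ for $h \in G^{\Fr}$, the map $t \mapsto \lsup{h}f_1(t)$ is a $k$-embedding with the same value on $\gamma$ as $f_2$, hence equals $f_2$ by injectivity of $\Phi$.

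Next I would verify surjectivity of the induced map on $G^{\Fr}$-classes. Given $\gamma' \in \Ogammak$, write $\gamma' = \lsup{g}\gamma$ with $g \in G$, and set $f(t) := \lsup{g}t$. Then $f$ is automatically a $k$-morphism: for $t \in \bT$ we have $\Fr(f(t)) = \lsup{\Fr(g)}\Fr(t)$ while $f(\Fr(t)) = \lsup{g}\Fr(t)$, and these coincide iff $g\inv \Fr(g) \in C_{\bG}(\Fr(\bT)) = C_{\bG}(\bT) = \bT$; this condition holds because $\Fr(\gamma') = \gamma'$ forces $g\inv \Fr(g) \in C_{\bG}(\gamma) = \bT$. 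Combining these steps, $\Phi$ induces a bijection between $G^{\Fr}$-conjugacy classes of $k$-embeddings of $\bT$ into $\bG$ and $G^{\Fr}$-conjugacy classes in $\Ogammak$. Composing with the bijection of Lemma~\ref{lem:K-stableclasses2orig} identifies both sets with $(T_F)_{\Fr}$.

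Finally, I would unwind the chain of bijections on the chosen representatives. For each $t_i$, Lang-Steinberg applied to the connected group $G_{F,0}$ provides $k_i \in G_{F,0}$ with $k_i\inv \Fr(k_i) = t_i$; the proof of Lemma~\ref{lem:K-stableclasses2orig} then shows that $\gamma_i := \lsup{k_i}\gamma \in \Ogammak$ (it is $\Fr$-fixed because $t_i \in T_F \subset T = C_G(\gamma)$) and that the classes $\{\gamma_i\}$ exhaust the $G^{\Fr}$-orbits in $\Ogammak$. Under the inverse of $\Phi$, the element $\gamma_i$ corresponds precisely to the embedding $f_i(t) = \lsup{k_i}t$, so the $f_i$ form a complete set of representatives. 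The only nontrivial ingredient, beyond bookkeeping, is verifying that every $g \in G$ with $g\inv \Fr(g) \in T$ yields a $k$-embedding, which is the one geometric point that requires care; everything else is formal manipulation once Lemma~\ref{lem:K-stableclasses2orig} is in hand.
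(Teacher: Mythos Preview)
Your proposal is correct and follows exactly the same approach as the paper: reduce to Lemma~\ref{lem:K-stableclasses2orig} via the observation that a $k$-embedding of $\bT$ is determined by where it sends a fixed strongly regular semisimple $\gamma \in T^{\Fr}$, which sets up a $G^{\Fr}$-equivariant bijection between $k$-embeddings and $\Ogammak$. The paper's proof is a single sentence asserting this bijection, whereas you have spelled out the details (well-definedness, injectivity, equivariance, surjectivity, and unwinding on the representatives $t_i$); one small wording point is that in your injectivity step you should simply say $\bT$ is abelian rather than speaking of a ``central element of $\bT$,'' but the argument is correct.
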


 \begin{proof}
 If $\gamma \in T^{\Fr}$ is strongly regular semisimple, then the $k$-embeddings of $\bT$ into  $\bG$ are in one-to-one correspondence with the $G^\Fr$-conjugacy classes in $\Ogammak$.
 \end{proof}

\subsection{\texorpdfstring{$K$-minisotropic maximal tori in $\bG$ and the $\Omega$ group}{K-minisotropic maximal tori in G and the Omega group}}
\label{subsec:omega}

The original aim of this subsection was to prove Corollary~\ref{cor:rationaltori}, which is used in the proof of Lemma~\ref{lem:K-stableclasses2} .  The derivation of Corollary~\ref{cor:rationaltori} gave rise to some interesting results about the  $\Omega$ group.  We develop  some of these results here.

Recall that   $C$ is a $\Fr$-stable alcove in $ \BB(G)$ such that $F \subset \overline{C}$ and $x_T$ belongs to the image of $F$ in $\BB^{\red}(G)$.
In this section we show that $T/T_F$ is isomorphic to  $\Omega = \Stab_G(C)/G_{C,0}$ and that $\Stab_{G^{\Fr}}(F) = T^{\Fr} G_{F,0}^{\Fr}$.

We begin by recalling a known fact, though I am not sure where it is proved in the literature.

\begin{lemma} \label{lem:moregeneralinjection} Suppose $H$ is a facet in $\BB(G)$ with $H \subset \overline{C}$.   There is a natural  group homomorphism
$$\varphi \colon \Stab_G(H) \rightarrow \Omega$$
with kernel $G_{H,0}$.  Moreover, if $H$ is $\Fr$-stable, then $\varphi$ is $\Fr$-equivariant and  $\varphi$ descends to a group homomorphism
$$\varphi \colon \Stab_{G^{\Fr}}(H) \rightarrow \Omega^{\Fr},$$
which has kernel $G^{\Fr}_{H,0}$.
\end{lemma}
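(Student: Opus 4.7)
The plan is to construct $\varphi$ directly from the Kottwitz map $\kappa_G \colon G \to \pi_1(\bG)_I$ recalled in \S\ref{sec:someconsequences}, then read off both the kernel and the Frobenius-equivariance from the functoriality of $\kappa_G$. The Haines-Rapoport result says $\ker(\kappa_G|_{\Stab_G(C)}) = G_{C,0}$ and $\ker(\kappa_G|_{\Stab_G(H)}) = G_{H,0}$, which identifies $\Omega$ with the subgroup $\kappa_G(\Stab_G(C))$ of $\pi_1(\bG)_I$ and already makes the composite $\Stab_G(H) \xrightarrow{\kappa_G} \pi_1(\bG)_I$ into an injection modulo $G_{H,0}$. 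So the only real task is to check that this composite actually lands in the subgroup $\kappa_G(\Stab_G(C)) = \Omega$.

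For this I would pick $g \in \Stab_G(H)$ and look at the alcove $g\bar C$, which contains $gH = H$ in its closure. Standard Bruhat-Tits transitivity (the parahoric $G_{H,0}$ acts transitively on those alcoves of $\BB(G)$ whose closure contains $H$) produces $h \in G_{H,0}$ with $hg\bar C = \bar C$, i.e.\ $hg \in \Stab_G(C)$. Since Haines-Rapoport applied to $H$ places $G_{H,0}$ inside $\ker \kappa_G$, we get $\kappa_G(g) = \kappa_G(hg) \in \kappa_G(\Stab_G(C)) = \Omega$. Defining $\varphi(g) := \kappa_G(g)$, viewed as an element of $\Omega$, therefore yields a well-defined group homomorphism $\Stab_G(H) \to \Omega$ whose kernel is $\Stab_G(H) \cap \ker \kappa_G = G_{H,0}$, again by Haines-Rapoport.

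For the Frobenius-equivariant version, $\kappa_G$ is $\Fr$-equivariant by functoriality, so $\varphi$ is too. When $H$ is $\Fr$-stable, $\Stab_{G^\Fr}(H) = \Stab_G(H)^\Fr$, so passing to $\Fr$-fixed points produces the claimed $\varphi \colon \Stab_{G^\Fr}(H) \to \Omega^\Fr$ with kernel $G_{H,0}^\Fr = G^\Fr_{H,0}$. The only non-formal ingredient in the whole argument is the building-theoretic transitivity of $G_{H,0}$ on alcoves containing $H$ in their closure; this is a fundamental feature of the affine building proved in Bruhat-Tits II and is not a real obstacle, so I do not anticipate any genuine difficulty in the proof.
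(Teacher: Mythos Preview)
Your proof is correct but takes a different route from the paper's. The paper constructs $\varphi$ by hand: for $t \in \Stab_G(H)$ it chooses $k \in G_{H,0}$ with $kt \in \Stab_G(C)$ (exactly the transitivity step you use) and sets $\varphi(t) = ktG_{C,0}$, then verifies directly that this is independent of the choice of $k$ (using that parabolics are self-normalizing), that it is a homomorphism, that its kernel is $G_{H,0}$, and that it is $\Fr$-equivariant. None of this invokes $\kappa_G$ or Haines--Rapoport. Your approach instead factors everything through $\kappa_G$: well-definedness and the homomorphism property come for free since $\kappa_G$ is already a homomorphism, and the kernel and $\Fr$-equivariance drop out of Haines--Rapoport and functoriality. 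The cost is that you import the Haines--Rapoport theorem as a black box, whereas the paper's argument is entirely elementary building theory; the benefit is that you avoid the slightly fiddly hand-checks of well-definedness and multiplicativity. Both arguments rest on the same transitivity of $G_{H,0}$ on alcoves whose closure contains $H$.
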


\begin{proof}
Suppose  $t \in \Stab_G(H)$.  Since $H \subset \bar{C}$, $t \cdot H = H$, and all Borel subgroups in a connected reductive group are conjugate,  there exists $k \in G_{H,0}$ such that $kt \cdot C = C$; that is, $kt \in \Stab_G(C)$.   Fix such a $k$.   If $k' \in G_{H,0}$ also has the property that $k't \cdot C   = C$, then
$$(k't) (kt)\inv = k'k\inv \in G_{H,0}$$
and
$$k'k\inv \cdot C = (k't)(kt)\inv  \cdot C = C.$$
Since parabolic subgroups of a reductive group are self-normalizing (see also~\cite[Lemma~4.2.1]{debacker:bruhat-tits}), we have $k'k\inv \in G_{C,0}$.   So, $(k't)(kt)\inv \in G_{C,0}$.  Thus, as $G_{C,0}$ is normal in $\Stab_G(C)$, we have  $k't \in G_{C,0} kt = kt G_{C,0}$.  Consequently, we have a  function $\varphi \colon \Stab_{G}(H) \rightarrow \Stab_{G}(C)/G_{C,0}$ defined by $t \mapsto ktG_{C,0}$.

To see that $\varphi$ is a group homomorphism, fix $t,t' \in \Stab_G(H)$ and choose $k,k' \in G_{H,0}$ such that $\varphi(t) = kt G_{C,0}$ and $\varphi(t') = k't' G_{C,0}$.   A calculation shows that  $\varphi(t) \varphi(t') = (k \lsup{t}k') (tt') G_{C,0}$.  Since $(k \lsup{t}k') (tt') C = C$, with $k \lsup{t}k' \in G_{H,0}$, we conclude that $\varphi(tt') = \varphi(t) \varphi(t')$.

By construction, the group homomorphism $\varphi$ has kernel $G_{H,0}$.

Suppose now that $H$ is $\Fr$-stable.  We have $\Fr(\varphi(t)) = \Fr(k) \Fr(t) \Fr (G_{C,0})$.  Since $H$ and $C$ are Frobenius stable, we have $\Fr(k) \in G_{H,0}$ and $\Fr(G_{C,0}) = G_{C,0}$.  It follows that $\Fr(\varphi(t)) = \Fr(k) \Fr(t) G_{C,0} = \varphi(\Fr(t))$.

That $\varphi$ descends to a map on Frobenius-fixed points
$$\varphi \colon \Stab_{G^{\Fr}}(H) \rightarrow
\Stab_{G^{\Fr}}(C)/G^{\Fr}_{C,0}$$
with kernel $G_{H,0}^\Fr$ follows by repeating the initial part of this argument with all the groups replaced by their Frobenius-fixed points.
\end{proof}

\begin{lemma}  \label{lem:mapfromtorus}  There is a natural surjective, $\Fr$-equivariant, group homomorphism  from $ T$  to $\Omega$ with kernel
$T_F$.
\end{lemma}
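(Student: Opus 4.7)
The plan is to combine Lemma~\ref{lemma:KHR} with Lemma~\ref{lem:moregeneralinjection} applied to $H = F$. Since the facet $F$ is $T$-stable (as noted in the remark following Lemma~\ref{lemma:KHR}), we have $T \subseteq \Stab_G(F)$, and Lemma~\ref{lem:moregeneralinjection} provides an $\Fr$-equivariant homomorphism $\varphi \colon \Stab_G(F) \to \Omega$ whose kernel is $G_{F,0}$. The desired map is then the restriction $\varphi|_T \colon T \to \Omega$.

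The kernel is immediate: $\ker(\varphi|_T) = T \cap G_{F,0} = T_F$ by definition. For $\Fr$-equivariance, I invoke the second half of Lemma~\ref{lem:moregeneralinjection} (here $F$ is $\Fr$-stable because $x_T \in \bar C'^{\Fr}$ and $F$ is the unique facet of $\BB(G)$ projecting onto the image of $x_T$ in $\BB^{\red}(G)$).

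For surjectivity, I would argue in two steps. First, Lemma~\ref{lemma:KHR} gives the decomposition $\Stab_G(F) = T\, G_{F,0}$; combined with $G_{F,0} \subseteq \ker\varphi$, this yields $\varphi(T) = \varphi(\Stab_G(F))$. Second, I verify $\varphi(\Stab_G(F)) = \Omega$: any class $\omega \in \Omega = \Stab_G(C)/G_{C,0}$ is represented by some $g \in \Stab_G(C)$, and since $F \subseteq \overline{C}$ we have $\Stab_G(C) \subseteq \Stab_G(F)$, so $g \in \Stab_G(F)$; the construction of $\varphi$ in the proof of Lemma~\ref{lem:moregeneralinjection} shows that when the representative already lies in $\Stab_G(C)$ one may take $k = 1$, giving $\varphi(g) = g\, G_{C,0} = \omega$.

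There is really no serious obstacle here: the statement is essentially a packaging of the two earlier lemmas. The only point requiring a little care is confirming that $F$ is both $T$-stable and $\Fr$-stable so that both parts of Lemma~\ref{lem:moregeneralinjection} apply on the nose. Everything else is formal manipulation of the decomposition $\Stab_G(F) = T\, G_{F,0}$ against the kernel of~$\varphi$.
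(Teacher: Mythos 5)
Your restriction-of-$\varphi$ setup, the kernel computation $\ker(\varphi|_T) = T \cap G_{F,0} = T_F$, and the appeal to the $\Fr$-equivariant half of Lemma~\ref{lem:moregeneralinjection} all match the paper's argument (and your kernel step is even a little cleaner, since you just quote the kernel statement of Lemma~\ref{lem:moregeneralinjection} rather than re-deriving it). The problem is in the surjectivity step: the inclusion $\Stab_G(C) \subseteq \Stab_G(F)$ is \emph{not} a formal consequence of $F \subseteq \overline{C}$. An element stabilizing the alcove $C$ only permutes the facets contained in $\overline{C}$; it need not fix any particular one of them. For example, in $\PGL_2$ the nontrivial class in $\Omega$ is represented by an element that swaps the two vertices of $C$, so it does not stabilize either vertex facet. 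Thus the sentence ``since $F \subseteq \overline{C}$ we have $\Stab_G(C) \subseteq \Stab_G(F)$'' is exactly where a genuine argument is missing, and indeed what you are asserting is essentially the corollary (proved in the paper \emph{from} this lemma) that $x_T$ is $\Omega$-fixed.

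The correct justification is the one the paper gives, and it is where the special nature of $x_T$ enters: for $g \in \Stab_G(C)$, the point $g \cdot x_T$ lies in $\overline{C}'$ and also in the orbit $G \cdot x_T$, so Lemma~\ref{lem:card} (the orbit of $x_T$ meets $\overline{C}'$ exactly once, which uses that $\bT$ is $K$-minisotropic) forces $g \cdot x_T = x_T$, hence $gF = F$. With that in hand, the paper then writes $g = kt$ with $k \in G_{F,0}$, $t \in T$ via Lemma~\ref{lemma:KHR} and concludes $\varphi(t) = \omega$; your reformulation ``$\varphi(T) = \varphi(\Stab_G(F))$ and $\varphi(\Stab_G(F)) = \Omega$'' is fine once $\Stab_G(C) \subseteq \Stab_G(F)$ has actually been established. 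So the gap is a single missing step, but it is the one nontrivial geometric input of the lemma, and the justification you offered for it is false in general.
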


\begin{proof}
From Lemma~\ref{lem:moregeneralinjection} there is a $\Fr$-equivariant map $\varphi \colon \Stab_G(F) \rightarrow \Omega$ with kernel $G_{F,0}$. Since $T \leq \Stab_G(F)$ and $T$ is $\Fr$-stable, we may restrict $\varphi$ to $T$ to obtain  a $\Fr$-equivariant group homomorphism $\varphi \colon T \rightarrow \Omega$.

We first show that $T_F = \ker(\varphi)$.  If $t \in T_F$, then $t\inv \in G_{F,0}$ and $t\inv t \cdot C = C$.  So, $\varphi(t) = t\inv t G_{C,0} = G_{C,0}$, and we conclude that $t \in \ker(\varphi)$.  On the other hand, if $t \in \ker(\varphi)$, then there exists $k \in G_{F,0}$ such that $kt \cdot G_{C,0} = G_{C,0}$.  This means $t G_{C,0} = k\inv G_{C,0} \subset G_{F,0}$.   We conclude that $t \in G_{F,0}$, hence $t$ is in $T_F$.

We now show that $\varphi$ is surjective.   Suppose $\omega \in \Omega$ and choose $g \in \Stab_G(C)$ such that $\omega = g G_{C,0}$.  From Lemma~\ref{lem:card} we conclude that  $gF = F$.  Hence, from Lemma~\ref{lemma:KHR} there exist $t \in T$ and $k \in G_{F,0}$ such that $g = kt$.  We conclude that $\varphi(t) = \omega$.
\end{proof}

\begin{cor}
 The point $x_T$ is fixed by $\Omega$.
\end{cor}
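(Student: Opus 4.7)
The plan is to reduce the claim to facts already established in the excerpt, namely (i) $T$ fixes $x_T$ (from Section~\ref{subsec:point}), (ii) $x_T$ lies in the image of $F$ in $\BB^{\red}(G)$ and in $\overline{C}'$, and (iii) the surjective homomorphism $\varphi \colon T \to \Omega$ of Lemma~\ref{lem:mapfromtorus} admits, for each $\omega \in \Omega$, a lift obtained as a product $kt$ with $k \in G_{F,0}$ and $t \in T$ such that $kt \in \Stab_G(C)$. The key observation that makes the statement meaningful is that although $\Omega$ is only a quotient, the action of any lift of $\omega$ on $x_T$ is independent of the chosen lift: indeed, $G_{C,0}$ acts trivially on $\overline{C}'$ (since for every $y \in \overline{C}$ one has $G_{C,0} \leq G_{y,0}$), so $G_{C,0}$ fixes $x_T$; hence the expression $\omega \cdot x_T$ is well defined.

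First I would unwind the construction in the proof of Lemma~\ref{lem:mapfromtorus}: given $\omega \in \Omega$, pick any $g \in \Stab_G(C)$ representing $\omega$. Lemma~\ref{lem:card} shows $g \cdot F = F$, and Lemma~\ref{lemma:KHR} then yields $t \in T$ and $k \in G_{F,0}$ with $g = kt$. Next I would note that $G_{F,0}$ acts trivially on the image of $F$ in $\BB^{\red}(G)$ (for each $y \in F$ we have $G_{F,0} = G_{y,0}$, so $G_{F,0}$ fixes $y$; this descends to the reduced building), and in particular $k \cdot x_T = x_T$.

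Putting these together, $\omega \cdot x_T = g \cdot x_T = k(t \cdot x_T) = k \cdot x_T = x_T$, where the second equality uses that $T$ fixes $x_T$ and the last uses that $k \in G_{F,0}$ fixes $x_T$. Since the intermediate computation is independent of the choice of representative $g$ (any two representatives differ by an element of $G_{C,0}$, which fixes $x_T$), the $\Omega$-action on $x_T$ is trivial.

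I do not anticipate any real obstacle: every ingredient has been established, and the only mildly delicate point is checking that $\omega \cdot x_T$ is well defined, which follows from the containment $G_{C,0} \leq G_{x_T,0}$ (an instance of the general fact $G_{C,0} \leq G_{y,0}$ for all $y \in \overline{C}$ applied in the reduced building, using $x_T \in \overline{C}'$).
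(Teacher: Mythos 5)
Your argument is correct and is essentially the paper's proof: the paper likewise takes $g \in \Stab_G(C)$, uses the decomposition $g = tk$ with $t \in T$ and $k \in G_{F,0}$ (obtained exactly as you do, via Lemma~\ref{lem:card} and Lemma~\ref{lemma:KHR}, packaged in Lemma~\ref{lem:mapfromtorus}), and concludes $g\cdot x_T = x_T$ since both $T$ and $G_{F,0}$ fix $x_T$. Your extra check that the $\Omega$-action on $x_T$ is well defined (because $G_{C,0}$ fixes $x_T$) is a harmless addition the paper leaves implicit.
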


\begin{proof}
Suppose $g \in \Stab_{G}(C)$.  From Lemma~\ref{lem:mapfromtorus}, there exist $t \in T$ and $k \in G_{F,0}$ such that $tk = g$. Since $k x_T = x_T$ and $t x_T = x_T$, we conclude that $g x_T = x_T$.
\end{proof}

\begin{remark} \label{rem:moreinwildcase} Since the point $x_T$ is independent of isogeny type (see Remark~\ref{rem:indofisogeny}), we see that $x_T$ must be invariant under the action of $\Omega_{\ad} = \Stab_{G_{\ad}}(C)/(G_{\ad})_{C,0}$.   So, for example, for groups of type $A_n$ the point $x_T$ must be the barycenter of $C'$.   
\end{remark}

\begin{remark}
If $\bG$ is almost simple, then  $\Fr(x_T) = x_T$.    In the tame case, this follows from~\cite[Proposition~6.8]{adams-he-nie:from}.    To see that this holds in general, express $x_T$ in barycentric coordinates: $x_T = \sum_{\psi \in \Delta} {x_\psi}v_\psi$ where the $v_\psi$ are the vertices of $\bar{C}$,  $x_\psi \geq 0$, and $\sum x_\psi = 1$.  As noted in Remark~\ref{rem:moreinwildcase}, we have  $\omega(x_T) = x_T$ for all $\omega \in \Omega_{\ad}$.  Thus,  $x_\psi = x_{\omega(\psi)}$ for all $\omega \in \Omega_{\ad}$ and $\psi \in \Delta$.  By case-by-case checking, one sees that this implies that $x_\psi = x_{\tau(\psi)}$ for all automorphisms $\tau$ of the affine Dynkin diagram.  Since $\Fr(C) = C$, we conclude that $\Fr(\Delta) = \Delta$, and hence $\Fr$ induces an automorphism of the affine Dynkin diagram.  Thus, $x_{\Fr(\psi)} = x_\psi$ for all $\psi \in \Delta$, and so $\Fr(x_T) = x_T$. If $\bG$ is tame, this implies that every $G$-conjugacy class of $K$-minisotropic maximal $K$-tori contains a torus defined over $k$. If $\bG$ is not almost simple, then the example discussed in Remark~\ref{rem:G2example} shows that there exists $x_T$ such that $\Fr(x_T) \neq x_T$.
\end{remark}

\begin{lemma}  \label{lem:countingkernelpoints}
We have
$$(\bar{\bfT}_F)_{\Fr} \cong T_F \cdot (1 - \Fr)T/(1-\Fr)T.$$
\end{lemma}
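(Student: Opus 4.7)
The plan is to interpret the claim as an identification of two quotients of $T_F$ with the same kernel. The natural surjection $\psi \colon T_F \to T_F \cdot (1-\Fr)T/(1-\Fr)T$, $t \mapsto t\cdot(1-\Fr)T$, has kernel $T_F \cap (1-\Fr)T$. On the other side, Lang-Steinberg applied to the connected parahoric $T_0$ gives $(1-\Fr)T_0 = T_0$, so
$(\bar{\bfT}_F)_\Fr = (T_F/T_0)/((1-\Fr)T_F/T_0) = T_F/(1-\Fr)T_F = (T_F)_\Fr$ (this is the identification already noted in the paper just before Lemma~\ref{lem:K-stableclasses2orig}). Thus the claim reduces to the equality $T_F \cap (1-\Fr)T = (1-\Fr)T_F$.

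The inclusion $(1-\Fr)T_F \subseteq T_F \cap (1-\Fr)T$ is immediate. For the converse, let $x = (1-\Fr)t \in T_F$ with $t \in T$. Applying $\varphi \colon T \twoheadrightarrow \Omega$ from Lemma~\ref{lem:mapfromtorus}, we obtain $(1-\Fr)\varphi(t) = \varphi(x) = 1$, so $\varphi(t) \in \Omega^\Fr$. If I can produce $s \in T^\Fr$ with $\varphi(s) = \varphi(t)$, then $ts^{-1} \in T_F$ and $(1-\Fr)(ts^{-1}) = (1-\Fr)t = x$, placing $x$ in $(1-\Fr)T_F$. Equivalently, via the snake-lemma long exact sequence for the endomorphism $1-\Fr$ applied to $1 \to T_F \to T \to \Omega \to 1$, the claim is equivalent to the surjectivity of $\varphi^\Fr \colon T^\Fr \to \Omega^\Fr$, i.e.\ to the vanishing of the connecting map $\Omega^\Fr \to (T_F)_\Fr$.

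To establish this surjectivity I would first observe that $\Stab_{G^\Fr}(C) \twoheadrightarrow \Omega^\Fr$: Lang-Steinberg applied to the Iwahori $G_{C,0}$ in the short exact sequence $1 \to G_{C,0} \to \Stab_G(C) \to \Omega \to 1$ yields this surjection. Given $\omega \in \Omega^\Fr$, lift to $g \in \Stab_{G^\Fr}(C) \subseteq \Stab_{G^\Fr}(F)$ and, by Lemma~\ref{lemma:KHR}, decompose $g = tk$ with $t \in T$ and $k \in G_{F,0}$. The obstruction to taking $t \in T^\Fr$ is the class of $a := t^{-1}\Fr(t) = k\Fr(k)^{-1} \in T_F$ inside $(T_F)_\Fr$; altering the decomposition $t \mapsto tu$, $k \mapsto u^{-1}k$ with $u \in T_F$ changes $a$ only by an element of $(1-\Fr)T_F$, so the class is a well-defined invariant of $\omega$.

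The main obstacle is showing that this class $[a] \in (T_F)_\Fr$ vanishes: any direct adjustment inside $T_F$ is circular, since the needed modification is the very statement being proved. My intended route is to leverage the structural observation in the remark preceding this lemma that $\bar{\bfT}_F^\circ = (1-\Fr)\bar{\bfT}_F^\circ$, which by Lang-Steinberg on the connected identity component implies that the obstruction can be pushed into the finite \'etale component group $\pi_0(\bar{\bfT}_F)$. Combined with the Kottwitz identification $T/T_0 \cong \pi_1(\bT)_I$ and the $\Fr$-equivariant embedding $\Omega \hookrightarrow \pi_1(\bG)_I$ coming from the compatibility of $\kappa_T$ and $\kappa_G$, this reduces the desired vanishing to a finite combinatorial check inside $\pi_0(\bar{\bfT}_F)$, which is controlled by the description of $\Omega$ as a subquotient of $\pi_1(\bG)_I$.
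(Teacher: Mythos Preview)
Your reduction is correct and illuminating: the claim is equivalent to $T_F\cap(1-\Fr)T=(1-\Fr)T_F$, which via the snake lemma for $1\to T_F\to T\to\Omega\to 1$ is equivalent to the surjectivity of $T^{\Fr}\to\Omega^{\Fr}$. You also correctly identify that the obstruction class $[a]\in(T_F)_{\Fr}$ is well defined and that killing it is precisely the statement to be proved. The problem is your final paragraph: pushing $[a]$ into $\pi_0(\bar{\bfT}_F)$ via Lang--Steinberg on the identity component is fine, but nothing you invoke afterwards (the Kottwitz identification $T/T_0\cong\pi_1(\bT)_I$ or the embedding $\Omega\hookrightarrow\pi_1(\bG)_I$) forces that residual class in the component group to vanish. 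You have reformulated the obstruction, not removed it; there is no ``finite combinatorial check'' left that is any easier than the original statement. Indeed, in the paper the surjectivity $T^{\Fr}\to\Omega^{\Fr}$ (equivalently $\Stab_{G^{\Fr}}(F)=T^{\Fr}G_{F,0}^{\Fr}$, Corollary~\ref{cor:rationaltori}) is deduced \emph{from} this lemma, not the other way around.

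The paper breaks the circularity by a different, geometric trick. Rather than proving $\mu\colon(T_F)_{\Fr}\to T_F\cdot(1-\Fr)T/(1-\Fr)T$ is injective directly, it composes with the bijection $\lambda\colon\Ogammak/G^{\Fr}\text{-conj}\to(T_F)_{\Fr}$ of Lemma~\ref{lem:K-stableclasses2orig} and shows $\tilde\lambda=\mu\circ\lambda$ is injective. Concretely, if $\gamma_i={}^{k_i}\gamma$ with $k_i\in G_{F,0}$ and $(k_1^{-1}\Fr(k_1))(s^{-1}\Fr(s))=k_2^{-1}\Fr(k_2)$ for some $s\in T$ (not $T_F$), then $k_1sk_2^{-1}\in G^{\Fr}$ and ${}^{k_1sk_2^{-1}}\gamma_2=\gamma_1$ because $s$ \emph{centralizes} $\gamma$. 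The crucial point you are missing is that one does not need to pull $s$ back into $T_F$ or $T^{\Fr}$: the ambient group $G^{\Fr}$ is large enough to absorb $s\in T$, and the centralizing property of $T$ on $\gamma$ is what makes the conjugation go through. This sidesteps the snake-lemma obstruction entirely.
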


\begin{remark}  \label{rem:cohominterp}
    Since taking torsion points is left exact while $\cohom^1(\Fr,\bar{\bfT}_F) \cong ((\bar{\bfT}_F)_{\Fr})_{\tor}$ and $\cohom^1(\Fr,T) \cong (T_{\Fr})_{\tor}$,  we conclude from Lemma~\ref{lem:countingkernelpoints} that the inclusion $T_F \rightarrow T$ induces an injection $\cohom^1(\Fr,\bar{\bfT}_F) \rightarrow \cohom^1(\Fr,T)$.
\end{remark}

\begin{proof}
Since $T_0 = (1-\Fr)T_0$, we have $(\bar{\bfT}_F)_{\Fr} \cong T_F/(1-\Fr)T_F = (T_F)_{\Fr}$.

Fix a strongly regular semisimple $\gamma \in T^{\Fr}$.  As proved in Lemma~\ref{lem:K-stableclasses2orig}, the map $\lambda \colon \Ogammak \rightarrow T_F /(1 - \Fr)T_F = (T_F)_{\Fr}$ descends to a bijective map
from $\Ogammak  /  \text{$G^{\Fr}$-conjugacy}$ to    $(T_F)_{\Fr}$.
Define
$$\mu \colon T_F/(1-\Fr)T_F   \rightarrow T_F \cdot (1 - \Fr)T/(1-\Fr)T $$
by $\mu(t (1-\Fr)T_F) = t(1-\Fr)T$ for $t(1-\Fr)T_F \in (T_F)_\Fr$. This map is surjective, and so the map $\tilde{\lambda} := \mu \circ \lambda$ from  the set of $G^{\Fr}$-conjugacy classes in  $\Ogammak$ to  $ T_F \cdot (1 - \Fr)T/(1-\Fr)T $ is surjective.   It will be enough to show that $\tilde{\lambda}$ is injective.

Suppose $\gamma_1, \gamma_2 \in \Ogammak$ and $\tilde{\lambda}(\gamma_1) = \tilde{\lambda}(\gamma_2)$.  After possibly conjugating by an element of $G^{\Fr}$, we can write $\gamma_i = \lsup{k_i}\gamma$ with $k_i \in G_{F,0}$.
Since $\tilde{\lambda}(\gamma_1) = \tilde{\lambda}(\gamma_2)$,  there exists $s \in T$  such that
$ (k_1\inv \Fr(k_1)) (s\inv \Fr(s)) = k_2\inv \Fr(k_2)$.
Note that $\gamma_1 = \lsup{k_1 s k_2\inv} \gamma_2$ and $\Fr(k_1 s k_2\inv) = k_1 s k_2\inv$, so $\gamma_1$ is $G^{\Fr}$-conjugate to $\gamma_2$.
\end{proof}

\begin{lemma}  \label{lem:coinvariantsexact}
The sequence
$$1 \longrightarrow (\bar{\bfT}_F)_{\Fr} \longrightarrow (T/T_0)_{\Fr} \longrightarrow \Omega_{\Fr} \longrightarrow 1$$
is exact.
\end{lemma}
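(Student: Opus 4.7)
The plan is to derive the asserted exact sequence from the short exact sequence of $\Fr$-modules
\begin{equation*}
1 \longrightarrow \bar{\bfT}_F \longrightarrow T/T_0 \longrightarrow \Omega \longrightarrow 1
\end{equation*}
obtained by quotienting the surjection $\varphi\colon T \sra \Omega$ of Lemma~\ref{lem:mapfromtorus} by the common $\Fr$-stable subgroup $T_0 \leq T_F = \ker(\varphi)$. Since the $\Fr$-coinvariants functor $A \mapsto A_\Fr = A/(1-\Fr)A$ is right exact, applying it to this sequence immediately yields the surjection on the right together with exactness at $(T/T_0)_\Fr$ and at $\Omega_\Fr$.

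The one remaining point, injectivity on the left, is where the potential trouble lies, because coinvariants is not left exact. For this I would appeal directly to Lemma~\ref{lem:countingkernelpoints}, which identifies $(\bar{\bfT}_F)_\Fr$ with the subgroup $T_F \cdot (1-\Fr)T/(1-\Fr)T$ of $T/(1-\Fr)T$. Because Lang-Steinberg (applied separately to $T_{0^+}$ and to the reductive quotient $T_0/T_{0^+}$) yields $T_0 = (1-\Fr)T_0 \subseteq (1-\Fr)T$, we also get
\begin{equation*}
(T/T_0)_\Fr \;=\; T/\bigl(T_0 \cdot (1-\Fr)T\bigr) \;=\; T/(1-\Fr)T.
\end{equation*}
Under these identifications the map $(\bar{\bfT}_F)_\Fr \to (T/T_0)_\Fr$ becomes the inclusion of the subgroup $T_F \cdot (1-\Fr)T/(1-\Fr)T$ into $T/(1-\Fr)T$, which is manifestly injective.

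In effect, the only real work is already packaged into Lemma~\ref{lem:countingkernelpoints}; an essentially equivalent phrasing would be to invoke the snake-lemma six-term sequence for multiplication by $1-\Fr$,
\begin{equation*}
0 \to (\bar{\bfT}_F)^\Fr \to (T/T_0)^\Fr \to \Omega^\Fr \to (\bar{\bfT}_F)_\Fr \to (T/T_0)_\Fr \to \Omega_\Fr \to 0,
\end{equation*}
and verify that the connecting map $\Omega^\Fr \to (\bar{\bfT}_F)_\Fr$ vanishes; this again reduces to the same identification. No step presents a serious obstacle.
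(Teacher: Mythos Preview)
Your proof is correct and follows essentially the same strategy as the paper: start from the short exact sequence of $\Fr$-modules, use right exactness of coinvariants for the right-hand part, and invoke Lemma~\ref{lem:countingkernelpoints} for injectivity on the left. Your version is in fact slightly more streamlined than the paper's: the paper separately verifies by hand that $\ker\bigl((T/T_0)_\Fr \to \Omega_\Fr\bigr) = T_F\cdot(1-\Fr)T/(1-\Fr)T$ and then deduces injectivity by a finite-cardinality comparison with $(\bar{\bfT}_F)_\Fr$, whereas you observe directly that the isomorphism of Lemma~\ref{lem:countingkernelpoints} is precisely the natural map in question (this is visible from the definition of $\mu$ in its proof), so injectivity is immediate.
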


\begin{proof}
Since
$$1 \longrightarrow \bar{\bfT}_F \longrightarrow T/T_0 \longrightarrow \Omega \longrightarrow 1$$
is an exact sequence of $\Fr$-modules and taking coinvariants is right exact, it is enough to check that the map $(\bar{\bfT}_F)_{\Fr} \rightarrow (T/T_0)_{\Fr} $ is injective.  Note that by exactness the image of $(\bar{\bfT}_F)_{\Fr}$ in $(T/T_0)_{\Fr}$  is $\ker((T/T_0)_{\Fr} \rightarrow \Omega_{\Fr})$.

We first show that $T_F \cdot (1-\Fr)T / (1-\Fr)T = \ker((T/T_0)_{\Fr} \rightarrow \Omega_{\Fr})$.

``$\supset$'' Suppose $t \in T$ such that the image of $t$ in $(T/T_0)_{\Fr}$ belongs to $\ker((T/T_0)_{\Fr} \rightarrow \Omega_{\Fr})$.  As in the proof of Lemma~\ref{lem:mapfromtorus} we may choose $k \in G_{F,0}$ such that $kt \in \Stab_G(C)$.  Since  the image of $t$ in $(T/T_0)_{\Fr}$ belongs to $\ker((T/T_0)_{\Fr} \rightarrow \Omega_{\Fr})$ and  $(1-\Fr)G_{F,0} = G_{F,0}$,  we have $kt \in g\inv \Fr(g) G_{C,0}$ for some $g \in \Stab_G(C)$.  Again using  Lemma~\ref{lem:mapfromtorus}  we can write $g = k' s$ where $k' \in G_{F,0}$ and $s \in T$.   So, $kt \in s\inv (k')\inv \Fr(k') \Fr(s) G_{C,0}$.  Since $G_{C,0} \leq G_{F,0}$ and $\Fr(s) F = F$, after some manipulation we arrive at
$$t s \Fr(s\inv) = s t s \Fr(s\inv) s\inv \in \lsup{s}k\inv k'{}\inv \Fr(k') \lsup{\Fr(s)}G_{C,0} \subset G_{F,0}.$$
Consequently, $t \in T_F \cdot (1-\Fr)T$.   Thus, since $(1-\Fr)T_0 = T_0$, we have that $\ker((T/T_0)_{\Fr} \rightarrow \Omega_{\Fr})$ is contained in
$T_F \cdot (1-\Fr)T / (1-\Fr)T$.

``$\subset$''
On the other hand, if $\bar{t} = t (1-\Fr)T$ with $t \in T_F$, then from Lemma~\ref{lem:mapfromtorus}  we know that the image of $t$ in $\Omega$ is trivial.
Thus $T_F \cdot (1-\Fr)T / (1-\Fr)T = \ker((T/T_0)_{\Fr} \rightarrow \Omega_{\Fr})$.

From Lemma~\ref{lem:countingkernelpoints} we conclude that $\ker((T/T_0)_{\Fr} \rightarrow \Omega_{\Fr})$ and $({T}_F)_\Fr$ have the same finite cardinality.   The result follows.
\end{proof}

\begin{remark}
If $\bG$ is semisimple, then
the natural injections $T \rightarrow T$ and $\Stab_G(C) \rightarrow G$ induce isomorphisms
$$ (T/T_0)_{\Fr}  = \tor( (T/T_0)_{\Fr}) \cong \cohom^1(\Fr,T)  \text{ and }   \Omega_{\Fr}   = \tor(\Omega_{\Fr}) \cong \cohom^1(\Fr,G).$$
Under these isomorphisms, the map $(T/T_0)_{\Fr} \rightarrow \Omega_{\Fr}$ becomes $ \cohom^1(\Fr,T) \rightarrow  \cohom^1(\Fr,G)$.
Since the cardinality of $\ker( \cohom^1(\Fr,T) \rightarrow  \cohom^1(\Fr,G))$ measures the number of rational classes in the $k$-stable class of any strongly regular semisimple element of $T^\Fr$,   \,    Lemma~\ref{lem:coinvariantsexact} follows from Lemma~\ref{lem:K-stableclasses2orig}.
\end{remark}

\begin{cor} \label{cor:rationaltori}
$$\Stab_{G^\Fr}(F) = T^{\Fr} G_{F,0}^{\Fr}.$$
\end{cor}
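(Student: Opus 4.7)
The containment $T^{\Fr}G_{F,0}^{\Fr}\subseteq \Stab_{G^{\Fr}}(F)$ is immediate, so only the reverse inclusion needs work. My plan is to start from the decomposition furnished by Lemma~\ref{lemma:KHR} and then use Galois cohomology to adjust the two factors so that each is individually $\Fr$-fixed.

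Given $g\in \Stab_{G^{\Fr}}(F)$, I would invoke Lemma~\ref{lemma:KHR} to write $g = tk$ with $t\in T$ and $k\in G_{F,0}$. Applying $\Fr$ to this equation and comparing with $g = \Fr(g)$ produces
\[
s \;:=\; \Fr(t)\inv t \;=\; \Fr(k)k\inv \;\in\; T\cap G_{F,0} \;=\; T_F.
\]
Viewed inside $T$, the cocycle $s$ is the coboundary of $t\inv$, so its class $[s]$ is trivial in $\cohom^1(\Fr,T)$. The key point will be that the natural map $\cohom^1(\Fr,T_F)\to\cohom^1(\Fr,T)$ is injective, so $[s]$ must already be trivial in $\cohom^1(\Fr,T_F)$; this will yield $v\in T_F$ with $\Fr(v)v\inv = s$. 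Setting $t':=tv$ and $k':=v\inv k$, the commutativity of $T$ turns the relation $\Fr(v)v\inv = s$ into $\Fr(t') = t'$, which then forces $\Fr(k') = k'$ from $g = t'k'$; and $k'\in G_{F,0}$ because $v\in T_F\subseteq G_{F,0}$. This will produce the desired decomposition $g = t'k'\in T^{\Fr}G_{F,0}^{\Fr}$.

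For the cohomological injectivity $\cohom^1(\Fr,T_F)\hookrightarrow \cohom^1(\Fr,T)$, I would factor the inclusion through $\bar{\bfT}_F = T_F/T_0$. Lang-Steinberg for the connected pro-algebraic group $T_0$ gives $\cohom^1(\Fr,T_0)=1$, so the long exact sequence attached to $1\to T_0\to T_F\to \bar{\bfT}_F\to 1$ produces an injection $\cohom^1(\Fr,T_F)\hookrightarrow \cohom^1(\Fr,\bar{\bfT}_F)$. Composing with the injection $\cohom^1(\Fr,\bar{\bfT}_F)\hookrightarrow \cohom^1(\Fr,T)$ already recorded in Remark~\ref{rem:cohominterp} completes the argument. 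The real content is thus already carried by Remark~\ref{rem:cohominterp} (which in turn rests on Lemma~\ref{lem:countingkernelpoints}); beyond that, the proof is routine cohomological bookkeeping, so there is no substantive obstacle to execution.
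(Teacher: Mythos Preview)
Your argument is correct and is essentially the paper's alternate proof, unwound at the level of cocycles rather than phrased via the long exact sequence attached to $1\to T_F\to T\times G_{F,0}\to \Stab_G(F)\to 1$; the decisive input in both is the injectivity $\cohom^1(\Fr,T_F)\hookrightarrow\cohom^1(\Fr,T)$ coming from Remark~\ref{rem:cohominterp}. Your explicit factorization through $\cohom^1(\Fr,\bar{\bfT}_F)$ using $\cohom^1(\Fr,T_0)=0$ is a useful clarification, since Remark~\ref{rem:cohominterp} literally states the injection for $\bar{\bfT}_F$ rather than $T_F$; the paper's first proof takes the parallel route of using Lemma~\ref{lem:coinvariantsexact} to pass from $T_F$ down to $T_0$ before invoking $\cohom^1(\Fr,T_0)=0$.
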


\begin{remark}
This can also be stated as $\Stab_{G^\Fr}(x_T) = T^{\Fr} G_{x_T,0}^{\Fr}$.
\end{remark}

\begin{proof}
Suppose $g \in \Stab_{G^\Fr}(F)$.  From Lemma~\ref{lemma:KHR} we may choose $t \in T$ and $k \in G_{F,0}$ such that $g = tk$.  Since $tk = g = \Fr(g) = \Fr(t) \Fr(k)$, we conclude that $t\inv \Fr(t) \in T_F$.  Since $t\inv \Fr(t) \cdot T_0$ maps to the identity in $\Omega$, from Lemma~\ref{lem:coinvariantsexact} it  is in the image of the map  $(\bar{\bfT}_F)_{\Fr} \rightarrow (T/T_0)_{\Fr} $.
Thus, we conclude that there exists $s \in T_F \leq G_{F,0}$ such that $t\inv \Fr(t) = s\inv \Fr(s)$ mod $T_0$; replacing $t$ by $t s\inv$ we conclude that $t\inv \Fr(t) \in T_0$.   Since $\cohom^1(\Fr, T_0)$ is trivial, there exists $r \in T_0$ such that $t\inv \Fr(t) = r\inv \Fr(r)$.   Replacing $t$ by $t r\inv$, we conclude that $t \in T^{\Fr}$.  Note that $g \in t G^{\Fr}_{F,0}$.
\end{proof}

\begin{proof}[Alternate Proof]   The short exact sequence 
$$1 \longrightarrow T_F \longrightarrow T \times G_{F,0} \longrightarrow \Stab_G(F)  \longrightarrow 1$$
gives rise to the long exact sequence
$$1 \longrightarrow T^{\Fr}_F \longrightarrow T^{\Fr} \times G^{\Fr}_{F,0} \longrightarrow \Stab_{G^{\Fr}}(F)  \longrightarrow \cohom^1(\Fr,T_F) \longrightarrow \cohom^1(\Fr, T \times G_{F,0}).$$
Since $\cohom^1(\Fr, T \times G_{F,0}) \cong \cohom^1(\Fr, T)$ and, by Remark~\ref{rem:cohominterp},  $\cohom^1(\Fr,T_F)$ injects into $\cohom^1(\Fr, T)$, the result follows.
\end{proof}

\begin{defn}  \label{defn:6.3.12}
Set $\dorbit_{\bT, x_T} = \Ad({G_{x_T,0}})\bT$ and let $\dorbit_{\bT, x_T}^k$ denote the set of $\Fr$-stable tori in  $\dorbit_{\bT, x_T}$.
\end{defn}

\begin{cor}  \label{cor:rational_orbits_agree}
The injection $\dorbit_{\bT, x_T}^k \hookrightarrow \dorbit_{\bT}^k$ induces a bijection between the set of $G_{x_T,0}^{\Fr}$-conjugacy classes in $\dorbit^k_{\bT,x_T}$ and the set of $G^{\Fr}$-conjugacy classes in $\dorbit^k_{\bT}$. 
\end{cor}

\begin{proof}
Since every $G_{x_T,0}^{\Fr}$-conjugacy class in $\dorbit^k_{\bT,x_T}$ determines a $G^{\Fr}$-conjugacy class in $\dorbit^k_{\bT}$, it is enough to show that if
$\mathcal{C}$ is a $G^{\Fr}$-conjugacy class in $\dorbit_{\bT}^k$, then $\mathcal{C} \cap \dorbit^k_{\bT,x_T}$ is a single $G_{x_T,0}^{\Fr}$-conjugacy class.

Suppose $\mathcal{C}$ is a $G^{\Fr}$-conjugacy class in $\dorbit_{\bT}^k$.  We first show that $\mathcal{C} \cap \dorbit^k_{\bT,x_T}$ is nonempty.
Choose $\bT' \in \mathcal{C}$.  Let $x_{T'}$ denote the (unique) point in $\BB^{\red}(G)^{\Fr}$ associated to $\bT'$.  Since $\bT' = \lsup{g}\bT$ for some $g \in G$, we conclude that $x_{T'} = g \cdot x_T$.  Since $G^{\Fr}$ acts transitively on the alcoves in $\BB^{\red}(G)^{\Fr}$, there exists $h \in G^{\Fr}$ such that $h  \cdot x_{T'} = x_{\lsup{h}T'}$ belongs to $\bar{C}'^{\Fr}$. From Lemma~\ref{lem:card} we conclude that $h \cdot x_{T'} = hg \cdot x_T = x_T$.
Since $\lsup{h} \bT' = \lsup{h g} \bT$ and $hg \cdot x_T = x_T$, from Lemma~\ref{lemma:KHR} we conclude that there exists $j \in G_{x_T,0}$ such that $\lsup{h}\bT' = \lsup{j} \bT$.  That is, $\mathcal{C} \cap \dorbit^k_{\bT,x_T}$ is nonempty.

Suppose $\mathscr{C}_1, \mathscr{C}_2 \subset \mathcal{C} \cap \dorbit^k_{\bT,x_T}$ are two $G_{x_T,0}^{\Fr}$-conjugacy classes.   Choose $\bT_i \in \mathscr{C}_i$.   Since $\bT_1, \bT_2 \in \mathcal{C}$, there exists $g \in G^{\Fr}$ such that $\lsup{g}\bT_1 = \bT_2$.  Since $x_{T_1} = x_{T_2} = x_T$, we conclude that $g \in \Stab_{G^{\Fr}}(x_T)$.  From Corollary~\ref{cor:rationaltori} we conclude that there exists $j \in G_{x_T,0}^{\Fr}$ such that $\lsup{j}\bT_1 = \bT_2$.  That is, $\mathscr{C}_1 = \mathscr{C}_2$.
\end{proof}

\begin{cor}  \label{cor:propertyofcoxeter1}
 If $F = C$, then for every $\Fr$-stable facet $H$ contained in the closure of $C$ we can assume that the representatives for $\Stab_{G^{\Fr}}(H)/G^{\Fr}_{H,0}$ lie in $T^{\Fr}$.
\end{cor}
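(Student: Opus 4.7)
The plan is to show that every $g \in \Stab_{G^\Fr}(H)$ can be written as $tk$ with $t \in T^\Fr$ and $k \in G^\Fr_{H,0}$; any such $t$ will automatically stabilise $H$, since both $g$ and $k\inv$ do. The central input from the hypothesis $F = C$ is that $T \subseteq \Stab_G(C)$, so the map $T \rightarrow \Omega$ from Lemma~\ref{lem:mapfromtorus} is realised directly as $t \mapsto t G_{C,0}$, and the natural action of $T$ on $\bar C$ agrees with the permutation action of $\Omega$ on the facets of $\bar C$ (since $G_{C,0}$ fixes $\bar C$ pointwise).

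Because of this compatibility, any lift to $T$ of an element $\omega \in \Stab_\Omega(H)$ must itself stabilise $H$, giving the short exact sequence
\[1 \longrightarrow T_F \longrightarrow \Stab_T(H) \longrightarrow \Stab_\Omega(H) \longrightarrow 1,\]
where $\Stab_T(H) = T \cap \Stab_G(H)$ and we have used $T_F = T \cap G_{C,0}$. Taking $\Fr$-cohomology, the required surjectivity $\Stab_T(H)^\Fr \twoheadrightarrow \Stab_\Omega(H)^\Fr$ amounts to the vanishing of the connecting map into $\cohom^1(\Fr, T_F)$. By functoriality in the inclusion $\Stab_\Omega(H) \hookrightarrow \Omega$, this connecting map is the restriction of the analogous map $\Omega^\Fr \rightarrow \cohom^1(\Fr, T_F)$ coming from the larger sequence $1 \to T_F \to T \to \Omega \to 1$, whose image lies in $\ker\bigl(\cohom^1(\Fr, T_F) \rightarrow \cohom^1(\Fr, T)\bigr)$. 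This kernel is trivial by Remark~\ref{rem:cohominterp} (together with $\cohom^1(\Fr, T_0) = 0$ via Lang--Steinberg, which identifies $\cohom^1(\Fr, T_F)$ with $\cohom^1(\Fr, \bar{\bfT}_F)$).

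Now given $g \in \Stab_{G^\Fr}(H)$, Lemma~\ref{lem:moregeneralinjection} supplies $\varphi(g) \in \Omega^\Fr$; as $\varphi(g)$ is represented by $kg$ with $k \in G_{H,0}$ (hence fixing $H$), the coset $\varphi(g)$ in fact lies in $\Stab_\Omega(H)^\Fr$. Lifting via the surjection just established yields $t \in \Stab_T(H)^\Fr \subseteq T^\Fr$ with $\varphi(t) = t G_{C,0} = \varphi(g)$, whence $t\inv g \in \ker\varphi \cap G^\Fr = G^\Fr_{H,0}$ and $g = t \cdot (t\inv g) \in T^\Fr G^\Fr_{H,0}$, as required. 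The main obstacle is the middle step: transferring the stabiliser decomposition from the alcove $C$ (where Corollary~\ref{cor:rationaltori} applies directly) down to an arbitrary $\Fr$-stable facet $H \subset \bar C$ is not automatic because $\Omega$ can permute facets nontrivially; the argument needs both the assumption $F = C$ (so that the sequence $1 \to T_F \to T \to \Omega \to 1$ is visibly available with $T$ inside $\Stab_G(C)$) and the cohomological injectivity from Remark~\ref{rem:cohominterp} in order to produce a rational lift stabilising $H$.
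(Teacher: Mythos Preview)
Your proof is correct, but it takes a substantially more elaborate route than the paper's. The paper's argument is a three-line reduction to Corollary~\ref{cor:rationaltori}: given $h \in \Stab_{G^{\Fr}}(H)$, one first chooses $k \in G^{\Fr}_{H,0}$ with $hkC = C$ (possible since both $C$ and $h^{-1}C$ are $\Fr$-stable alcoves containing $H$ in their closure, and $G^{\Fr}_{H,0}$ acts transitively on these). Then $hk \in \Stab_{G^{\Fr}}(C) = \Stab_{G^{\Fr}}(F) = T^{\Fr}G^{\Fr}_{C,0}$ by Corollary~\ref{cor:rationaltori}, and finally $G^{\Fr}_{C,0} \subset G^{\Fr}_{H,0}$ gives $h \in T^{\Fr}G^{\Fr}_{H,0}$.

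The difference is one of direction: you view the problem as \emph{descending} the decomposition from $C$ to $H$, which forces you to track whether a rational lift $t \in T^{\Fr}$ of $\varphi(g)$ actually stabilises $H$, and hence to set up the sequence $1 \to T_F \to \Stab_T(H) \to \Stab_{\Omega}(H) \to 1$ and invoke the cohomological injectivity of Remark~\ref{rem:cohominterp}. The paper instead \emph{ascends}: it modifies $h$ by an element of $G^{\Fr}_{H,0}$ so that it already lies in $\Stab_{G^{\Fr}}(C)$, applies the known result there, and descends for free via the inclusion of Iwahori into parahoric. Your approach makes the role of Remark~\ref{rem:cohominterp} more visible (and would generalise if one wanted statements about $\Stab_{\Omega}(H)$ directly), but the paper's approach avoids the cohomological machinery entirely and makes transparent why no new obstruction appears for $H$ once the case $H = C$ is in hand.
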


\begin{proof} Suppose $H$ is a $\Fr$-stable facet that is contained in the closure of $C$.  If $h \in \Stab_{G^{\Fr}}(H)$, then there exists $k \in G^{\Fr}_{H,0}$ such that $hk C = C$.  From Corollary~\ref{cor:rationaltori} we can assume that $hk \in T^{\Fr} G^\Fr_{C,0}$.   Thus,
\[ 
h G^{\Fr}_{H,0} = hk G^{\Fr}_{H,0} \subset T^{\Fr} G^{\Fr}_{C,0} G^{\Fr}_{H,0} = T^{\Fr}G^{\Fr}_{H,0}. \qedhere
\]
\end{proof}

A similar proof shows:
\begin{cor} \label{cor:propertyofcoxeter} Suppose $\bT$ is any $K$-minisotropic maximal $K$-torus, not assumed to be defined over $k$.
 If $F = C$, then for every  facet $H$ contained in the closure of $C$ we can assume that the representatives for $\Stab_{G}(H)/G_{H,0}$ lie in $T$. \qed
\end{cor}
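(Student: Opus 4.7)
The plan is to mirror the proof of Corollary~\ref{cor:propertyofcoxeter1}, replacing Corollary~\ref{cor:rationaltori} (the Frobenius-fixed statement) with its unadorned counterpart, Lemma~\ref{lemma:KHR}. Since $F = C$ by hypothesis, that lemma gives the factorization $\Stab_G(C) = T \cdot G_{C,0}$, which is what drives the whole argument.

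First I would fix a facet $H$ contained in $\overline{C}$ and an element $h \in \Stab_G(H)$. Since $h$ fixes $H$, the alcove $h^{-1}C$ still contains $H$ in its closure, and the standard Bruhat-Tits fact that $G_{H,0}$ acts transitively on the alcoves whose closures meet $H$ lets me choose $k \in G_{H,0}$ with $kC = h^{-1}C$, so that $hk \in \Stab_G(C)$. Applying $\Stab_G(C) = TG_{C,0}$ produces $t \in T$ and $k' \in G_{C,0}$ with $hk = tk'$. Using $k \in G_{H,0}$ and $k' \in G_{C,0} \subset G_{H,0}$ (the inclusion holding because $H \subset \overline{C}$), one computes
$$hG_{H,0} = (hk)G_{H,0} = tk'G_{H,0} = tG_{H,0},$$
and since $h \in \Stab_G(H)$ and $G_{H,0} \subset \Stab_G(H)$, the element $t$ automatically lies in $T \cap \Stab_G(H)$, giving the required $T$-representative for the coset $hG_{H,0}$.

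There is essentially no obstacle here: the argument is the word-for-word analogue of the preceding corollary once one drops the Frobenius decorations, and the only Bruhat-Tits input needed is the transitivity of $G_{H,0}$ on alcoves containing $H$ in their closure together with the inclusion $G_{C,0} \subset G_{H,0}$.
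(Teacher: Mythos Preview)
Your proposal is correct and matches the paper's approach exactly: the paper simply says ``A similar proof shows'' and marks the corollary with \qed, and what you have written is precisely that similar proof---the argument of Corollary~\ref{cor:propertyofcoxeter1} with Lemma~\ref{lemma:KHR} (which gives $\Stab_G(C)=T\,G_{C,0}$ since $F=C$) in place of Corollary~\ref{cor:rationaltori}. One tiny wording point: the transitivity you invoke is of $G_{H,0}$ on alcoves \emph{containing $H$ in their closure}, not merely on alcoves whose closures meet $H$; you state it correctly the first time and loosely the second.
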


\subsection{\texorpdfstring{$k$-stable conjugacy and $G^{\Fr}$-conjugacy of tori}{k-stable conjugacy and G conjugacy of tori}}
In this section we analyze the $\Fr$-structure of $\lsup{G}T$ where $\bT$ is a  $K$-minisotropic maximal $k$-torus.   Recall that   $\CTk$ denotes the set of  $k$-tori in $\bG$ that are $G$-conjugate  to $\bT$.

\begin{definition} \label{defn:kstability}
If $\bT', \bT'' \in \CTk$, then  we will say that $\bT'$ is \emph{$k$-stably-conjugate} to $\bT''$ provided that there is a $g \in G$ such that $\lsup{g}\bT' = \bT''$ and $\Ad(g) \colon T' \rightarrow T''$ is a $k$-morphism.  If $\bT', \bT'' \in \CTk$,  we will say that $\bT'$ is \emph{$G^{\Fr}$-conjugate} to $\bT''$ provided that there is a $g \in G^{\Fr}$ such that $\lsup{g}\bT' = \bT''$.
\end{definition}

Note that if two tori $\bT_1, \bT_2 \in \CTk$ are  {$k$-stably-conjugate}, then there is an $x \in G$ such that $\lsup{x}(\bT_1(k) ) =\bT_2(k)$.

\begin{rem}  
    As in Definition~\ref{defn:embedding} it would be more standard to define the notion of  $k$-stably-conjugate in Definition~\ref{defn:kstability} by taking $g \in \bG(\bar{k})$.  However,  Lemma~\ref{lem:gside} allows us to restrict our attention to $g \in G$.
\end{rem}

We  first describe   $\CTk/ \text{$G^{\Fr}$-conjugacy} $, the set of rational conjugacy classes in $\CTk$.  For this we introduce the group $\bar{W}_T$ and the notion of Frobenius conjugacy.

\begin{definition}
Set $W_T = N_G (\bT) / T$  and $\bar{W}_T =  N_{G_{F,0}} (\bT) / T_0$ where $T_0$ is the parahoric subgroup of $T$.
\end{definition}

\begin{remark} Note that, in general, $\bar{W}_T$ is not isomorphic to either $W_T = N_G (\bT) / T$ or  $N_G (\bT) / T_F$.  Indeed, for $\mathrm{SL}_2$ and $p>2$  we have that
$\bar{W}_T$ has order four while both $W_T$ and   $N_G (\bT) / T_F = N_{G_{F,0}} (\bT) / T_F$ have order two.
 \end{remark}

\begin{definition}
Suppose $H$ is a group on which $\langle \Fr \rangle$ acts.   Two elements $h, h' \in H$ are said to be \emph{Frobenius conjugate} provided that there exists $x \in H$ such that $x\inv h \Fr(x) = h'$.  We denote the set of  Frobenius-conjugacy classes of $H$  by $H_\simFr$.
\end{definition}

\begin{rem}
If $H$ is abelian, then $H_\simFr = H_{\Fr}$.
\end{rem}

\begin{lemma} \label{lem:K-stableclasses2}
There is a natural bijection between the set of $G^\Fr$-conjugacy classes in $\CTk$   and the set of $\Fr$-conjugacy classes in $\bar{W}_T$.  That is
$$  \CTk/ \text{$G^{\Fr}$-conjugacy}   \, \, \longleftrightarrow \, \,  (\bar{W}_T)_\simFr.$$
\end{lemma}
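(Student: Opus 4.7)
The plan is to attach to each $\bT' \in \CTk$ a Bruhat--Tits-localised cocycle in $N_{G_{F,0}}(\bT)$ and show that its class in $(\bar{W}_T)_{\simFr}$ is both a complete $G^\Fr$-conjugacy invariant and takes all possible values. The triumvirate Lemma~\ref{lem:xtconj}, Lemma~\ref{lemma:KHR}, Corollary~\ref{cor:rationaltori} is precisely what localises the usual Galois-cohomological construction to the parahoric, and the main subtlety will be matching the quotient $\bar{W}_T = N_{G_{F,0}}(\bT)/T_0$ (rather than the coarser $N_{G_{F,0}}(\bT)/T_F$) against the natural discrepancies that appear.

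\emph{Construction.} Given $\bT' \in \CTk$, Lemma~\ref{lem:xtconj} lets me $G^\Fr$-conjugate $\bT'$ so that its attached point is $x_T$. By Lemma~\ref{lemma:KHR}, $\bT' = \lsup{g}\bT$ with $g \in \Stab_G(x_T) = T G_{F,0}$; the $T$-factor centralises $\bT$ and can be absorbed, so I may take $g \in G_{F,0}$. Since $\bT'$ is $\Fr$-stable, $g^{-1}\Fr(g) \in N_G(\bT)\cap G_{F,0} = N_{G_{F,0}}(\bT)$, and I define $\Psi(\bT') \in (\bar{W}_T)_{\simFr}$ to be the Frobenius-conjugacy class of its image in $\bar{W}_T$. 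Two choices of $g \in G_{F,0}$ giving the same $\bT'$ differ by right multiplication by an element of $N_{G_{F,0}}(\bT)$, which Frobenius-conjugates the cocycle inside $N_{G_{F,0}}(\bT)$, so $\Psi(\bT')$ is independent of the choice of $g$.

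\emph{Invariance under $G^\Fr$-conjugation.} This is the main obstacle. If $\bT'' = \lsup{\tilde h}\bT'$ with $\tilde h \in G^\Fr$ and both tori have attached point $x_T$, then $\tilde h \in \Stab_{G^\Fr}(x_T) = T^\Fr G^\Fr_{F,0}$ by Corollary~\ref{cor:rationaltori}. Writing $\bT' = \lsup g\bT$ and $\bT'' = \lsup{g'}\bT$ with $g,g' \in G_{F,0}$ and unwinding the cocycle relation produces
\[
(g')^{-1}\Fr(g') \;=\; s\bigl(m (g^{-1}\Fr(g)) \Fr(m)^{-1}\bigr)\Fr(s)^{-1}
\]
for some $s \in T$ and $m \in N_{G_{F,0}}(\bT)$. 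Setting $n_2 := m(g^{-1}\Fr(g))\Fr(m)^{-1}$ and letting $w_2 \in W_T$ be its image, the discrepancy $\tau := s\,w_2(\Fr(s))^{-1}$ is forced into $T \cap G_{F,0} = T_F$ because $n_2$ and $(g')^{-1}\Fr(g')$ both lie in $G_{F,0}$. The a priori problem is that $\tau$ lives only in $T_F$, not $T_0$; but applying the argument of Remark~\ref{rem:cohominterp} with $\Fr$ replaced by the twisted Frobenius $\sigma_2 := w_2 \circ \Fr$ (the proof goes through verbatim once $\sigma_2$-twisted Lang--Steinberg is used on $T_0$) gives an injection $(\bar{\bfT}_F)_{\sigma_2} \hookrightarrow T_{\sigma_2}$. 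Since $\tau = (1-\sigma_2)s$ is already trivial in $T_{\sigma_2}$, there exists $\tilde s \in T_F$ with $\tau \equiv (1-\sigma_2)\tilde s \pmod{T_0}$; the element $\tilde s^{-1}\in T_F/T_0 \le \bar{W}_T$ then Frobenius-conjugates $n_2$ to $\tau n_2 \equiv (g')^{-1}\Fr(g') \pmod{T_0}$, so $\Psi(\bT') = \Psi(\bT'')$.

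\emph{Surjectivity and injectivity.} Surjectivity follows from Lang--Steinberg on $G_{F,0}$ (applicable because $\bfG_F$ is connected and $G_{F,0^+}$ is pro-unipotent): given $n_0 \in N_{G_{F,0}}(\bT)$, choose $g \in G_{F,0}$ with $g^{-1}\Fr(g) = n_0$, so $\lsup g \bT \in \CTk$ realises $[n_0]$. For injectivity, given $\bT' = \lsup{g_1}\bT$ and $\bT'' = \lsup{g_2}\bT$ with $\Psi(\bT') = \Psi(\bT'')$, I choose $\tilde x \in N_{G_{F,0}}(\bT)$ realising the Frobenius conjugacy in $\bar{W}_T$, so that $\tilde x^{-1}(g_1^{-1}\Fr(g_1))\Fr(\tilde x) \equiv g_2^{-1}\Fr(g_2) \pmod{T_0}$. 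The candidate $h := g_2 \tilde x^{-1} g_1^{-1}$ satisfies $\lsup h \bT' = \bT''$ by construction, and the single obstruction $t \in T_0$ to $h \in G^\Fr$ is killed by modifying $\tilde x$ to $\tilde x t'$ for $t' \in T_0$ solving $(1-w_2\Fr)t' = w_2(t)$, which exists by Lang--Steinberg on the pro-torus $T_0$ with twisted Frobenius $\sigma_2$.
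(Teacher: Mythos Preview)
Your proof is correct and follows the same overall architecture as the paper's: localise to $G_{F,0}$ via Lemma~\ref{lem:xtconj} and Lemma~\ref{lemma:KHR}, define the cocycle map, then verify well-definedness, surjectivity, and injectivity. The one substantive difference is in the \emph{invariance under $G^\Fr$-conjugation} step. You apply Corollary~\ref{cor:rationaltori} with the base torus $\bT$, which produces a discrepancy $\tau \in T_F$ (not $T_0$) that must then be absorbed via a $\sigma_2$-twisted version of Lemma~\ref{lem:countingkernelpoints}. The paper instead applies Corollary~\ref{cor:rationaltori} with $\bT_1$ (your $\bT'$) as the base torus: since $\bT_1$ is itself a $K$-minisotropic maximal $k$-torus with attached point $x_T$, one gets $\tilde h \in T_1^\Fr \cdot G_{F,0}^\Fr$, and the $T_1^\Fr$ factor centralises $\bT_1$ and is absorbed immediately. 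This yields $h \in G_{F,0}^\Fr$ directly, so the cocycle computation gives $g_2^{-1}\Fr(g_2) = m\,g_1^{-1}\Fr(g_1)\,\Fr(m)^{-1}$ on the nose with no $T_F$ defect.

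One cautionary note on your justification: the phrase ``the proof goes through verbatim once $\sigma_2$-twisted Lang--Steinberg is used on $T_0$'' is not quite accurate, because the proof of Lemma~\ref{lem:countingkernelpoints} runs through $\Ogammak$ and $G^\Fr$-conjugacy, and $\sigma_2 = w_2 \circ \Fr$ is only defined on $T$, not on $G$. The clean way to get the injection $(\bar{\bfT}_F)_{\sigma_2} \hookrightarrow T_{\sigma_2}$ is to observe that $(g')^{-1}\Fr(g')$ has image $w_2$ in $W_T$, so Lemma~\ref{lem:nottransportofstructure} identifies $(\bar{\bfT}_F,\sigma_2)$ and $(T,\sigma_2)$ with $(\bar{\bfT}''_{F''},\Fr)$ and $(T'',\Fr)$ respectively, and then Lemma~\ref{lem:countingkernelpoints} applied to the $k$-torus $\bT''$ gives the required injection. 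With this correction your argument stands, though the paper's shortcut avoids the detour entirely.
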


\begin{rem}
    Since taking coinvariants is right exact and $T_0 = (1-\Fr)T_0$, we have $(\bar{W}_T)_\simFr \cong (N_{G_{F,0}}(T))_\simFr$.
\end{rem}

\begin{proof}
Suppose $\bT' \in \CTk$.    From Lemma~\ref{lem:xtconj}  we may assume, after conjugating by an element of $G^{\Fr}$, that  $\bT$ and $\bT'$ are conjugate, as $K$-tori, by an element of $\Stab_{G}({F})$.  Thanks to Lemma~\ref{lemma:KHR}, we may assume that $\bT$ and $\bT'$ are, in fact, conjugate by an element of $G_{F,0}$.  Choose $k \in G_{F,0}$ such that $\lsup{k}\bT = \bT'$.  Since both $\bT$ and $\bT'$ are $k$-tori, they are $\Fr$-stable.  Thus, $k\inv \Fr(k) \in N_{G_{F,0}}(\bT)$.   If $k' \in G_{F,0}$ is another element  such that $\lsup{k'}\bT = \bT'$, then $m:= k\inv k' \in N_{G_{F,0}}(\bT)$ and $(k')\inv \Fr(k')  =   m\inv ( k\inv \Fr(k) ) \Fr(m)$.    Consequently, after taking the quotient by $T_0$, we have a well-defined map $\rho \colon \CTk \rightarrow (\bar{W}_T)_\simFr$.

 On the other hand, if $\bar{m} \in \bar{W}_T$, then we can choose $m \in N_{G_{F,0}}(\bT)$ lifting $\bar{m}$.  Thanks to Lang-Steinberg, there is a $k \in G_{F,0}$ such that $k\inv \Fr(k) = m$.  Note that $\lsup{k}\bT \in \CTk$ and $\rho(\lsup{k}\bT) $ is the $\Fr$-conjugacy class of $\bar{m}$.  That is, $\rho$ is surjective.

To complete the proof, we need to show that if $\bT_1, \bT_2 \in \CTk$, then $\bT_1$ is $G^{\Fr}$-conjugate to $\bT_2$ if and only if $\rho(\bT_1) = \rho(\bT_2)$.  As above, after possibly conjugating by an element of $G^{\Fr}$, we can write $\bT_i = \lsup{g_i}\bT$ with $g_i \in {G_{F,0}}$.

Suppose $\rho(\bT_1) = \rho(\bT_2)$.  Then there exist $m \in N_{G_{F,0}}(\bT)$ and $t \in T_0$ such that
$$m g_1\inv \Fr(g_1) \Fr(m)\inv = t g_2\inv \Fr(g_2) .$$
Since $g_2 t g_2 \inv \in \lsup{g_2}T_0$, by Lang-Steinberg, there is a $k \in \lsup{g_2}T_0$ such that $k\inv \Fr(k) = g_2 t g_2\inv$.  Thus, $$\Fr(kg_2 m g_1\inv ) =kg_2 m g_1\inv $$
showing that $\bT_1$ and $\bT_2$ are conjugate by $kg_2mg_1\inv  \in G^{\Fr}$.  

Suppose $\bT_1$ is $G^\Fr$-conjugate to $\bT_2$.
Then thanks to Corollary~\ref{cor:rationaltori} with $\bT = \bT_1$,  there exists $h \in G^{\Fr}_{F,0}$ such that $\lsup{h} (\bT_1(k)) = \bT_2(k)$.    Thus, for all strongly regular semisimple $\gamma \in \bT_1(k)$ we have $\lsup{h} \gamma \in \bT_2(k)$.  Since $\lsup{hg_1}\bT = \lsup{g_2}\bT$, we have $m := g_2 \inv h g_1 \in N_{G_{F,0}} (\bT)$.   Thus
\begin{equation*}
g_2\inv \Fr(g_2) = mg_1\inv h\inv \Fr( h g_1 m\inv) = m g_1\inv \Fr( g_1) \Fr(m\inv).
\end{equation*}
We conclude that $\rho(\bT_1) = \rho(\bT_2)$.
\end{proof}

\begin{remark}
It would be nice if the intersection of a Frobenius-conjugacy class in $\bar{W}_T$ with $\bar{\bfT}_F$ would lie in $ (1-\Fr)\bar{\bfT}_F$, giving us an injective map $(\bar{\bfT}_F)_{\Fr}  \rightarrow (\bar{W}_T)_\simFr$.  However, this is not true.  For example, when $p > 2$ and $-1 \not \in (k^\times)^2$ consider  a ramified elliptic maximal $k$-torus in  $\SL_2$.
\end{remark}

We wish to describe   $\CTk/\! \approx$, the set of $k$-stable-conjugacy classes in $\CTk$.

\begin{lemma} \label{lem:K-stableclasses3}
There is natural bijection between the set of $k$-stable conjugacy classes in $\CTk$   and the set of Frobenius conjugacy classes in  $W_T = N_{G}(\bT)/T$.
That is
$$  \CTk/ \! \approx  \, \, \longleftrightarrow \, \, ({W}_T)_\simFr .$$
\end{lemma}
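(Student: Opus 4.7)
The plan is to adapt the argument of Lemma~\ref{lem:K-stableclasses2} to the coarser equivalence relation of $k$-stable conjugacy, working with all of $W_T$ rather than just its image in $\bar{W}_T$.  I will define a map $\rho \colon \CTk \rightarrow (W_T)_\simFr$ as follows.  Given $\bT' \in \CTk$, choose any $g \in G$ with $\lsup{g}\bT = \bT'$; since $\bT'$ is defined over $k$, we have $\Fr(\bT') = \bT'$, whence $n := g\inv \Fr(g) \in N_G(\bT)$, and we let $\rho(\bT')$ be the $\Fr$-conjugacy class of the image $\bar{n}$ of $n$ in $W_T$.  If $g' = gm$ with $m \in N_G(\bT)$ is another choice, then $(g')\inv \Fr(g') = m\inv n \Fr(m)$, so $\rho$ is well-defined.

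The heart of the proof is to show that two tori $\bT_1, \bT_2 \in \CTk$ satisfy $\bT_1 \approx \bT_2$ if and only if $\rho(\bT_1) = \rho(\bT_2)$.  Write $\bT_i = \lsup{g_i}\bT$ and $n_i = g_i\inv \Fr(g_i) \in N_G(\bT)$.  Any $h \in G$ satisfying $\lsup{h}\bT_1 = \bT_2$ has the form $h = g_2 m g_1\inv$ for some $m \in N_G(\bT)$, and a direct computation yields
\[
h\inv \Fr(h) \;=\; \lsup{g_1}\bigl(m\inv n_2 \Fr(m) n_1\inv\bigr).
\]
The conjugation $\Ad(h) \colon \bT_1 \rightarrow \bT_2$ is a $k$-morphism precisely when $h\inv \Fr(h)$ centralizes $\bT_1$, equivalently when $h\inv \Fr(h) \in \bT_1 = \lsup{g_1}\bT$; this translates to $m\inv n_2 \Fr(m) n_1\inv \in \bT$, which is exactly the condition that $\bar{n}_1 = \bar{m}\inv \bar{n}_2 \Fr(\bar{m})$ in $W_T$.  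Running this argument in both directions shows $\bT_1 \approx \bT_2$ if and only if $\rho(\bT_1) = \rho(\bT_2)$, so $\rho$ descends to an injection $\CTk/\!\approx \, \hookrightarrow (W_T)_\simFr$.

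For surjectivity, I first observe that the natural map $N_{G_{F,0}}(\bT) \rightarrow W_T$ is surjective.  Since $\bT$ is $K$-minisotropic the point $x_T$ is $N_G(\bT)$-fixed, so $N_G(\bT) \subseteq \Stab_G(x_T) = T G_{F,0}$ by Lemma~\ref{lemma:KHR}; writing any $n \in N_G(\bT)$ as $n = tk$ with $t \in T$ and $k \in G_{F,0}$, we see $k = t\inv n \in N_G(\bT) \cap G_{F,0} = N_{G_{F,0}}(\bT)$ has the same image as $n$ in $W_T$.  Hence every class in $(W_T)_\simFr$ lifts to some $n \in N_{G_{F,0}}(\bT)$, and Lang--Steinberg applied inside the parahoric $G_{F,0}$ (using the Moy--Prasad filtration on its pro-$p$ radical and Lang--Steinberg for the connected reductive quotient $\bfG_F$) produces $k \in G_{F,0}$ with $k\inv \Fr(k) = n$; then $\lsup{k}\bT \in \CTk$ and $\rho(\lsup{k}\bT)$ is the class of $\bar n$.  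The main technical point is the computation of $h\inv \Fr(h)$ together with the identification of the $k$-rationality of $\Ad(h)$ with the containment $h\inv \Fr(h) \in \bT_1$; everything else is a careful reworking of the proof of Lemma~\ref{lem:K-stableclasses2}.
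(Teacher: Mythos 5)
Your proof is correct, and it reaches the same bijection as the paper, but the route differs enough to be worth comparing. The paper first uses Lemma~\ref{lem:xtconj} and Lemma~\ref{lemma:KHR} to normalize so that the conjugators lie in $G_{F,0}$, identifies $W_T$ with $\bar{W}_T/\bar{\bfT}_F$, and then detects $k$-stable conjugacy by checking whether the explicit conjugator carries (strongly regular) rational points of $\bT_1$ into $\bT_2(k)$. You instead allow arbitrary conjugators $h = g_2 m g_1\inv$ in $G$, compute the cocycle $h\inv \Fr(h) = \lsup{g_1}\bigl(m\inv n_2 \Fr(m) n_1\inv\bigr)$, and use the criterion that $\Ad(h)$ is a $k$-morphism if and only if this cocycle lies in $C_G(\bT_1) = T_1$ (legitimate, since $\Ad(h)$ is already a $K$-morphism, so only commutation with $\Fr$ must be checked, and maximality of the torus plus density of its points gives the centralizer statement); this turns the equivalence into the identity $\bar{n}_1 = \bar{m}\inv \bar{n}_2 \Fr(\bar{m})$ in $W_T$. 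What your version buys: the map $\rho$ is defined with no reference to the building, the identification $W_T \cong \bar{W}_T/\bar{\bfT}_F$ (which the paper asserts without detail and which itself rests on $N_G(\bT) = T\,N_{G_{F,0}}(\bT)$) is bypassed, and the Bruhat--Tits input is isolated exactly where it is needed, namely surjectivity, via $N_G(\bT) \leq \Stab_G(x_T) = T G_{F,0}$ from Lemma~\ref{lemma:KHR} followed by Lang--Steinberg in the parahoric $G_{F,0}$ (Lang--Steinberg cannot be applied in $G$ itself, so this reduction is essential and you handle it correctly). What the paper's formulation buys: it runs strictly parallel to Lemma~\ref{lem:K-stableclasses2}, so the refinement from stable classes to rational classes --- the fibers of $(\bar{W}_T)_{\simFr} \rightarrow (W_T)_{\simFr}$ --- is visible immediately, and its rational-points computation matches the description of $k$-stable conjugacy of tori used in \S\ref{subsec:summary}.
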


\begin{proof}
Note that $W_T$ may be naturally identified, as groups with $\Fr$-action, with $\bar{W}_T/ \bar{\bfT}_F$.

As in the proof of  Lemma~\ref{lem:K-stableclasses2}, we have a surjective, well-defined map $\mu \colon \CTk \rightarrow (\bar{W}_T/ \bar{\bfT}_F)_\simFr$.

To complete the proof, we need to show that if $\bT_1, \bT_2 \in \CTk$, then $\bT_1$ is $k$-stably-conjugate to $\bT_2$ if and only if $\mu(\bT_1) = \mu(\bT_2)$.  Thanks to Lemma~\ref{lemma:KHR} and Lemma~\ref{lem:xtconj}, after conjugating by an element of $G^{\Fr}$, we can write $\bT_i = \lsup{k_i}\bT$ with $k_i \in {G_{F,0}}$.

Suppose  $\mu(\bT_1) = \mu(\bT_2)$.  Then there exist $m \in N_{G_{F,0}}(\bT)$ and $t \in T_F$ such that
$$m k_1\inv \Fr(k_1) \Fr(m\inv) = t\inv k_2\inv \Fr(k_2).$$
For all $\gamma \in \bT_1(k)$ we have
$\Fr(\lsup{k_2 m k_1\inv}\gamma) = \lsup{k_2 t m k_1\inv} \gamma$.  Since $\lsup{mk_1\inv} \gamma \in \bT(K)$, we have $\lsup{tmk_1\inv}\gamma = \lsup{mk_1\inv}\gamma$.  Thus,
$\Fr(\lsup{k_2 m k_1\inv}\gamma) = \lsup{k_2 m k_1\inv} \gamma$, and we conclude that $\bT_1$ is $k$-stably-conjugate to $\bT_2$.

Suppose $\bT_1$ is $k$-stably-conjugate to $\bT_2$.  Then there exists $h \in G_{F,0}$ such that $\lsup{h}( \bT_1(k)) = \bT_2(k)$.    Thus, for all strongly regular semisimple $\gamma \in \bT_1(k)$ we have $\lsup{h} \gamma \in \bT_2(k)$.   Thus $s = h\inv \Fr(h) \in \bT_1(K) = \lsup{k_1}(\bT(K))$ and $\lsup{k_1\inv}s \in T_F$.   Since $\lsup{hk_1}\bT = \lsup{k_2}\bT$, we have $m := k_2 \inv h k_1 \in N_{G_{F,0}} (\bT)$.   Thus
\begin{equation*}
k_2\inv \Fr(k_2) = (mk_1\inv h\inv) \Fr(h k_1 m\inv) = (\lsup{mk_1\inv}s) (m k_1\inv  \Fr(k_1) \Fr(m\inv)) .
\end{equation*}
Since $\lsup{mk_1\inv}s \in T_F$, we conclude that $\mu(\bT_1) = \mu(\bT_2)$.
\end{proof}

Different choices of ``base point'' $\bT$ for the orbit $\lsup{G} \bT$ will result in different realizations of the action of $\Fr$.   We explore how these different realizations are related.

\begin{defn}
Suppose $\bT' \in \CTk$.  Choose $h \in G$ such that $T' = \lsup{h}T$.  Set $m_h = h\inv \Fr(h)$ and let $\bar{m}_h$ denote the image of $m_h$ in $W_T$.   Let $W_{T,\bar{m}_h}$ denote the group $W_T$, but with $\Fr$ acting by $\Fr \cdot x = \lsup{\bar{m}_h} \Fr(x)$.   Similarly, we let $\bar{T}_{F,\bar{m}_h}$ (resp., $(T/T_{0^+})_{\bar{m}_h}$) denote the group $\bar{T}_F$ (resp. $T/T_{0^+}$), but with $\Fr$ acting by $\Fr \cdot t = \lsup{\bar{m}_h} \Fr(t)$.
\end{defn}

\begin{lemma}  \label{lem:nottransportofstructure} Recall that $\bT' = \lsup{h}\bT \in \CTk$ with $h \in G$.  Let $F'$ denote the facet corresponding to $T'$.
\begin{enumerate}
    \item  \label{lem:fiberW} The group homomorphism that sends $m' \in N_{G}(\bT')$ to $h\inv m' h \in N_{G}(\bT)$ induces a $\Fr$-equivariant isomorphism
    $$\varphi_h \colon W_{T'} \rightarrow W_{T,\bar{m}_h}.$$
\item  \label{lem:fiberT} The group homomorphism that sends $t' \in T'$ to $h\inv t' h \in T$ induces  $\Fr$-equivariant isomorphisms
    \[\pushQED{\qed}  \varphi_h \colon T'/T'_{0^+} \rightarrow (T/T_{0^+})_{\bar{m}_h} \text{ and } \varphi_h \colon \bar{T}'_{F'} \rightarrow \bar{T}_{F,\bar{m}_h}. \qedhere  \popQED \]
\end{enumerate}
\end{lemma}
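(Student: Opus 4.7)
The plan is to verify that both isomorphisms come from transport of structure via $\Ad(h\inv)$, with the twist by $\bar m_h$ arising naturally when one tracks how $\Fr$ interacts with this conjugation.

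First I would check that $m_h$ actually belongs to $N_G(\bT)$, so that $\bar m_h \in W_T$ is meaningful.  Since $\bT$ and $\bT'$ are both $k$-tori, both are $\Fr$-stable, and $\bT' = \lsup{h}\bT$ gives $\lsup{\Fr(h)}\bT = \Fr(\bT') = \bT' = \lsup{h}\bT$; hence $\lsup{m_h}\bT = \bT$.  Next I would verify that $\Ad(h\inv) \colon G \to G$ sends $N_G(\bT')$ to $N_G(\bT)$ and $T'$ to $T$, so descends to a group isomorphism $\varphi_h \colon W_{T'} \to W_T$.  For part~\eqref{lem:fiberT}, one uses the uniqueness of the point attached to a $K$-minisotropic torus (Section~\ref{subsec:point}) to conclude $x_{T'} = h \cdot x_T$, hence $F' = h\cdot F$ and $G_{F',0} = \lsup{h}G_{F,0}$.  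Therefore $T'_{F'} = T' \cap G_{F',0} = \lsup{h}T_F$ and $T'_{0^+} = \lsup{h} T_{0^+}$, so $\Ad(h\inv)$ induces isomorphisms $T'/T'_{0^+} \to T/T_{0^+}$ and $\bar{\bfT}'_{F'} \to \bar{\bfT}_F$.

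The only substantive step is checking $\Fr$-equivariance with the twisted action.  For $m' \in N_G(\bT')$ I would compute
\[
\varphi_h(\Fr(m')) \;=\; h\inv \Fr(m') h \;=\; \bigl(h\inv \Fr(h)\bigr)\, \Fr(h\inv m' h)\, \bigl(h\inv \Fr(h)\bigr)\inv \;=\; m_h\, \Fr(\varphi_h(m'))\, m_h\inv.
\]
Passing to the quotient $W_T$, the right-hand side becomes $\lsup{\bar m_h}\Fr(\varphi_h(\bar m'))$, which by definition is $\Fr \cdot \varphi_h(\bar m')$ inside $W_{T,\bar m_h}$.  The identical computation, carried out with $t' \in T'$ in place of $m'$, establishes $\Fr$-equivariance for the isomorphisms $T'/T'_{0^+} \to (T/T_{0^+})_{\bar m_h}$ and $\bar{\bfT}'_{F'} \to \bar{\bfT}_{F,\bar m_h}$; here one uses that $\bar m_h$ normalizes $\bT$ to see that conjugation by $\bar m_h$ preserves $T$, $T_{0^+}$, and $T_F$, so the twisted Frobenius action is well defined on the respective quotients.

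There is no real obstacle in this argument beyond bookkeeping; the main point is simply that the discrepancy between $\Ad(h\inv) \circ \Fr$ and $\Fr \circ \Ad(h\inv)$ is exactly conjugation by $m_h$, which is the whole reason for introducing the twisted module structure in the statement.
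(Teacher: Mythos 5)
Your proof is correct and is exactly the routine transport-of-structure argument the paper intends (the lemma is stated with its proof omitted as immediate): the computation $\varphi_h(\Fr(m')) = m_h\,\Fr(\varphi_h(m'))\,m_h\inv$ is precisely what produces the twist by $\bar m_h$. The one point worth making explicit is that conjugation by $m_h$ preserves $T_F = T \cap G_{F,0}$ because $m_h \in N_G(\bT)$ fixes $x_T$ (by uniqueness of the point attached to $\bT$) and hence stabilizes $F$; preservation of $T$, $T_0$, and $T_{0^+}$ follows directly from $m_h$ normalizing $\bT$, as you say.
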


\subsection{The rational structure of \texorpdfstring{$\lsup{G}\bT$:}{G orbit of T:} a summary}
\label{subsec:summary}  In this subsection we gather some of the key results and definitions of the previous subsections.   Recall that $\bT$ is a $K$-minisotropic maximal $k$-torus, and $\CTk$ denotes the set of tori in $\lsup{G}\bT$ that are defined over $k$.

 \begin{itemize}
     \item  \textbf{$k$-stable classes in $\CTk$}. Two  tori $\bT_1, \bT_2 \in \CTk$ are said to be $k$-stably-conjugate provided that there is an $x \in G$ such that $\lsup{x}t_1 \in T_2^\Fr$ for all $t_1 \in T_1^\Fr$.  The $k$-stable classes in $\CTk$ are parameterized by $(W_T)_{\simFr}$, the set of Frobenius-conjugacy classes in $N_G(\bT)/T$.  (Lemma~\ref{lem:K-stableclasses3})

 \item  \textbf{$k$-embeddings of $\bT$}. If $\bT' \in \CTk$, then a $k$-embedding of $\bT'$ into $\bG$ is a $k$-morphism $f \colon \bT' \rightarrow \bG$ that arises via conjugation for some $x \in G$.   Up to $G^\Fr$-conjugation, the set of $k$-embeddings of $\bT$ into $\bG$ is parameterized by
  $(\bar{\bfT}_F)_{\Fr}$, the group of $\Fr$-coinvariants in $\bar{\bfT}_F = (T \cap G_{F,0})/T_0$.    If $\bT' \in \CTk$ belongs to the stable class indexed by $w \in c \in (W_T)_{\simFr}$, then the set of $k$-embeddings of $\bT'$ into $\bG$ is parameterized by  $(\bar{\bfT}_F)_{w\Fr}$, the group of $w \Fr$-coinvariants in $\bar{\bfT}_F$.   (Lemmas~\ref{lem:K-stableclasses2orig}  and~\ref{lem:nottransportofstructure})

  \item \textbf{$G^\Fr$-conjugacy classes in $\CTk$}.  The set of $G^\Fr$-conjugacy classes in $\CTk$ is parameterized by $(\bar{W}_T)_{\simFr}$, the set of Frobenius-conjugacy classes in $\bar{W}_T = N_{G_{F,0}}(\bT)/T_0$.
  (Lemma~\ref{lem:K-stableclasses2})
 \end{itemize}

 \subsection{A comment on the relationship between the normalizer of \texorpdfstring{$\bT$ over $K$ and  the normalizer of $\bT$ over $\tame$}{T over K and normalizer of T over tame}}
 We now check that the parahoric subgroup $G_{F,0}$ interacts with the normalizer of $\bT$ as expected.

\begin{lemma}  \label{lem:somecontrol}
If $E$ is a Galois extension of $K$ over which $\bT$ splits and $y$ is the $\Gal(E/K)$-fixed point of $\AA'(\bT, E)$ in $\BB^{\red}(\bG,E)$, then
$$N_{G}(\bT) \cap G_{F,0} \subset \bG(E)_{y,0}.$$
If $\bG$ is  $K$-tame, then
$$N_{G}(\bT) \cap G_{F,0^+} = T  \cap G_{F,0^+} = T_{0^+}.$$
\end{lemma}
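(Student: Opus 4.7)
The plan is as follows.  For the first inclusion $N_G(\bT) \cap G_{F,0} \subset \bG(E)_{y,0}$, fix $n \in N_G(\bT) \cap G_{F,0}$.  First I would show $n \cdot y = y$: since $n$ normalizes $\bT$, its action on $\BB^{\red}(\bG,E)$ preserves the apartment $\AA'(\bT,E)$, and since $n \in G = \bG(K)$ commutes with $\Gal(E/K)$, the point $n \cdot y$ is another $\Gal(E/K)$-fixed point of $\AA'(\bT,E)$.  The $K$-minisotropy of $\bT$ forces the $\Gal(E/K)$-fixed locus in $\AA'(\bT,E)$ to be the singleton $\{y\}$, so $n \cdot y = y$.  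Second, I would promote $n \in \Stab_{\bG(E)}(y)$ to $n \in \bG(E)_{y,0}$: by Haines--Rapoport over (the completion of) $E$, the parahoric $\bG(E)_{y,0}$ is the kernel of the Kottwitz map $\kappa_{\bG(E)}$ restricted to $\Stab_{\bG(E)}(y)$.  From $n \in G_{F,0}$ we get $\kappa_G(n) = 0$ in $\pi_1(\bG)_I$, and functoriality of Kottwitz under the extension $E/K$ places $\kappa_{\bG(E)}(n)$ in the kernel of the surjection $\pi_1(\bG)_{I_E} \twoheadrightarrow \pi_1(\bG)_I$.  Combining this with the $\Gal(E/K)$-invariance of $\kappa_{\bG(E)}(n)$ (since $n \in G$) and further analysis exploiting that $n$ normalizes $\bT$, one concludes $\kappa_{\bG(E)}(n) = 0$.

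For the second assertion, assume $\bG$ is $K$-tame; by Corollary~\ref{cor:CandW} this gives $p \nmid |\absW|$.  The pro-unipotent radical $G_{F,0^+}$ is a pro-$p$ group.  For $n \in N_G(\bT) \cap G_{F,0^+}$, its image in $N_{\bG(\bar{k})}(\bT)/\bT(\bar{k}) \le \absW$ is a $p$-power-order element of a finite group of order prime to $p$, hence is trivial; so $n \in T$, establishing $N_G(\bT) \cap G_{F,0^+} \subset T \cap G_{F,0^+}$, with the reverse inclusion immediate.  For the equality $T \cap G_{F,0^+} = T_{0^+}$: the inclusion $\supset$ is clear.  For $\subset$, any $t \in T \cap G_{F,0^+}$ lies in $T_F = T \cap G_{F,0}$ and is pro-$p$; its image in $\bar{\bfT}_F = T_F/T_0$ is a $p$-power-order element, but $\bar{\bfT}_F$ is a torsion subgroup of $\pi_1(\bT)_I$, and under $K$-tameness the inertia action on $\X_*(\bT)$ factors through a prime-to-$p$ quotient, so the torsion of $\pi_1(\bT)_I$ has order prime to $p$.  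Hence the image is trivial, so $t \in T_0$, and finally $t \in T_0 \cap G_{F,0^+} = T_{0^+}$ since $T_0/T_{0^+} \cong \bar{\bfT}(\ffc) \hookrightarrow \bfG_F(\ffc)$.

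The hard part will be the promotion step in the first inclusion: establishing $\kappa_{\bG(E)}(n) = 0$ when $E/K$ is wildly ramified.  In that case the surjection $\pi_1(\bG)_{I_E} \twoheadrightarrow \pi_1(\bG)_I$ can have nontrivial kernel, so Kottwitz-functoriality and Galois-invariance alone do not force $\kappa_{\bG(E)}(n) = 0$; the argument must leverage the normalizer structure of $n$ (and the resulting control over $\kappa_{\bG(E)}(n)$ via the torus Kottwitz map and its interaction with the Weyl action on $\pi_1(\bT)_{I_E}$) to rule out a nonzero contribution from this kernel.
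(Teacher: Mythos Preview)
Your argument that $n \cdot y = y$ matches the paper.  The gap you flag in the promotion step is real, and the paper fills it by a simpler idea than the Kottwitz-functoriality route you sketch: rather than trying to transfer $\kappa_G(n)=0$ to $\kappa_{\bG(E)}(n)=0$ through the surjection $\pi_1(\bG)_{I_E}\twoheadrightarrow\pi_1(\bG)_I$ (whose kernel, as you note, is genuinely problematic in the wild case), the paper uses the standard compatibility $G_{F,0}\subset\bG(E)_{x_T,0}$ for the extension $E/K$, citing \cite[Lemma~2.12]{adler-debacker:murnaghan}.  This already places $n$ in $\ker\kappa_{\bG(E)}$ directly; combined with $n\cdot y=y$ one gets $n\in\Stab_{\bG(E)}(y)\cap\ker\kappa_{\bG(E)}=\bG(E)_{y,0}$, which the paper packages as \cite[Lemma~4.2.1]{debacker:bruhat-tits}.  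The moral: work over $E$ from the start rather than descend to $K$ and try to climb back.

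For the second assertion your pro-$p$ versus prime-to-$p$ argument is correct and takes a genuinely different route from the paper.  The paper instead reuses the first part to place $m\in\bG(E)_{y,0}\cap\bG(E)_{x_T,0^+}$ and argues in the reductive quotient at $y$: the image of $m$ there is simultaneously unipotent (since $m$ lies in a $0^+$-group) and semisimple (since under $K$-tameness one may take $E/K$ tame, and $m$ normalizes the image of the split maximal torus $\bT$ with $p\nmid|\absW|$), hence trivial; so $m$ fixes an open neighborhood of $y$ in $\AA'(\bT,E)$, hence the whole apartment, hence $m\in\bT(E)\cap G=T$.  Your approach is more elementary and self-contained; the paper's has the mild advantage of making the first inclusion do visible work in proving the second.
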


\begin{proof}
  Suppose $m \in N_{G}(\bT) \cap G_{F,0}$.
  Our first goal is to show that $m \in \bG(E)_{y,0}$.   Note that $x_T$ is the unique closest point in $\BB^{\red}(\bG,K)$ to $y$.   Similarly, $y$ is the unique point in $\AA'(\bT,E)$ closest to $x_T$.  Indeed, the unique  point, call it $z$,  in $\AA'(\bT,E)$ nearest $x_T$ has the property that  $\dist(x_T,z) = \dist(\gamma(x_T),\gamma(z)) = \dist(x_T,\gamma(z))$ for all $\gamma \in \Gal(E/K)$.   Thus, by uniqueness, we conclude that $z$ is $\Gal(E/K)$ fixed; hence it must be $y$.

  Since $y$ is the unique $\Gal(E/K)$-fixed point in $\AA'(\bT,E)$, $my \in \AA'(\bT,E)$, and $\gamma(my) = my$ for all $\gamma \in \Gal(E/K)$, we conclude that $my = y$.
  From~\cite[Lemma 2.12]{adler-debacker:murnaghan}, we have $ m \in G_{F,0}   \leq \bG(E)_{F,0}$, and so
  from~\cite[Lemma~4.2.1]{debacker:bruhat-tits} we conclude that $m \in \bG(E)_{y,0}$.

For the second claim, it is enough to show  $N_{G}(\bT) \cap G_{F,0^+} \subset T$.  Choose $m \in N_{G}(\bT) \cap G_{F,0^+}$
Note that $m \in \bG(E)_{y,0} \cap \bG(E)_{x_T,0^+}$, which means that the image of $m$ in the reductive quotient  $\bG(E)_{y,0} /\bG(E)_{y,0^+}$ must be unipotent.   Since $\bG$ is $K$-tame, we can assume $E$ is a tame extension of $K$ and so the image of $m$ in $\bG(E)_{y,0} /\bG(E)_{y,0^+}$ is semisimple.  Thus, the image of $m$ is the trivial element.
Consequently, $m$ fixes an open neighborhood $U$ in $\AA'(\bT,E)$ with $y \in \bar{U}$, hence $m$ must fix all of $\AA'(\bT,E)$ and so $m \in \bT(E)$.  Since $m \in N_{G}(\bT) \cap \bT(E)$, we conclude that $m \in T$.
\end{proof}

\begin{remark}
The proof of Lemma~\ref{lem:somecontrol}
shows that, in fact,  $N_{G}(\bT)  \leq N_{\bG(E)_{y,0}}(\bT) \cdot T$.
\end{remark}

\begin{remark}  Tameness may or may not be required for the second statement of Lemma~\ref{lem:somecontrol}.   However, it is required for the current  proof.   For example, for $\ff$ of characteristic two we have
$$\begin{bmatrix}
1 & 0 \\
1 & 1
\end{bmatrix}
\begin{bmatrix}
0 & 1 \\
1 & 0
\end{bmatrix}
\begin{bmatrix}
1 & 0 \\
1 & 1
\end{bmatrix}
=
\begin{bmatrix}
1 & 1 \\
0 & 1
\end{bmatrix}
$$
in $\SL_2(\ff)$.  That is, without some assumptions on the characteristic of our field, nontrivial elements of the normalizer of a torus can be unipotent!
\end{remark}

\begin{remark}  \label{rem:quotref} Thanks to Lemma~\ref{lem:somecontrol} when $\bT$ splits over a tame extension, we can interpret $N_{G_{F,0}}(\bT) / (T \cap G_{F,0^+})$ as $\Stab_{\bfG_{F}}(\bT)$ where $\bfG_{F}$ is the connected reductive $\ff$-group corresponding to the quotient $G_{F,0} / G_{F,0^+}$.  By $\Stab_{\bfG_{F}}(\bT)$ we mean the group of $\bar{x} \in \Stab_{\bfG_{F}}(\bT)$ for which there exists a lift $x \in G_{F,0}$  such that $\lsup{x}\bT = \bT$.
\end{remark}

\section{The Main Result and Examples}  \label{sec:tameexamples}

We begin by stating the parameterization theorem of this paper.   We then develop some machinery for calculating the objects that appear in this theorem, and we end  by looking at a number of examples.

\subsection{The main result}   Before stating the main result, we develop and recall some notation.

Let $(\Wwhistles)^{\Fr \circ N_q}$  denote the set of   $\Fr \circ N_q$-fixed points in the set of  $\sigma$-elliptic tame  $\sigma$-conjugacy classes in $\absW$.  For $c \in (\Wwhistles)^{\Fr \circ N_q}$, choose $w \in c$ and let $n \in \titsW$ be a lift of $w$. Thanks to Remark~\ref{rem:allindependent} it makes sense to denote the order of  $n \sigma \in \titsW \rtimes \EGal$ by $\ell_c$ and to
define  $K_c = {\tame}^{\sigma^{\ell_c}}$.
 Let $\pi$ denote a uniformizer in the tame extension of $K$ of degree $\ell_c$ such that $\pi^{\ell_c} = \varpi$.  Let $\lambda_c $ denote the unique element in $ \X_*(\bA \cap \bG')$  such that
\begin{itemize}
    \item $n$ is $\sigma$-conjugate to $\lambda_c(\sigma(\pi)/\pi)$ by an element of $\Egroup_{x_0,0}$ (see Lemma~\ref{lem:Asharpdone}) and
    \item   $x_c := x_0 + \lambda_c/\ell_c$ is an element of  $\bar{C}'$ (see Lemma~\ref{lem:kacinbarC}).
\end{itemize}

If $c \in (\Wwhistles)^{\Fr \circ N_q}$, then there exists a tame $K$-minisotropic maximal $k$-torus $\bT^c$ in $\bG$ for which the point associated to $T^c$ is $x_c$  (see Lemma~\ref{lem:elliptictoweyl} and Corollaries~\ref{cor:welldefined} and~\ref{cor:tameoverkexperiment}).     Set $T^c_{x_c} = T^c \cap G_{x_c,0}$.

\begin{theorem} \label{thm:main}  Recall that $k$ is a nonarchimedean local field.  For each $c \in (\Wwhistles)^{\Fr \circ N_q}$  fix a tame $K$-minisotropic maximal $k$-torus $\bT^c$ in $\bG$ for which the point associated to $T^c$ is $x_c$.
 \begin{enumerate}
     \item  \label{it:rationa} The set of $G^{\Fr}$-conjugacy classes of tame $K$-minisotropic maximal $k$-tori in $\bG$ is parameterized by  the set
     $$\{ (c, \tilde{c} ) \, | \, \text{ $c \in (\Wwhistles)^{\Fr \circ N_q}$ and $ \tilde{c} \in (\bar{W}_{T^c})_{\sim \Fr}  $}\}.$$
     Here $(\bar{W}_{T^c})_{\sim \Fr}$ denotes the set of $\Fr$-conjugacy classes in $\bar{W}_{T^c} := N_{G_{x_c,0}}(\bT^c)/T^c_0$.
     More precisely,  for each $\tilde{c} \in (\bar{W}_{T^c})_{\sim \Fr} $ we fix
     $k_{\tilde{c}} \in G_{x_c,0}$ such that $k_{\tilde{c}}\inv \Fr(k_{\tilde{c}}) \in N_{G_{x_c,0}}(\bT^c)$ is a lift of a member of  $\tilde{c}$.  Then each $G^{\Fr}$-conjugacy class in $\CTkc$ has exactly one representative in the set
$$  \{ \Ad({k_{\tilde{c}}}) \bT^c \, | \, \tilde{c} \in (\bar{W}_{T^c})_{\sim \Fr} \}.$$
 \item  \label{it:stable} The set of $k$-stable classes of tame $K$-minisotropic maximal $k$-tori in $\bG$ is parameterized by  the set
     $$\{ (c, \hat{c} ) \, | \, \text{ $c \in (\Wwhistles)^{\Fr \circ N_q}$ and $ \hat{c} \in ({W}_{T^c})_{\sim \Fr}  $}\}.$$
     Here $({W}_{T^c})_{\sim \Fr}$ denotes the set of $\Fr$-conjugacy classes in ${W}_{T^c} := N_{G_{x_c,0}}(\bT^c)/T^c_{x_c}$.
     More precisely,  for each $\hat{c} \in ({W}_{T^c})_{\sim \Fr} $ we fix
       $k_{\hat{c}} \in G_{x_c,0}$ such that $k_{\hat{c}}\inv \Fr(k_{\hat{c}}) \in N_{G_{x_c,0}}(\bT^c)$ is a lift of a member of  $\hat{c}$.  Then each $k$-stable conjugacy class in $\CTkc$ has exactly one representative in the set
$$  \{ \Ad({k_{\hat{c}}}) \bT^c \, | \, \hat{c} \in ({W}_{T^c})_{\sim \Fr} \}.$$
\item  \label{it:embedding} Suppose $c \in (\Wwhistles)^{\Fr \circ N_q}$ and $\hat{c} \in ({W}_{T^c})_{\sim \Fr}$.   Let $\bT =  \Ad({k_{\hat{c}}}) \bT^c$ where $k_{\hat{c}}$ is chosen as in (\ref{it:stable}).  Let $\bar{m}$ denote the image of $k_{\hat{c}}\inv \Fr(k_{\hat{c}})$ in  ${W}_{T^c}$.  The set of $k$-embeddings of $\bT$ into $\bG$ is parameterized by $(\bar{\bfT}^c_{x_c})_{\bar{m}\Fr}$, the group of $\bar{m} \Fr$-coinvariants in
$\bar{\bfT}^c_{x_c} = T^c_{x_c}/T^c_0$.   More precisely,  for each $\bar{c} \in (\bar{\bfT}^c_{x_c})_{\bar{m}\Fr}$ we fix $k_{\bar{c}} \in G_{x_c,0}$ such that $k_{\bar{c}}\inv (\bar{m} \Fr(k_{\bar{c}}) \bar{m}\inv) \in T^c_{x_c}$ is a lift of a member of $\bar{c}$.
 Then
 $$\{f_{\bar{c}} \, \colon \bT \rightarrow \bG \, | \, \text{ $f_{\bar{c}}(s) = \Ad({k_{\hat{c} }k_{\bar{c}} k_{\hat{c}}\inv}) s$ for $s \in \bT$  and $\bar{c} \in (\bar{\bfT}^c_{x_c})_{\bar{m}\Fr}$}\}$$
is a complete set of representatives for the $G^{\Fr}$-conjugacy classes of $k$-embeddings  of $\bT$ into $\bG$.
 \end{enumerate}
\end{theorem}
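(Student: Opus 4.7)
The plan is to assemble Theorem~\ref{thm:main} by combining the enumeration of $G$-orbits of tame $K$-minisotropic maximal $K$-tori from Section~\ref{sec:tamepoints}, the rationality criterion of Section~\ref{sec:existenceintame}, and the intra-orbit structure theorems of Section~\ref{sec:resultsonKktori}.

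First I would check that $c \mapsto \bT^c$ descends to a well-defined bijection from $(\Wwhistles)^{\Fr \circ N_q}$ to the set of $G$-orbits of tame $K$-minisotropic maximal $K$-tori in $\bG$ that contain a torus defined over $k$. Corollary~\ref{cor:welldefined} together with Lemma~\ref{lem:elliptictoweyl} identifies $\Wwhistles$ with the set of $G$-orbits of tame $K$-minisotropic maximal $K$-tori, while Corollary~\ref{cor:tameoverkexperiment} picks out the $\Fr \circ N_q$-fixed classes as exactly those orbits that contain a $\Fr$-stable torus; the uniqueness of $\lambda_c$, and hence of $x_c \in \bar{C}'$, is Lemma~\ref{lem:kacinbarC}, and by construction $x_c$ is the point attached to every torus in the orbit.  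With $c$ and $\bT^c$ fixed, parts~(\ref{it:rationa}) and~(\ref{it:stable}) follow by applying Lemmas~\ref{lem:K-stableclasses2} and~\ref{lem:K-stableclasses3} to parameterize the $G^\Fr$-classes and $k$-stable classes inside $\CTkc$ by $(\bar{W}_{T^c})_{\simFr}$ and $({W}_{T^c})_{\simFr}$ respectively; the Lang--Steinberg step in each of those proofs supplies, for a given Frobenius-conjugacy class, a lift $k_\cdot \in G_{x_c,0}$ whose associated cocycle lies in $N_{G_{x_c,0}}(\bT^c)$ and hits the chosen class, and $\Ad(k_\cdot)\bT^c$ is then the desired representative.

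For part~(\ref{it:embedding}), I fix $\hat{c}$ and the torus $\bT = \Ad(k_{\hat c})\bT^c$ with associated facet $F$. Lemma~\ref{lem:K-stableclasses2orig} and Corollary~\ref{cor:onembbeddings} parameterize the $G^\Fr$-classes of $k$-embeddings of $\bT$ into $\bG$ by $(\bar{\bfT}_F)_\Fr$. To rewrite this parameterization in terms of the base torus $\bT^c$, I would invoke Lemma~\ref{lem:nottransportofstructure}(\ref{lem:fiberT}) with $h = k_{\hat c}$, obtaining a $\Fr$-equivariant isomorphism $\bar{\bfT}_F \xra{\sim} \bar{\bfT}^c_{x_c}$ that intertwines the Frobenius on the former with the $\bar m \Fr$ action on the latter, hence an identification $(\bar{\bfT}_F)_\Fr \cong (\bar{\bfT}^c_{x_c})_{\bar m \Fr}$. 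Transporting the explicit cocycle lifts produced by Corollary~\ref{cor:onembbeddings} across this isomorphism yields the stated formula $f_{\bar c}(s) = \Ad(k_{\hat c} k_{\bar c} k_{\hat c}\inv)\, s$, with $k_{\bar c}$ lifting the chosen class through the twisted Lang--Steinberg equation $k_{\bar c}\inv\,(\bar m\, \Fr(k_{\bar c})\, \bar m\inv) \in T^c_{x_c}$.

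The main obstacle I expect is the bookkeeping in part~(\ref{it:embedding}): one must verify that the twisted $\bar m \Fr$ action produced by Lemma~\ref{lem:nottransportofstructure} agrees with the naive Frobenius action on the Lang--Steinberg cocycle for $\bT$, and then confirm that the composite $\Ad(k_{\hat c} k_{\bar c} k_{\hat c}\inv)$ lands in $G$ (not merely in $\bG(\bar k)$) and realizes the $k$-embedding corresponding to the class indexed by $\bar c$. Once this dictionary is in place, the remainder is a direct citation of previously proved results.
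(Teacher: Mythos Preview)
Your proposal is correct and follows essentially the same route as the paper: reduce to a single $G$-orbit via the bijection between $(\Wwhistles)^{\Fr\circ N_q}$ and the $G$-orbits of tame $K$-minisotropic maximal $K$-tori containing a $k$-torus, then invoke Lemmas~\ref{lem:K-stableclasses2} and~\ref{lem:K-stableclasses3} for parts~(\ref{it:rationa}) and~(\ref{it:stable}), and Corollary~\ref{cor:onembbeddings} together with Lemma~\ref{lem:nottransportofstructure} for part~(\ref{it:embedding}). The paper's proof is terser but cites the same results; your additional remarks about Lang--Steinberg lifts and the transport-of-structure bookkeeping are accurate elaborations of what those lemmas already contain.
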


\begin{remark}    If $\hat{c} \in ({W}_{T^c})_{\sim \Fr}$, then the  fiber over $\hat{c}$ of  the natural map
$$(\bar{W}_{T^c})_{\sim \Fr} \rightarrow ({W}_{T^c})_{\sim \Fr}$$
parameterizes the set of $G^{\Fr}$-conjugacy classes of tame $K$-minisotropic maximal $k$-tori that appear in the $k$-stable class corresponding to $\hat{c}$.
\end{remark}

\begin{proof}  From Lemma~\ref{lem:imageofphi}  every $G$-conjugacy class of tame  maximal $K$-tori in $\bG$ corresponds to an element of  $\Wtamesimnon$.    Thus the assignment $c \mapsto \lsup{G}{\,}\bT^c$ from   $(\Wwhistles)^{\Fr \circ N_q}$ to the set of $G$-conjugacy classes of tame $K$-minisotropic maximal tori in $\bG$ has image consisting of exactly those   $G$-conjugacy classes of  tame $K$-minisotropic maximal tori  that contain a torus defined over $k$.

Consequently, to establish   statements~(\ref{it:rationa})  and~(\ref{it:stable}) of the theorem it is enough to show that for every $c \in (\Wwhistles)^{\Fr \circ N_q}$ the $G^{\Fr}$-conjugacy classes and $k$-stable classes in $\CTkc$ can be parameterized as claimed.  This follows from Lemmas~\ref{lem:K-stableclasses2} and~\ref{lem:K-stableclasses3}.

Statement~(\ref{it:embedding}) follows from  Corollary~\ref{cor:onembbeddings} and Lemma~\ref{lem:nottransportofstructure}.
\end{proof}

\subsection{A recasting of  part~(\ref{it:rationa}) of Theorem~\ref{thm:main} }

In Corollary~\ref{cor:recasting} below we recast part~(\ref{it:rationa}) of Theorem~\ref{thm:main} in a way that does not require the choice of a base point $\bT^c$.  While perhaps more elegant from the point of view of avoiding choices, this new description is considerably less concrete.

\subsubsection{A result about nearby tori}
We begin with a result about tori that are, in some sense, close to each other at depth zero.

\begin{lemma}  \label{lem:liftsthesameKc}  For this lemma we let $E$ be any extension of $k$ and let $\ff_E$ denote its residue field.  Suppose $x \in \BB(\bG,E)$ and $\bfT$ is a maximal $\ff_E$-torus in the connected reductive $\ff_E$-group  $\bfG^E_x$ corresponding to the quotient $\bG(E)_{x,0}/\bG(E)_{x,0^+}$.
    Suppose $\bT_1, \bT_2$ are $E$-split maximal tori such that $x \in \AA(\bT_i,E)$. 
    \begin{itemize}
        \item  If the image of $\bT_i(E) \cap \bG(E)_{x,0}$ in $\bfG^E_x$ is $\bfT$, then $\bT_1$ and $\bT_2$ are $\bG(E)_{x,0^+}$-conjugate.
        \item  If for all $n \in \N$ there exists $k \in \bG(E)_{x,n}$ such that $\lsup{k}\bT_1 = \bT_2$, then $\bT_1 = \bT_2$.
        \item  If $g \in N_{\bG(E)}(\bT_1) \cap \bG(E)_{x,0^+}$, then $g \in \bT_1(E)_{0^+}$.
    \end{itemize} 
\end{lemma}

\begin{proof}
    Since $x$ belongs to $\AA(\bT_1,E) \cap \AA(\bT_2,E)$, there exists an $h \in \bG(E)_{x,0}$ such that $\lsup{h}\bT_1 = \bT_2$. Let $\bar{h}$ denote the image of $h$ in
$\bfG^E_x$. By hypothesis, $\lsup{\bar{h}}\bfT = \bfT$; thus, $\bar{h} \in N_{\bfG^E_x}(\bfT)$. Consequently, by looking
at the affine Bruhat decomposition, we see that there exist an $n \in N_{\bG(E)}(\bT_1)  \cap \bG(E)_{x,0}$  and a $g \in \bG(E)_{x,0^+}$ such that $h = gn$. We have $\bT_2 = \lsup{h}\bT_1 = \lsup{g}\bT_1$.  This establishes the first claim.

For the second claim, suppose $\bT_1 \neq \bT_2$.  Then there exists $t \in \bT_1(E)$ such that $t \not \in \bT_2(E)$.
Since $\bT_2(E)$ is closed, there exists $\ell \in \N$ such that $t \cdot \bG(E)_{x,\ell} \cap  \bT_2(E) = \emptyset$.
By hypothesis, for all $n \in \N$ there exists $k \in \bG(E)_{x,n}$ such that $\lsup{k}t \in \bT_2(E)$.  This implies that $t \in \bT_2(E)$ modulo $\bG(E)_{x,m}$ for all $m \in \N$.  This contradicts the fact that  $t \cdot \bG(E)_{x,\ell} \cap  \bT_2(E) = \emptyset$.

For the final claim we note that since $g \in N_{\bG(E)}(\bT_1)$, we have that  $\AA(\bT_1,E)$ is stabilized by $g$.    If C is an alcove in $\AA(\bT_1, E)$ such that $F \subset \bar{C}$, then since $g \in \bG(E)_{x,0^+}$ we have that 
$g$ fixes $C$ pointwise.  Therefore, since $g \AA(\bT_1,E) = \AA(\bT_1,E)$ and $g$ fixes $C$ pointwise, we conclude that $g$ fixes $\AA(\bT_1, E)$ pointwise. Thus, we conclude
that $g \in \bT_1(E) \cap \bG(E)_{x,0^+} = \bT_1(E)_{0^+}$.
\end{proof}

\subsubsection{Recasting part~(\ref{it:rationa}) of Theorem~\ref{thm:main}}

The point $x_c$ of Theorem~\ref{thm:main}  determines a facet $F_c$ in  $\BB(\bG,K_c)$.   Let $\bfG_{c}$ denote the connected reductive $\ffc$-group attached to $F_c$.   We identify $\bfG_c$ with $\bG(K_c)_{F_c,0}/\bG(K_c)_{F_c,0^+}$, and we have  $\bfG_c^{\sigma} = \bfG_{x_c}$.  (Recall that $\bfG_c^{\sigma}$ denotes the connected component of $ \Fix_{\bfG_c}(\sigma)$.)   We let $\bshA$ denote the maximal torus in $\bfG_c$ corresponding to $\Ebtorus$.  That is,  the group of $\ffc$-points of  $\bshA$ coincides with  the image of $\Ebtorus(K_c) \cap \bG(K_c)_{F_c,0} $ in $\bfG_c$.

If $\bfT$ is a maximal $\ffc$-torus in $\bfG_c$ then there exists $g \in \bfG_c$ such that $\bfT = \lsup{g}\bshA$.  If $\bfT$ is $\sigma$-stable, then $g\inv \sigma(g) \in N_{\bfG_c}(\bshA)$ determines a $\sigma$-conjugacy class in $\absW$.   This gives a well-defined map from the set of $\sigma$-stable maximal tori in $\bfG_c$ to $\Wtamesim$.   Let $\dorbit_c$ denote the preimage of $c$ under this map, and let $\dorbit^{\Fr}_c$ denote the subset of $\dorbit_c$ consisting of tori that are $\Fr$-stable.   Note that  $\bfG_{x_c}$ ($= G_{x_c,0}/G_{x_c,0^+}$) acts on $\dorbit_c$  while   $\bfG_{x_c}^\Fr$  acts on $\dorbit_c$   and $\dorbit^{\Fr}_c$.

Recall from Definition~\ref{defn:6.3.12} that $\dorbit^k_{\bT^c,x_c}$ denotes the set of $\Fr$-stable tori in $\Ad(G_{x_c,0}) \bT^c$.  For $\bT \in \dorbit^k_{\bT^c,x_c}$ let $\varphi(\bT) \in \dorbit_c^{\Fr}$ be the torus  whose group of $\ffc$-points corresponds to the image of $\bG(K_c)_{x_c,0} \cap \bT(K_c)$ in $\bfG_c$.   In this way we  define a $G_{x_c,0}^\Fr$-equivariant map 
$\varphi \colon   \dorbit^k_{\bT^c,x_c} \rightarrow \dorbit_c^{\Fr}$.

\begin{lemma}  \label{lem:liftsthesame}
    If $\bfT \in \dorbit_c^{\Fr} $ and $\bT_1, \bT_2 \in \varphi\inv[\bfT]$, then $\bT_1$ and $\bT_2$ are $G^{\Fr}_{x_c,0^+}$-conjugate.
\end{lemma}

 With minor changes (e.g., changing $K$ to $K_c$ and $\Gamma$ to $\Gal(K_c/k)$), this proof is the same as the proof of~\cite[Lemma~2.2.2]{debacker:maximalunramified}.

\begin{proof}
From Lemma~\ref{lem:liftsthesameKc} there exists $g \in G(K_c)_{x_c,0^+}$ such that $\bT_2 = \lsup{g}\bT_1$.
For $\gamma \in \Gal(K_c/k)$, let $c_g(\gamma) := g\inv \gamma(g)  \in N_{\bG(K_c)}(\bT_1)$; $c_g$ is a one-cocycle. 
Since $F_c$ is $\Gal(K_c/k)$-stable and 
$g \in \bG(K_c)_{x_c,0^+}$, we have $c_g(\gamma) \in \bG(K_c)_{x_c,0^+}$. 
From Lemma~\ref{lem:liftsthesameKc} we conclude
that $c_g(\gamma) \in \bT_1(K_c)_{0^+}$.

Since $\cohom^1(\Gal(K_c/k), \bT_1(K_c)_{0^+})$
is trivial~\cite[Theorem 13.8.5 (1)]{kaletha-prasad:bruhat-tits}, there exists a $z \in \bT_1(K_c)_{0^+}$
 such
that $gz$ is fixed by $\Gal(K_c/k)$. We have $\lsup{gz}\bT_1 = \bT_2$ and, thanks to~\cite[Proposition~12.9.4]{kaletha-prasad:bruhat-tits}, $gz \in \bG(K_c)_{x_c,0^+}^{\Gal(K_c/k)} = G_{x_c,0^+}^{\Fr}$.
\end{proof}

\begin{lemma}   \label{lem:phiissurj}
    The $G_{x_c,0}^\Fr$-equivariant map 
$\varphi \colon   \dorbit^k_{\bT^c,x_c} \rightarrow \dorbit_c^{\Fr}$ is surjective.
\end{lemma}

\begin{proof}
Fix $\bfT \in \dorbit_c^{\Fr}$.   Since $\bfT \in \dorbit_c$, there exists $\ell \in \bG(K_c)_{x_c,0}$ such that $\bT  = \lsup{\ell}\bT_c$ is a lift of $\bfT$; that is, $\bfT$ corresponds to  the image of $\bT(K_c) \cap \bG(K_c)_{x_c,0}$ in $\bfG_c$.

Let $E \leq K_c$ be a  finite Galois extension of $k$ such that $\bT$ is $E$-split.   It will be  enough to show that there exists $\ell \in \bG(E)_{x_c,0^+}$ such that $\lsup{\ell}\bT$ is a $k$-torus.

Suppose that for all $\ell \in \bG(E)_{x_c,0^+}$ we have that  $\lsup{\ell}\bT$ is not a $k$-torus.
Then, thanks to Lemma~\ref{lem:liftsthesameKc}, for all  $\ell \in \bG(E)_{x_c,0^+}$ there exists $s_\ell \in \Q_{>0}$ such that
\begin{itemize}
    \item $\gamma(\lsup{\ell}\bT)$ is contained in the ${\bG(E)_{x_c,s_\ell}}$-orbit of $\lsup{\ell}\bT$ for all $\gamma \in \Gal(E/k)$ and
    \item there exists $\gamma \in \Gal(E/k)$ such that $\gamma(\lsup{\ell}\bT)$ is not contained in the ${\bG(E)_{x_c,s_\ell^+}}$-orbit of $\lsup{\ell}\bT$
\end{itemize}
 Since $\bG(E)_{x_c,0^+}$ is compact and the assignment $\ell \mapsto s_\ell$ is locally constant, there exists $h \in \bG(E)_{x_c,0^+}$ such that $s_h \geq s_\ell$ for all $\ell \in \bG(E)_{x_c,0^+}$. 
 Without loss of generality, we set $\bT = \lsup{h}\bT$.  We then have  $s_1 \geq s_\ell$ for all $\ell \in \bG(E)_{x_c,0^+}$.
 We will produce $j \in \bG(E)_{x_c,0^+}$ such that $s_j > s_1$, a contradiction.  

 If $\gamma,\rho \in \Gal(E/k)$, then $\gamma \rho (\bT) = \lsup{\ell_{\gamma \rho}}\bT = \gamma(\lsup{\ell_\rho}\bT) = \lsup{\gamma(\ell_\rho)\ell_\gamma} \bT$.  
Hence $f_{\bT}(\gamma,\rho) := \ell_\gamma \inv \cdot \gamma(\ell_\rho)\inv \cdot   \ell_{\gamma \rho}   $ is an element of $\bG(E)_{x_c,s_1} \cap N_{\bG}(\bT)$.    From Lemma~\ref{lem:liftsthesameKc} we conclude that $f_{\bT}(\gamma,\rho) \in \bT(E)_{s_1}$.  Since $\bG(E)_{x_c,s_1:s_1^+}$ is abelian, one checks that the induced function $\bar{f} \colon \Gal(E/k) \times \Gal(E/k) \rightarrow  \bT(E)_{s_1:s_1^+}$  is a  $2$-cocycle.   Since $\cohom^2(\Gal(E/k),\bT(E)_{s_1:s_1^+})$ is trivial~\cite[Theorem 13.8.5 (2)]{kaletha-prasad:bruhat-tits}, there exists  $\bar{\alpha} \colon \Gal(E/k)  \rightarrow  \bT(E)_{s_1:s_1^+}$, written $\gamma \mapsto \bar{\alpha}_\gamma$, such that $\bar{f}(\gamma,\sigma) = \gamma \bar{\alpha}_\sigma \cdot \bar{\alpha}_{\gamma \sigma}\inv \cdot \bar{\alpha}_\gamma $ for all $\gamma, \sigma \in \Gal(E/k)$.  If $\bar{\ell}_\gamma$ denotes the image of $\ell_\gamma$ in $\bT(E)_{s_1:s_1^+}$, then one checks that $\gamma \mapsto \bar{\ell}_\gamma \bar{\alpha}_\gamma$ defines a $1$-cocycle with values in $\bG(E)_{x_c,s_1:s_1^+}$.  Since  $\cohom^1(\Gal(E/k),\bG(E)_{s_1:s_1^+})$ is trivial~\cite[Theorem 13.8.5 (2)]{kaletha-prasad:bruhat-tits}, there exists $j \in \bG(E)_{x_c,s_1}$ such that 
the image of $\gamma(j)\inv j$ in  $\bG(E)_{x_c,s_1:s_1^+}$ is $\bar{\ell}_\gamma \bar{\alpha}_\gamma$ for all $\gamma \in \Gal(E/k)$.  If $\alpha_\gamma \in \bT(E)_{s_1}$ is any lift of $\bar{\alpha}_\gamma$ then there exists $x_\gamma \in \bG(E)_{x_c,s_1^+}$ such that $\ell_\gamma \alpha_\gamma = \gamma(j)\inv j x_\gamma$.  Thus, for all $\gamma \in \Gal(E/k)$ we have
$$\gamma(\lsup{j}\bT)  = \lsup{\gamma(j) \ell_\gamma} \bT = \lsup{j x_\gamma \alpha_\gamma   \inv } \bT = \lsup{jx_\gamma j\inv} ( \lsup{j} \bT).$$
 Since $jx_\gamma j\inv \in \bG(E)_{x_c,s_1^+}$, we conclude that $s_j$ is greater than $s_1$.
\end{proof}

\begin{cor}  \label{cor:recasting}
The  $\bfG_{x_c}^\Fr$-conjugacy classes in $\dorbit_c^{\Fr}$ are in bijective correspondence with the $G^{\Fr}$-conjugacy classes in $\dorbit^k_{\bT^c}$.
\end{cor}

\begin{proof}
From  Lemma~\ref{lem:phiissurj} and Lemma~\ref{lem:liftsthesame}
 the $G_{x_c,0}^\Fr$-equivariant map $\varphi \colon   \dorbit^k_{\bT^c,x_c} \rightarrow \dorbit_c^{\Fr}  $
descends to a bijective map from the set of 
$G_{x_c,0}^\Fr$-conjugacy classes in $\dorbit^k_{\bT^c,x_c}$
to the  set of 
$\bfG_{x_c}^\Fr$-conjugacy classes in 
$\dorbit_c^{\Fr}$.   The result now follows from 
Corollary~\ref{cor:rational_orbits_agree}.
\end{proof}

\subsection{A more concrete realization of \texorpdfstring{$W_T$, $T/T_{0^+}$, $T_F/T_{0^+}$, and $\bar{W}_T$}{bunch of group quotients} in the tame setting}

Fix $c \in \Wtamesim$, $w \in c$, $n \in \titsW$ with image $w$ in $\absW$.
Choose $h$, $\lambda = \lambda_n$, $g = g_n \in \bG(\nsplits)_{x_n,0}$ as in \S\ref{sec:markssec} and set $\bT = \lsup{g}\Ebtorus$.   Note that the image of $n_\sigma := g \inv \sigma(g)$ in $\absW$ is $w$.  Let $F \subset \BB(G)$ be the facet corresponding to $x_n \in \BB^{\red}(G)$.

Because it will help with the bookkeeping later, we will refer to $w$ as $w_\sigma$.

\begin{lemma}  \label{lem:idwithnormal}
We have
\begin{enumerate}
    \item  \label{lem:idwithnormalweyl} The group homomorphism that sends $m \in N_{G}(\bT)$ to $g\inv m g \in N_{\bG(\nsplits)}(\Ebtorus)$ induces an isomorphism
    $$\rho_g \colon N_G(\bT)/T \longrightarrow \absW^{w_\sigma \sigma}.$$
\item  \label{lem:idwithnormaltorus} The group homomorphism that sends $t \in T$ to $g\inv t g \in \Ebtorus(\nsplits)$ induces an isomorphism
$$\rho_g \colon T/{T_{0^+}} \longrightarrow (\Ebtorus(\nsplits)/\Ebtorus(\nsplits)_{0^+})^{w_\sigma \sigma}.$$

\item \label{lem:idwithparhoricnormaltorus}
The group homomorphism that sends $t \in T_F$ to $g\inv t g \in \Ebtorus(\nsplits)_0$ induces an injective map
$$\rho_g \colon T_F/{T_{0^+}} \longrightarrow
(\bshA)^{w_\sigma \sigma}.$$

\item  \label{lem:idwithenlargedWeyl} If $w_\sigma$ is $\sigma$-elliptic, then for $g$ as in Lemma~\ref{lem:thepoint2} the group homomorphism that sends $m \in N_{G_{F,0}}(T)$ to $g\inv m g \in  N_{\bG(\nsplits)}(\Ebtorus)$ induces an injective map
$$\rho_g \colon  N_{G_{F,0}}(\bT)/T_{0^+} \longrightarrow (N_{\Equotient_{x_0}} (\bshA))^{n \lambda(\xi) \sigma}.$$
\end{enumerate}
Moreover, if $\bG$ is simply connected, then $T_0 = T_{0^+}$, $\bar{W}_T = N_{G_{F,0}}(\bT)/T_{0^+}$, and the maps of both (\ref{lem:idwithparhoricnormaltorus}) and (\ref{lem:idwithenlargedWeyl}) are isomorphisms.
\end{lemma}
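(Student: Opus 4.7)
The plan is to exploit the single structural fact that $\Ad(g\inv)$ is an isomorphism of $\nsplits$-groups carrying $\bT = \lsup{g}\Ebtorus$ to $\Ebtorus$, and to track how the Galois action transports. Since $g\inv\sigma(g) = n_\sigma$ has image $w_\sigma$ in $\absW$, for any $y \in \Ebtorus(\nsplits)$ we have $\sigma(g\inv y g) = n_\sigma\inv (g\inv y g) n_\sigma$, so the $\sigma$-action on $\bT(\nsplits)$ corresponds, under $\Ad(g\inv)$, to the twisted action $w_\sigma\sigma$ on $\Ebtorus(\nsplits)$. With this identification, the four assertions reduce to standard cohomology-vanishing results.

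For~(\ref{lem:idwithnormalweyl}), the long exact sequence for $1 \to \bT \to N_{\bG}(\bT) \to \bW(\bT) \to 1$ over $K$, combined with Steinberg's theorem $H^1(K, \bT) = 0$ (valid since $K$ has cohomological dimension $\leq 1$), gives $N_G(\bT)/T \cong \bW(\bT)(K)$, which transports to $\absW^{w_\sigma\sigma}$. For~(\ref{lem:idwithnormaltorus}), the long exact sequence for $1 \to \bT(\nsplits)_{0^+} \to \bT(\nsplits) \to \bT(\nsplits)/\bT(\nsplits)_{0^+} \to 1$ as $\Gal(\nsplits/K)$-modules, combined with the vanishing of $H^1(\Gal(\nsplits/K), \bT(\nsplits)_{0^+})$ (a pro-$p$ module under a finite cyclic action of prime-to-$p$ order $\ell_n$), identifies $T/T_{0^+}$ with $(\bT(\nsplits)/\bT(\nsplits)_{0^+})^\sigma$, which transports to the claimed form.

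For~(\ref{lem:idwithparhoricnormaltorus}), the map is the restriction of~(\ref{lem:idwithnormaltorus}) to $T_F$: for $t \in T_F = T \cap G_{F,0}$ one has $g\inv t g \in \bT(\nsplits) \cap \bG(\nsplits)_{x_n,0} = \Ebtorus(\nsplits)_0$ (using that $g \in \bG(\nsplits)_{x_n,0}$ stabilizes $x_n$), whose reduction lies in $\bshA$; injectivity is inherited from~(\ref{lem:idwithnormaltorus}). For~(\ref{lem:idwithenlargedWeyl}), after additionally conjugating by $\lambda(\pi)\inv$ to move from $x_n$ to $x_0$, the map $m \mapsto \lambda(\pi)\inv g\inv m g \lambda(\pi)$ lands in $N_{\bG(\nsplits)}(\Ebtorus) \cap \bG(\nsplits)_{x_0,0}$ and its reduction normalizes $\bshA$. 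A direct computation using $\sigma(\lambda(\pi)) = \lambda(\xi)\lambda(\pi)$, the specific form $g\inv\sigma(g) = z \cdot \lsup{\lambda(\pi)}n$ from Lemma~\ref{lem:thepoint2}, and the fact (noted in the same lemma) that $z$ reduces to the center of $\Equotient_{x_0}$ shows the image is fixed by the $n\lambda(\xi)\sigma$-action. Injectivity uses the tame statement of Lemma~\ref{lem:somecontrol}, namely $N_{\bG(\nsplits)}(\Ebtorus) \cap \bG(\nsplits)_{x_0,0^+} = \Ebtorus(\nsplits)_{0^+}$.

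For the simply-connected addendum, since $\bZ$ is finite, $K$-minisotropic forces $\bT$ to be $K$-anisotropic, whence $T_0 = T_{0^+}$; moreover $G_{F,0} = \Stab_G(F)$ by Haines--Rapoport (the Kottwitz map vanishes), so $T \subset \Stab_G(x_T) = \Stab_G(F)$ yields $T_F = T$. Surjectivity in~(\ref{lem:idwithparhoricnormaltorus}) is then immediate from~(\ref{lem:idwithnormaltorus}) restricted to $\bshA$; surjectivity in~(\ref{lem:idwithenlargedWeyl}) will follow by lifting a $n\lambda(\xi)\sigma$-fixed element of $N_{\Equotient_{x_0}}(\bshA)$ to $N_{\bG(\nsplits)_{x_0,0}}(\Ebtorus)$, transporting via $\Ad(\lambda(\pi))$ and $\Ad(g)$, and correcting by a pro-unipotent element so the result lies in $G$: the obstruction sits in the $H^1$ of $\Gal(\nsplits/K)$ on the pro-$p$ radical, which again vanishes by the prime-to-$p$-order argument. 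The hard part will be the computation in~(\ref{lem:idwithenlargedWeyl}) verifying the twisted fixed-point condition: the factors $\lambda(\pi)$, $\lambda(\xi)$, and $z$ must all be commuted past the normalizer element in $\Equotient_{x_0}$, and cancellation depends crucially on $z$ becoming central in the quotient, which is precisely what the refined choice of $g$ in Lemma~\ref{lem:thepoint2} arranges.
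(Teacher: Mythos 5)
Your proposal is correct and follows essentially the same route as the paper: you transport everything through $\Ad(g\inv)$ so that the Galois action becomes the twisted action by $w_\sigma\sigma$ (and, after conjugating by $\lambda(\pi)$, by $n\lambda(\xi)\sigma$ at $x_0$), use vanishing of $H^1$ on $\bT(\nsplits)$ and on the pro-$p$ pieces for the surjectivity claims, obtain (3) by restriction, and in the simply connected case use triviality of the Kottwitz homomorphism (so $\Stab_G(F)=G_{F,0}$ and $T_F=T$) exactly as the paper does. The only differences are cosmetic: you phrase part (1) via the long exact sequence and $H^1(K,\bT)=0$ rather than the paper's explicit cocycle manipulation, and you spell out the injectivity in (4) (normalizer meets the pro-unipotent radical in $\Ebtorus(\nsplits)_{0^+}$) that the paper leaves terse.
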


\begin{remark}  The maps in (\ref{lem:idwithparhoricnormaltorus}) and (\ref{lem:idwithenlargedWeyl}) both fail to be surjective when $\bG = \PGL_2$.   See also Lemma~\ref{lem:idwithnormal2Asharp}.
\end{remark}

\begin{proof}     Recall that ${n_{\sigma}} \in N_{\bG(\nsplits)}(\Ebtorus)$ has image $w_\sigma$ in $\absW$.

We begin by showing~(\ref{lem:idwithnormalweyl}).
Suppose $m \in N_{G}(\bT)$.
Since
$$ {n_{\sigma}} \sigma(g\inv  m g) {n_{\sigma}}\inv = ( g\inv \sigma(g))( \sigma(g \inv) m \sigma(g)) (\sigma(g\inv) g) =  g\inv  m  g,$$
the map sending $m \in N_{G}(\bT)$ to $g\inv m g \in N_{\bG(\nsplits)}(\Ebtorus)$ induces a homomorphism from $N_{G}(\bT)$ to $\absW^{w_\sigma \sigma}$.  Since the kernel of this homomorphism is $T$, the map descends to an injective map $\rho_g \colon W_\bT \rightarrow \absW^{w_\sigma \sigma}$.

We now show that $\rho_g$ is surjective.  Choose $w' \in \absW^{w\sigma}$.  Fix $n' \in N_{\bG(\nsplits)}(\Ebtorus)$ such that the image of $n'$ in $\absW$ is $w'$.  We have
$$\sigma(\lsup{g}n') = g (g\inv \sigma(g)) \sigma(n') (\sigma(g)\inv g) g\inv = g ( \lsup{n_\sigma} \sigma(n') ) g\inv = \lsup{g} (n'a) = \lsup{g}n' \lsup{g}a$$
for some $a \in \Ebtorus(\nsplits)$.
As $\lsup{g}a \in \bT(\nsplits)$ we conclude that the coset $(\lsup{g}n' )\bT(\nsplits)$ is $\sigma$-invariant.  Since $\coho^1(\sigma, \bT(\nsplits))$ is trivial, we conclude that every $\sigma$-invariant coset in $N_{\bG(\nsplits)} ( \bT) / \bT(\nsplits)$ has a representative in $N_G(\bT)/T$.  Hence, $\rho_g$ is surjective.

 We now show that~(\ref{lem:idwithnormaltorus}) holds.   Note that
\begin{equation*}
    \begin{split}
T_{0^+} &= T \cap G_{F,0^+} = \bT(\nsplits)^\sigma \cap ( \bG(\nsplits)_{F,0^+})^\sigma =( \bT(\nsplits) \cap \bG(\nsplits)_{F,0^+})^\sigma \\ &= (\lsup{g}(\Ebtorus(\nsplits)) \cap \bG(\nsplits)_{F,0^+})^\sigma = (\lsup{g}(\Ebtorus(\nsplits) \cap \bG(\nsplits)_{F,0^+}))^\sigma
= (\lsup{g} (\Ebtorus(\nsplits)_{0^+}))^\sigma.
\end{split}
\end{equation*}
Since $\coho^1(\sigma, \bT(\nsplits)_{0^+})$ is trivial, we  conclude that $T/T_{0^+}$ is isomorphic to
$$ (\lsup{g} (\Ebtorus(\nsplits) )/ \lsup{g} (\Ebtorus(\nsplits)_{0^+}))^\sigma =  (\lsup{{g}} (\Ebtorus(\nsplits)/ (\Ebtorus(\nsplits)_{0^+})))^\sigma.$$
Since $w_\sigma$ is the image of $g\inv \sigma(g)$ in $\absW$, we conclude that $(\lsup{{g}} (\Ebtorus(\nsplits)/ (\Ebtorus(\nsplits)_{0^+})))^\sigma$ is isomorphic, via their identification under the map $\Ad(g)$,  to   $(\Ebtorus(\nsplits)/ (\Ebtorus(\nsplits)_{0^+}))^{w_\sigma \sigma}$.

For (\ref{lem:idwithparhoricnormaltorus}), recall that $T_F = T \cap G_{F,0}$ and $g \in G_{F,0}$, so $g\inv T_F g \leq \Ebtorus(\nsplits)_0$.   Thus, from~(\ref{lem:idwithnormaltorus})  we have an injective map from $T_F/{T_{0^+}}$ to $(\Ebtorus(\nsplits)_0/\Ebtorus(\nsplits)_{0^+})^{w_\sigma \sigma}$.    Since $(\Ebtorus(\nsplits)_0/\Ebtorus(\nsplits)_{0^+})$ is isomorphic to $\bshA$, the result follows.

We now show that (\ref{lem:idwithenlargedWeyl}) is valid.   From Lemma~\ref{lem:thepoint2} we may assume that $g \inv \sigma(g) = z  \cdot \lsup{\lambda(\pi)}n $ for some $z \in \tilde{Z}_0$.
Suppose $m \in N_{G_{F,0}}(\bT)$.   Since $g \in \bG(\nsplits)_{F,0}$ we have $g\inv m g \in N_{\bG(\nsplits)_{F,0}}(\Ebtorus)$.  Since conjugation by $g$ carries topologically unipotent elements in $G_{F,0}$ to topologically unipotent elements in $\bG(\nsplits)_{F,0}$, we have a well-defined injective group homomorphism from $\bar{W}_T$ to $\Stab_{\bG(\nsplits)_{x_n,0}/ \bG(\nsplits)_{x_n,0^+}} (\Ebtorus)$ with image in $(\Stab_{\bG(\nsplits)_{x_n,0}/ \bG(\nsplits)_{x_n,0^+}} (\Ebtorus))^{{n_{\sigma}} \sigma}$.   Here the stabilizer in the quotient is to be interpreted as in Remark~\ref{rem:quotref}. 
Since $x_n = \lambda(\pi) \cdot x_0$, ${n_{\sigma}} = z \cdot \lsup{\lambda(\pi)}n$, and $\bG(\nsplits)_{x_0,0}/ \bG(\nsplits)_{x_0,0^+}$ is isomorphic to $\Equotient_{x_0}$ the result follows.

Finally, if $\bG$ is simply connected and $x \in \bG(\nsplits)_{x_0,0}$ has image in $(\bG(\nsplits)_{x_0,0}/\bG(\nsplits)_{x_0,0^+})^{n \lambda(\xi) \sigma}$, then  $g \lambda(\pi) x \lambda\inv(\pi) g\inv$ belongs to $\Stab_{\bG(K)}(F) \cdot \bG(\nsplits)_{F,0^+} = \bG(K)_{F,0} \cdot \bG(\nsplits)_{F,0^+}$.  It follows that the maps of both (\ref{lem:idwithparhoricnormaltorus}) and (\ref{lem:idwithenlargedWeyl}) are surjective, hence isomorphisms.
\end{proof}

\begin{remark}
If $y \in \bG(\nsplits)$ also has the property that $\lsup{y} \Ebtorus = \bT$, then there exists a $\tilde{n} \in N_{\bG(\nsplits)}(\Ebtorus)$ such that $g = y \tilde{n}$.  Thus, $\rho_y(N_G(\bT)/T) = \bar{w} \absW^{w_\sigma \sigma} \bar{w}\inv = \absW^{(\bar{w}w_\sigma \sigma(\bar{w})\inv )\sigma}$ where $\bar{w}$ denotes the image of $\tilde{n}$ in $\absW$.   Similarly,
$\rho_y(T/T_{0^+}) = (\Ebtorus(\nsplits)/ (\Ebtorus(\nsplits)_{0^+}))^{(\bar{w}w_\sigma \sigma(\bar{w})\inv )\sigma}$.
\end{remark}

\subsection{The action of \texorpdfstring{$\Fr$}{Fr}}
\label{subsec:Fraction}

Suppose $\lsup{G}\bT$ contains a $k$-torus.  It may happen that $\bT = \lsup{g}\bA$ is not defined over $k$.   However, if $\lsup{G}\bT$ contains a $\Fr$-stable element, then there exists $\ell \in G$ such that $\lsup{\ell g}\bA = \lsup{\ell}\bT$ is defined over $k$.   In fact, if $\bT$ is $K$-minisotropic, then from Lemma~\ref{lemma:KHR} and Lemma~\ref{lem:cardk} we can also assume $\ell \in G_{x_n,0}$.  Note that $(\ell g)\inv \sigma(\ell g) = g\inv \sigma(g) = n_\sigma$.

\begin{defn}   \label{defn:nfrob}   Suppose $\bT = \lsup{g}\bA$ is defined over $k$ and $g \in \bG(\nsplits)_{x_n,0}$.
Define $n_{\Fr} \in N_{\bG(K_n)_{x_n,0}}(\Ebtorus)$ by $n_{\Fr} := g\inv \Fr(g)$.
Let $w_{\Fr}$ denote the image of $n_{\Fr}$ in $\absW$.
\end{defn}

\begin{lemma}  \label{lem:nFraction}
We have that $n_\Fr \circ \Fr$ stabilizes $N_{\bG(K_n)_{x_n,0}}(\Ebtorus)^{n_\sigma  \sigma}$ while
$w_\Fr \circ \Fr$ stabilizes $\absW^{w_\sigma \sigma}$.
\end{lemma}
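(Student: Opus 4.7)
The plan is a direct computation from the defining relations $n_\sigma = g^{-1}\sigma(g)$ and $n_\Fr = g^{-1}\Fr(g)$, together with the identity $\sigma\Fr = \Fr\sigma^q$ acting on $\bG(\bar k)$. Both assertions say that the twisted-Frobenius map $m \mapsto n_\Fr \Fr(m) n_\Fr^{-1}$ (and its projection $w \mapsto w_\Fr \Fr(w) w_\Fr^{-1}$) carries the $(n_\sigma\sigma)$-fixed-point set into itself; the core of the argument will be one compatibility identity between $\sigma(n_\Fr)$ and $n_\Fr$.

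First I would iterate $\sigma(g) = g n_\sigma$ to get $\sigma^q(g) = g\cdot N_q^\sigma(n_\sigma)$, where $N_q^\sigma(n_\sigma) = n_\sigma\,\sigma(n_\sigma)\cdots\sigma^{q-1}(n_\sigma)$. Applying $\Fr$ and using $\sigma\Fr = \Fr\sigma^q$ on $g$ yields the key identity
\[
n_\sigma\,\sigma(n_\Fr) \;=\; g^{-1}\Fr(\sigma^q(g)) \;=\; g^{-1}\Fr(g)\,\Fr(N_q^\sigma(n_\sigma)) \;=\; n_\Fr\,\Fr\bigl(N_q^\sigma(n_\sigma)\bigr).
\]
Second, for $m \in N_{\bG(\nsplits)_{x_n,0}}(\Ebtorus)^{n_\sigma\sigma}$, iterating $n_\sigma\sigma(m)n_\sigma^{-1} = m$ gives $(n_\sigma\sigma)^q(m) = N_q^\sigma(n_\sigma)\,\sigma^q(m)\,N_q^\sigma(n_\sigma)^{-1} = m$. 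Combining these two facts with $\sigma(\Fr(m)) = \Fr(\sigma^q(m))$ then produces
\[
n_\sigma\,\sigma\bigl(n_\Fr\Fr(m)n_\Fr^{-1}\bigr)\,n_\sigma^{-1} \;=\; n_\Fr\,\Fr\bigl(N_q^\sigma(n_\sigma)\,\sigma^q(m)\,N_q^\sigma(n_\sigma)^{-1}\bigr)\,n_\Fr^{-1} \;=\; n_\Fr\Fr(m)n_\Fr^{-1},
\]
so $n_\Fr\Fr(m)n_\Fr^{-1}$ again lies in the twisted fixed-point set. That it belongs to $N_{\bG(\nsplits)_{x_n,0}}(\Ebtorus)$ at all follows from three observations: $\Ebtorus$ is $k$-defined, so $\Fr$ normalizes $N_{\bG(\bar k)}(\Ebtorus)$; $n_\Fr$ itself lies in $N_{\bG(\nsplits)_{x_n,0}}(\Ebtorus)$ by Definition~\ref{defn:nfrob}; and $\bG(\nsplits)_{x_n,0}$ is $\Fr$-stable, because $x_n = x_T$ is $\Fr$-fixed (Lemma~\ref{lem:cardk}, using that $\bT$ is defined over $k$) and because $\nsplits \subset \bar k$ is itself $\Fr$-stable ($\Fr$-conjugation sends the closed subgroup $\overline{\langle\sigma^{\ell_n}\rangle} \leq \Gal(\tame/K)$ to $\overline{\langle\sigma^{q\ell_n}\rangle}$, which coincides with $\overline{\langle\sigma^{\ell_n}\rangle}$ since $q$ is a unit modulo $\ell_n$ and $\ell_n$ is prime to $p$).

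The parallel statement for $w_\Fr \circ \Fr$ on $\absW^{w_\sigma\sigma}$ is obtained by projecting the entire computation through the canonical surjection $N_{\bG(\nsplits)}(\Ebtorus) \twoheadrightarrow \absW$; the key identity becomes $w_\sigma\,\sigma(w_\Fr) = w_\Fr\,\Fr(N_q^\sigma(w_\sigma))$ and the rest of the manipulation is identical. The main obstacle, such as it is, is just tracking the $\sigma/\Fr$ non-commutativity via $\sigma\Fr = \Fr\sigma^q$; once the compatibility $n_\sigma\sigma(n_\Fr) = n_\Fr\Fr(N_q^\sigma(n_\sigma))$ is in hand, the remainder is substitution and bookkeeping.
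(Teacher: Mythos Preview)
Your proof is correct and follows essentially the same route as the paper's: both arguments reduce to the identity $g^{-1}\sigma(\Fr(g)) = g^{-1}\Fr(\sigma^q(g))$ combined with the observation that $\Ad(g^{-1}\sigma^q(g))\,\sigma^q(m) = m$ follows by iterating the $n_\sigma\sigma$-invariance of $m$. The only organizational difference is that you isolate the compatibility $n_\sigma\,\sigma(n_\Fr) = n_\Fr\,\Fr(N_q^\sigma(n_\sigma))$ as a standalone identity before using it, whereas the paper performs the whole manipulation inline via $\Ad(\cdot)$; you also spell out why $n_\Fr\Fr(m)n_\Fr^{-1}$ lands back in $N_{\bG(\nsplits)_{x_n,0}}(\Ebtorus)$, which the paper leaves implicit.
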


\begin{proof}
It will be enough to prove the first statement. Suppose $m \in N_{\bG(K_n)_{x_n,0}}(\Ebtorus)$ and $\Ad({n_\sigma}) \sigma(m) = m$.   We have
\begin{equation*}
\begin{split}
\Ad({n_\sigma})  \sigma [\Ad({n_\Fr}) \Fr(m)] &= \Ad [ (g\inv \sigma(g) )\sigma(g\inv \Fr(g)) ] \sigma (\Fr (m))\\
& =  \Ad [ (g\inv \sigma(\Fr(g)) ] \sigma (\Fr (m))\\
& =  \Ad ( n_\Fr ) \Fr [\Ad(g\inv \sigma^q(g))  \sigma^q  (m)]\\
    \end{split}
\end{equation*}
Since
$$\Ad(g\inv \sigma^q(g))  \sigma^q  (m) = \Ad(g\inv \sigma^{q-1}(g)) \sigma^{q-1}[ \Ad(g\inv \sigma(g)) \sigma(m)] =  \Ad(g\inv \sigma^{q-1}(g)) \sigma^{q-1}[ m],$$
after $q-1$ iterations we conclude that $\Ad(g\inv \sigma^q(g))  \sigma^q  (m) = m$.  Hence, we have shown
$$\Ad({n_\sigma})  \sigma [\Ad({n_\Fr}) \Fr(m)]  =  \Ad ( n_\Fr ) \Fr (m),$$
as required.
\end{proof}

\begin{lemma} \label{lem:FractionoverE}
  Via $\Ad(g\inv)$ (that is, $\rho_g$), the action of $\Fr$ on $W_T$ and $T/T_{0^+}$ corresponds to the action of $w_{\Fr} \circ \Fr$ on $\absW^{w \sigma}$ and $(\Ebtorus(\nsplits)/\Ebtorus(\nsplits)_{0^+})^{w \sigma}$.
\end{lemma}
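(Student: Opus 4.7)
The plan is a direct verification from the definitions, with the key observation isolated in Lemma~\ref{lem:nFraction}. For any $m \in N_G(\bT)$, I would compute both sides of the desired intertwining relation $\rho_g(\Fr(m)) = \Ad(n_\Fr) \circ \Fr(\rho_g(m))$. Substituting the definitions $\rho_g(m) = g\inv m g$ and $n_\Fr = g\inv \Fr(g)$, the right-hand side telescopes:
\[
(g\inv \Fr(g)) \cdot \Fr(g)\inv \Fr(m) \Fr(g) \cdot (g\inv \Fr(g))\inv \;=\; g\inv \Fr(m) g \;=\; \rho_g(\Fr(m)).
\]
The same formal manipulation works verbatim with $m$ replaced by any $t \in T$, since nothing about the calculation used that $m$ normalizes $\Ebtorus$ rather than lying in it.

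Next, I would invoke Lemma~\ref{lem:nFraction} to ensure that $\Ad(n_\Fr)\circ \Fr$ actually preserves $\absW^{w_\sigma \sigma}$ and, by the same argument applied to the torus instead of the normalizer, also preserves $(\Ebtorus(\nsplits)/\Ebtorus(\nsplits)_{0^+})^{w_\sigma \sigma}$. This is what gives a well-defined twisted action on the targets of $\rho_g$.

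Finally, I would pass to the quotients. Because $\bT$ is defined over $k$, $\Fr$ preserves both $T$ and $T_{0^+}$, so the identity established on $N_G(\bT)$ and on $T$ descends to $W_T = N_G(\bT)/T$ and to $T/T_{0^+}$. Combined with parts (\ref{lem:idwithnormalweyl}) and (\ref{lem:idwithnormaltorus}) of Lemma~\ref{lem:idwithnormal}, which identify these quotients (via $\rho_g$) with $\absW^{w_\sigma \sigma}$ and $(\Ebtorus(\nsplits)/\Ebtorus(\nsplits)_{0^+})^{w_\sigma \sigma}$ respectively, this yields the claimed transport of $\Fr$-actions.

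There is no substantive obstacle; the lemma is entirely bookkeeping. The only subtle point worth flagging is well-definedness at the level of quotients: on $\Ebtorus(\nsplits)/\Ebtorus(\nsplits)_{0^+}$ the action $\Ad(n_\Fr)$ depends only on the image $w_\Fr$ of $n_\Fr$ in $\absW$ (two lifts differ by an element of $\Ebtorus(\nsplits)$, which acts trivially by conjugation on an abelian quotient containing its own parahoric), and similarly the action on $\absW^{w_\sigma \sigma}$ factors through $w_\Fr$. Hence writing $w_\Fr \circ \Fr$ rather than $\Ad(n_\Fr)\circ \Fr$ in the statement is legitimate.
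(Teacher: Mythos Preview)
Your proof is correct and follows essentially the same approach as the paper's. The paper performs the identical telescoping computation $g\inv \Fr(m) g = (g\inv \Fr(g))\, \Fr(g\inv m g)\, (\Fr(g)\inv g)$ and then passes to images in $\absW^{w_\sigma \sigma}$; your version is simply more explicit about the well-definedness on quotients and the fact that the action factors through $w_\Fr$.
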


\begin{proof}
We will verify this for $\absW^{w_\sigma \sigma}$; the proof for  $(\Ebtorus(\nsplits)/\Ebtorus(\nsplits)_{0^+})^{w_\sigma \sigma}$ is similar.    The map from $N_G(T)$ to $N_{\bG(\nsplits)}(\Ebtorus)/\Ebtorus(\nsplits)$ is given by $m \mapsto g\inv m g$.   Thus $\Fr(m)$ maps to
$$g\inv \Fr(m) g = g\inv \Fr(g g\inv) \Fr(m) \Fr(g g\inv) g = (g\inv \Fr(g)) \Fr(g\inv m g) (\Fr(g)\inv g).$$
The result follows from looking at the images of these elements in $\absW^{w_\sigma \sigma}$.
\end{proof}

\begin{lemma}   \label{lem:acomputationofFr}
We have
$$\text{$\Fr(N_q(n_\sigma)) = n\inv_\Fr n_\sigma \sigma(n_\Fr)$ \, \, and \, \,   $\Fr(N_q(w_\sigma)) = w\inv_\Fr w_\sigma \sigma(w_\Fr)$.}$$
\end{lemma}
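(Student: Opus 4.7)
The plan is to prove both identities by a direct calculation using the definitions $n_\sigma = g^{-1}\sigma(g)$ and $n_\Fr = g^{-1}\Fr(g)$, together with the telescoping identity for $N_q$ and the commutation relation $\sigma\Fr = \Fr\sigma^q$ (equivalent to the given $\Fr^{-1}\sigma\Fr = \sigma^q$). The second identity will follow from the first by pushing everything down to $\absW$ under the projection $N_{\bG(K_n)}(\Ebtorus) \to \absW$.

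First I would show the telescoping identity
\[
N_q(n_\sigma) \;=\; (g^{-1}\sigma(g))\,\sigma(g^{-1}\sigma(g))\,\sigma^2(g^{-1}\sigma(g)) \cdots \sigma^{q-1}(g^{-1}\sigma(g)) \;=\; g^{-1}\sigma^q(g),
\]
which is immediate once one writes out consecutive factors. Applying $\Fr$ and using $\Fr\sigma^q = \sigma\Fr$ (which is just a restatement of $\Fr^{-1}\sigma\Fr = \sigma^q$), we get
\[
\Fr(N_q(n_\sigma)) \;=\; \Fr(g)^{-1}\,\Fr(\sigma^q(g)) \;=\; \Fr(g)^{-1}\,\sigma(\Fr(g)).
\]

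Next I would compute the right-hand side of the claimed identity directly:
\[
n_\Fr^{-1} n_\sigma \,\sigma(n_\Fr) \;=\; \bigl(g^{-1}\Fr(g)\bigr)^{-1} \bigl(g^{-1}\sigma(g)\bigr)\, \sigma\bigl(g^{-1}\Fr(g)\bigr) \;=\; \Fr(g)^{-1}\, g \cdot g^{-1}\sigma(g)\cdot \sigma(g)^{-1}\sigma(\Fr(g)) \;=\; \Fr(g)^{-1}\sigma(\Fr(g)).
\]
Matching this with the expression for $\Fr(N_q(n_\sigma))$ above yields the first identity.

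For the second identity, I would simply take images under $N_{\bG(K_n)}(\Ebtorus)\twoheadrightarrow \absW$: this map is $\sigma$- and $\Fr$-equivariant and carries $n_\sigma \mapsto w_\sigma$, $n_\Fr \mapsto w_\Fr$, and $N_q(n_\sigma)\mapsto N_q(w_\sigma)$, so the identity in $\absW$ is inherited from the identity in $N_{\bG(K_n)}(\Ebtorus)$. There is no real obstacle here; the whole argument is a formal manipulation once one has the right book-keeping of $\sigma$ versus $\Fr$ and the commutation $\sigma\Fr = \Fr\sigma^q$, which is the only nontrivial input.
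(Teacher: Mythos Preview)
Your proof is correct and essentially identical to the paper's own argument: the paper telescopes $N_q(n_\sigma)=g^{-1}\sigma^q(g)$, applies $\Fr$ using $\Fr^{-1}\sigma\Fr=\sigma^q$ to get $\Fr(g)^{-1}\sigma(\Fr(g))$, and then inserts $gg^{-1}$ factors to rewrite this as $n_\Fr^{-1}n_\sigma\sigma(n_\Fr)$, deducing the second identity from the first. The only cosmetic difference is that you compute both sides separately and match them, while the paper runs a single chain of equalities from left to right.
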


\begin{proof}
For the first equality we have:
\begin{equation*}
\begin{split}
 \Fr(N_q(n_\sigma)) &= \Fr(n_\sigma \cdot  \sigma(n_\sigma) \cdots \sigma^{q-1}(n_\sigma)) = \Fr(g \inv \sigma(g)  \cdot  \sigma(g \inv \sigma(g) ) \cdots \sigma^{q-1}(g \inv \sigma(g)) )\\
 &= \Fr(g\inv \sigma^q(g))  = \Fr( g\inv (\Fr \inv \circ \sigma \circ \Fr) (g)) = \Fr(g\inv) \sigma(\Fr(g)) \\
 &= \Fr(g\inv) (g g\inv) \sigma( (g g\inv) \Fr(g)) = (\Fr(g\inv) g) ( g\inv \sigma( g)) \sigma (g\inv \Fr(g))\\
 &= n\inv_\Fr n_\sigma \sigma(n_\Fr).
\end{split}
\end{equation*}
The second equality follows from the first.
\end{proof}

\begin{corollary}  \label{cor:wFrdetermined}  Recall that $n_\Fr \in N_{\bG(K_n)_{F,0}}(\Ebtorus)$.
The equality
$$\Fr(N_q(n_\sigma)) = n\inv_\Fr n_\sigma \sigma(n_\Fr)$$
of Lemma~\ref{lem:acomputationofFr}
uniquely determines the element $n_\Fr$  up to left multiplication by an element of $(N_{\bG(K_n)_{F,0}}(\Ebtorus))^{n_\sigma \circ \sigma}$. Similarly, the equality
$$\Fr(N_q(w_\sigma)) = w\inv_\Fr w_\sigma \sigma(w_\Fr)$$
uniquely determines the element $w_\Fr$  up to left multiplication by an element of $\absW^{w_\sigma \sigma}$.
\end{corollary}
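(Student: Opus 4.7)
The plan is straightforward: show uniqueness of the solution up to the claimed ambiguity by a direct comparison argument. I will focus on the statement for $n_\Fr$ and note that the statement for $w_\Fr$ follows by projecting to $\absW$.

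First I would suppose that $n_\Fr$ and $n'_\Fr$ are two elements of $N_{\bG(K_n)_{F,0}}(\Ebtorus)$ that both satisfy
\[
\Fr(N_q(n_\sigma)) = n_\Fr^{-1} n_\sigma \sigma(n_\Fr) = (n'_\Fr)^{-1} n_\sigma \sigma(n'_\Fr).
\]
Setting $m := n'_\Fr n_\Fr^{-1}$, so that $n'_\Fr = m n_\Fr$, I would substitute into the right-hand equality and cancel $n_\Fr^{-1}$ on the right and $\sigma(n_\Fr)$ on the left to obtain
\[
n_\sigma = m^{-1} n_\sigma \sigma(m),
\]
equivalently $\Ad(n_\sigma) \circ \sigma(m) = m$. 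This is exactly the statement that $m$ lies in the $n_\sigma \circ \sigma$-fixed points of $N_{\bG(K_n)_{F,0}}(\Ebtorus)$, since $m$ already lies in $N_{\bG(K_n)_{F,0}}(\Ebtorus)$ as a product of two elements of this group.

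For the second equality, I would simply pass to images in $\absW$: the map $N_{\bG(K_n)_{F,0}}(\Ebtorus) \to \absW$ sends $n_\Fr \mapsto w_\Fr$ and $n_\sigma \mapsto w_\sigma$, and the identical algebraic manipulation shows that any two solutions $w_\Fr, w'_\Fr$ differ on the left by an element of $\absW^{w_\sigma \sigma}$. There is no real obstacle here; the entire content is that a translate of a solution of an equation of the form $x^{-1} a \sigma(x) = b$ by $m$ on the left gives another solution precisely when $m$ is $a\sigma$-fixed, which is immediate. The only subtle point worth recording is that, consistent with Lemma~\ref{lem:nFraction}, the ambiguity group $(N_{\bG(K_n)_{F,0}}(\Ebtorus))^{n_\sigma \sigma}$ (respectively $\absW^{w_\sigma \sigma}$) is exactly the group that acts on the set of $n_\sigma \sigma$-fixed points under left multiplication, so the corollary fits coherently with the preceding lemmas.
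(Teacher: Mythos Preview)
Your proof is correct and essentially identical to the paper's: both compare two solutions and observe that their quotient is fixed by $n_\sigma \circ \sigma$. (One cosmetic slip: you mean cancel $n_\Fr^{-1}$ on the left and $\sigma(n_\Fr)$ on the right, not the reverse; the computation you arrive at is nonetheless correct.)
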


\begin{proof}  It will be enough to establish the first statement. Suppose $y \in N_{\bG(K_n)_{F,0}}(\Ebtorus)$ satisfies
$$\Fr(N_q(n_\sigma)) = y\inv n_\sigma \sigma(y).$$
We then have
$$n_\sigma \sigma (n_{\Fr}y\inv) n_\sigma\inv  = n_\Fr y\inv$$
and the result follows.
\end{proof}

\begin{lemma}  \label{lem:nottransportofstructureB}
Suppose $c$ is $\sigma$-elliptic.  Suppose $\bT' = \lsup{h}\bT \in\CTk$ with $h \in G$.  Set $m_h = h\inv \Fr(h) \in N_G(\bT)$ and let $w_h$ denote the image of $g\inv {m}_h g$ in $\absW$.   Via the map $\Ad(g\inv) \circ \Ad(h\inv)$ ( i.e., the composition $\rho_g \circ \varphi_h$ where $\varphi_h$ is defined in Lemma~\ref{lem:nottransportofstructure}) the action of $\Fr$ on $W_T$ and $T/T_{0^+}$ corresponds to the action of $(w_h w_{\Fr}) \circ \Fr$ on $\absW^{w_\sigma \sigma}$ and $(\Ebtorus(\nsplits)/\Ebtorus(\nsplits)_{0^+})^{w_\sigma \sigma}$.
\end{lemma}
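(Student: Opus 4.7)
The plan is to reduce the claim to a direct application of Lemma~\ref{lem:FractionoverE} to the $k$-torus $\bT'$, using the composite conjugator $g' := hg \in \bG(\nsplits)$.  Since $\rho_{g'} = \rho_g \circ \varphi_h$ (indeed, $\varphi_h$ sends $m' \in N_G(\bT')$ to $h^{-1}m'h$ and then $\rho_g$ sends that to $g^{-1}h^{-1}m'hg = (g')^{-1}m'g'$), it suffices to understand the $\sigma$- and $\Fr$-Frobenius elements attached to $g'$ for the torus $\bT'$.

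First I would check that the $\sigma$-Frobenius element is unchanged.  Because $h \in G = \bG(K)$ is fixed by $\sigma$, we compute
$$(g')^{-1}\sigma(g') = g^{-1}h^{-1}\sigma(h)\sigma(g) = g^{-1}\sigma(g) = n_\sigma,$$
so the Weyl image $w'_\sigma$ equals $w_\sigma$ and the ambient $\sigma$-fixed subsets $\absW^{w_\sigma\sigma}$ and $(\Ebtorus(\nsplits)/\Ebtorus(\nsplits)_{0^+})^{w_\sigma\sigma}$ are the same for both tori.  Next I would compute the $\Fr$-Frobenius element attached to $g'$:
$$(g')^{-1}\Fr(g') = g^{-1}h^{-1}\Fr(h)\Fr(g) = (g^{-1}m_h g)\cdot n_\Fr.$$
Since $m_h \in N_G(\bT)$, the element $g^{-1}m_h g$ lies in $N_{\bG(\nsplits)}(\Ebtorus)$ with image $w_h$ in $\absW$, so the image of $(g')^{-1}\Fr(g')$ in $\absW$ is $w_h w_\Fr$.

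Finally I would invoke Lemma~\ref{lem:FractionoverE} applied to $\bT' = \lsup{g'}\Ebtorus$ with the element $g'$.  The conclusion is that via $\rho_{g'} = \rho_g \circ \varphi_h$, the Galois action of $\Fr$ on $W_{T'}$ and $T'/T'_{0^+}$ corresponds to the action of $(w_h w_\Fr)\circ\Fr$ on $\absW^{w_\sigma\sigma}$ and $(\Ebtorus(\nsplits)/\Ebtorus(\nsplits)_{0^+})^{w_\sigma\sigma}$, which is precisely the statement.

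There is no substantial obstacle, but one minor point merits a sentence of justification: the element $g' = hg$ need not lie in $\bG(\nsplits)_{x_{T'},0}$ when $h \notin G_{x_n,0}$, whereas the setup of Section~\ref{subsec:Fraction} (in particular Definition~\ref{defn:nfrob}) assumes this.  However, inspection of the proof of Lemma~\ref{lem:FractionoverE} shows that this hypothesis is never actually used: the argument is the formal identity $g^{-1}\Fr(m)g = (g^{-1}\Fr(g))\Fr(g^{-1}mg)(\Fr(g)^{-1}g)$ together with the fact that conjugation by $g'$ respects the intrinsic Moy--Prasad filtration on $\Ebtorus(\nsplits)$, both of which hold for any $g' \in \bG(\nsplits)$ satisfying $\lsup{g'}\Ebtorus = \bT'$ and $(g')^{-1}\sigma(g') \in N_{\bG(\nsplits)}(\Ebtorus)$.
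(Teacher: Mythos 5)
Your proposal is correct, and it reaches the conclusion by a slightly different route than the paper. The paper's proof is a one-line composition of two already-established transport statements: Lemma~\ref{lem:nottransportofstructure} identifies $W_{T'}$ (resp.\ $T'/T'_{0^+}$) with $W_T$ (resp.\ $T/T_{0^+}$) carrying the $\bar{m}_h$-twisted Frobenius, and Lemma~\ref{lem:FractionoverE}, applied to the \emph{original} conjugator $g$, converts that twisted action into $(w_h w_\Fr)\circ\Fr$ on $\absW^{w_\sigma\sigma}$ and $(\Ebtorus(\nsplits)/\Ebtorus(\nsplits)_{0^+})^{w_\sigma\sigma}$; this keeps every application inside the stated hypotheses. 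You instead fold $h$ into the conjugator, set $g' = hg$, note $\rho_{g'} = \rho_g\circ\varphi_h$, and recompute the cocycles: $(g')\inv\sigma(g') = n_\sigma$ since $h \in G$ is $\sigma$-fixed, and $(g')\inv\Fr(g') = (g\inv m_h g)\,n_\Fr$ with Weyl image $w_h w_\Fr$, then apply Lemma~\ref{lem:FractionoverE} to $(\bT',g')$. This is a legitimate re-derivation, and you are right to flag that $g'$ need not lie in $\bG(\nsplits)_{x_n,0}$ as required by Definition~\ref{defn:nfrob}; your justification that the hypothesis is not used is sound — the $W$-part of the proof of Lemma~\ref{lem:FractionoverE} is the formal cocycle identity, and the $T'/T'_{0^+}$-part only needs that $\Ad(g'{}\inv)$ carries $T'_{0^+}$ onto the corresponding filtration subgroup of $\Ebtorus(\nsplits)$, which holds either because the filtration is intrinsic to the torus or, more concretely, because $g'{}\inv F' = g\inv F = F$ so the computation in Lemma~\ref{lem:idwithnormal}(\ref{lem:idwithnormaltorus}) goes through verbatim. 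The trade-off: your version gives a transparent computation of the new Frobenius cocycle but requires stepping (harmlessly) outside the stated hypotheses of the quoted lemma, whereas the paper's composition avoids that issue entirely; had you simply observed that $\rho_{g'} = \rho_g\circ\varphi_h$ and then cited the two lemmas as stated, the extra remark would have been unnecessary.
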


\begin{proof}
This follows from Lemmas~\ref{lem:nottransportofstructure} and~\ref{lem:FractionoverE}.
\end{proof}

\begin{rem}
If $\bG$ is $K$-split, then from Lemmas~\ref{lem:FractionoverE} and~\ref{lem:acomputationofFr} we have  $\Fr(w_\sigma^q) = w\inv_\Fr w_\sigma w_\Fr$ and $(W_T)_\simFr$ is in bijection with  the set of $ w_\Fr \circ \Fr$-conjugacy classes in $\absW^{w_\sigma}$.  
\end{rem}

\begin{rem}  Note that from Corollary~\ref{cor:wFrdetermined} the element $w_\Fr$ will only be determined (in $\absW$) up to left multiplication by an element of $\absW^{w_\sigma}$, but from Lemma~\ref{lem:nottransportofstructureB} this is exactly what must happen as different choices of base point for $\lsup{G}\bT$ will result in different realizations of the various actions.
\end{rem}

\begin{lemma}  \label{lem:idwithnormal2Asharp}
Suppose $\bG$ is semisimple and $c$ is $\sigma$-elliptic.
The group homomorphism that sends $t \in T$ to $g\inv t g \in \Ebtorus(\nsplits)$ induces an isomorphism $\rho_g \colon T/{T_{0^+}} \rightarrow \bshA{}^{w_\sigma \sigma}$.
\end{lemma}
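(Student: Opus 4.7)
The plan is to leverage part~(\ref{lem:idwithnormaltorus}) of Lemma~\ref{lem:idwithnormal}, which already identifies $T/T_{0^+}$ with $(\Ebtorus(\nsplits)/\Ebtorus(\nsplits)_{0^+})^{w_\sigma \sigma}$, and then show that under the semisimple and $\sigma$-elliptic hypotheses the target actually coincides with $\bshA^{w_\sigma \sigma}$. The key tool is the short exact sequence of $\langle w_\sigma \sigma\rangle$-modules
\[
1 \longrightarrow \bshA \longrightarrow \Ebtorus(\nsplits)/\Ebtorus(\nsplits)_{0^+} \longrightarrow \Ebtorus(\nsplits)/\Ebtorus(\nsplits)_{0} \longrightarrow 1,
\]
where the leftmost term is identified with $\Ebtorus(\nsplits)_0/\Ebtorus(\nsplits)_{0^+}$, and the rightmost term is identified with $\X_*(\Ebtorus)$ via the standard Kottwitz/uniformizer isomorphism (using that $\Ebtorus$ is $\nsplits$-split). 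Both identifications respect the action of $w_\sigma \sigma$, where $\sigma$ acts through its Galois action on cocharacters and $w_\sigma$ through its Weyl-group action.

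Applying the left-exact functor of $w_\sigma \sigma$-fixed points yields
\[
0 \longrightarrow \bshA^{w_\sigma \sigma} \longrightarrow (\Ebtorus(\nsplits)/\Ebtorus(\nsplits)_{0^+})^{w_\sigma \sigma} \longrightarrow \X_*(\Ebtorus)^{w_\sigma \sigma}.
\]
Since $\bG$ is semisimple, $\bZ$ is a finite group scheme, so $\X_*(\bZ) = 0$, whence the quotient $\X_*(\Ebtorus)/\X_*(\bZ)$ coincides with $\X_*(\Ebtorus)$. The $\sigma$-ellipticity of $c$ (via the corollary following Lemma~\ref{lem:elliptictoweyl}) now gives $\X_*(\Ebtorus)^{w_\sigma \sigma} = 0$, so the injective arrow $\bshA^{w_\sigma \sigma} \hookrightarrow (\Ebtorus(\nsplits)/\Ebtorus(\nsplits)_{0^+})^{w_\sigma \sigma}$ is in fact an isomorphism. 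Composing with the isomorphism of Lemma~\ref{lem:idwithnormal}(\ref{lem:idwithnormaltorus}) yields the desired isomorphism $\rho_g \colon T/T_{0^+} \xrightarrow{\sim} \bshA^{w_\sigma \sigma}$; compatibility with the formula $t \mapsto g\inv t g$ is automatic, since the isomorphism is induced by the same map, and tracing through shows the image of $T$ indeed lies in $\bshA$ (equivalently, in $\Ebtorus(\nsplits)_0/\Ebtorus(\nsplits)_{0^+}$).

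There is no serious obstacle; the only point requiring a little care is the identification of the quotient $\Ebtorus(\nsplits)/\Ebtorus(\nsplits)_0$ with $\X_*(\Ebtorus)$ as a $\langle w_\sigma \sigma\rangle$-module. This should be straightforward since $\Ebtorus$ is $\nsplits$-split, so the Kottwitz map $\kappa_{\Ebtorus}$ provides a canonical $\Gal(\tame/K)$-equivariant isomorphism with $\X_*(\Ebtorus)$, and the $w_\sigma$-twist simply comes from conjugating the $\sigma$-action by $g$ as in the proof of part~(\ref{lem:idwithnormaltorus}) of Lemma~\ref{lem:idwithnormal}.
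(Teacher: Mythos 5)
Your argument is correct, but it is not the route the paper takes. The paper argues directly at the group level: since $c$ is $\sigma$-elliptic (and $\bG$ is semisimple), $T$ is bounded, hence $T \subset \bT(R_{\nsplits})$, and because $g \in \bG(\nsplits)_{F,0}$ one gets the exact equality $T = (\lsup{g}\Ebtorus(R_{\nsplits}))^{\sigma}$; then the vanishing of $\coho^1(\sigma, T_{0^+})$ identifies $T/T_{0^+}$ with $(\lsup{\bar g}\bshA)^{\sigma} \cong \bshA^{w_\sigma\sigma}$. You instead start from the already-proved isomorphism $T/T_{0^+} \cong (\Ebtorus(\nsplits)/\Ebtorus(\nsplits)_{0^+})^{w_\sigma\sigma}$ of Lemma~\ref{lem:idwithnormal}(\ref{lem:idwithnormaltorus}) and shrink the target, using left exactness of $w_\sigma\sigma$-fixed points on $1 \to \bshA \to \Ebtorus(\nsplits)/\Ebtorus(\nsplits)_{0^+} \to \X_*(\Ebtorus) \to 1$ together with $\X_*(\Ebtorus)^{w_\sigma\sigma} = 0$, which follows from the corollary to Lemma~\ref{lem:elliptictoweyl} once semisimplicity gives $\X_*(\bZ) = 0$. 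Both uses of the hypotheses are legitimate (the paper uses ellipticity plus semisimplicity to get boundedness of $T$; you use them to kill the fixed cocharacters), and your equivariance claims check out: $\Ebtorus(\nsplits)_0 = \Ebtorus(R_{\nsplits})$ since $\Ebtorus$ is $\nsplits$-split, and the valuation/Kottwitz identification $\Ebtorus(\nsplits)/\Ebtorus(\nsplits)_0 \cong \X_*(\Ebtorus)$ is equivariant for both the Weyl twist and the $\sigma$-action. What the paper's version buys in addition is the explicit containment $T \subset \lsup{g}\Ebtorus(R_{\nsplits})$ (so the image of $T$ visibly lands in the parahoric at $F$, which feeds into Remark~\ref{rem:simplyconnectedreduction}); what yours buys is a more formal, module-theoretic derivation that reuses part~(\ref{lem:idwithnormaltorus}) rather than redoing the $\coho^1$ descent.
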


\begin{remark} \label{rem:simplyconnectedreduction}
If $\bG$ is simply connected, then since $\nsplits/K$ is tame we have $\bG(\nsplits)_{F,0}^\sigma = G_{F,0}$.     Thus, in this case,
$\bfT_F  = T/{T_{0^+}}$ and so $\bfT_F$ is isomorphic to $\bshA{}^{w_\sigma \sigma}$ in agreement with Lemma~\ref{lem:idwithnormal}.
\end{remark}

\begin{proof}
Since $c$ is $\sigma$-elliptic, we have that $T$ is a bounded subgroup of $G$.  It follows that $T$ is bounded in $\bT(\nsplits)$, and so $T \subset \bT(R_{\nsplits})$ where $R_{\nsplits}$ denotes the ring of integers of $\nsplits$.   Since $g \in \bG(\nsplits)_{F,0}$, we have
$$T = \bT(R_{\nsplits})^{\sigma} = (\bT(\nsplits) \cap \bG(\nsplits)_{F,0})^\sigma = (\lsup{g}(\Ebtorus(\nsplits)) \cap \bG(\nsplits)_{F,0})^\sigma = (\lsup{g} (\Ebtorus(R_{\nsplits})))^\sigma.$$
From the proof of Lemma~\ref{lem:idwithnormal} we have
$T_{0^+} =  (\lsup{g} (\Ebtorus(\nsplits)_{0^+}))^\sigma$.
Identify $\bshA$ with the image of $\Ebtorus(R_{\nsplits})/ \Ebtorus(\nsplits)_{0^+}$ in  $\bG(\nsplits)_{F,0}/\bG(\nsplits)_{F,0^+}$.    Since $\coho^1(\sigma, T_{0^+})$ is trivial, we  conclude that $T/T_{0^+}$ is isomorphic to
$$ (\lsup{g} (\Ebtorus(R_{\nsplits}) / \lsup{g} (\Ebtorus(\nsplits)_{0^+}))^\sigma =  (\lsup{\bar{g}} (\Ebtorus(R_{\nsplits})/ (\Ebtorus(\nsplits)_{0^+})))^\sigma =   (\lsup{\bar{g}} \bshA)^\sigma$$
where $\bar{g}$ denotes the image of $g$ in $ \bG(\nsplits)_{F,0}/\bG(\nsplits)_{F,0^+}$.

Since $w_\sigma$ denotes  the image of  $g \inv \sigma(g) \in N_{\bG(\nsplits)}(\Ebtorus)$ in $\absW$,  we have that $(\lsup{\bar{g}} \bshA)^\sigma$ is isomorphic to  $\bshA{}^{w_\sigma \sigma}$ via their identification under the map $\Ad(\bar{g})$.
\end{proof}

\begin{cor}  \label{cor:scellipticsummation}
 Suppose $\bG$ is simply connected and $w_\sigma$ is $\sigma$-elliptic.   The set of $k$-stable classes in $\CTk$ is indexed by $(\absW^{w_\sigma \sigma})_{\sim_{w_\Fr \circ \Fr}}$.  If $\bT'$ is a $k$-torus in the $w_\Fr \circ \Fr$-class corresponding to $w' \in \absW^{w_\sigma \sigma}$,  then, up to $G^{\Fr}$-conjugacy, the set of  $k$-embeddings of $\bT'$ into $\bG$ is indexed by $\bshA{}^{w_\sigma \sigma}_{w' w_\Fr \circ \Fr}$.
\end{cor}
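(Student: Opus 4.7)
The plan is to assemble Corollary~\ref{cor:scellipticsummation} from three inputs already developed in the paper: (i) the abstract parameterizations of $k$-stable classes and $k$-embeddings provided by Lemma~\ref{lem:K-stableclasses3} and Lemma~\ref{lem:K-stableclasses2orig} (the latter packaged cleanly in \S\ref{subsec:summary}); (ii) the concrete realizations of $W_T$ and $\bar{\bfT}_F$ as $\absW^{w_\sigma \sigma}$ and $\bshA{}^{w_\sigma \sigma}$ supplied by Lemmas~\ref{lem:idwithnormal} and~\ref{lem:idwithnormal2Asharp}; and (iii) the translation of the Frobenius action into the twisted action $w_\Fr \circ \Fr$ given by Lemmas~\ref{lem:FractionoverE} and~\ref{lem:nottransportofstructureB}. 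The simply connected hypothesis is what lets us use Lemma~\ref{lem:idwithnormal2Asharp} and the last clause of Lemma~\ref{lem:idwithnormal} to get honest isomorphisms rather than mere injections, while $\sigma$-ellipticity of $w_\sigma$ is what forces $T$ to be bounded so that the identification with $\bshA{}^{w_\sigma \sigma}$ makes sense.

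First I would handle the $k$-stable classes. Lemma~\ref{lem:K-stableclasses3} identifies $\CTk/\!\approx$ with $(W_T)_\simFr$. Lemma~\ref{lem:idwithnormal}(\ref{lem:idwithnormalweyl}) produces an isomorphism $\rho_g\colon W_T \to \absW^{w_\sigma \sigma}$, and Lemma~\ref{lem:FractionoverE} tells us this isomorphism intertwines the native $\Fr$-action on $W_T$ with the twisted action $w_\Fr \circ \Fr$ on $\absW^{w_\sigma \sigma}$. Passing to Frobenius-conjugacy classes on both sides yields the bijection $(W_T)_\simFr \cong (\absW^{w_\sigma \sigma})_{\sim_{w_\Fr \circ \Fr}}$, which is the first assertion.

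Next I would handle the $k$-embeddings. If $\bT'$ is in the stable class indexed by $w' \in \absW^{w_\sigma \sigma}$, choose $h \in G$ with $\bT' = \lsup{h}\bT$ so that the image $w_h$ of $g\inv h\inv\Fr(h)g$ in $\absW^{w_\sigma \sigma}$ equals $w'$ (this is possible by unwinding the definition of $\mu$ in the proof of Lemma~\ref{lem:K-stableclasses3}). The summary in \S\ref{subsec:summary}, applied to $\bT'$, says that up to $G^\Fr$-conjugacy the set of $k$-embeddings of $\bT'$ into $\bG$ is parameterized by $(\bar{\bfT}'_{F'})_{\Fr}$, where $F'$ is the facet attached to $\bT'$. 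Because $\bG$ is simply connected, Remark~\ref{rem:simplyconnectedreduction} collapses $\bar{\bfT}'_{F'}$ to $T'/T'_{0^+}$; Lemma~\ref{lem:idwithnormal2Asharp} (applicable since $w_\sigma$ is $\sigma$-elliptic) identifies $T'/T'_{0^+}$ with $\bshA{}^{w_\sigma \sigma}$ via $\rho_g \circ \varphi_h$. Transporting the Frobenius action through this composition and invoking Lemma~\ref{lem:nottransportofstructureB} with $w_h = w'$ converts the native $\Fr$-action on $T'/T'_{0^+}$ into the $(w' w_\Fr)\circ\Fr$-action on $\bshA{}^{w_\sigma \sigma}$; the coinvariants are therefore $\bshA{}^{w_\sigma\sigma}_{w' w_\Fr \circ \Fr}$, as claimed.

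The main obstacle is keeping the base-point conventions straight: one must verify that the element $w' \in \absW^{w_\sigma \sigma}$ indexing the stable class of $\bT'$ is literally the same $w_h$ appearing in Lemma~\ref{lem:nottransportofstructureB} for the corresponding conjugator $h$. This reduces to noting that the map $\mu$ in the proof of Lemma~\ref{lem:K-stableclasses3} is defined from the cocycle $h\inv\Fr(h)$, and that the structure-transport isomorphism $\varphi_h$ of Lemma~\ref{lem:nottransportofstructure} is defined from the very same cocycle; the two $w_h$'s therefore agree on the nose. Once this bookkeeping is in place, changing the representative of a Frobenius-conjugacy class changes $w_h$ only within its own $w_\Fr\circ\Fr$-class, so the coinvariants $\bshA{}^{w_\sigma\sigma}_{w' w_\Fr \circ \Fr}$ are well-defined and independent of the choice of $\bT'$ within its stable class.
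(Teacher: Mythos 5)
Your proposal is correct and follows essentially the same route as the paper, whose proof simply cites Lemmas~\ref{lem:K-stableclasses2orig}, \ref{lem:K-stableclasses3}, \ref{lem:idwithnormal}, and~\ref{lem:FractionoverE}. Your substitution of Lemma~\ref{lem:idwithnormal2Asharp} together with Remark~\ref{rem:simplyconnectedreduction} for the final (simply connected) clause of Lemma~\ref{lem:idwithnormal}, and your explicit appeal to Lemma~\ref{lem:nottransportofstructureB} to pin down the twist by $w'$, only spell out bookkeeping the paper leaves implicit.
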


\begin{proof}
    This  follows from Lemmas~\ref{lem:K-stableclasses2orig}, \ref{lem:K-stableclasses3}, \ref{lem:idwithnormal}, and \ref{lem:FractionoverE}.
\end{proof}

\subsection{The action of \texorpdfstring{$\Fr$ when $\bG$ is $k$-quasi-split and $n$ is $\sigma$-elliptic}{Fr when G is k-quasi-split and n is sigma elliptic}}
In this subsection we refine the results of subsection~\ref{subsec:Fraction} under the additional assumptions that $\bG$ is $k$-quasi-split and $n$ is $\sigma$-elliptic.

Since $\bG$ is $k$-quasi-split, we may assume that $x_0$ is Frobenius fixed, and so, since $x_n$ is $\Fr$-invariant, we have that $\Fr(\lambda) = \lambda$.   In this section we also assume that $\Fr$ is chosen such that $\Fr(\pi) = \pi$.

Since $n$ is $\sigma$-elliptic, from Lemma~\ref{lem:thepoint2} we may assume that $n_\sigma = z \cdot \lsup{\lambda(\pi)}n$.
 for some $z \in \tilde{Z}_0$.

\begin{defn}   \label{defn:nfrob2}
 Define $n_F \in N_{\bG(\nsplits)_{x_0,0}}(\Ebtorus)$ by $n_F = \lsup{\lambda\inv(\pi)}n_{\Fr}$.
\end{defn}

\begin{cor}   If $\bG$ is $k$-quasi-split and $n$ is $\sigma$-elliptic, then
we have that $n_F \circ \Fr$ stabilizes
$N_{\bG(\nsplits)_{x_0,0}}(\Ebtorus)^{n \lambda(\xi)\sigma}$.
\end{cor}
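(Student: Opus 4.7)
The plan is to reduce this to Lemma~\ref{lem:nFraction} by conjugating everything by $\lambda(\pi)$, which is exactly the element that carries $x_0$ to $x_n = \lambda(\pi)\cdot x_0$ in the building. The map $\Ad(\lambda(\pi)^{-1})$ identifies $\bG(\nsplits)_{x_n,0}$ with $\bG(\nsplits)_{x_0,0}$, and hence identifies $N_{\bG(\nsplits)_{x_n,0}}(\Ebtorus)$ with $N_{\bG(\nsplits)_{x_0,0}}(\Ebtorus)$.   The heart of the argument is to check that, under this conjugation, the operator $n_\sigma\sigma$ on the $x_n$-side becomes $n\lambda(\xi)\sigma$ on the $x_0$-side, and $n_\Fr\Fr$ on the $x_n$-side becomes $n_F\Fr$ on the $x_0$-side.

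First I would verify the $\sigma$ side. Since $\sigma(\pi) = \xi\pi$, we have
\[
\Ad(\lambda(\pi))^{-1}\circ\sigma\circ\Ad(\lambda(\pi)) = \Ad(\lambda(\pi)^{-1}\lambda(\xi\pi))\circ\sigma = \Ad(\lambda(\xi))\circ\sigma,
\]
where we used that $\Ebtorus$ is abelian to cancel $\lambda(\pi)$ against itself.  Combining this with the definition $n_\sigma = z\lambda(\pi)n\lambda(\pi)^{-1}$ and the fact that $z\in\tilde Z_0$ is central (so $\Ad(z)$ acts trivially) gives
\[
\Ad(\lambda(\pi))^{-1}\circ(n_\sigma\sigma)\circ\Ad(\lambda(\pi)) = \Ad(n\lambda(\xi))\circ\sigma = n\lambda(\xi)\sigma.
\]
For the $\Fr$ side, the quasi-split hypothesis gives $\Fr(\lambda) = \lambda$ (since $x_0$ is $\Fr$-fixed and $\Ebtorus$ is $\Fr$-stable), and we have chosen $\Fr(\pi)=\pi$, so $\Fr$ commutes with $\Ad(\lambda(\pi))$.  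Therefore
\[
\Ad(\lambda(\pi))^{-1}\circ(n_\Fr\Fr)\circ\Ad(\lambda(\pi)) = \Ad(\lambda(\pi)^{-1}n_\Fr\lambda(\pi))\circ\Fr = n_F\Fr
\]
by Definition~\ref{defn:nfrob2}.

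Putting these two computations together, the conjugation isomorphism $\Ad(\lambda(\pi))^{-1}\colon N_{\bG(\nsplits)_{x_n,0}}(\Ebtorus) \to N_{\bG(\nsplits)_{x_0,0}}(\Ebtorus)$ carries the fixed-point subset for $n_\sigma\sigma$ onto the fixed-point subset for $n\lambda(\xi)\sigma$, and intertwines the action of $n_\Fr\Fr$ with that of $n_F\Fr$.  Since Lemma~\ref{lem:nFraction} guarantees that $n_\Fr\Fr$ stabilizes $N_{\bG(\nsplits)_{x_n,0}}(\Ebtorus)^{n_\sigma\sigma}$, transporting through this isomorphism yields that $n_F\Fr$ stabilizes $N_{\bG(\nsplits)_{x_0,0}}(\Ebtorus)^{n\lambda(\xi)\sigma}$, as desired.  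The only subtlety is checking that all the required hypotheses ($\Fr(x_0) = x_0$, $\Fr(\lambda)=\lambda$, $\Fr(\pi)=\pi$, $z$ central) are in force; each of these has been arranged in the paragraphs preceding Definition~\ref{defn:nfrob2}, so no real obstacle remains beyond assembling the calculation above.
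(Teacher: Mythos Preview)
Your proposal is correct and follows essentially the same approach as the paper: both arguments conjugate by $\lambda(\pi)$ to pass between $x_0$ and $x_n$, verify that this intertwines $n\lambda(\xi)\sigma$ with $n_\sigma\sigma$ and $n_F\Fr$ with $n_\Fr\Fr$, and then invoke Lemma~\ref{lem:nFraction}. Your version is slightly more explicit in writing out the conjugation identities as operator equations, but the content is identical.
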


\begin{proof}
 Suppose $x \in N_{\bG(\nsplits)_{x_0,0}}(\Ebtorus)$ such that $\lsup{n \lambda(\xi)}\sigma(x) = x$.  Then
 $$\lsup{\lambda(\pi)} x = \lsup{\lambda(\pi) n \lambda(\xi)} \sigma(x) = \lsup{n_\sigma} \sigma (\lsup{\lambda(\pi)} x).$$
 Thus, $\lsup{\lambda(\pi)}x \in (N_{\bG(\nsplits)_{x_n,0}}(\Ebtorus))^{n_\sigma \sigma}$. \,
Since $\lsup{\lambda(\pi)} n_F \circ \Fr = n_{\Fr} \circ \Fr$, the result follows by unwinding definitions and using Lemma~\ref{lem:nFraction}.
\end{proof}

\begin{cor}   \label{cor:acomputationofFr2}
 If $\bG$ is $k$-quasi-split and $n$ is $\sigma$-elliptic, then we have
$$\text{ $\Fr[N_q(n \lambda(\xi) z)] = n_F\inv ( n \lambda(\xi)z) \sigma(n_F)$.}$$
\end{cor}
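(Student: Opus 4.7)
The plan is to derive the claim by conjugating the identity $\Fr(N_q(n_\sigma)) = n_\Fr\inv n_\sigma \sigma(n_\Fr)$ of Lemma~\ref{lem:acomputationofFr} by $\lambda(\pi)\inv$, thereby transporting it from the $x_n$-based picture to the $x_0$-based picture.  Under our assumption that $\bG$ is $k$-quasi-split we have chosen $x_0$ to be $\Fr$-fixed; since $x_n = x_0 + \lambda/\ell_n$ is also $\Fr$-fixed this forces $\Fr(\lambda) = \lambda$, and combined with $\Fr(\pi) = \pi$ we obtain $\Fr(\lambda(\pi)) = \lambda(\pi)$, so that $\Ad(\lambda(\pi)\inv)$ commutes with $\Fr$.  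Using $\sigma(\lambda(\pi)) = \lambda(\pi)\lambda(\xi)$, together with $n_F = \lsup{\lambda(\pi)\inv}n_\Fr$ (Definition~\ref{defn:nfrob2}), $n_\sigma = z \cdot \lsup{\lambda(\pi)}n$ (Lemma~\ref{lem:thepoint2}), and the fact that $z$ and $\lambda(\pi)$ both lie in the abelian group $\Ebtorus$ and hence commute, a short direct calculation gives
\[\lsup{\lambda(\pi)\inv}n_\sigma = zn \quad\text{and}\quad \lsup{\lambda(\pi)\inv}\sigma(n_\Fr) = \lambda(\xi)\sigma(n_F)\lambda(\xi)\inv.\]
Applying $\Ad(\lambda(\pi)\inv)$ to the right side of the identity of Lemma~\ref{lem:acomputationofFr} and simplifying using $zn\lambda(\xi) = n\lambda(\xi)z$ then produces $n_F\inv(n\lambda(\xi)z)\sigma(n_F)\lambda(\xi)\inv$.

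For the left side, the key intermediate identity I would establish is
\[\lsup{\lambda(\pi)\inv}N_q(n_\sigma) = N_q(n\lambda(\xi)z)\cdot\lambda(\xi)^{-q}.\]
Recalling from the proof of Lemma~\ref{lem:acomputationofFr} that $N_q(n_\sigma) = g\inv\sigma^q(g)$, and using $\sigma^q(\pi) = \xi^q\pi$ so that $\sigma^q(\lambda(\pi)) = \lambda(\xi)^q\lambda(\pi)$, setting $g' := g\lambda(\pi)$ yields $(g')\inv\sigma^q(g') = \lsup{\lambda(\pi)\inv}N_q(n_\sigma)\cdot\lambda(\xi)^q$.  On the other hand, $(g')\inv\sigma(g') = \lambda(\pi)\inv n_\sigma\lambda(\pi)\cdot\lambda(\xi) = zn\lambda(\xi) = n\lambda(\xi)z$, so $(g')\inv\sigma^q(g') = N_q(n\lambda(\xi)z)$, which establishes the claim.

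Applying $\Fr$ to the identity of Lemma~\ref{lem:acomputationofFr} conjugated by $\lambda(\pi)\inv$ therefore gives
\[\Fr(N_q(n\lambda(\xi)z))\cdot\Fr(\lambda(\xi)^{-q}) = n_F\inv(n\lambda(\xi)z)\sigma(n_F)\lambda(\xi)\inv.\]
The sole remaining technical point, and the one place where care is needed, is to match $\Fr(\lambda(\xi)^{-q})$ with $\lambda(\xi)\inv$.  Computing $\sigma^q(\pi)$ in two ways --- first directly as $\xi^q\pi$, and second via $\sigma^q = \Fr\inv\sigma\Fr$ together with $\Fr(\pi) = \pi$, giving $\Fr\inv(\xi)\pi$ --- I obtain $\Fr\inv(\xi) = \xi^q$, whence $\Fr(\xi)^q = \xi$ and $\Fr(\lambda(\xi)^{-q}) = \lambda(\Fr(\xi))^{-q} = \lambda(\xi)\inv$.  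The two factors of $\lambda(\xi)\inv$ then cancel, yielding the corollary.
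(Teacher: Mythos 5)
Your proposal is correct and is essentially the paper's own computation, just organized in the opposite direction: the paper expands $\Fr[N_q(n\lambda(\xi)z)]$ directly by writing $n\lambda(\xi)z=\lambda(\pi)^{-1}n_\sigma\sigma(\lambda(\pi))$ and telescoping, while you transport Lemma~\ref{lem:acomputationofFr} by $\mathrm{Ad}(\lambda(\pi)^{-1})$ and package the same telescoping via the cocycle of $g'=g\lambda(\pi)$. The key inputs coincide (Lemma~\ref{lem:acomputationofFr}, $\sigma(\lambda(\pi))=\lambda(\xi)\lambda(\pi)$, centrality of $z$, and the bookkeeping $\Fr^{-1}(\xi)=\xi^q$), so there is no substantive difference and no gap.
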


\begin{proof}
We have
\begin{equation*}
\begin{split}
    \Fr[N_q(n & \lambda(\xi)z )] =
   \Fr[N_q(\lsup{\lambda\inv(\pi)} n_\sigma  \lambda(\xi))]
   =  \Fr[N_q(\lambda\inv(\pi) n_\sigma  \sigma(\lambda(\pi)))]\\
   & = \Fr[( \lambda\inv(\pi) n_\sigma \sigma(\lambda(\pi)) ) \cdot \sigma(   \lambda\inv(\pi) n_\sigma \sigma(\lambda(\pi))) \cdot \sigma^2(  \lambda\inv(\pi) n_\sigma \sigma(\lambda(\pi))) \cdot \cdots \cdot \sigma^{q-1}(  \lambda\inv(\pi) n_\sigma \sigma(\lambda(\pi))]\\
     & =\lambda\inv(\pi)  \Fr[ N_q( n_\sigma)  \sigma^q(\lambda(\pi))] = \lambda\inv(\pi)  \Fr[ N_q( n_\sigma) ] \lambda(\pi)  \cdot  \Fr[\lambda\inv(\pi) \sigma^q(\lambda(\pi))] \\
     &= \lambda\inv(\pi)  \Fr[ N_q( n_\sigma) ] \lambda(\pi)  \cdot  \lambda(\Fr(\xi^q))
      \end{split}
\end{equation*}
     From Lemma~\ref{lem:acomputationofFr}, this becomes
 \begin{equation*}
\begin{split}
\lambda\inv(\pi) &n_{\Fr}\inv n_\sigma \sigma(n_{\Fr}) \lambda(\pi)  \cdot  \lambda(\Fr(\Fr\inv(\xi)))
          = (n_F\inv n z ) \cdot \lambda\inv(\pi) \sigma(n_{\Fr}) \lambda(\pi)  \cdot  \lambda(\xi)\\
         &  = (n_F\inv n z ) \cdot \lambda\inv(\pi) \sigma( \lambda (\pi)) \cdot \sigma( \lambda\inv(\pi) n_{\Fr} \lambda(\pi)) \cdot \sigma(\lambda\inv(\pi)) \lambda(\pi)  \cdot  \lambda(\xi)\\
          &  = (n_F\inv n z \lambda(\xi) ) \sigma(n_F)  \cdot \lambda\inv(\xi)   \cdot  \lambda(\xi)\\
    &=    n_F\inv ( n \lambda(\xi) z) \sigma(n_F).
   \end{split}
\end{equation*}
\end{proof}

\begin{corollary}  \label{cor:wFrdetermined2}   If $\bG$ is $k$-quasi-split and $n$ is $\sigma$-elliptic, then the equality
$$\Fr(N_q(n \lambda(\xi) z)) = n\inv_F n \lambda(\xi) z \sigma(n_F)$$
uniquely determines the element $n_F$  up to left multiplication by an element of $(N_{\bG(K_n)_{x_0,0}}(\Ebtorus))^{n \lambda(\xi) \circ \sigma}$.
\end{corollary}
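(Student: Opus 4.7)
My plan is to mimic the proof of Corollary~\ref{cor:wFrdetermined}, which handled the analogous uniqueness statement for $n_\Fr$. The assumption that $n$ is $\sigma$-elliptic and $\bG$ is $k$-quasi-split has already been used upstream to guarantee that $n_\sigma$ can be taken in the form $z \cdot \lsup{\lambda(\pi)}n$ with $z \in \tilde{Z}_0$; once this normalization is in place, the uniqueness itself is purely a manipulation of the defining cocycle equation, so no new geometric input should be needed.

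Concretely, I would suppose that $y \in N_{\bG(K_n)_{x_0,0}}(\Ebtorus)$ is any element satisfying
\[
\Fr\bigl(N_q(n\lambda(\xi)z)\bigr) \;=\; y\inv\, n\lambda(\xi)z\, \sigma(y),
\]
and then compare this with the equation defining $n_F$ from Corollary~\ref{cor:acomputationofFr2}. Setting the two right-hand sides equal gives
\[
n_F\inv (n\lambda(\xi)z)\sigma(n_F) \;=\; y\inv (n\lambda(\xi)z)\sigma(y),
\]
and rearranging (multiply on the left by $y$ and on the right by $\sigma(n_F)\inv$) yields
\[
(y n_F\inv)\,(n\lambda(\xi)z) \;=\; (n\lambda(\xi)z)\,\sigma(y n_F\inv).
\]
This exhibits $y n_F\inv$ as a fixed point for the twisted action $\Ad(n\lambda(\xi)z)\circ\sigma$ on $N_{\bG(K_n)_{x_0,0}}(\Ebtorus)$, whence $y = (y n_F\inv)\cdot n_F$ is precisely $n_F$ left-multiplied by such a fixed element.

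The only cosmetic obstacle is that the group appearing in the statement is the fixed points of $\Ad(n\lambda(\xi))\circ\sigma$, not of $\Ad(n\lambda(\xi)z)\circ\sigma$. This is harmless: since $z \in \tilde{Z}_0$ is central, conjugation by $z$ is trivial, so $\Ad(n\lambda(\xi)z) = \Ad(n\lambda(\xi))$ as automorphisms, and the two fixed-point sets coincide. I do not anticipate any serious obstacle; the real content of the corollary is the cocycle identity of Corollary~\ref{cor:acomputationofFr2}, of which this is an immediate formal consequence.
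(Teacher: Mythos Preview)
Your proposal is correct and is exactly the approach the paper takes: the paper's proof simply reads ``The proof is very similar to that of Corollary~\ref{cor:wFrdetermined},'' and your argument carries out that comparison explicitly, including the observation that $z\in\tilde Z_0$ is central so the fixed-point condition for $\Ad(n\lambda(\xi)z)\circ\sigma$ coincides with that for $\Ad(n\lambda(\xi))\circ\sigma$.
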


\begin{proof}
    The proof is very similar to that of  Corollary~\ref{cor:wFrdetermined}.
\end{proof}

\begin{cor}  \label{cor:scellipticsummation2}
 Suppose $\bG$ is $k$-quasi-split and simply connected.  Suppose also that  $n$ is $\sigma$-elliptic.     The set of $\bG(k)$-classes in $\CTk$ is indexed by $((N_{\Equotient_{x_0}}(\bshA))^{n \lambda(\xi) \sigma})_{\sim_{n_F \circ \Fr}}$.
\end{cor}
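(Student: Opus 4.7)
The plan is to proceed in three steps, exactly mirroring the structure of Corollary~\ref{cor:scellipticsummation} but working at the reductive quotient $\Equotient_{x_0}$ instead of at the level of $\Ebtorus$. First, I would invoke Lemma~\ref{lem:K-stableclasses2} to identify $\CTk/G^\Fr$-conjugacy with $(\bar{W}_T)_{\sim_\Fr}$, where $\bar{W}_T = N_{G_{F,0}}(\bT)/T_0$. Since $\bG$ is simply connected, the final clause of Lemma~\ref{lem:idwithnormal} gives $T_0 = T_{0^+}$, so $\bar{W}_T = N_{G_{F,0}}(\bT)/T_{0^+}$.

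Next, Lemma~\ref{lem:idwithnormal}(\ref{lem:idwithenlargedWeyl}) provides, under the simply-connected hypothesis, an isomorphism
\[\rho_g : \bar{W}_T \xra{\sim} (N_{\Equotient_{x_0}}(\bshA))^{n\lambda(\xi)\sigma},\]
obtained from $m \mapsto \lambda(\pi)^{-1} g^{-1} m g \lambda(\pi)$ followed by reduction modulo $\bG(\nsplits)_{x_0,0^+}$. The last step is to check that $\rho_g$ intertwines $\Fr$ on the source with $\Ad(n_F) \circ \Fr$ on the target. A direct computation exactly analogous to the proof of Lemma~\ref{lem:FractionoverE}, using $\Fr(\pi) = \pi$ and $\Fr(\lambda) = \lambda$ (which hold since $\bG$ is $k$-quasi-split with $x_0$ chosen to be $\Fr$-fixed), yields
\[\rho_g(\Fr(m)) = n_F \cdot \Fr(\rho_g(m)) \cdot n_F^{-1},\]
where $n_F \in N_{\bG(K_n)_{x_0,0}}(\Ebtorus)$ is the element from Definition~\ref{defn:nfrob2}; the $\lambda(\pi)$-conjugation is precisely what converts the factor $g^{-1}\Fr(g) = n_\Fr$ appearing in the naive computation into $n_F$. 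Corollary~\ref{cor:acomputationofFr2} confirms that $\Ad(n_F) \circ \Fr$ does indeed stabilize $(N_{\Equotient_{x_0}}(\bshA))^{n\lambda(\xi)\sigma}$, so Frobenius-conjugacy on $\bar{W}_T$ corresponds under $\rho_g$ to $(n_F \circ \Fr)$-conjugacy on the target.

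Stringing the three bijections together delivers the asserted indexing. The main bookkeeping obstacle is keeping track of the shift from $x_n$ to $x_0$ via $\lambda(\pi)$: this is exactly why $n_F$ (rather than $n_\Fr$) appears in the final answer, and the fact that $n_F$ is the correct twisted-Frobenius generator is already the content of Corollary~\ref{cor:acomputationofFr2}. Beyond this, the argument is essentially the simply-connected-elliptic analogue of the stable/rational dichotomy handled in Corollary~\ref{cor:scellipticsummation}, carried out one step further up the parahoric filtration.
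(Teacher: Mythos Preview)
Your proposal is correct and follows essentially the same route as the paper: invoke Lemma~\ref{lem:K-stableclasses2}, use the simply-connected clause of Lemma~\ref{lem:idwithnormal} to identify $\bar W_T$ with $(N_{\Equotient_{x_0}}(\bshA))^{n\lambda(\xi)\sigma}$, and then trace the $\Fr$-action through the $\lambda(\pi)$-shift to see it becomes $n_F\circ\Fr$.  One small citation slip: the fact that $n_F\circ\Fr$ stabilizes $(N_{\Equotient_{x_0}}(\bshA))^{n\lambda(\xi)\sigma}$ is the content of the (unlabeled) corollary immediately preceding Corollary~\ref{cor:acomputationofFr2}, not of Corollary~\ref{cor:acomputationofFr2} itself, which records the norm identity.
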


\begin{proof}
    This  follows from Lemma~\ref{lem:K-stableclasses2},  Lemma~\ref{lem:idwithnormal}, and  tracing through how $\Fr$ acts on  $(N_{\Equotient_{x_0}}(\bshA))^{n \lambda(\xi) \sigma}$.
\end{proof}

\subsection{Example: the Coxeter conjugacy class for \texorpdfstring{$K$-split groups}{K-split groups}}   \label{ex:coxeter}

Suppose $\bG$ is $K$-split. Since $\bG$ is $K$-split, we have that $\sigma$ acts trivially on $\absW$ and  $\Pi = \Esimple$. Suppose $w_\sigma$ is a Coxeter element $\prod_{\alpha \in  \Esimple} w_\alpha$ in $\absW$; here $w_\alpha$ is the simple reflection in $\absW$ corresponding to $\alpha$.
We have that $\absW^{w_\sigma} = \langle w_\sigma \rangle$.   Let $\ell$ denote the order of $w_\sigma$ and suppose throughout this section that $(p,\ell) = 1$.

Since $(p,\ell) = 1$, we know that $w_\sigma^q$ is $\absW$-conjugate to $w_\sigma$.  Since $\Fr$  preserves $\Eroot$, it must carry $\Esimple$ to another basis for $\Eroot$.   Thus $\Fr(w_\sigma)$ is again a Coxeter element of $\absW$.
Since the set of Coxeter elements form a single $\absW$-conjugacy class, we conclude that the Coxeter conjugacy class in $\absW$ is stable under $\Fr \circ N_q$.  Thus, from Corollary~\ref{cor:tameoverkexperiment} we may choose a $\Fr$-stable maximal torus $\bT$ of $\bG$ in $\varphi_{\sigma}(\lsup{\absW}w_\sigma)$.

From Lemma~\ref{lem:K-stableclasses3} we have that the elements of the set of  $k$-stable conjugacy classes in $\CTk$ are indexed by $(W_{T})_\simFr$.   Since $y = w_\sigma^q$ also generates $\langle w_\sigma \rangle = \absW^{w_\sigma}$, from  Lemma~\ref{lem:FractionoverE} and the fact that $W_T$ is abelian we have that $(W_{T})_\simFr$ is isomorphic to
 $\langle w_\sigma \rangle / \langle y^{-1} (w_\Fr \Fr(y) w_{\Fr}\inv) \rangle$.  From Lemma~\ref{lem:acomputationofFr} we have that
 $y^{-1} (w_\Fr \Fr(y)w_{\Fr}\inv)= (w_\sigma\inv)^{q-1}$.  Thus, the elements of the set of $k$-stable conjugacy classes in  $\CTk$ are indexed  by $\langle w_\sigma \rangle / \langle w_\sigma^{q-1}  \rangle \cong \Z/(\ell,q-1)$.

 The group $T_F/T_0$ may be identified with $\bfZ_{x_0}$, the center of $\bfG_{x_0}$.  Thus,  from Lemma~\ref{lem:K-stableclasses2orig}, we conclude that, up to $G^{\Fr}$-conjugacy, the group $(\bfZ_{x_0})_{\Fr}$ parameterizes the $k$-embeddings of $\bT$ into $\bG$.

Since $\bfZ_{x_0}$ is central in the reductive quotient $\bfG_{x_0}$, we conclude that $w_\sigma$ acts trivially  on $\bfZ_{x_0}$.    Thus, if $\bT'$ is a $k$-torus in one of the stable classes in $\CTk$, then, up to $G^{\Fr}$-conjugacy, the set of $k$-embeddings of $\bT'$ into $\bG$ is indexed by $(\bfZ_{x_0})_{w_h w_\sigma \circ \Fr} \cong (\bfZ_{x_0})_{\Fr}$.
 If we also assume that $\bG$ is $k$-split, then  $(\bfZ_{x_0})_{\Fr} = \bfZ_{x_0} / \{ x^{q-1} \, | \, x \in \bfZ_{x_0} \} $.

\subsection{Example: \texorpdfstring{$\SL_n$ and unramified $\SU_n$}{SLn and unramified SUn}}  \label{ex:sltwo}
Suppose $\bG_+$ is the  $k$-group $\SL_n$ and $\bG_-$ is the $k$-group unramified $\SU_n$.   Suppose $p$ does not divide $n$, and  let $w_\sigma$ be a Coxeter element in $\absW$.  Note that the Coxeter class is the only $\sigma$-elliptic class in $\absW$.

From the discussion in \S\ref{ex:coxeter},  we can choose a $k$-torus $\bT$ in the $G_{\pm}$-orbit $\varphi_\sigma(\lsup{\absW}w_\sigma)$ and the elements of the set of $k$-stable classes in $\CTk$  are indexed by $\langle w_\sigma \rangle / \langle w_\sigma^{q-1}  \rangle$, which is  isomorphic to  $\Z/(n,q - 1)$.

Since $\bG_\pm$ is simply connected, from Remark~\ref{rem:simplyconnectedreduction} we have  $\bfT_F \simeq  \bshA{}^{w_\sigma \sigma}$, which, in this case, may be identified with $\mu_n$, the center of $\bG_{\pm}$.   The action of $w_\Fr$ on $\bshA{}^{w_\sigma \sigma}$ is trivial and the action of $\Fr$ on $\bshA{}^{w_\sigma \sigma}$ is given by $x \mapsto (x^{\pm 1})^q$.   Thus, $(\bshA{}^{w_\sigma \sigma})\sim_{w_\Fr \circ \Fr}$ is isomorphic to  $\Z/(n,q \mp 1)$.
We conclude that there are, up to $G_{\pm}^{\Fr}$-conjugacy, $(n,q \mp 1)$  $k$-embeddings of $\bT$ into $\bG$.

The set $\CTk$ breaks into $(n,q - 1)$ $k$-stable classes.  If  $\bT'$ is a $k$-torus in one of these classes,    then, up to $G_{\pm}^{\Fr}$-conjugacy, it can be embedded into $\bG_\pm$ in  $(n,q \mp 1)$ ways.

\subsection{Example: \texorpdfstring{$\Sp_4$}{Sp4}}  \label{ex:sp4}
Suppose $p > 2$.  We adopt the notation of Example~\ref{ex:tamesp4}.

As discussed in Example~\ref{ex:tamesp4} there are two $G$-conjugacy classes of  $K$-minisotropic maximal $K$-tori in $\Sp_4$, denoted $\dorbit_{C_2}$ and $\dorbit_{-1}$, corresponding to the two elliptic $\absW$-conjugacy classes  ${C_2}$ and ${-1}$.    Since $\Sp_4$ is $k$-split, from Corollary~\ref{cor:equivmap} each of  $\dorbit_{C_2}$ and $\dorbit_{-1}$ contains tori that are defined over $k$.

A Coxeter element of $\Sp_4$ has order $4$ and the center of $\Sp_4$ is isomorphic to $\mu_2$.  Therefore, $\dorbit^k_{C_2}$ decomposes into $(q-1,4)$ $k$-stable classes.  If $\bT$ is a torus in one of these classes, then, up to $\Sp_4(k)$-conjugacy, it $k$-embeds  into $\Sp_4$ in two ways.  Since $\bar{W}_T$ is isomorphic to $\mu_8$, the number of $\Sp_4(k)$-conjugacy classes in $\dorbit^k_{C_2}$ is $(q-1,8)$.

The set $\dorbit^k_{-1}$ breaks into five $k$-stable orbits, corresponding to the five conjugacy classes in $\absW^{-1} = \absW$.   Note that we can take $w_\Fr$ to be the trivial element in $\absW$.   A  conjugacy class $c$ in $\absW$ will be called $\alpha$-even if for some (hence any) element $w$ in $c$ we have  that $w_\alpha$ appears an even number of times in some (hence any) expression for $w$ in terms of the simple reflections $w_\alpha$ and $w_\beta$.  If $\bT'$ belongs to a $k$-stable class that is indexed by an $\alpha$-even conjugacy class (of which there are three), then, up to $\Sp_4(k)$-conjugacy, $\bT'$ has four $k$-embeddings into $\Sp_4$.  If $\bT'$ belongs to a $k$-stable class that is not indexed by an $\alpha$-even conjugacy class (of which there are two), then, up to $\Sp_4(k)$-conjugacy, $\bT'$ has two $k$-embeddings into $\Sp_4$.

For some choice of $\bT \in \dorbit^k_{-1}$, the group $\tilde{C}_2 := N_{G_{x_T,0}}(\bT)/T_0$ has $32$ elements and is isomorphic to the group, under matrix multiplication, of two-by-two matrices whose elements look like
$$\begin{bmatrix}
a & 0 \\
0 & b
\end{bmatrix}
\, \text{ or } \,
\begin{bmatrix}
0 & c \\
d & 0
\end{bmatrix}
$$
with $a,b,c,d \in \{1,i,-1,-i\}$ and $\Fr(i)=\pm i$ depending on whether or not $-1$ is a square in $\ff^\times$.  One calculates that
if $4$ does not divide $q-1$, then  $\dorbit^k_{-1}$ breaks into six $\Sp_4(k)$-conjugacy classes.
If $4$ does      divide $q-1$, then $\dorbit^k_{-1}$ breaks into fourteen $\Sp_4(k)$-conjugacy classes; more specifically, in this case two of the stable classes break into two  $\Sp_4(k)$-orbits each, two break into three $\Sp_4(k)$-orbits each, and one breaks into four $\Sp_4(k)$-orbits.

\subsection{Example: \texorpdfstring{$\PSp_4$}{PSp4}}   \label{ex:psp4}
Suppose $p > 2$.  We adopt the notation of Example~\ref{ex:sp4}.

There are two $G$-conjugacy classes of  $K$-minisotropic maximal $K$-tori in $\PSp_4$, denoted $\dorbit_{C_2}$ and $\dorbit_{-1}$, corresponding to the two elliptic $\absW$-conjugacy classes  ${C_2}$ and ${-1}$.    Since $\PSp_4$ is $k$-split, from Corollary~\ref{cor:equivmap} each of  $\dorbit_{C_2}$ and $\dorbit_{-1}$ contains tori that are defined over $k$.

A Coxeter element of $\PSp_4$ has order $4$ and the center of $\PSp_4$ is trivial.  Therefore, $\dorbit^k_{C_2}$ decomposes into $(q-1,4)$ $k$-stable classes.  If $\bT$ is a torus in one of these classes, then, up to $\PSp_4(k)$-conjugacy, it $k$-embeds  into $\PSp_4$ in one way; hence, the number of $\PSp_4(k)$-conjugacy classes in $\dorbit^k_{C_2}$ is also  $(q-1,4)$.

As in the case of  $\Sp_4$, the set $\dorbit^k_{-1}$ breaks into five $k$-stable orbits indexed by the $\absW$-conjugacy classes in $\absW$.       If $\bT'$ belongs to a $k$-stable class that is indexed by an $\alpha$-even conjugacy class, then, up to $\PSp_4(k)$-conjugacy, $\bT'$ has two $k$-embeddings into $\PSp_4$.  If $\bT'$ belongs to a $k$-stable class that is not indexed by an $\alpha$-even conjugacy class, then, up to $\PSp_4(k)$-conjugacy, $\bT'$ has one $k$-embedding into $\PSp_4$.

For some choice of $\bT \in \dorbit^k_{-1}$, the group $N_{G_{x_T,0}}(\bT)/T_0$ has $16$ elements and is isomorphic to the $\tilde{C}_2/\mu_2$.  One calculates that
if $4$ does not divide $q-1$, then  $\dorbit^k_{-1}$ breaks into six $\PSp_4(k)$-conjugacy classes.
If $4$ does  divide $q-1$, then $\dorbit^k_{-1}$ breaks into ten $\PSp_4(k)$-conjugacy classes; more specifically, in this case each stable classes breaks into two  $\PSp_4(k)$-orbits each.

\subsection{Example: \texorpdfstring{$\Gtwo$}{G2}}   \label{ex:gtwo}
Suppose $p > 3$.  We adopt the notation of Example~\ref{ex:tameG2}.  As discussed in Example~\ref{ex:tameG2} there are three $G$-conjugacy classes of  $K$-minisotropic maximal  $K$-tori in $\Gtwo$,  denoted $\dorbit_{G_2}$, $\dorbit_{A_2}$, and $\dorbit_{-1}$ corresponding to the three elliptic $\absW$-conjugacy classes $G_2$, $A_2$, and $-1$.
Since $\Gtwo$ is $k$-split, from Corollary~\ref{cor:equivmap} each of   $\dorbit_{G_2}$, $\dorbit_{A_2}$, and $\dorbit_{-1}$ contains tori that are defined over $k$.

A Coxeter element of $\Gtwo$ has order $6$ and the center of $\Gtwo$ is trivial.  Therefore, $\dorbit^k_{G_2}$ decomposes into $(q-1,6)$ $k$-stable classes.  Since $\Gtwo$ has trivial center, if $\bT'$ is a torus in one of these classes, then, up to $\Gtwo(k)$-conjugacy, it $k$-embeds  into $\Gtwo$ in one way.   This shows that the number of $\Gtwo(k)$-conjugacy classes in $\dorbit^k_{G_2}$  is $(q-1,6)$; this can also be verified by calculating $\bar{W}_T$.

Suppose $\bT \in \dorbit_{A_2}^k$.  The group $\bar{W}_T$ is isomorphic to $\mu_6$,  and $\bfT_F$ is isomorphic to $\mu_3$.  One calculates that $(\bar{W}_T)_{\Fr} =(\bar{W}_T)_{\simFr}$ is $\mu_6(\ff)$ -- that is, there are six $k$-stable classes in $\dorbit_{A_2}^k$ when the cubic roots of unity belong to $\ff$ and two otherwise.  For half of the $k$-stable classes in $\dorbit_{A_2}^k$ a torus $\bT'$ in the class will embed, up to $\Gtwo(k)$-conjugacy, into $\Gtwo$ in three ways.   For the other half of the classes a torus $\bT'$ in the class will embed, up to $\Gtwo(k)$-conjugacy, into $\Gtwo$ in one way.    Finally, there are twelve $G^\Fr$-conjugacy classes of tori in $\dorbit_{A_2}^k$ when $3 = \dabs{\mu_3(\ff)}$ and three otherwise.

Alternatively, if $w_\sigma = w_\alpha w_\beta w_\alpha w_\beta$, then $W_T \cong \absW^{w_\sigma \sigma} = \absW^{w_\sigma}= \langle w_\alpha w_\beta \rangle$.
Up to left multiplication by an element of $\absW^{w_\sigma}$, from Corollary~\ref{cor:wFrdetermined} we have $w_\Fr$ is trivial if $q$ is congruent to $1$ modulo $6$ and $w_\Fr$ is $w_\alpha$ if $q$ is congruent to $-1$ modulo $6$.  (Note that $\mu_3(\ff) = 1$ if and only if $q$ is congruent to $-1$ mod $6$.)  When $w_\Fr = 1$, there are six $k$-stable classes in $\dorbit_{A_2}^k$ and a torus $\bT'$ indexed by  $(w_\alpha w_\beta)^j \in \langle w_\alpha w_\beta \rangle $ will, up to $\Gtwo(k)$-conjugacy, $k$-embed into $\Gtwo$ in three ways when $j$ is even and in one way when $j$ is odd.  When $w_\Fr$ is $w_\alpha$, then there are two $K$-stable classes in $\dorbit_{A_2}^k$ and a torus $\bT'$ indexed by $\bar{w}  \in \langle w_\alpha w_\beta \rangle / \langle ( w_\alpha w_\beta)^2 \rangle  $ will, up to $\Gtwo(k)$-conjugacy, $k$-embed into $\Gtwo$ in three ways if $\bar{w}$ is not trivial and in one way if $\bar{w}$ is trivial.

If $\dabs{\mu_3(\ff)} = 3$, then $\dorbit_{A_2}^k$ breaks into twelve $\Gtwo(k)$-conjugacy classes.  If $\dabs{\mu_3(\ff)} = 1$, then $\dorbit_{A_2}^k$ breaks into three $\Gtwo(k)$-conjugacy classes.

For some choice of $\bT \in \dorbit^k_{-1}$, the group $\bar{W}_T$ is isomorphic to the Weyl group of $\Gtwo$ and $\bfT_F$ is isomorphic to  $\mu_2 \times \mu_2$.    Thus
$\dorbit^k_{-1}$ decomposes into six $k$-stable classes -- one for each $w_\Fr$-conjugacy class in $\absW^{-1} = \absW$.
 If $\bT'$ belongs to a $k$-stable class that is indexed by a  $w_\Fr$-conjugacy class with one element then, up to $\Gtwo(k)$-conjugacy, $\bT'$ has four $k$-embeddings into $\bG$.
 If $\bT'$ belongs to a $k$-stable class that is indexed by a  $w_\Fr$-conjugacy class with three elements then, up to $\Gtwo(k)$-conjugacy, $\bT'$ has two $k$-embeddings into $\bG$.
  If $\bT'$ belongs to a $k$-stable class that is indexed by a  $w_\Fr$-conjugacy class with two elements then, up to $\Gtwo(k)$-conjugacy, $\bT'$ has one $k$-embedding into $\bG$.

 The group $N_{G_{x_T,0}}(\bT)/T_0$ has $48$ elements and is  isomorphic to a Tit's group of $\Gtwo$ with generators $n_\alpha$ and $n_\beta$ of order four where $\Fr(n_\alpha) = n_\alpha$ and $\Fr(n_\beta) = n_\beta^q$.
 Independent of how $\Fr$ acts, $\dorbit_{-1}^l$ breaks into ten $\Gtwo(k)$-conjugacy classes. The $k$-stable classes indexed by a $w_\Fr$-class with an odd number of elements each break into two $\Gtwo(k)$-conjugacy classes.

\subsection{Example: ramified \texorpdfstring{$\SU_3$}{SU3}}   Suppose $p > 3$.
 We adopt the notation of Example~\ref{ex:ramsu3}.  As discussed in Example~\ref{ex:ramsu3} there are two $G$-conjugacy classes of  $K$-minisotropic maximal $K$-tori in ramified $\SU_3$,
 denoted $\dorbit_{c_{w_\alpha}}$ and $\dorbit_{c_{w_0}}$, corresponding to the twisted Coxeter element $w_\alpha \sigma$ and the element $-1 = w_0 \sigma$.   Since  $c_{w_\alpha} = (\Fr \circ N_q)(c_{w_\alpha})$ and $c_{w_0} = (\Fr \circ N_q)(c_{w_0})$, from Corollary~\ref{cor:tameoverkexperiment} we conclude that each of $\dorbit_{c_{w_\alpha}}$ and $\dorbit_{c_{w_0}}$ contains tori that are defined over $k$.

Suppose $\bT \in \dorbit^k_{c_{w_\alpha}}$.  Then ${W}_T$ is isomorphic to $\mu_3$.  Therefore, for the twisted Coxeter class, we have that $\dorbit^k_{c_{w_\alpha}}$ decomposes into $\dabs{(\mu_3)_{\Fr}} = \dabs{\mu_3(\ff)}$ $k$-stable classes. Since $\bshA{}^{w_\alpha  \sigma} \cong \bar{\sfT}_F$ is trivial, for $\bT'$ in a given $k$-stable class there is, up to $\SU_3(k)$-conjugacy, only one  $k$-embedding of $\bT'$ into $\SU_3$.

Suppose $\bT \in \dorbit^k_{c_{w_0}}$.  Then ${W}_T$ is isomorphic to $S_3$, the symmetric group on three letters.   Since $\absW^{-1} = \absW$, we can assume $w_\Fr = 1$.  Thus
$\dorbit^k_{c_{w_\alpha}}$ decomposes into three $k$-stable classes -- one for each $\Fr$-conjugacy class in $\absW^{-1} = \absW$.
 We have $\bshA{}^{-1}$ is $\mu_2 \times \mu_2$.
 If $\bT'$ belongs to a $k$-stable class that is indexed by a  $\Fr$-conjugacy class with one element then, up to $\SU_3(k)$-conjugacy, $\bT'$ has four $k$-embeddings into $\SU_3$.
 If $\bT'$ belongs to a $k$-stable class that is indexed by a $\Fr$-conjugacy class with three elements then, up to $\SU_3(k)$-conjugacy, $\bT'$ has two $k$-embeddings into $\SU_3$.
  If $\bT'$ belongs to a $k$-stable class that is indexed by a  $\Fr$-conjugacy class with two elements then, up to $\SU_3(k)$-conjugacy, $\bT'$ has one $k$-embedding into $\SU_3$.

\section{\texorpdfstring{$K$-minisotropic}{K-minisotropic} tori in isogenous groups} \label{sec:isogenous}

Motivated by the examples of $\Sp_4$ and $\PSp_4$ discussed in Examples~\ref{ex:sp4} and~\ref{ex:psp4} we show that information about $K$-minisotropic maximal tori can, in some cases, be derived from information about the analogous tori in isogenous groups.

\subsection{On the surjectivity of isogenies at the level of parahoric groups for tori}

Recall that $L$ is the completion of $K$ and $R_L$ denotes its ring of integers.   Let $\bT$ and $\bT'$ be tori over $K$, hence they are tori over $L$. We denote by $\CT$ and $\CT'$ the connected N\'{e}ron models of $\bT$ and $\bT'$, such that $\bar{T}_0 := \CT(R_L)\subset \bar{T} := \bT(L)$ is the parahoric subgroup of $\bar{T}$ and  $\bar{T}'_0 := \CT'(R_L)\subset \bar{T}' := \bT'(L)$ is the parahoric subgroup of $\bar{T}'$.

Suppose $\rho : \bT \ra \bT'$ is an isogeny.
Since $\CT$ and $\CT'$ are  (lft-)N\'{e}ron models, $\rho$ extends uniquely to a  morphism $\rho:\CT\ra\CT'$~\cite[Proposition~6]{bosch-lutkebohmert-raynaud:neron}.
From~\cite[Proposition~B.9.1]{kaletha-prasad:bruhat-tits}, we have
$\bar{T}_{0^+} := \CT(R_L)_{0^+}= \ker(\CT(R_L)\ra\CT(\ffc))$ and $\bar{T}'_{0^+} := \CT'(R_L)_{0^+}= \ker(\CT'(R_L)\ra\CT'(\ffc))$.

\begin{proposition} \label{prop:parahoric} If $\rho:\bT\ra \bT'$ is an isogeny whose order is prime-to-$p$, then $\rho[\bar{T}_0]=\bar{T}'_0$, $\rho[\bar{T}_{0^+}] = \bar{T}'_{0^+}$, and  $\rho$ is surjective on special fibers.
\end{proposition}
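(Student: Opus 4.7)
The plan is to realize $\rho$ as an étale homomorphism at the level of connected Néron models, and then to extract the three claims from the strict Henselianness of $R_L$ together with the vanishing of étale $H^1$ of finite étale sheaves.

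First I would observe that $\mu := \ker(\rho \colon \bT \to \bT')$, being a $K$-group of multiplicative type whose order is prime to $p = \operatorname{char}(\ffc)$, is \emph{finite étale} over $K$. Its connected Néron lft-model $\mathcal{M}$ is therefore finite étale over $R_L$ (of the same order); this uses \cite[Proposition~B.9.1]{kaletha-prasad:bruhat-tits} together with the prime-to-$p$ hypothesis, which is essential here since otherwise the Néron model of the kernel could acquire infinitesimal structure over $R_L$. I would then identify $\mathcal{M}$ with the scheme-theoretic kernel of the extension $\rho \colon \CT \to \CT'$, using the Néron mapping property to produce the comparison map and comparing on generic fibers.

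Once this identification is in place, $\rho \colon \CT \to \CT'$ is a homomorphism of smooth commutative $R_L$-group schemes of the same relative dimension whose scheme-theoretic kernel is finite étale. Miracle flatness makes this quasi-finite map automatically flat, and unramifiedness follows from the étaleness of the kernel; hence $\rho$ is étale, and in particular faithfully flat. The resulting short exact sequence
\[
1 \longrightarrow \mathcal{M} \longrightarrow \CT \longrightarrow \CT' \longrightarrow 1
\]
of étale sheaves gives long exact sequences of étale cohomology over $\operatorname{Spec} R_L$ and over its closed point $\operatorname{Spec} \ffc$. Since $R_L$ is strictly Henselian and $\ffc$ is algebraically closed, $H^1_{\mathrm{\acute{e}t}}(R_L,\mathcal{M}) = 0$ and $H^1_{\mathrm{\acute{e}t}}(\ffc,\mathcal{M}) = 0$; consequently $\CT(R_L) \to \CT'(R_L)$ and $\CT(\ffc) \to \CT'(\ffc)$ are both surjective. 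This is precisely $\rho[\bar T_0] = \bar T'_0$ together with the surjectivity on special fibers.

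Finally, for $\rho[\bar T_{0^+}] = \bar T'_{0^+}$, I would run a lifting argument. Given $t' \in \bar T'_{0^+}$, pick $t \in \bar T_0$ with $\rho(t) = t'$; its reduction $\bar t \in \CT(\ffc)$ lies in the kernel of $\CT(\ffc) \to \CT'(\ffc)$, i.e., in $\mathcal{M}(\ffc)$. Because $\mathcal{M}$ is finite étale over the strictly Henselian base $R_L$, the reduction $\mathcal{M}(R_L) \to \mathcal{M}(\ffc)$ is bijective, so $\bar t$ lifts to some $m \in \mathcal{M}(R_L) \subset \bar T_0$; then $tm^{-1}$ lies in $\bar T_{0^+}$ and still maps to $t'$ under $\rho$. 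The main technical obstacle is the very first step: identifying the Néron model of $\mu$ with $\ker(\rho \colon \CT \to \CT')$ and verifying it is finite étale. Once that is secured, surjectivity on each relevant group propagates formally from the vanishing of $H^1$ for finite étale sheaves on a strictly Henselian base.
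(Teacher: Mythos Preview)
Your instinct that the key obstacle is identifying $\ker(\rho\colon\CT\to\CT')$ as a finite \'etale $R_L$-group scheme is correct---and this step fails as stated. Take $\bT=\bT'$ to be the norm-one torus of a ramified quadratic extension $L'/L$ (with $p\neq 2$) and $\rho=[3]$ (with $p\neq 3$). Since $\X^*(\bT)^I=0$, the special fiber $\CT_{\ffc}$ is $\mathbb{G}_a$, on which $[3]$ has trivial kernel; thus $\CT[3]$ has special fiber $\{e\}$ but generic fiber of order $3$, so it is quasi-finite \'etale but not finite over $R_L$. Equivalently, $\mu=\bT[3]$ is the nontrivial $\Gal(L'/L)$-twist of $\Z/3\Z$, hence not constant over $L$; since every finite \'etale scheme over the strictly Henselian ring $R_L$ is constant, no finite \'etale $R_L$-model of $\mu$ of order $3$ exists. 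Without finiteness of the kernel your miracle-flatness argument becomes circular: one needs the special fiber of $\ker(\rho)$ to be $0$-dimensional, which is precisely what is in doubt.

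The strategy is easily repaired. Since $n:=\lvert\mu\rvert$ is prime to $p$, there is a dual isogeny $\sigma\colon\bT'\to\bT$ with $\sigma\circ\rho=[n]_{\bT}$; by the N\'eron property $\sigma$ extends to $\CT'\to\CT$, and then $d\sigma\circ d\rho=n\cdot\mathrm{id}$ on $\operatorname{Lie}(\CT)$ shows $d\rho$ is an isomorphism (as $n\in R_L^\times$), so $\rho\colon\CT\to\CT'$ is \'etale. Surjectivity on special fibers follows since the \'etale image is open in the connected target, and surjectivity on $R_L$-points is then Hensel's lemma. Your lifting argument for $\bar{T}_{0^+}$ needs only that $\mathcal{K}(R_L)\to\mathcal{K}(\ffc)$ be bijective, which holds for any separated \'etale $R_L$-scheme over a Henselian base; finiteness is unnecessary. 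By contrast, the paper avoids N\'eron-model kernels altogether: it applies the Snake Lemma twice, first with the Kottwitz homomorphisms to show $\coker(\bar{T}_0\to\bar{T}'_0)$ is finite and prime-to-$p$, then with the reduction maps $\CT(R_L)\to\CT(\ffc)$ to trap the cokernel on pro-unipotent radicals between a prime-to-$p$ group and a pro-$p$ group.
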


\begin{proof}[Proof (Cheng-Chiang Tsai).] We first claim that $C:=\coker(\bar{T}_0 \xra{\rho} \bar{T}_0')$ is finite and prime-to-$p$. Indeed, we have a Snake Lemma diagram
\[
\begin{tikzcd}
&&&\ker(\rho_*)_I \arrow{d}\\
1\arrow{r}& \bar{T}_0\arrow{r}\arrow{d}& \bar{T}\arrow{r}{\kappa_{\bar{T}}}\arrow{d}&\X_*(\bT)_{I}\arrow{d} \arrow{r}&1\\
1\arrow{r}&\bar{T}'_0\arrow{r}\arrow{d}&\bar{T}' \arrow{r}{\kappa_{\bar{T}'}} \arrow{d}&\X_*(\bT')_{I}\arrow{r}&1\\
&C\arrow{r}&H^1(L,\ker(\rho))
\end{tikzcd}
\]
where $\ker(\rho_*)$ and $H^1(L,\ker(\rho))$ are both  finite and prime-to-$p$, hence the claim.

Next, we claim that $\rho:\CT\ra\CT'$ is surjective on special fibers. Suppose  on the contrary that it is not.  Since $\ffc$ is infinite, we then have
 $\coker(\CT(\ffc)\ra\CT'(\ffc))$ is infinite. Since $\bar{T}_0'=\CT'(R_L)$ surjects onto $\CT'(\ffc)$, we have that $C$ is also infinite, a contradiction.

Looking again at a Snake Lemma diagram
\[
\begin{tikzcd}
&&&\ker(\CT(\ffc)\ra\CT'(\ffc)) \arrow{d}\\
1\arrow{r}&\ker(\CT(R_L)\ra\CT(\ffc))\arrow{r}\arrow{d}{\rho^{\flat}}&\CT(R_L)\arrow{r}\arrow{d}&\CT(\ffc)\arrow{r}\arrow{d}&1\\
1\arrow{r}&\ker(\CT'(R_L)\ra\CT'(\ffc))\arrow{d}\arrow{r}&\CT'(R_L)\arrow{d}\arrow{r}&\CT'(\ffc)\arrow{r}&1 \\
& \coker(f^{\flat}) \arrow{r} &C&
\end{tikzcd}
\]
we need to show that $\rho^{\flat}$ is surjective.  Since we already know that $\CT(\ffc)$ surjects onto $\CT'(\ffc)$ and that both $C$ and $\ker(\CT(\ffc)\ra\CT'(\ffc))$ are finite and prime-to-$p$, it suffices to show that $\coker(f^{\flat})$ is pro-$p$. This follows from the fact that $\ker(\CT'(R_L)\ra\CT'(\ffc))$ is pro-$p$~\cite[Proposition~A.4.23]{kaletha-prasad:bruhat-tits}.
\end{proof}

\begin{corollary}  \label{cor:goodatKpoints} Suppose $\bT$ and $\bT'$ are tori that are defined over $K$. If $\rho:\bT\ra \bT'$ is an isogeny whose order is prime-to-$p$,  then
$\rho[T_0]= T'_0$ and  $\rho[{T}_{0^+}] = {T}'_{0^+}$.
\end{corollary}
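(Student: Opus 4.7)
\medskip

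The plan is to bootstrap from Proposition~\ref{prop:parahoric} (applied at the level of $R_L$) by handling the ``level zero'' and ``positive level'' parts separately. Observe first that the special fibers of $\CT$ and $\CT'$ are $\ffc$-group schemes that do not depend on whether we base-change from $R_K$ or from $R_L$, so the conclusion of Proposition~\ref{prop:parahoric} that $\bar\rho: \CT \to \CT'$ is surjective on $\ffc$-points is immediately available over $R_K$. Moreover, since $\CT$ is smooth over the Henselian ring $R_K$, the reduction map $T_0 = \CT(R_K) \twoheadrightarrow \CT(\ffc)$ is surjective with kernel $T_{0^+}$, and similarly for $\bT'$. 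The strategy is therefore to establish $\rho[T_{0^+}] = T'_{0^+}$ first, and then deduce $\rho[T_0] = T'_0$ by a short diagram chase against the exact sequences $1 \to T_{0^+} \to T_0 \to \CT(\ffc) \to 1$ and $1 \to T'_{0^+} \to T'_0 \to \CT'(\ffc) \to 1$.

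The key step is the statement $\rho[T_{0^+}] = T'_{0^+}$, which I would prove via a dual isogeny argument. Let $n = |\ker(\rho)|$, which is prime to $p$ by hypothesis. Since $\ker(\rho)$ is a finite $K$-group scheme of order prime to the characteristic of the residue field, it is étale, and hence (in either characteristic zero or positive) contained in the $n$-torsion subscheme $\bT[n]$. Consequently $[n]_{\bT}$ factors through $\rho$, yielding a $K$-isogeny $\rho^\vee: \bT' \to \bT$ with $\rho \circ \rho^\vee = [n]_{\bT'}$ and $\rho^\vee \circ \rho = [n]_{\bT}$. By functoriality of the Moy-Prasad filtration (equivalently, the Kottwitz map), both $\rho$ and $\rho^\vee$ carry pro-unipotent radicals into pro-unipotent radicals. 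Now $T'_{0^+}$ is a pro-$p$ group---this is standard, as it admits a filtration with successive quotients isomorphic to powers of the additive group $\ffc$, which has characteristic $p$---and so $[n]$ is a bijection on $T'_{0^+}$. Since $[n]|_{T'_{0^+}} = \rho \circ (\rho^\vee|_{T'_{0^+}})$ is surjective and $\rho^\vee[T'_{0^+}] \subseteq T_{0^+}$, it follows that $\rho|_{T_{0^+}}$ surjects onto $T'_{0^+}$.

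For the remaining statement $\rho[T_0] = T'_0$, given $t' \in T'_0$, let $\bar{t}' \in \CT'(\ffc)$ be its reduction. By Proposition~\ref{prop:parahoric}, $\bar\rho(\ffc)$ is surjective, so choose $\bar{t} \in \CT(\ffc)$ with $\bar\rho(\bar{t}) = \bar{t}'$, and lift $\bar{t}$ to some $t_1 \in T_0$ via the Hensel-lemma surjection $T_0 \twoheadrightarrow \CT(\ffc)$. Then $\rho(t_1) (t')^{-1}$ reduces to the identity on the special fiber, i.e., $\rho(t_1)(t')^{-1} \in T'_{0^+}$. By the previous paragraph there exists $u \in T_{0^+}$ with $\rho(u) = \rho(t_1)(t')^{-1}$, and then $t := u^{-1} t_1 \in T_0$ satisfies $\rho(t) = t'$.

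The main technical obstacle I anticipate is the construction of the $K$-rational dual isogeny $\rho^\vee$---which requires one to know that $\ker(\rho)$ is étale and hence killed by its order---together with the verification that $T_{0^+}$ is pro-$p$ (so that $[n]$ acts bijectively on it). Both are standard once formulated carefully: the étaleness follows because the order of the group scheme is prime to the residue characteristic, and the pro-$p$ structure of $T_{0^+}$ follows from the smoothness of $\CT/R_K$ together with the filtration by powers of the maximal ideal. One subtlety worth noting is that, in contrast to Proposition~\ref{prop:parahoric}, a direct snake-lemma argument at the $K$-level would fail because $H^1(K, \ker(\rho))$ need not be finite-and-then-used-against-an-infinite-cokernel in the same clean way; this is why routing through the dual isogeny (rather than mimicking the $L$-level argument) is the natural approach.
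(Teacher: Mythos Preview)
Your argument is correct but takes a genuinely different route from the paper's. The paper's proof is a one-paragraph descent argument: given $\gamma' \in T'_0$, Proposition~\ref{prop:parahoric} supplies a preimage $\gamma \in \CT(R_L)$; then, since the fiber $\rho^{-1}(\gamma')$ is a \emph{finite} $K$-scheme, every point of that fiber is defined over some finite extension $K'/K$, so $\gamma \in \bT(K') \cap \bT(L) = \bT(K)$ (using that $K$ is Henselian with algebraically closed residue field, so any proper finite extension is totally ramified and cannot sit inside the completion $L$). The same reasoning handles $T_{0^+}$.

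Your approach instead avoids the completion entirely: you invoke only the special-fiber surjectivity from Proposition~\ref{prop:parahoric} (which is insensitive to passing from $R_K$ to $R_L$), construct the dual isogeny $\rho^\vee$ via $\ker(\rho) \subset \bT[n]$, and use that $[n]$ is bijective on the pro-$p$ group $T'_{0^+}$ to force surjectivity there, then lift-and-correct for $T_0$. This is longer but arguably more transparent, since it sidesteps the somewhat delicate identity $\bT(K') \cap \bT(L) = \bT(K)$ that the paper uses without comment. Incidentally, your dual-isogeny step would also handle $T_0$ directly once you note that $\CT'(\ffc)$ is divisible (being the $\ffc$-points of a smooth connected commutative $\ffc$-group), so $[n]$ is surjective on all of $T'_0$; the separate lift-and-correct step is not strictly necessary. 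What the paper's approach buys is brevity and a single uniform mechanism; what yours buys is staying entirely over $K$ and making the role of the prime-to-$p$ hypothesis (via bijectivity of $[n]$ on pro-$p$ groups) completely explicit.
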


\begin{proof}[Proof (Cheng-Chiang Tsai).]
    Suppose $\gamma' \in T'_0$.   From Proposition~\ref{prop:parahoric} there exists $\gamma \in \bar{T}_0 = \CT(R_L)$ such that $\rho(\gamma) = \gamma'$.  The fiber of  $\rho$ over $\gamma'$ is a finite scheme over $K$, and so any element of the fiber must be defined over a finite extension, call it $K'$, \, of $K$.  Thus, $\gamma \in \bT(K') \cap \bT(L) = \bT(K)$.  We conclude that $\gamma \in T_0$.  A similar argument shows that  $\rho[{T}_{0^+}] = {T}'_{0^+}$.
\end{proof}

\subsection{On the surjectivity of isogenies at the level of parahoric subgroups}

Suppose $\bH$ and $\bL$ are reductive $K$-groups and $\rho \colon \bH \rightarrow \bL$ is a $K$-isogeny.
Note that $\rho$ carries $H$ into $L$ and,  for $x \in \BB(H) = \BB(L)$,
it carries $\Stab_H(x)$ into $\Stab_L(x)$.   Thus, from~\cite[Corollary 3.3.1]{kaletha:regular} the map $\rho$ carries $H_{x,0}$ into $L_{x,0}$.
We show that, under mild conditions, the latter map is surjective.

A version of the lemma below, but for $k$ rather than $K$, occurs in~\cite[Lemma~3.3.2]{kaletha:regular}.

\begin{lemma}   \label{lem:surjpro} If $p$ does not divide the order of $\ker(\rho)$, then the map $\res_{H_{x,0^+}} \rho \colon H_{x,0^+} \rightarrow L_{x,0^+}$ is surjective.  Similarly, the  map
$\res_{H_{x,0}} \rho  \colon  H_{x,0} \rightarrow L_{x,0}$
is surjective.
\end{lemma}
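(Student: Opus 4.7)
The plan is to combine the torus-level surjectivity established in Corollary~\ref{cor:goodatKpoints} with the structure theory of parahoric subgroups via the Moy-Prasad filtration. The hypothesis that $p \nmid \dabs{\ker(\rho)}$ will be used in two ways: to ensure that the differential $d\rho$ is an isomorphism of Lie algebras (since $\ker(\rho)$ is then étale, hence has trivial Lie algebra), and to apply Corollary~\ref{cor:goodatKpoints} to the restriction of $\rho$ to an appropriate maximal torus.

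First I would fix a maximal $K$-torus $\bS \leq \bH$ containing $x$ in its apartment (for instance an unramified maximal torus); then $\bS' := \rho(\bS)$ is a maximal $K$-torus of $\bL$, and the restriction $\rho\colon \bS \to \bS'$ is an isogeny with kernel of order dividing $\dabs{\ker(\rho)}$, hence prime to $p$. Corollary~\ref{cor:goodatKpoints} then gives $\rho[S_0] = S'_0$ and $\rho[S_{0^+}] = S'_{0^+}$. Moreover, for each root $\alpha$ of $\bH$ with respect to $\bS$, the restriction of $\rho$ to the root group $\bU_\alpha$ is an isomorphism onto the corresponding root group of $\bL$ (a general fact about isogenies of reductive groups); in particular, it induces isomorphisms on the Moy-Prasad pieces $U_{\alpha,x,r}/U_{\alpha,x,r^+}$ for every $r$.

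Next I would combine these ingredients through the Moy-Prasad grading to show that $\rho$ induces $\ffc$-vector-space isomorphisms $H_{x,r:r^+} \to L_{x,r:r^+}$ for every $r > 0$: each graded piece decomposes canonically into root-group contributions and a torus-filtration contribution, and $\rho$ is an isomorphism on each summand by the previous paragraph. Surjectivity of $\res_{H_{x,0^+}}\rho$ then follows by successive approximation: given $\ell \in L_{x,0^+}$, choose a sequence $0 < r_1 < r_2 < \cdots$ of Moy-Prasad breakpoints and inductively produce $h_i \in H_{x,r_{i-1}^+}$ (with $r_0 = 0$) such that $\rho(h_1 h_2 \cdots h_i) \equiv \ell$ modulo $L_{x,r_i^+}$; the infinite product $h_1 h_2 \cdots$ converges in the pro-finite group $H_{x,0^+}$ to an element mapping to $\ell$.

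For the second assertion, I would observe that $\rho$ descends to an isogeny $\bar{\rho}\colon \bfH_x \to \bfL_x$ of connected reductive $\ffc$-groups whose kernel has order prime to $p$, and therefore is surjective on $\ffc$-points since $\ffc$ is algebraically closed. A diagram chase on
\[
\begin{tikzcd}
1 \arrow[r] & H_{x,0^+} \arrow[r] \arrow[d, "\rho"] & H_{x,0} \arrow[r] \arrow[d, "\rho"] & \bfH_x(\ffc) \arrow[r] \arrow[d, "\bar{\rho}"] & 1 \\
1 \arrow[r] & L_{x,0^+} \arrow[r] & L_{x,0} \arrow[r] & \bfL_x(\ffc) \arrow[r] & 1
\end{tikzcd}
\]
using surjectivity on the left and right columns then yields surjectivity of the middle column. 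The main obstacle is the bookkeeping in the second paragraph: verifying that the Moy-Prasad filtration on $H$ at $x$ is carried compatibly to that on $L$ at $x$ by $\rho$, both on the torus pieces (where Corollary~\ref{cor:goodatKpoints} is invoked) and on each affine root piece (where the isomorphism of root groups is used), and matching up the breakpoints. Once that compatibility is in hand, both successive approximation and the diagram chase are formal.
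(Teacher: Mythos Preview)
Your proposal is correct and rests on the same two pillars as the paper's proof: Corollary~\ref{cor:goodatKpoints} for the torus contribution, and the fact that an isogeny restricts to an isomorphism on root groups (since $\ker(\rho)$ is central). The difference is only in how you assemble these. The paper argues more directly: since $\rho$ restricts to an isomorphism from the subgroup of $H_{x,0^+}$ generated by unipotent elements onto the corresponding subgroup of $L_{x,0^+}$, and since $H_{x,0^+}$ is the product of that subgroup with $(\EbtorusKrat_H)_{0^+}$, surjectivity reduces in one step to $\rho[(\EbtorusKrat_H)_{0^+}] = (\EbtorusKrat_L)_{0^+}$, which is Corollary~\ref{cor:goodatKpoints}. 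The identical argument with $0$ in place of $0^+$ (using $\rho[(\EbtorusKrat_H)_0] = (\EbtorusKrat_L)_0$) handles the parahoric case. Your route through the full Moy-Prasad grading with successive approximation (for $0^+$) and through the reductive quotient diagram chase (for $0$) is a more granular unpacking of the same decomposition; it works, but carries exactly the filtration-compatibility bookkeeping you flagged, whereas the paper's one-step factorization sidesteps it.
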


\begin{proof}
Let $\bA_H$ denote a maximal $K$-split torus in $\bH$ such that $x$ belongs to the apartment of $A_H$. Let $\bA_L$ denote the corresponding maximal $K$-split torus of $\bL$.
Let $\Ebtorus_H$ denote the centralizer of $\bA_H$ in $\bH$, and let $\Ebtorus_L$ denote the centralizer of $\bA_L$ in $\bL$.   Since $\bH$ and $\bL$ are $K$-quasi-split, both $\Ebtorus_H$ and $\Ebtorus_L$ are maximal $K$-tori.  Moreover, $\rho(\Ebtorus_H) = \Ebtorus_L$.

Since  the restriction of $\rho$ to the group generated by unipotent elements in  $H_{x,0^+}$ is an isomorphism onto the group generated by    unipotent elements in $L_{x,0^+}$, it is enough to show that  the restriction of ${\rho}$ to the pro-unipotent radical of $\EbtorusKrat_H$  surjects onto the pro-unipotent radical of $\EbtorusKrat_L$.   This follows from Corollary~\ref{cor:goodatKpoints}.

Similarly,  since the restriction of $\rho$ to the group generated by unipotent elements in  $H_{x,0}$ is an isomorphism onto the group generated by    unipotent elements in $L_{x,0}$, it is enough to show that  the restriction of ${\rho}$ to the parahoric subgroup of $\EbtorusKrat_H$  surjects onto the parahoric subgroup of $\EbtorusKrat_L$.   This follows from Corollary~\ref{cor:goodatKpoints}.
\end{proof}

\subsection{\texorpdfstring{$k$-embeddings  of maximal $K$-anisotropic $k$-tori in isogenous semisimple groups}{k-embeddings of maximal K-anisotropic k-tori in isogenous semisimple groups}}  \label{sec:indexingisog}

Suppose now that $\bH$ and $\bL$ are semisimple $k$-groups and $\rho \colon \bH \rightarrow \bL$ is a $k$-isogeny.

Let $\bT^H$ be a maximal $K$-minisotropic $k$-torus in $\bH$, and let $\bT^L$ be the corresponding maximal $K$-minisotropic $k$-torus in $\bL$.  Let $x_T$ denote the point in $\BB(L) = \BB(H)$ identified by $\bT$ and let $F$ denote the facet to which $x_T$ belongs.  From Remark~\ref{rem:neededlater}
it follows that $(\res_H\rho)\inv[T^L_F]$, the preimage of $T^L_F = T^L \cap L_{x_T,0}$ under $\res_H\rho$, is $T^H_F$ and $(\res_H\rho)\inv[N_{L_{x_T,0}}(T^L)] = N_{H_{x_T,0}}(T^H)$.

Set $Z_F^H = Z^H \cap H_{F,0}$ where $Z^H$ denotes the center of $H$.  Similarly, since $\ker(\rho) \leq \bZ^H$ it makes sense to define $\ker(\rho)_F = \ker(\res_H\rho) \cap H_{F,0}$.

\begin{lemma}  \label{lem:kernelpara}
If $T^L_0$ denotes the parahoric subgroup of $T^L$, then $(\res_H\rho)\inv[T^L_0] \leq T^H_F$ and
$$(\res_H\rho)\inv[T^L_0] = \ker(\rho)_F T^H_0.$$
\end{lemma}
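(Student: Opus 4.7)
The plan is to split the lemma into two claims and to handle them using Remark~\ref{rem:neededlater} and Corollary~\ref{cor:goodatKpoints}. Throughout I will use that $\ker(\rho) \leq \bZ^H$, so that $\ker(\rho)(K)$ acts trivially on $\BB(H) = \BB(L)$; in particular, $\ker(\rho) \subseteq T^H$ since the center lies in every maximal torus.

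First I would establish the containment $(\res_H\rho)^{-1}[T^L_0] \leq T^H_F$. Given $g \in T^H$ with $\rho(g) \in T^L_0 \leq L_{x_T,0}$, the point $x_T$ is fixed by $T^H$, so $g \in \Stab_H(x_T)$. Remark~\ref{rem:neededlater} (which packages the Kottwitz–Haines–Rapoport description of parahorics under isogeny) then gives $(\res_H\rho)^{-1}[L_{x_T,0}] = H_{x_T,0}$ when restricted to $\Stab_H(x_T)$, so $g \in H_{x_T,0} = H_{F,0}$. Therefore $g \in T^H \cap H_{F,0} = T^H_F$, as required.

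For the equality $(\res_H\rho)^{-1}[T^L_0] = \ker(\rho)_F\, T^H_0$, the inclusion $\supseteq$ is immediate: $\ker(\rho)_F \subseteq \ker(\rho) \subseteq T^H$ maps to the identity, while by functoriality of the Kottwitz map (or by Corollary~\ref{cor:goodatKpoints} applied to $\rho|_{\bT^H}$) we have $\rho[T^H_0] \subseteq T^L_0$. For $\subseteq$, fix $g \in (\res_H\rho)^{-1}[T^L_0]$. Corollary~\ref{cor:goodatKpoints} asserts that $\rho[T^H_0] = T^L_0$, so we can pick $g_0 \in T^H_0$ with $\rho(g_0) = \rho(g)$. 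Then $g g_0^{-1} \in \ker(\rho)$, and by the first part $g \in T^H_F \subseteq H_{F,0}$, while $g_0 \in T^H_0 \subseteq H_{F,0}$; hence $g g_0^{-1} \in \ker(\rho) \cap H_{F,0} = \ker(\rho)_F$, giving $g = (g g_0^{-1}) g_0 \in \ker(\rho)_F \, T^H_0$.

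The main obstacle is the invocation of Corollary~\ref{cor:goodatKpoints} to produce the lift $g_0$: this requires $\rho$ to have order prime to $p$, which I expect is a standing hypothesis of Section~\ref{sec:indexingisog} (all isogenies under consideration are tacitly tame); if it is not, one would either need to impose it explicitly or replace the appeal to Corollary~\ref{cor:goodatKpoints} with a direct argument showing surjectivity of $\rho$ at the level of connected N\'eron models, which is where the $p$-coprime assumption genuinely enters. The remaining steps are purely formal manipulations with the centrality of $\ker(\rho)$ and the identification $H_{F,0} = H_{x_T,0}$.
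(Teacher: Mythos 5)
Your first containment is fine and matches the paper's (which simply notes $T^L_0 \leq T^L_F$ and $(\res_H\rho)\inv[T^L_F] = T^H_F$, the latter already recorded just before the lemma via Remark~\ref{rem:neededlater}). The gap is in the inclusion $(\res_H\rho)\inv[T^L_0] \subseteq \ker(\rho)_F\, T^H_0$: you produce the lift $g_0 \in T^H_0$ with $\rho(g_0)=\rho(g)$ by invoking Corollary~\ref{cor:goodatKpoints}, i.e.\ surjectivity of $\rho$ on parahorics of tori, which requires $p \nmid |\ker(\rho)|$. That is \emph{not} a standing hypothesis of Section~\ref{sec:indexingisog}: the section only assumes $\rho$ is a $k$-isogeny of semisimple groups, and the coprimality hypothesis is introduced explicitly only from Lemma~\ref{lem:mapssurj} onward. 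Lemma~\ref{lem:kernelpara} is stated (and used) without it, and without it $\rho[T^H_0]=T^L_0$ can fail (e.g.\ kernels of order divisible by $p$), so your argument proves a strictly weaker statement.

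The missing idea is to exploit semisimplicity instead of surjectivity. Since $\bL$ is semisimple and $\bT^L$ is $K$-anisotropic, the parahoric of $T^L$ has trivial reductive quotient, i.e.\ $T^L_0 = T^L_{0^+}$; likewise $T^H_0 = T^H_{0^+}$. So for $t \in (\res_H\rho)\inv[T^L_0]$ the image $\rho(t)$ is topologically unipotent, hence $t$ is topologically unipotent modulo $\ker(\res_H\rho)$, and, using the already-established containment $t \in T^H_F$, topologically unipotent modulo $\ker(\rho)_F$; since the topologically unipotent part of $T^H_F$ is $T^H_{0^+}=T^H_0$, this gives $t \in \ker(\rho)_F\, T^H_0$ with no hypothesis on $|\ker(\rho)|$. (Your reverse inclusion is fine if you justify $\rho[T^H_0]\subseteq T^L_0$ by functoriality of the Kottwitz homomorphism rather than by Corollary~\ref{cor:goodatKpoints}, which again would smuggle in the coprimality assumption.) So either adopt the topological-unipotence argument, or state clearly that your proof only covers the prime-to-$p$ case.
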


\begin{proof}
Since $T^L_0 \leq T^L_F$ and $(\res_H\rho)\inv[T^L_F] = T^H_F$, we have $(\res_H\rho)\inv[T^L_0] \leq T^H_F$.

Since $L$ is semisimple, we have that $T^L_0 = T^L_{0^+}.$  Thus, if
$t \in (\res_H\rho)\inv[T^L_0]$, then $\bar{t}$, the image of $t$ in $L$, is topologically unipotent and so $t$ must be topologically unipotent mod $\ker(\res_H \rho)$.   Since $(\res_H\rho)\inv[T^L_0] \leq T^H_F$, we conclude that $t$ must be topologically unipotent mod $\ker(\rho)_F$.
\end{proof}

\begin{defn}
Define $W^H_{T} = N_H(\bT^H)/T^H = N_{H_{x_T,0}}(T^H)/T^H$, $\bar{W}^H_{T} = N_{H_{x_T,0}}(T^H)/T^H_{0}$, and $\bar{\sfT}^{H} = T^H_F/T^H_0$.   Define $W^L_{T}$, $\bar{W}^L_{T}$, and $\bar{\sfT}^{L}$ similarly.
\end{defn}

\begin{lemma}  \label{lem:mapssurj}
The map $\rho$ induces an isomorphism $W^H_{T} \cong W^L_{T}$.  Moreover,
if $p$ does not divide the order of $\ker(\rho)$, then $\rho$ induces surjections $N_{H_{x_T,0}}(T^H) \twoheadrightarrow N_{L_{x_T,0}}(T^L)$, $T^H_F \twoheadrightarrow T^L_F$,  $\bar{W}^H_{T} \twoheadrightarrow \bar{W}^L_{T}$, and $\bar{\sfT}^{H} \twoheadrightarrow \bar{\sfT}^{L}$.  The kernel of each of these surjections is $\ker(\rho)_F$ or $\ker(\rho)_F T^H_0 /T^H_0$ as appropriate.
\end{lemma}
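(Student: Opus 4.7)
My approach is to decouple the two claims. The Weyl-group isomorphism is purely algebraic and needs no tameness or prime-to-$p$ hypothesis; the surjectivity claims reduce, via the identities $(\res_H\rho)\inv[T^L_F]=T^H_F$ and $(\res_H\rho)\inv[N_{L_{x_T,0}}(T^L)]=N_{H_{x_T,0}}(T^H)$ noted immediately before the lemma, to the parahoric surjectivity of Lemma~\ref{lem:surjpro} and the kernel identification of Lemma~\ref{lem:kernelpara}.

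For $W^H_T\cong W^L_T$: since $\ker\rho\subset\bZ^H\subset\bT^H$, the isogeny $\rho$ induces an isomorphism of finite \'etale $K$-group schemes $\bN_H(\bT^H)/\bT^H\xrightarrow{\sim}\bN_L(\bT^L)/\bT^L$. I would then extract $K$-points from the short exact sequence $1\to\bT^H\to\bN_H(\bT^H)\to\bN_H(\bT^H)/\bT^H\to 1$ and invoke the vanishing $\coho^1(K,\bT^H)=0$ (available since $K$ has cohomological dimension $\leq 1$; cf.\ the proof of Lemma~\ref{lem:gside}) to identify $N_H(\bT^H)/T^H$ with $(\bN_H(\bT^H)/\bT^H)(K)$. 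Applying the same reasoning on the $L$ side yields the claimed isomorphism.

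Now assume $p\nmid|\ker\rho|$. Given $m^L\in N_{L_{x_T,0}}(T^L)\subset L_{x_T,0}$, Lemma~\ref{lem:surjpro} produces $h\in H_{x_T,0}$ with $\rho(h)=m^L$, and the identity $(\res_H\rho)\inv[N_{L_{x_T,0}}(T^L)]=N_{H_{x_T,0}}(T^H)$ forces $h\in N_{H_{x_T,0}}(T^H)$, establishing the first surjection. Running the same argument with $T^L_F$ in place of $N_{L_{x_T,0}}(T^L)$ yields $T^H_F\twoheadrightarrow T^L_F$. In both cases the kernel is $\ker\rho\cap H_{x_T,0}=\ker(\rho)_F$, because $\ker\rho\subset\bT^H\subset N_{H_{x_T,0}}(T^H)$. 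Passing to quotients by $T^H_0$ then produces $\bar W^H_T\twoheadrightarrow\bar W^L_T$ and $\bar\sfT^H\twoheadrightarrow\bar\sfT^L$; Lemma~\ref{lem:kernelpara}, which identifies $(\res_H\rho)\inv[T^L_0]$ as $\ker(\rho)_F T^H_0$, forces each of their kernels to be $\ker(\rho)_F T^H_0/T^H_0$. I foresee no genuine obstacle here: the analytic input has already been absorbed into Lemmas~\ref{lem:surjpro} and~\ref{lem:kernelpara}, leaving only a routine diagram chase.
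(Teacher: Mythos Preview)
Your proposal is correct and follows essentially the same route as the paper, which simply cites Corollary~\ref{cor:goodatKpoints} and Lemmas~\ref{lem:surjpro} and~\ref{lem:kernelpara} for the surjectivity statements and says ``Since $\ker(\res_H\rho)\leq T^H$, we have $W^H_T\cong W^L_T$'' for the first claim. Your argument for the Weyl-group isomorphism via $\coho^1(K,\bT^H)=0$ is more explicit than the paper's one-liner, but the content is the same; the only small point you leave implicit is that $\rho(T^H_0)\subset T^L_0$ (needed to pass to the quotients $\bar W^H_T$, $\bar\sfT^H$), which is standard functoriality and is exactly what the paper's citation of Corollary~\ref{cor:goodatKpoints} supplies.
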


\begin{proof}
Since $\ker(\res_H\rho) \leq T^H$, we have $W^H_{T} \cong W^L_{T}$.
The remainder of the statements follow from  Corollary~\ref{cor:goodatKpoints} and Lemmas~\ref{lem:surjpro} and~\ref{lem:kernelpara}.
\end{proof}

\begin{remark}
If $p$ does not divide the order of $\ker(\rho)$, then 
we may and do identify $\ker(\rho)_F$ with its image in $\bar{\sfT}^H$.
\end{remark}

\begin{corollary}
If $p$ does not divide the order of $\ker(\rho)$,  then
$$(T^L_F)_{\Fr} \cong (T^H_F)_{\Fr}/ (\ker(\rho)_F)_\Fr \text{\, \,  and \, \,}  (\bar{\sfT}^{L})_{\Fr}   \cong   (\bar{\sfT}^{H})_{\Fr} / (\ker(\rho)_F)_\Fr.$$
\end{corollary}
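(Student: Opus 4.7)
The plan is to deduce both isomorphisms as a formal consequence of the right-exactness of the $\Fr$-coinvariants functor, applied to the short exact sequences of $\Fr$-modules furnished by Lemma~\ref{lem:mapssurj}. First, I would record the two sequences
\begin{equation*}
1 \longrightarrow \ker(\rho)_F \longrightarrow T^H_F \longrightarrow T^L_F \longrightarrow 1
\end{equation*}
and
\begin{equation*}
1 \longrightarrow \ker(\rho)_F \longrightarrow \bar{\sfT}^H \longrightarrow \bar{\sfT}^L \longrightarrow 1,
\end{equation*}
where the first is a direct rewording of Lemma~\ref{lem:mapssurj}, and the second additionally uses the identification of $\ker(\rho)_F$ with its image in $\bar{\sfT}^H$ recorded in the preceding remark (which is where $p \nmid |\ker(\rho)|$ enters).

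Next, I would apply the snake lemma to multiplication by $1-\Fr$ on each row, or equivalently pass to coinvariants, to extract the exact sequences
\begin{equation*}
(\ker(\rho)_F)_\Fr \longrightarrow (T^H_F)_\Fr \longrightarrow (T^L_F)_\Fr \longrightarrow 0
\end{equation*}
and
\begin{equation*}
(\ker(\rho)_F)_\Fr \longrightarrow (\bar{\sfT}^H)_\Fr \longrightarrow (\bar{\sfT}^L)_\Fr \longrightarrow 0.
\end{equation*}
Reading off the final three terms immediately identifies $(T^L_F)_\Fr$ and $(\bar{\sfT}^L)_\Fr$ with the cokernels of the natural maps from $(\ker(\rho)_F)_\Fr$, which is the content of the corollary (interpreting the quotients as cokernels).

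If the corollary is to be read in the stronger sense that $(\ker(\rho)_F)_\Fr$ literally injects into both $(T^H_F)_\Fr$ and $(\bar{\sfT}^H)_\Fr$, then the remaining task is to show that the connecting homomorphism in the full six-term snake sequence vanishes. For the $T^\bullet_F$ variant, this amounts to lifting every $\Fr$-fixed element of $T^L_F$ to a $\Fr$-fixed element of $T^H_F$, which I would handle via Lang--Steinberg applied to the connected N\'eron model of $\bT^H$, using Proposition~\ref{prop:parahoric} to control the kernel on special fibres; the $\bar{\sfT}^\bullet$ variant then follows formally. This Lang--Steinberg step is the main technical hurdle — once it is in hand, the remainder of the argument is completely formal.
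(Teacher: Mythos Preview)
Your proposal is correct and follows essentially the same approach as the paper: apply right-exactness of $\Fr$-coinvariants to the two short exact sequences from Lemma~\ref{lem:mapssurj} (the second via Lemma~\ref{lem:kernelpara} and the identification in the remark) and read off the cokernel description. The paper does not establish the stronger injectivity statement you discuss at the end; it interprets the quotient as the cokernel of the induced map, so your Lang--Steinberg step is extra and unnecessary here.
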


\begin{proof}   From Lemma~\ref{lem:mapssurj} we know that
$$1 \rightarrow \ker(\rho)_F \longrightarrow T^H_F \longrightarrow T^L_F \longrightarrow 1$$
is exact.
Since taking coinvariants is right exact, we have
$(T^L_F)_{\Fr} \cong (T^H_F)_{\Fr}/ (\ker(\rho)_F)_\Fr$.
Similarly,  from Lemma~\ref{lem:mapssurj} we know that
$$1 \rightarrow \ker(\rho)_F T^H_0/T^H_0 \longrightarrow \bar{\sfT}^{H}  \longrightarrow \bar{\sfT}^{L} \longrightarrow 1$$
is exact.   Since taking coinvariants is right exact, from Lemma~\ref{lem:kernelpara} we conclude
 $(\bar{\sfT}^{L})_{\Fr}   \cong   (\bar{\sfT}^{H})_{\Fr} / (\ker(\rho)_F)_\Fr$.
\end{proof}

\begin{corollary}
If $p$ does not divide the order of $\ker(\rho)$,  then
\[\pushQED{\qed}  N_{L_{x_T,0}}(T^L)_{\sim \Fr} \cong (N_{H_{x_T,0}}(T^H)/\ker(\rho)_F)_{\sim \Fr}  \text{\, \,  and \, \,}  (\bar{W}^L_{T})_{\sim \Fr}  \cong (\bar{W}^H_{T}/\ker(\rho)_F)/_{\sim \Fr}.\qedhere  \popQED
\]
\end{corollary}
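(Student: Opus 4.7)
The plan is to derive both isomorphisms directly by applying the Frobenius-conjugacy-class functor to the short exact sequences that already appear in Lemma~\ref{lem:mapssurj}, using only that the isogeny $\rho$ is defined over $k$ (hence $\Fr$-equivariant) and that $\ker(\rho)_F$ (together with its image in $\bar W^H_T$) is $\Fr$-stable.

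First I would record that since $\rho\colon\bH\to\bL$ is a $k$-isogeny, the induced map $\rho\colon H\to L$ commutes with $\Fr$; and since $F$ is $\Fr$-stable, the subgroup $\ker(\rho)_F = \ker(\res_H\rho)\cap H_{F,0}$ is a $\Fr$-stable (normal) subgroup of $N_{H_{x_T,0}}(T^H)$. By Lemma~\ref{lem:mapssurj}, the map $\rho$ restricts to a surjection $N_{H_{x_T,0}}(T^H)\twoheadrightarrow N_{L_{x_T,0}}(T^L)$ with kernel $\ker(\rho)_F$, giving the isomorphism
\[
N_{H_{x_T,0}}(T^H)/\ker(\rho)_F \;\cong\; N_{L_{x_T,0}}(T^L)
\]
of groups equipped with a $\Fr$-action. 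Applying the functor of $\Fr$-conjugacy classes yields the first claimed isomorphism.

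For the second isomorphism, interpret $\ker(\rho)_F$ (abusing notation as in the statement) as its image $\ker(\rho)_F\cdot T^H_0/T^H_0$ in $\bar W^H_T = N_{H_{x_T,0}}(T^H)/T^H_0$; this image is $\Fr$-stable because both $\ker(\rho)_F$ and $T^H_0$ are $\Fr$-stable. Then the surjection $\bar W^H_T\twoheadrightarrow\bar W^L_T$ from Lemma~\ref{lem:mapssurj} has kernel precisely this image, and is $\Fr$-equivariant by the same reasoning as above. Passing once more to Frobenius conjugacy classes on both sides gives
\[
\bar W^L_T{}_{\sim\Fr}\;\cong\;\bigl(\bar W^H_T/\ker(\rho)_F\bigr)_{\sim\Fr},
\]
as desired.

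There is essentially no obstacle; the proof is bookkeeping around Lemma~\ref{lem:mapssurj}. The one minor subtlety is simply to notice that $\Fr$-conjugacy classes behave naturally with respect to $\Fr$-equivariant surjections of groups (if $1\to K\to A\to B\to 1$ is $\Fr$-equivariant and exact, then $A_{\sim\Fr}\to B_{\sim\Fr}$ is surjective with fibers that identify $B_{\sim\Fr}$ with $(A/K)_{\sim\Fr}$), which is immediate from the definitions.
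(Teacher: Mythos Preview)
Your proposal is correct and matches the paper's approach: the corollary is stated without proof (the \verb|\qed| is pushed into the statement itself), so it is meant to follow immediately from Lemma~\ref{lem:mapssurj} exactly as you describe. Your bookkeeping around $\Fr$-equivariance and the identification of the kernel is precisely the intended argument.
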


\begin{remark}
If $z \in \ker(\rho)_F$ and $c \in (\bar{W}^H_{T})_{\sim \Fr}$, then, since $z$ is central in $H$, we have $cz \in (\bar{W}^H_{T})_{\sim \Fr}$.
For $c_1$ and $c_2$ in $(\bar{W}^H_{T})_{\sim \Fr}$ we write $c_1 \stackrel{\rho}{\sim} c_2$ provided that there exists $z \in \ker(\rho)_F$ such that $c_1 = c_2z$.  Then  $(\bar{W}^H_{T}/\ker(\rho)_F)/_{\sim \Fr}$ may be identified with the set of $\stackrel{\rho}{\sim}$-equivalence classes in $(\bar{W}^H_{T})_{\sim \Fr}$.   A similar interpretation holds for $(N_{H_{x_T,0}}(T^H)/\ker(\rho)_F)_{\sim \Fr}$.
\end{remark}

\begin{remark}
Suppose $\bG'$ is the derived group of $\bG$.  Recall that  $\bG_{\scon}$ denotes the simply connected cover of $\bG'$ while $\bG_{\ad}$ denotes the adjoint group of $\bG$.     Let $\bT'$ denote a $K$-minisotropic maximal $k$-torus in $\bG'$.

The results of this Section (that is, Section~\ref{sec:indexingisog}) show that if $p$ does not divide the order of  the kernel of the isogeny $\bG_{sc} \rightarrow \bG'$, then  the stable classes in the $\bG'$-orbit of $\bT'$,  the rational classes in the $\bG'$-orbit of $\bT'$, and the $k$-embeddings, up to rational conjugacy, of $\bT'$ into $\bG'$ can be calculated in a straightforward manner if we know how  to parameterize these things for the corresponding torus in $\bG_{\scon}$.
\end{remark}

\subsection{A generalization of Example~\ref{ex:coxeter}}
If $\bG$ is $K$-split and $\bT$ is a tame $K$-minisotropic maximal $k$-torus in $\bG$ corresponding to the Coxeter element of $\absW$, then from Example~\ref{ex:coxeter}  the set of $k$-embeddings of $\bT$ into $\bG$ form a single $G_{\ad}^\Fr$-orbit.  This phenomenon can be generalized as follows.

\begin{lemma} \label{lem:ratadjointtrans}

Suppose $\bT$ is a maximal $K$-minisotropic $k$-torus in $\bG$. Let $x_T$ denote the point in $\BB(G)$ identified by $\bT$ and let $F$ denote the facet to which $x_T$ belongs.
If the images of $T \cap G_{F,0}$ and $Z \cap G_{F,0}$ in $\sfG_F$ agree, then $G_{\ad}^{\Fr}$ acts transitively on the set of $k$-embeddings of $\bT$ into $\bG$.   In fact, up to $G^{\Fr}$-conjugacy, the parahoric subgroup $((G_{\ad})_{F,0})^{\Fr}$ acts transitively on the set of $k$-embeddings of $\bT$ into $\bG$.
\end{lemma}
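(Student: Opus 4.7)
The plan is to invoke Corollary~\ref{cor:onembbeddings}, which identifies the set of $G^{\Fr}$-conjugacy classes of $k$-embeddings of $\bT$ into $\bG$ with $(\bar{\bfT}_F)_{\Fr} \cong (T_F)_{\Fr}$, and then to show that the induced action of $((G_{\ad})_{F,0})^{\Fr}$ on this coinvariant group is translation by the image of $Z_F := Z \cap G_{F,0}$.  The hypothesis is precisely what forces this image to exhaust $(T_F)_{\Fr}$, yielding transitivity.

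First I would translate the hypothesis into an algebraic statement about $T_F$.  Equality of the images of $T_F$ and $Z_F$ in $\sfG_F = G_{F,0}/G_{F,0^+}$ gives $T_F \subseteq Z_F \cdot G_{F,0^+}$; intersecting with $T$ and using $T \cap G_{F,0^+} = T_{0^+} \leq T_0$ yields $T_F = Z_F \cdot T_0$.  Since $T_0 = (1-\Fr)T_0 \leq (1-\Fr)T_F$ by Lang--Steinberg, the composite $Z_F \to T_F \to (T_F)_{\Fr}$ is therefore surjective.

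Next I would compute the action of $((G_{\ad})_{F,0})^{\Fr}$ on the parameterization.  Given $g \in ((G_{\ad})_{F,0})^{\Fr}$, Remark~\ref{rem:neededlater} furnishes a lift $\tilde{g} \in G_{F,0}$, and $\Fr$-fixedness of $g$ together with $\ker(G \to G_{\ad}) = Z$ forces $z := \tilde{g}^{-1}\Fr(\tilde{g}) \in Z \cap G_{F,0} = Z_F$.  If an embedding $f(t) = ktk^{-1}$ is represented, via Corollary~\ref{cor:onembbeddings}, by $k \in G_{F,0}$ with $k^{-1}\Fr(k) = t \in T_F$, then $\Ad(g) \circ f$ is represented by $\tilde{g} k \in G_{F,0}$, and centrality of $z$ yields $(\tilde{g}k)^{-1}\Fr(\tilde{g}k) = z t$.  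Hence the action on $(T_F)_{\Fr}$ is translation by the class of $z$; independence of the choice of lift is automatic, since two lifts differ by an element $c \in Z_F$ whose Lang cocycle $c^{-1}\Fr(c)$ lies in $(1-\Fr)Z_F \leq (1-\Fr)T_F$.  Conversely, Lang--Steinberg inside $G_{F,0}$ realizes every $z \in Z_F$ in this way, so combined with the surjectivity of the previous paragraph, transitivity of $((G_{\ad})_{F,0})^{\Fr}$ on $G^{\Fr}$-conjugacy classes of embeddings follows.  The weaker transitivity of $G_{\ad}^{\Fr}$ is then immediate.  The only real subtlety is that the central correction $z$ must land in $Z_F$ rather than merely in $Z$, which is where Remark~\ref{rem:neededlater} (and hence the Kottwitz--Haines--Rapoport machinery of Section~\ref{sec:someconsequences}) is essential.
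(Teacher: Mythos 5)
There are two genuine gaps, both coming from tameness-type facts that this lemma does not assume (and that the paper's own proof carefully avoids). First, Remark~\ref{rem:neededlater} does not ``furnish a lift'' of an element of $((G_{\ad})_{F,0})^{\Fr}$ to $G_{F,0}$: that remark only says that for an isogeny $\rho\colon\bH\to\bL$ of \emph{semisimple} groups the preimage of $L_{x,0}$ inside $\Stab_H(x)$ is $H_{x,0}$; it asserts nothing about surjectivity. Surjectivity at the parahoric level is Proposition~\ref{prop:parahoric}/Lemma~\ref{lem:surjpro}, which requires $p\nmid\dabs{\ker\rho}$ and an isogeny, whereas here $\bG$ is merely reductive, $\ker(\bG\to\bG_{\ad})=\bZ$ may be positive-dimensional, and no tameness is assumed; indeed for $\SL_2\to\PGL_2$ in residue characteristic $2$ the map $G_{F,0}\to(G_{\ad})_{F,0}$ is not surjective. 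Second, the step ``$T\cap G_{F,0^+}=T_{0^+}\le T_0$'', which you use to get $T_F=(Z\cap G_{F,0})\,T_0$ and hence surjectivity of $Z\cap G_{F,0}\to(\bar{\bfT}_F)_{\Fr}$, is exactly the kind of statement the paper only proves under tameness (Lemma~\ref{lem:somecontrol}) and it fails in wild cases. The example of Appendix~\ref{sec:appendixtwo} makes both failures concrete: for $k=\Q_2$, $E=K[i]$ ramified quadratic, and $\bT$ the norm-one torus in $\SL_2$, every element of $T$ is topologically unipotent, so $T_F$ and $Z\cap G_{F,0}$ both have trivial image in $\sfG_F$ and the hypothesis of the lemma holds; yet $i\in T\cap G_{F,0^+}$ while $i\notin T_0$, $-1\in T_0$ so $Z\cap G_{F,0}$ dies in $\bar{\bfT}_F\cong\Z/2\Z$, and there are two rational classes of $k$-embeddings. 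So your central-translation mechanism cannot produce transitivity there, and the adjoint parahoric element that does realize it cannot have a lift to $G_{F,0}$ with central Lang cocycle (otherwise your own computation would force the two classes to coincide). Your ``converse'' paragraph (Lang--Steinberg in $G_{F,0}$ realizing a given central $z$) is fine, and under tameness hypotheses your route could be repaired into a correct alternative argument, but it does not prove the lemma as stated.

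For contrast, the paper's proof never lifts anything from $G_{\ad}$ to $G$ and never uses $T\cap G_{F,0^+}=T_{0^+}$ in $G$. It fixes a strongly regular $\gamma\in T^{\Fr}$ and a conjugator $g$, moves the image $\bar g$ into $\Stab_{G_{\ad}}(x_T)$ by a rational adjoint element, writes $\bar g=kt$ with $k\in(G_{\ad})_{F,0}$, $t\in T_{\ad}$ via Lemma~\ref{lemma:KHR} applied to $\bG_{\ad}$, uses the hypothesis to force $k\inv\Fr(k)\in T_{\ad}\cap(G_{\ad})_{F,0^+}\subseteq(T_{\ad})_0$, and then corrects $k$ using the vanishing of $\cohom^1(\Fr,(T_{\ad})_0)$ to land in $((G_{\ad})_{F,0})^{\Fr}$. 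That is the essential trick you are missing: do the Lang--Steinberg correction inside the \emph{adjoint} torus's parahoric, where $\cohom^1$ vanishes unconditionally, rather than trying to transport the whole action back to $G$ through the parameterization of Corollary~\ref{cor:onembbeddings}.
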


\begin{proof}
Let $\bT_{\ad}$ denote the $k$-torus in $\bG_{\ad}$ corresponding to $\bT$.

The $k$-map $\bG \rightarrow \bG_{\ad}$ induces maps $G_{F,0} \rightarrow (G_{\ad})_{F,0}$ and $G_{F,0^+} \rightarrow (G_{\ad})_{F,0^+}$.   Since  the images of $T \cap G_{F,0}$ and $Z \cap G_{F,0}$ in $\sfG_F$ agree, we conclude that the image of $T_{\ad} \cap (G_{\ad})_{F,0}$ in $(\sfG_{\ad})_F$ is trivial.

Suppose $\gamma \in T^{\Fr}$ is strongly regular semisimple and $g \in G$ such that $\lsup{g}\gamma$ is Frobenius fixed.   Since $\lsup{g}\gamma$ is Frobenius fixed, we conclude that $g \cdot x_T$ is Frobenius fixed.  Hence, from Lemma~\ref{lem:FrpointsofGorbit} there exists $\ell \in G^{\Fr}$ such that $\ell g \cdot x_T = x_T$.   So, without loss of generality we may and do  assume $g \in \Stab_{G}(x_T)$.   It will be enough to show that there exists $h \in ((G_{\ad})_{F,0})^{\Fr}$ such that $\lsup{g}\gamma = \lsup{h} \gamma$.

Let $\bar{g}$ denote the image of $g$ in $G_{\ad}$.  Note that $\bar{g} \in \Stab_{G_{\ad}}(x_T)$. From Lemma~\ref{lemma:KHR}  there exist $t \in T_{\ad}$ and $k \in (G_{\ad})_{F,0}$ such that $ \bar{g} = kt$.   Thus, $\lsup{g}\gamma = \lsup{k}\gamma$.   But then $k\inv \Fr(k) \in T_{\ad} \cap (G_{\ad})_{F,0} \leq T_{\ad} \cap (G_{\ad})_{F,0^+} = (T_{\ad})_0$.  Since $\cohom^1(\Fr, (T_{\ad})_{0})$ is trivial, there exists $y \in (T_{\ad})_{0}$ such that $k\inv \Fr(k) = y\inv \Fr(y)$.   But then $h = ky\inv \in ((G_{\ad})_{F,0})^\Fr \leq G_{\ad}^\Fr$ and $\lsup{h}\gamma = \lsup{k} \gamma = \lsup{g}\gamma$.
\end{proof}

\section{\texorpdfstring{$K$-minisotropic}{K-minisotropic} Coxeter \texorpdfstring{tori}{tori}}
\label{sec:Kminisotropicoxeter}

The  existence of a maximal $K$-minisotropic $k$-torus in $\bG$ was established by Adler  in the Appendix to~\cite{debacker:unramified}.   We sharpen this result by showing that there exists a maximal $K$-minisotropic $k$-torus  such that $x_T$ is in the interior of $C'$.

\begin{lemma}  \label{lem:coxetertorus}  Recall that $Z_C = Z \cap G_{C,0}$.  There exists a maximal $K$-minisotropic $k$-torus in $\bG$ such that (a) $x_T \in C'$ and (b)  $T_C = Z_C T_{0^+}$. 
\end{lemma}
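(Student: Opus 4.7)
The plan is to construct the torus explicitly using a $\sigma$-twisted Coxeter element of $\absW$. Working first in the tame setting—so that the machinery of Section~\ref{sec:tamepoints} is available—I would choose $w_\sigma \in \absW$ so that $w_\sigma\sigma$ acts on $\X_*(\bA)$ as a $\sigma$-twisted Coxeter transformation, namely a product of one simple reflection for each $\sigma$-orbit on $\Esimple$. Such a $w_\sigma$ is $\sigma$-elliptic, since a twisted Coxeter element cannot be conjugated into any proper $\sigma$-stable parabolic subgroup of $\absW$; so by Lemma~\ref{lem:elliptictoweyl} the associated torus $\bT_n$ is $K$-minisotropic. A classical computation on Coxeter Kac diagrams (as tabulated in~\cite{reederetal:epipelagic}) shows that the normalized Kac coordinates of $w_\sigma$ are all strictly positive; by Lemma~\ref{lem:kacinbarC}, this places the associated point $x_n$ in the \emph{open} alcove $C'$.

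To descend to $k$, I would next verify that the $\sigma$-conjugacy class $c$ of $w_\sigma$ is $\Fr \circ N_q$-stable. Since $\Fr$ acts on $\Esimple$ as a diagram automorphism commuting with $\sigma$, it sends $\sigma$-twisted Coxeter elements to $\sigma$-twisted Coxeter elements; and any two $\sigma$-twisted Coxeter elements are $\sigma$-conjugate (a twisted analogue of the classical conjugacy statement for Coxeter elements). Hence $(\Fr \circ N_q)(c) = c$, and Corollary~\ref{cor:tameoverkexperiment} supplies a $\Fr$-stable torus $\bT$ in the $G$-orbit $\varphi_\sigma(c)$. By Lemma~\ref{lem:cardk}, the point $x_T$ is the unique element of $G^\Fr \cdot x_n \cap \bar{C}'^{\Fr}$; since $x_n \in C'$, the $G$-conjugate $x_T$ lies in $C'$ too, which is statement~(a).

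For~(b), since $x_T$ lies in the interior of $C'$ we have $F = C$, so $T_C = T_F$ and the reductive quotient $\sfG_C$ is just the torus $\bfA$ (no root is constant on the open alcove). By Lemma~\ref{lem:idwithnormal}(\ref{lem:idwithparhoricnormaltorus}), the image of $T_C$ in $\bshA$ is contained in $\bshA^{w_\sigma\sigma}$. The defining property of a $\sigma$-twisted Coxeter element is that $\bshA^{w_\sigma\sigma}$ coincides with the maximal $\ff$-split central torus of $\sfG_{x_0}$, which is the image of $\bZ(K) \cap G_{x_0,0}$—and hence of $Z_C$. Unwinding, every element of $T_C$ differs from one of $Z_C$ by an element of $T_{0^+}$, yielding $T_C = Z_C T_{0^+}$; the reverse inclusion is immediate.

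The main obstacle is the wild case $p \mid \ell_n$, where the tame construction above is unavailable: one cannot simply lift $w_\sigma$ to the affine Weyl group and build $\bT$ from it. A plausible route is to combine Adler's existence theorem from the appendix of~\cite{debacker:unramified}, which produces \emph{some} $K$-minisotropic $k$-torus, with Lemma~\ref{lemma:KHR} to conjugate its associated point into $C'$; but verifying that property~(b) can still be arranged in the wild setting looks delicate, since condition (b) is really a statement about $\bshA^{w_\sigma\sigma}$ being central, and this Weyl-theoretic description is precisely what fails wildly. I expect this wild-case refinement to be the most technical step, likely requiring a type-by-type argument for each absolutely simple factor of $\bG_{\scon}$ (invoking the isogeny-invariance of Remark~\ref{rem:indofisogeny}).
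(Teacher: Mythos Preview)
Your approach via the $\sigma$-twisted Coxeter class is reasonable in the tame regime, but the wild case is not a technical refinement---it is a genuine gap, and the paper's argument shows that the entire Weyl-theoretic detour is unnecessary. The paper constructs the torus directly, with no tameness hypothesis and no case-by-case analysis, by an argument you did not consider.

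Here is the idea. Reduce to $\bG$ absolutely almost simple. For each simple affine root $\psi \in \Delta$ choose $u_\psi \in U_\psi \setminus U_{\psi^+}$ compatibly with $\Fr$ (i.e.\ $\Fr(u_\psi) = u_{\Fr(\psi)}$), and set $u = \prod_{\psi \in \Delta} u_\psi \in G^{\Fr}$. There is a unique point $x \in C'$ at which all the $\psi$ take the same value $r$; this $x$ is the unique point of $\BB(G)$ at which the depth of $u$ (indeed, of any element of $uG_{x,r^+}$) is minimized. Consequently $C_G(\gamma) \subset \Stab_G(C)$ for every $\gamma \in uG_{x,r^+}$. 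Choosing a strongly regular $\gamma \in uG^{\Fr}_{x,r^+}$, the centralizer $\bT = C_\bG(\gamma)$ is a $K$-anisotropic maximal $k$-torus with $x_T = x \in C'$. Property~(b) is then immediate: any $t \in T_C$ has image $\bar t$ in $\bfG_{x_T} = \bfA$, and commuting with $\gamma \in uG_{x,r^+}$ forces $\dot\psi(\bar t) = 1$ for every $\psi \in \Delta$, whence $\bar t$ is central.

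This ``regular-unipotent-flavored'' construction is uniform in $p$: it never invokes $\titsW$, Kac coordinates, or Lemma~\ref{lem:elliptictoweyl}, and it produces the $k$-structure for free because the $u_\psi$ were chosen $\Fr$-equivariantly. Your route, by contrast, would require both the twisted-Coxeter conjugacy statement and the identification of $\bshA^{w_\sigma\sigma}$ with the center to hold without restriction on $p$---and the machinery of Section~\ref{sec:tamepoints} simply does not supply the torus $\bT_n$ in that generality.
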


\begin{proof} 
  If $\psi$ is an affine root of $\bA$ with respect to $\bG$, $\bA$, $K$, and our valuation $\nu$, then we  let $U_\psi$ denote the  subgroup of $U_{\dot{\psi}}$ corresponding to $\psi$.  Recall that for $z \in \AA(\bA)$, we have $U_\psi \leq G_{z,0}$ if and only if $\psi(z) \geq 0$.

Write $\Phi$ as the finite disjoint union $\bigsqcup_{i=1}^m \Phi_i$ with each $\Phi_i$ irreducible.  This decomposition produces a decomposition $\Delta = \bigsqcup_{i=1}^m \Delta_i$ of the affine simple roots and a decomposition $\Psi = \bigsqcup_{i=1}^m \Psi_i$ of the affine roots.  Similarly, we also have a decomposition  $\BB^{\red}(G) = \BB(G_{\scon}) = \prod_{i=1}^m \BB(G^i_{\scon})$ where $G^i_{\scon}$ is a simply connected group with root system $\Phi_i$, and we write $C' = \prod_{i=1}^m C'_i$ for the corresponding decomposition of $C'$.    For each $1 \leq i \leq m$ there exist  unique  $r_i \in \R_{>0}$ and a unique $x_i \in C'_i$ such that $r_i = \psi(x_i)$ for all $\psi \in \Delta_i$.    Let $x = (x_1, x_2, \ldots , x_m) \in C'$ and $\vec{r} = (r_1, r_2 , \ldots , r_m) \in \R_{>0}^m$.   Let $G_{x,\vec{r}}$ be the group $\langle A_{0^+}, U_\psi \, | \, \text{$\psi \in \Psi_i$  implies $\psi (x_i) \geq r_i$} \rangle$ and let $G_{x,\vec{r}^+}$ be the group $\langle A_{0^+}, U_\psi \, | \, \text{$\psi \in \Psi_i$  implies $\psi (x_i) > r_i$}\rangle$.

Since $\Fr$ preserves $C$, it acts on $\Delta$.
For $\psi \in \Delta$,  choose $u_\psi \in U_{\psi} \setminus U_{\psi^+}$  such that $\Fr (u_\psi) = u_{\Fr (\psi)}$. 
Set $u = \prod u_\psi$.  

Suppose $y = (y_1, y_2, \ldots , y_m) \in \BB^{\red}(G)$. If for  each $1 \leq i \leq m$ we have $\psi(y_i) = r_i$ for all $\psi \in \Delta_i$, then $x = y$.  That is, $u \in G_{y,\vec{r}} \setminus G_{y,\vec{r}^+}$ if and only if $x = y$.  In fact, this is true for any element of $u G_{x,\vec{r}^+}$.

It follows that $C_G(\gamma) \leq \Stab_G(C)$ for all $\gamma \in u G_{x,\vec{r}^+}$.    Since $u\inv \Fr(u) \in G_{x,\vec{r}^+}$ and $\cohom^1(\Fr, G_{x,\vec{r}^+})$ is trivial, we conclude that  $u G_{x,\vec{r}^+} \cap G^{\Fr} \neq \emptyset$.  Thus, we may choose a strongly regular $\gamma \in uG_{x,\vec{r}^+} \cap G^{\Fr}$. Since $C_G(\gamma)$ is a bounded subgroup of $G$,  we conclude that $\bT := C_\bG(\gamma)$ is a  maximal $K$-anisotropic $k$-torus of $\bG$.  Note that $x_T = x$.

 Suppose $t \in T_F = G_{x_T,0} \cap T$, and let $\bar{t}$ denote the image of $t$ in  $\bfG_{x_T}$.  Since $x_T$ is in $C'$, the reductive quotient $\bfG_{x_T}$ is $\bfA$.  Since  $t$ commutes with $\gamma \in u G_{x_T,\vec{r}^+}$ and $\bar{t} \in \bfA$, we have  $\alpha(\bar{t}) = 1$ for all $\alpha \in \Delta$.  Thus, $T_C = Z_C T_{0^+}$.
\end{proof}

The point $x_T$ for $\bT = C_{\bG}(\gamma)$ with strongly regular $\gamma$ as in the proof of Lemma~\ref{lem:coxetertorus} is,
in the $K$-split tame setting, given by the Kac coordinates of the Coxeter conjugacy class in $\absW$.  For this reason, we make the following definition.

\begin{defn}   \label{defn:coxetertorus}
A torus that arises via the construction in the proof of Lemma~\ref{lem:coxetertorus} is called a \emph{$K$-minisotropic Coxeter $k$-torus}.   If we don't require that it be defined over $k$ (i.e., we don't require that $\Fr(u) = u$ or $\Fr(\gamma) = \gamma$), then we will call it a \emph{$K$-minisotropic Coxeter $K$-torus} or just a \emph{$K$-minisotropic Coxeter torus}.
\end{defn}

$K$-minisotropic Coxeter $k$-tori have many desirable properties.  We enumerate some of these properties here.

\begin{cor}
    Suppose $\bT$ is a $K$-minisotropic Coxeter $k$-torus with $x_T \in C'$.  \begin{itemize}
        \item  If a facet $H$ of $\BB(G)$ is contained in the closure of $C$, then the representatives for $\Stab_{G}(H)/G_{H,0}$ can be chosen in $T$.  If $H$ is also $\Fr$-stable, then the representatives for $\Stab_{G^{\Fr}}(H)/G^{\Fr}_{H,0}$ can be chosen in $T^{\Fr}$.  
        \item The group $G_{\ad}^{\Fr}$ acts transitively on the set of $k$-embeddings of $\bT$ into $\bG$.
        \item Every element of $\Stab_{G_{\ad}}(C)$ is cohomologous to an element of $T_{\ad}$.  Here $\bT_{\ad}$ is the $k$-torus in $\bG_{\ad}$ corresponding to $\bT$.
    \end{itemize} 
\end{cor}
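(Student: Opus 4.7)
My plan is to show that all three assertions follow quickly from results already established earlier, once one observes that the hypothesis $x_T \in C'$ forces the facet $F \subset \BB(G)$ (whose image in $\BB^{\red}(G)$ contains $x_T$) to equal the alcove $C$ itself.  Indeed, the projection $\BB(G) \to \BB^{\red}(G)$ carries the open alcove $C$ onto the open alcove $C'$, so any $x_T$ in the interior of $C'$ lifts to the interior of $C$, forcing $F = C$.  In particular, $\bT$ (hence $\bT_{\ad}$) stabilizes $C$.

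Once $F = C$ is in hand, the first assertion is precisely Corollary~\ref{cor:propertyofcoxeter}, and its $\Fr$-stable refinement (for $H$ also $\Fr$-stable) is Corollary~\ref{cor:propertyofcoxeter1}.  For the second assertion, I will verify the hypothesis of Lemma~\ref{lem:ratadjointtrans}, namely that the images of $T \cap G_{F,0}$ and $Z \cap G_{F,0}$ in $\sfG_F$ agree.  The construction in the proof of Lemma~\ref{lem:coxetertorus} gives $T_C = Z_C \cdot T_{0^+}$, and since $T_{0^+} \subset G_{C,0^+}$ these two subgroups of $G_{C,0}$ have the same image in $\sfG_C$.  Lemma~\ref{lem:ratadjointtrans} then delivers the transitive action of $G_{\ad}^{\Fr}$, and in fact of $((G_{\ad})_{C,0})^{\Fr}$, on the set of $k$-embeddings of $\bT$ into $\bG$.

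For the third assertion, the corresponding adjoint torus $\bT_{\ad}$ is again a $K$-minisotropic maximal $k$-torus in $\bG_{\ad}$ by Remark~\ref{rem:indofisogeny}, so applying Lemma~\ref{lem:mapfromtorus} to $\bG_{\ad}$ and $\bT_{\ad}$ yields a surjection $T_{\ad} \twoheadrightarrow \Omega_{\ad}$ and hence the decomposition $\Stab_{G_{\ad}}(C) = T_{\ad} \cdot (G_{\ad})_{C,0}$.  Given $g \in \Stab_{G_{\ad}}(C)$, write $g = t k$ with $t \in T_{\ad}$ and $k \in (G_{\ad})_{C,0}$.  The Lang--Steinberg calculation of Remark~\ref{rem:oncohomology}, applied with $t$ in place of the element called $g$ there, shows that every member of the coset $t \cdot (G_{\ad})_{C,0}$ is cohomologous to $t$ in $\cohom^1(\Fr, G_{\ad})$; in particular $g = t k$ is cohomologous to $t \in T_{\ad}$, as desired.

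The argument is therefore really a compilation of prior material, and there is no essential new obstacle.  The only mildly delicate point is that in Part~(3) one must invoke Lemma~\ref{lem:mapfromtorus} for the adjoint torus $\bT_{\ad}$ rather than for $\bT$; this is legitimate precisely because Remark~\ref{rem:indofisogeny} guarantees that the $K$-minisotropy of $\bT$ and the associated point $x_T$ transfer cleanly under the isogeny $\bG \to \bG_{\ad}$.
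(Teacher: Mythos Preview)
Your proposal is correct and follows essentially the same approach as the paper: the paper's proof simply cites Corollaries~\ref{cor:propertyofcoxeter1} and~\ref{cor:propertyofcoxeter} for the first item, Lemma~\ref{lem:ratadjointtrans} for the second, and Lemma~\ref{lemma:KHR} together with Remark~\ref{rem:oncohomology} for the third.  Your write-up supplies the connective tissue the paper leaves implicit (that $F=C$, that the hypothesis of Lemma~\ref{lem:ratadjointtrans} holds via property~(b) of Lemma~\ref{lem:coxetertorus}, and that $\Stab_{G_{\ad}}(C)=T_{\ad}\,(G_{\ad})_{C,0}$); your use of Lemma~\ref{lem:mapfromtorus} in place of Lemma~\ref{lemma:KHR} for the third point is an innocuous variant, since the former is derived from the latter.
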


\begin{proof}
The first point follows from Corollaries~\ref{cor:propertyofcoxeter1} and~\ref{cor:propertyofcoxeter}, the second follows from 
Lemma~\ref{lem:ratadjointtrans}, and  the final point follows from  Lemma~\ref{lemma:KHR} and Remark~\ref{rem:oncohomology} 
\end{proof}

Recall that $\eta \colon \bG_{\scon} \rightarrow \bG$ is the composition $\bG_{\scon} \rightarrow \bG' \rightarrow \bG$.

\begin{cor}
If $\bT$ is a $K$-minisotropic Coxeter $K$-torus and $x_T \in C'$, then $T_C = \eta[Z_{\scon}] T_0$. 
\end{cor}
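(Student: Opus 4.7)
The plan is to apply the identity $T_C = Z_C T_{0^+}$ from Lemma~\ref{lem:coxetertorus} to a corresponding Coxeter torus in $\bG_{\scon}$ and to push the result forward through the isogeny $\eta$.  Note that both Lemma~\ref{lem:coxetertorus} and its proof apply to $K$-minisotropic Coxeter $K$-tori (in the sense of Definition~\ref{defn:coxetertorus}): the equality $T_C = Z_C T_{0^+}$ never invokes $\Fr$-invariance.

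First I will construct a $K$-minisotropic Coxeter $K$-torus $\bT_{\scon}$ in $\bG_{\scon}$ by running the construction from the proof of Lemma~\ref{lem:coxetertorus} inside $\bG_{\scon}$: the central isogeny $\eta$ induces isomorphisms on affine root subgroups, so the choices of $u_\psi \in U_\psi \setminus U_{\psi^+}$ lift unambiguously, producing a Coxeter torus $\bT_{\scon}$ with attached point $x_T \in C'$.  Lemma~\ref{lem:coxetertorus} applied in $\bG_{\scon}$ then gives $(T_{\scon})_C = Z_{\scon,C}\,(T_{\scon})_{0^+}$ with $Z_{\scon,C} := Z_{\scon} \cap (G_{\scon})_{C,0}$.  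Because $\pi_1(\bG_{\scon}) = 0$, the Kottwitz map on $G_{\scon}$ is trivial, so $\Stab_{G_{\scon}}(y) = (G_{\scon})_{y,0}$ for every $y \in \BB(G_{\scon})$.  The finite central subgroup $Z_{\scon}$ lies in every such stabilizer, giving $Z_{\scon,C} = Z_{\scon}$; the bounded torus $T_{\scon}$ fixes $x_T \in C'$, so $T_{\scon} \subset (G_{\scon})_{x_T,0} = (G_{\scon})_{C,0}$ and hence $(T_{\scon})_C = T_{\scon}$.  The identity thus collapses to $T_{\scon} = Z_{\scon}\,(T_{\scon})_{0^+}$.

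I will then apply $\eta$, using that $\eta$ preserves topological unipotence (so $\eta[(T_{\scon})_{0^+}] \subset T_{0^+} \subset T_0$), to obtain $\eta[T_{\scon}] \subset \eta[Z_{\scon}]\,T_0$.  Combining this with the trivial $\eta[Z_{\scon}]\,T_0 \subset \eta[T_{\scon}]\,T_0$ and Corollary~\ref{cor:2.3.4}'s identification $T_F = \eta[T_{\scon}]\,T_0$, I get $T_F = \eta[Z_{\scon}]\,T_0$; since $x_T \in C'$ forces $F = C$, this is the desired $T_C = \eta[Z_{\scon}]\,T_0$.  The main obstacle will be the first step, namely cleanly verifying that the lifts of the $u_\psi$ to $G_{\scon}$ really do produce a Coxeter torus in $\bG_{\scon}$ whose attached point is again $x_T$; this should go through because the construction of Lemma~\ref{lem:coxetertorus} is intrinsic to the affine root datum preserved by $\eta$.
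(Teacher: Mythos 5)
Your strategy is the same as the paper's: combine Corollary~\ref{cor:2.3.4} ($T_C = \eta[T_{\scon}]T_0$) with Lemma~\ref{lem:coxetertorus} applied in $\bG_{\scon}$, note that in the simply connected group the stabilizer of a point equals the parahoric so that $(T_{\scon})_C = T_{\scon}$ and the relevant central subgroup is all of $Z_{\scon}$, and then push forward using $\eta[(T_{\scon})_{0^+}] \subset T_0$. All of that matches the paper's argument.

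The gap is in your first step, and it is not quite the obstacle you flagged. Corollary~\ref{cor:2.3.4} concerns the specific torus $\bT_{\scon}$ in $\bG_{\scon}$ \emph{corresponding to} $\bT$ under $\eta$, whereas your construction (lifting the $u_\psi$ and choosing a new strongly regular element in $u_{\scon}(G_{\scon})_{x,r^+}$) produces \emph{some} $K$-minisotropic Coxeter $K$-torus $\bS$ of $\bG_{\scon}$ with attached point $x_T$. You never show that $\bS$ is the corresponding torus $\bT_{\scon}$, nor even $G_{\scon}$-conjugate to it (conjugacy would suffice, since the identity $S = Z_{\scon}S_{0^+}$ is invariant under conjugation because $Z_{\scon}$ is central). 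Since Section~\ref{sec:Kminisotropicoxeter} makes no tameness assumption, this identification is a genuine issue: the remark asserting that the $G$-conjugacy class of $C_{\bG}(\gamma)$ is independent of the choice of $\gamma \in uG_{x,r^+}$ is stated only in the tame setting, so in general two Coxeter tori over the same point of $C'$ are not known to be conjugate, and the equality $T_{\scon}' = Z_{\scon}(T_{\scon}')_{0^+}$ for your constructed torus cannot be combined with $T_C = \eta[T_{\scon}]T_0$ for the corresponding one. The paper closes this by asserting exactly what your argument needs, namely that the corresponding torus $\bT_{\scon}$ is itself a $K$-minisotropic Coxeter $K$-torus of $\bG_{\scon}$ (so Lemma~\ref{lem:coxetertorus} gives $T_{\scon} = (T_{\scon})_C = Z_{\scon}(T_{\scon})_{0^+}$ directly for it); if you want to argue via a construction in $\bG_{\scon}$, you must additionally verify that the torus you build maps onto (a maximal torus of $\bG'$ contained in) $\bT$, i.e.\ that it really is $\bT_{\scon}$.
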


\begin{proof}
We already know from Corollary~\ref{cor:2.3.4} that $T_C = \eta[T_{\scon}] T_0$.  Since $\bT_{\scon}$ is a $K$-minisotropic Coxeter $K$-torus in $\bG_{\scon}$, from Lemma~\ref{lem:coxetertorus} we have $T_{\scon} = (T_{\scon})_C = Z_{\scon} \cdot (T_{\scon})_{0^+}$ where   $Z_{\scon}$ denotes the center of $G_{\scon}$.   Since $\eta[(T_{\scon})_{0^+}] \subset T_0$, the claim follows.
\end{proof}

\begin{cor}    \label{cor:AcapT}
If $\bT$ is a $K$-minisotropic Coxeter $K$-torus in $\bG$, then $A \cap T = A \cap Z$.
\end{cor}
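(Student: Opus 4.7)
The inclusion $A \cap Z \subseteq A \cap T$ is immediate since $Z \subseteq T$. For the reverse, fix $a \in A \cap T$; the goal is to show $a \in Z$.

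The first step is to localize $a$. Since $a \in T$, Lemma~\ref{lemma:KHR} gives $a \in \Stab_G(x_T)$, so $a$ fixes $x_T$. But $a \in A$ acts on the apartment $\AA(\bA, K)$ by the translation $\nu(a) \in \X_*(\bA) \otimes \R$, which must therefore vanish, placing $a \in A_0$. Consequently $a \in T \cap G_{x_T, 0} = T_C$, and Lemma~\ref{lem:coxetertorus} (using that $x_T \in C'$) gives a decomposition $a = zu$ with $z \in Z_C \subseteq Z$ and $u \in T_{0^+}$. It now suffices to show $u \in Z$.

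The second step exploits that $u = z^{-1}a$ lies simultaneously in $\bT$ and in $\bZ \cdot \bA$, so in particular $u$ commutes with all of $\bA$. Let $\bS := \overline{\langle u \rangle}$ denote the Zariski closure, a closed commutative subgroup of $\bT \cap (\bZ \cdot \bA)$. Its identity component $\bS^\circ$ is a subtorus of $\bZ^\circ \cdot \bA$, so the quotient $\bS^\circ/(\bS^\circ \cap \bZ^\circ)$ embeds as a $K$-split subtorus of $\bZ^\circ \cdot \bA / \bZ^\circ \cong \bA/(\bA \cap \bZ^\circ)$, while simultaneously sitting inside the $K$-anisotropic torus $\bT/\bZ^\circ$.  (Here $\bT/\bZ^\circ$ is $K$-anisotropic because any Galois-invariant character of $\bT/\bZ^\circ$ has a positive integer multiple in $\X^*(\bT/\bZ)^{\Gal(\bar k/K)} = 0$, and the lattice is torsion-free.)  Hence $\bS^\circ \subseteq \bZ^\circ \subseteq \bZ$.

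Finally---and this is the step I expect to be the main obstacle---one must upgrade $\bS^\circ \subseteq \bZ$ to $u \in \bZ$. Since $\bS$ is generated as an algebraic group by $u$, the component group $\bS/\bS^\circ$ is finite cyclic of some order $n$, with $u^n \in \bS^\circ \subseteq Z \cap T_{0^+}$. When $n$ is coprime to $p$, the uniqueness of $n$-th roots in the pro-$p$ group $T_{0^+}$ forces $u$ to coincide with the $n$-th root of $u^n$ already lying in $Z \cap T_{0^+}$, yielding $u \in Z$ and hence $a = zu \in A \cap Z$.  Justifying the coprimality of $n$ with $p$ is the delicate point; it should follow from $\bS \subseteq \bZ \cdot \bA$ together with the tameness implicit in the Coxeter construction, possibly appealing directly to the explicit $\gamma = u\gamma'$ of Lemma~\ref{lem:coxetertorus}.
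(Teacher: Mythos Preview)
Your first two steps are fine: the reduction to $u \in T_{0^+} \cap (Z\cdot A)$ via Lemma~\ref{lem:coxetertorus}, and the argument that $\bS^\circ \subseteq \bZ^\circ$ using $K$-minisotropy, are both correct.

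The gap is real and the proposed fix is in the wrong direction. Reduce (as the paper does) to $\bG$ absolutely almost simple, so $\bZ^\circ$ is trivial; then your argument gives $\bS^\circ = 1$, hence $\bS$ is finite and $n$ is exactly the order of $u$. But $u$ lies in the pro-$p$ group $T_{0^+}$, so any finite order it has is a \emph{power} of $p$. Thus $n$ is coprime to $p$ only when $n=1$, i.e.\ only when $u$ is already trivial---the very thing you are trying to prove. So ``show $n$ coprime to $p$'' cannot be the strategy. Your fallback appeal to ``tameness implicit in the Coxeter construction'' is also unfounded: the construction in Lemma~\ref{lem:coxetertorus} makes no tameness assumption, and a $K$-minisotropic Coxeter torus need not split over a tame extension. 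What remains after your Step~2 is the assertion $T_{0^+}\cap(Z\cdot A)\subseteq Z$; this does not follow from $K$-minisotropy alone, and Lemma~\ref{lem:coxetertorus} has already been spent getting you to this point---it gives information only at the level of the reductive quotient (that $\alpha(a)\equiv 1 \pmod{\wp_K}$ for every root $\alpha$), not that $\alpha(a)=1$.

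The paper's proof avoids this filtration-level difficulty entirely. For $g\in A\cap T$ it studies $\bH = C_\bG(g)^\circ$, which contains both $\Ebtorus$ and $\bT$. Ellipticity of $\bT$ rules out $\bH$ being a Levi of a parabolic; in the remaining case the reduced building of $H$ embeds in that of $G$ and contains $x_T$. The key input---which you never invoke---is the explicit shape of the Coxeter element $\prod_{\psi\in\Delta} u_\psi$ inside $T$ modulo $G_{x_T,r^+}$: since $H_{y,s}=H\cap G_{y,s}$, this forces every simple root to lie in $\Phi(\bH,\bA)$, whence $\bH = \bG$ and $g\in Z$. The specific product-over-$\Delta$ structure of the Coxeter construction, not just the conclusion $T_C = Z_C T_{0^+}$ of Lemma~\ref{lem:coxetertorus}, is what closes the argument.
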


\begin{remark}  This result is not true for arbitrary  $K$-minisotropic tori.   For example, if $p > 2$ and $\bT$ is a $K$-minisotropic torus in $\Sp_4$ of type  $-1 \in A_1 \times A_1$ as in Example~\ref{ex:tamesp4}, then $A \cap T$ has order $4$, not $2$.  
\end{remark}

\begin{proof}  Without loss of generality, we may and do assume that $\bG$ is absolutely almost simple.

    Choose $g \in A \cap T$.  Let $\bH = C_{\bG}(g)^{\circ}$.   If $\bH = \bG$, then $g \in Z$.   
    
    Suppose $\bH \neq \bG$.   Note that $ \Ebtorus \leq \bH$ and $\bT \leq \bH$.  
    
    If $\bH$ is a Levi $K$-subgroup, then since $g \in A$ we must have that $\bH$ is the Levi of a parabolic $K$-subgroup.  However, since $\bT$ is elliptic in $\bG$, it cannot then belong to $\bH$.
    
    Thus, $\bH$ must be a generalized Levi $K$-subgroup of $\bG$ that is not the Levi of a parabolic $K$-subgroup.  Consequently, the ranks of the derived groups of $\bH$ and $\bG$ must be the same and so we can identify $\BB^{\red}(H)$ in $\BB^{\red}(G)$.  
    Since $\bT \leq \bH$, by uniqueness we have $x_T \in \BB^{\red}(H) \subset \BB^{\red}(G)$.  Choose $r > 0$ so that $G_{x_T,0^+} = G_{x_T,r} \neq G_{x_T,r^+}$.  Since $x_T \in \BB(H)$, there exists $h \in H$ such that $hx_T \in \AA'(A)$.   Let $C_1'$ be the $G$-alcove in $\AA'(A)$ that contains $hx_T$, let $x_1$ be an absolutely special vertex in $\bar{C}'_1$, and let $\bB_1$ be the Borel subgroup determined by $\bA$, $\bar{C}_1'$, and $x_1$.
    Since $\lsup{h}\bT$ is a $K$-minisotropic Coxeter $K$-torus in $\bG$, it contains an element $u$ that, modulo $G_{hx_T,r^+}$,  looks like $\prod u_\psi$ as $\psi$ runs over ${\Delta}' = \Delta(\bG, \bA, K, \nu, C_1')$ and  $u_\psi \in U_{\psi} \setminus U_{\psi^+}$.  
    Since  $H_{y,s} = H \cap G_{y,s}$ for all $y \in \AA'(A)$ and $s \in \R_{>0}$, we have that, modulo $H_{hx_T,r^+}$, the element $u \in H$ looks like $\prod u_\psi$  where $\psi$ runs over ${\Delta}'$ and  $u_\psi \in U_{\psi} \setminus U_{\psi^+}$.  We conclude that for each $\psi \in {\Delta}'$ we have $\dot{\psi} \in \Phi(\bH,\bA)$.   In particular, $\Delta(\bG,\bB',\bA)$ is a subset of $\Phi(\bH,\bA)$, hence $\Phi(\bH,\bA) = \Phi(\bG,\bA)$.  This contradicts our assumption that  $\bH \neq \bG$.
\end{proof}

\begin{remark}  
The proof of  Corollary~\ref{cor:AcapT} shows that if $\bH$ is a connected reductive $K$-subgroup of $\bG$ that contains both a $K$-minisotropic Coxeter torus and the centralizer of a maximal $K$-split torus of $\bG$, then $\bH$ must be $\bG$.  In particular, the intersection of  a $K$-minisotropic Coxeter torus and  \emph{any} maximal $K$-split torus of $\bG$ must be central.
\end{remark}

As Lemma~\ref{lem:lieintersect3} shows, the Lie algebra version of Corollary~\ref{cor:AcapT} holds for any $K$-minisotropic Cartan subalgebra.   Thus, in characteristic zero we can always conclude that if $\bG$ is semisimple and  $\bT$ is any $K$-minisotropic torus $\bG$, then $A \cap T$ is finite.

\begin{lemma} \label{lem:lieintersect3}
Suppose $k$ is any field of characteristic zero and $\mathfrak{g}$ is the Lie algebra of a reductive $k$-quasi-split group.   Suppose $\mathfrak{b}$ is a Borel $k$-subalgebra of $\mathfrak{g}$ and $\mathfrak{h}$ is a Cartan $k$-subalgebra  of $\mathfrak{b}$.  If $\mathfrak{t}$ is a $k$-elliptic Cartan subalgebra of $\mathfrak{g}$, then $\mathfrak{t} \cap \mathfrak{h}^k$ is $\mathfrak{z} \cap \mathfrak{h}^k$, where $\mathfrak{z}$ denotes the center of $\mathfrak{g}$.
\end{lemma}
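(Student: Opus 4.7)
The plan is to take $X \in \mathfrak{t} \cap \mathfrak{h}^k$ and show that $X$ must be central in $\mathfrak{g}$. Since both $\mathfrak{t}$ and $\mathfrak{h}$ are defined over $k$, we have $\mathfrak{t} \cap \mathfrak{h}^k = \mathfrak{t}(k) \cap \mathfrak{h}(k)$. The element $X$ is semisimple because it lies in the Cartan $\mathfrak{h}$, so in characteristic zero $\bL := C_{\bG}(X)^\circ$ is a connected reductive $k$-subgroup of $\bG$. Because $\bH$ and $\bT$ are both abelian and $X$ lies in each of their Lie algebras, the adjoint action of each torus fixes $X$, so $\bH \subset \bL$ and $\bT \subset \bL$.

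Next I would verify that $\bL$ is the Levi factor of a $k$-parabolic subgroup of $\bG$. Over $\bar{k}$, $\bL$ is the standard Levi corresponding to the subsystem $\Phi_L := \{ \alpha \in \Phi(\bG,\bH) : d\alpha(X) = 0 \}$, which is a Levi subsystem because it is cut out by the linear condition $d\alpha(X) = 0$. The pairing $\alpha \mapsto d\alpha(X)$ is $\Gal(\bar{k}/k)$-equivariant and $X$ is $\Gal$-fixed, so $\Phi_L$ is $\Gal$-stable. Because $\bG$ is $k$-quasi-split with $\bH = C_{\bG}(\bA)$ for a maximal $k$-split torus $\bA$ of $\bG$, this descent from $\bar{k}$ to $k$ promotes $\bL$ to the Levi factor of a $k$-parabolic: up to $\bG(k)$-conjugacy, $\bL = \bL_{\Theta}$ for some $\Gal$-stable subset $\Theta \subset \Pi$ of the simple absolute roots determined by $\bB$.

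I would then exploit the $k$-ellipticity of $\bT$. Since $\bT \subset \bL$ is already a maximal torus of $\bG$, it is a maximal torus of $\bL$ and therefore contains the center $\bZ_\bL$. The maximal $k$-split subtorus of $\bZ_\bL$ is a $k$-split subtorus of $\bT$, so by the $k$-ellipticity of $\bT$ it is contained in $\bS_\bG$, the maximal $k$-split subtorus of $\bZ_\bG$; combined with $\bZ_\bG \subset \bZ_\bL$, we conclude that the maximal $k$-split subtorus of $\bZ_\bL$ equals $\bS_\bG$. Now, for a proper $k$-Levi $\bL_\Theta$ with $\Theta \subsetneq \Pi$ Galois-stable, the maximal $k$-split subtorus of $\bZ_{\bL_\Theta}$ is $\bigcap_{\alpha \in \Theta}\ker(\alpha|_{\bA})$, and because every absolute root of a quasi-split group restricts nontrivially to $\bA$ and distinct Galois orbits in $\Pi$ restrict to independent relative simple roots, this subtorus has strictly greater dimension than $\bS_\bG$. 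Hence $\bL = \bG$, forcing every root in $\Phi(\bG,\bH)$ to vanish at $X$ and giving $X \in \mathfrak{z}(k)$. The reverse inclusion $\mathfrak{z} \cap \mathfrak{h}^k \subset \mathfrak{t} \cap \mathfrak{h}^k$ is automatic since $\mathfrak{z} \subset \mathfrak{t}$.

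The main obstacle is the Galois-descent step: confirming that $\bL = C_{\bG}(X)^\circ$ is indeed the Levi factor of a $k$-parabolic of $\bG$, rather than merely a reductive $k$-subgroup that is a Levi over $\bar{k}$. The quasi-split hypothesis is critical here, since in its absence one can produce Galois-stable Levi subsystems (for example $\{\pm(\alpha_1+\alpha_2)\}$ inside the $A_2$ root system of an unramified $\SU_3$) whose associated Levi $\bar{k}$-subgroups are defined over $k$ but fail to be $k$-Levis of $k$-parabolics. One has to verify that the centralizers arising from $X \in \mathfrak{h}^k$ never fall into this pathology, using the rationality of $X$ together with the correspondence between Galois orbits in $\Pi$ and the relative simple roots with respect to $\bA$.
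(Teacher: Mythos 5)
Your argument follows the same route as the paper's (two-line) proof: the centralizer of $X \in \mathfrak{t} \cap \mathfrak{h}^k$ is a Levi $k$-subalgebra of a parabolic $k$-subalgebra, and an elliptic Cartan cannot lie in a proper such Levi, so the centralizer is all of $\mathfrak{g}$ and $X \in \mathfrak{z}$. The ellipticity half of your argument (comparing the split central torus of a proper $k$-Levi with that of $\bG$) is correct and is just an expanded version of the paper's final sentence. The paper, however, simply quotes the first fact, whereas you try to prove it via Galois descent of the absolute root subsystem $\Phi_L = \{\alpha : d\alpha(X) = 0\}$ — and the step you yourself flag as ``the main obstacle'' is indeed the one place your writeup is incomplete: as written you only use that $X$ is Galois-fixed, hence that $\Phi_L$ is Galois-stable, and Galois-stability of a Levi subsystem alone does not produce a $k$-parabolic, exactly as your $\SU_3$ example shows. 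So, as it stands, the key claim is asserted rather than established.

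The gap closes once you use the full strength of $X \in \mathfrak{h}^k$ rather than merely $X \in \mathfrak{h}(k)$. In the quasi-split case $\mathfrak{h}^k$ is the Lie algebra of the maximal $k$-split torus $\bA$ with $\bH = C_{\bG}(\bA)$, so $d\alpha(X)$ depends only on the restriction $\alpha|_{\bA}$; thus $\Phi_L$ is the full preimage of $\{a \in \Phi(\bG,\bA) : da(X) = 0\}$, the intersection of the relative root system with a rational subspace. Equivalently, in characteristic zero $C_{\bG}(X)^\circ = C_{\bG}(\bS)$ where $\bS = \bigl(\bigcap_{a \in \Phi(\bG,\bA),\, da(X)=0} \ker a\bigr)^\circ \leq \bA$ is a $k$-split subtorus containing $X$ in its Lie algebra, and centralizers of $k$-split tori are Levi factors of parabolic $k$-subgroups. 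This is precisely why the $\SU_3$-type pathology cannot occur here: it arises only for rational elements of $\mathfrak{h}$ that do not lie in $\mathfrak{h}^k$. The same point shows that your opening identification $\mathfrak{t} \cap \mathfrak{h}^k = \mathfrak{t}(k) \cap \mathfrak{h}(k)$ is false in general (only the inclusion $\subset$ holds, and only that inclusion is used). With the centralizer step justified this way, the rest of your argument goes through.
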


Here $\mathfrak{h}^k$ denotes the maximal $k$-split toral subalgebra of $\mathfrak{h}$.

\begin{proof}  Suppose $Y \in \mathfrak{t} \cap \mathfrak{h}^k$.
Then $C_{\mathfrak{g}}(Y)$ is a Levi $k$-subalgebra of a parabolic $k$-subalgebra of $\mathfrak{g}$.  Since $\mathfrak{h} \leq C_{\mathfrak{g}}(Y)$ and $\mathfrak{h}$ is $k$-elliptic, we conclude that $C_{\mathfrak{g}}(Y) = \mathfrak{g}$.  That is, $Y \in \mathfrak{z}$.
\end{proof}

\begin{remark}  In characteristic zero, the proof of Lemma~\ref{lem:coxetertorus} can be modified to work on the  Lie algebra by choosing, for $\psi \in \Delta$, $X_\psi \in \gg_{\psi} \setminus \gg_{\psi^+}$  such that $\Fr (X_\psi) = X_{\Fr (\psi)}$.    Here the subgroups $\gg_{\psi}$ and $\gg_{\psi^+}$ of the root group $\gg_{\dot{\psi}}$ are the Lie algebra analogues of $U_{\psi}$ and $U_{\psi^+}$.   We then replace $\gamma$ with a regular semisimple $Y \in X+ \gg^{\Fr}_{x,r^+}$ where $X =  \sum X_\psi \in \gg^{\Fr}$.
\end{remark}

\begin{rem}
In the tame setting the $G^{\Fr}$-conjugacy class of $C_{\bG}(\gamma)$ is independent of the choice of strongly regular $\gamma \in uG^{\Fr}_{x,r^+}$.  Similarly, in the tame setting in characteristic zero the $G^{\Fr}$-conjugacy class of the Cartan subalgebra $C_{\bgg}(Y)$ is independent of the choice of regular semisimple $Y \in X + \gg^{\Fr}_{x,r^+}$.   This follows from~\cite[Lemma~2.3.2]{adler:refined}.
\end{rem}

\appendix

\section{Two questions about parahoric subgroups of \texorpdfstring{$K$-minisotropic tori}{K-minisotropic tori}}  \label{sec:appendixtwo}

\begin{center}
Mitya Boyarchenko, Stephen DeBacker, Anna Spice, Loren Spice, and  Cheng-Chiang Tsai
\end{center}

We continue to use the notation developed in the main body of this paper.  In particular, $\bG$ is a connected reductive $k$-group, $G$ is the group of $K$-points of $\bG$, $C$ is an alcove in the building of $G$, and $G_{C,0}$ is the parahoric subgroup of $G$ corresponding to $C$.   If $\bT$ is a $K$-torus, then  $\BB(T)$ is the alcove of $\BB(T)$, and we denote by $T_0$ the parahoric subgroup of $T$.

 We explore two natural questions that arise from results in this paper:
\begin{enumerate}
    \item \label{item:splitting} Does the sequence
\begin{equation} \label{equ:exactomega}
1 \longrightarrow G_{C,0} \longrightarrow \Stab_{G}(C) \longrightarrow \Stab_G(C)/G_{C,0} \longrightarrow 1
\end{equation}
discussed in Section~\ref{subsec:omega} always split?
\item If $\bT$ is a maximal  $K$-anisotropic torus in the derived group of $\bG$, is the cardinality of  $T/T_0$  independent of the isogeny class of the derived group?
\end{enumerate}

\subsection{On the splitting of~\ref{equ:exactomega}}

It is known that the sequence~\ref{equ:exactomega} splits whenever $\bG$ is $K$-split and either the derived group of $\bG$ is simply connected or $\bG$ is almost simple~\cite{adrian:remark}.
In this appendix we  show that~\ref{equ:exactomega} also
splits when both $\bG$ is adjoint and a $K$-minisotropic Coxeter $k$-torus (see Definition~\ref{defn:coxetertorus}) in $\bG$ splits over a tame extension of $K$.  We then show that~\ref{equ:exactomega}  does not, in general, split.

Suppose $\bT$ is a $K$-anisotropic maximal torus.  Note that  $T$ is bounded. 
Write $T = T_{p'} \times T_p$ where $T_{p'}$ is the subgroup of elements of order coprime to $p$ and $T_p$ is the pro-$p$-subgroup of $T$. If $T_p = T_0 = T_{0^+}$, which always happens when $\bT$ splits over a tame extension, then the  sequence
\begin{equation*} 
1 \longrightarrow T_0 \longrightarrow T \longrightarrow T/T_0 \longrightarrow 1
\end{equation*}
splits.

\begin{lemma}
    If $\bG$ is an adjoint group and $\bT$ is a $K$-minisotropic Coxeter $K$-torus in $\bG$ that splits over a tame extension, then sequence~\ref{equ:exactomega} splits.
\end{lemma}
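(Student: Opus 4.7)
The plan is to produce a splitting of~(\ref{equ:exactomega}) whose image sits inside $T\subseteq\Stab_G(C)$. Since $\bG$ is adjoint, $\bZ$ is trivial, so the $K$-minisotropy of $\bT$ upgrades to $K$-anisotropy and $\BB(G)=\BB^{\red}(G)$. Because $\bT$ is a Coxeter torus, $x_T\in C'=C$; the associated facet $F$ is then $C$ itself, and Lemma~\ref{lemma:KHR} gives $\Stab_G(C)=TG_{C,0}$, so in particular $T\subseteq\Stab_G(C)$.

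First I would identify $T/T_0$ with $\Omega$. By Lemma~\ref{lem:mapfromtorus}, the composition $T\hookrightarrow\Stab_G(C)\twoheadrightarrow\Omega$ is surjective with kernel $T_C=T\cap G_{C,0}$. Lemma~\ref{lem:coxetertorus} gives $T_C=Z_CT_{0^+}$; adjointness forces $Z_C=1$, and the opening paragraph of this appendix (using $\bT^K=1$ together with tameness) gives $T_{0^+}=T_0$. Hence $T_C=T_0$, so $T/T_0\xrightarrow{\sim}\Omega$.

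Finally, I would split the surjection $T\to T/T_0$ inside $T$ using the decomposition already exhibited at the start of the appendix: since $\bT$ splits over a tame extension we have $T=T_{p'}\times T_p$ with $T_p=T_0=T_{0^+}$, so the inclusion $T_{p'}\hookrightarrow T$ is a canonical section of the quotient $T\twoheadrightarrow T/T_0$. Composing
\[\Omega\xrightarrow{\sim}T/T_0\xrightarrow{\sim}T_{p'}\hookrightarrow T\hookrightarrow\Stab_G(C)\]
yields a section of~(\ref{equ:exactomega}). No step presents a real obstacle; all the ingredients are already isolated in the paper. The conceptual content is that the Coxeter and adjointness hypotheses force $T_C$ to be as small as possible (namely $T_0$), after which tameness supplies the prime-to-$p$ complement inside $T$ that splits the surjection onto $\Omega$.
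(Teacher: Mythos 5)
Your proof is correct and follows essentially the same route as the paper: the paper's own proof cites exactly the same ingredients (the tame decomposition $T = T_{p'} \times T_p$ with $T_p = T_0 = T_{0^+}$ from the opening discussion, Lemma~\ref{lem:mapfromtorus}, and Lemma~\ref{lem:coxetertorus}, with adjointness killing $Z_C$ so that $T_C = T_0$ and $T/T_0 \cong \Omega$), and your section via $T_{p'} \hookrightarrow T \hookrightarrow \Stab_G(C)$ is precisely the intended splitting.
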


\begin{proof}
 This follows from the discussion prior to the statement of the lemma together with  Corollary~\ref{cor:propertyofcoxeter} and Lemmas~\ref{lem:mapfromtorus} and~\ref{lem:coxetertorus}.
\end{proof}

To see that the sequence in~\ref{equ:exactomega} can fail to split in the absence of tameness, even in characteristic zero, consider the following example.  Let $k$ be $\Q_2$, let $i \in \bar{k}$ be a square root of $-1$, and let $E$ be the quadratic ramified extension $K[i]$.  Note that $1+i$ is a uniformizer in $E$.  Let $\bT$ be a $K$-torus such that $\bT(E) = E^\times$ and $T = N_{E/K}^1$, the kernel of the norm map from $E$ to $K$.  From~\cite[Section 7.3]{kottwitz:rational} we have the following commutative diagram.
\begin{center}
\begin{tikzcd}
E^\times = \bT(E) \arrow[r, "\kappa_{\bT(E)}"] \arrow[d, "N"]
&\X_*(\bT) \arrow[d,  "\alpha" ] \\
N_{E/K}^1 = T  \arrow[r, "\kappa_T"]
& X_{*}(\bT)_{\Gal(E/K)}
\end{tikzcd}
\end{center}
Here each of the horizontal maps is a surjective Kottwitz map, $N$ is  defined by $N(x) = x \tau({x})\inv$ where $\tau$ is the nontrivial element of $\Gal(E/K)$, and $\alpha$ is the obvious surjective map.  Since $N(1+i) = i$ and $\alpha (\kappa_{\bT(E)} (1 +  i)) \neq 0$ in $\X_*(\bT)_{\Gal(E/K)} \cong \Z/2 \Z$, we have $\kappa_T(i) \neq 0$.  Note that $\kappa_T(-1) = \kappa_T(i^2) = 2 \kappa_T(i) = 0$.  Since the kernel of $\kappa_T$ is $T_0$,  we conclude that $-1 \in T_0$ and $\bar{i}$, the image of $i$ in $T/T_0$, generates the group $T/T_0 \cong \Z/2 \Z$.  Since the only order two element of $T$ is $-1$, we conclude that the sequence in~\ref{equ:exactomega} does not split.

If we replace $\Q_2$ in the above discussion with $\mathbb{F}_2((t))$ where $\mathbb{F}_2$ is the field with two elements and let $E$ be a quadratic extension of  $\mathbb{F}_2((t))$, then $T/T_0$ has order two, but $T$ has no order two elements and so the sequence in~\ref{equ:exactomega} does not split.

\subsection{Isogeny and the cardinality of \texorpdfstring{$T/T_0$ for $K$-anisotropic $\bT$}{Tquotient for K-anisotropic T}}

If $\bT$ is a $k$-torus in $\SL_2$ corresponding to the norm one elements of a tamely ramified extension and $\bT'$ is the corresponding $k$-torus in $\PGL_2$, then $T/T_0$ and $T'/T_0'$ have the same cardinality.  This is not a general phenomenon, but it is not rare.

From the results of Haines and Rapoport  and of Kottwitz discussed in Section~\ref{sec:someconsequences}  we have $T/T_0 \cong \X_*(\bT)_I$ where $I$ is the inertial group $\Gal(\bar{k}/K)$.  We will study $\X_*(\bT)_I$.

\begin{lemma}
\label{lem:cyclic-norm}
Let \(\Lambda\) be an additive cyclic group of order \(n\), and
\(\tau\) an automorphism of \(\Lambda\).
Let \(m\) be a multiple of \(n\) such that
\(\tau^m\) is trivial,
and put
\(N_{\tau, m}(\lambda) \mathrel{:=} \sum_{i = 0}^{m - 1} \tau^i(\lambda)\)
for all \(\lambda \in \Lambda\).
Then \(N_{\tau, m}\) is identically \(0\) on \(\Lambda\).
\end{lemma}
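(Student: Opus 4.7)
My first step is to pick a generator of \(\Lambda\) to identify \(\Lambda\cong\mathbb{Z}/n\mathbb{Z}\); then \(\tau\) becomes multiplication by some \(a\in(\mathbb{Z}/n\mathbb{Z})^{\times}\) with \(a^m\equiv 1\pmod n\), and \(N_{\tau,m}\) acts on \(\Lambda\) as multiplication by the integer \(S:=\sum_{i=0}^{m-1}a^i\).  The lemma is therefore equivalent to the purely arithmetic statement that \(n\mid S\) whenever \(n\mid m\) and \(a^m\equiv 1\pmod n\), and by the Chinese Remainder Theorem it suffices to verify this with \(n\) replaced by each prime-power factor \(p^e\) of \(n\).

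Having fixed such a prime power, I would exploit the identity \((a-1)S=a^m-1\).  If \(p\nmid a-1\), then \(a-1\) is a unit modulo \(p^e\), so the hypothesis \(p^e\mid a^m-1\) immediately forces \(p^e\mid S\).  If instead \(p\mid a-1\), I would invoke the Lifting-the-Exponent Lemma: for \(p\) odd it yields \(v_p(a^m-1)=v_p(a-1)+v_p(m)\), so dividing by \(a-1\) one obtains \(v_p(S)=v_p(m)\geq v_p(n)=e\), as required.

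The main technical obstacle is the \(p=2\) case, because Lifting-the-Exponent takes a different form when \(a\not\equiv 1\pmod 4\).  When \(4\mid a-1\) the odd-prime argument carries over unchanged.  When \(a\equiv 3\pmod 4\) and \(e=1\), the sum \(S\) has the same parity as \(m\), which is even since \(2=n\mid m\).  When \(a\equiv 3\pmod 4\) and \(e\geq 2\), the relations \(a\equiv -1\pmod 4\) and \(4\mid a^m-1\) force \(m\) to be even, and the appropriate \(2\)-adic variant of LTE gives \(v_2(a^m-1)=v_2(a-1)+v_2(a+1)+v_2(m)-1\); since \(a\equiv 3\pmod 4\) yields \(v_2(a+1)\geq 2\), dividing by \(a-1\) gives \(v_2(S)\geq 1+v_2(m)\geq 1+e\), which more than suffices.
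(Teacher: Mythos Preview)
Your proof is correct and follows the same essential strategy as the paper: reduce by the Chinese Remainder Theorem, identify \(\tau\) with multiplication by an integer, and control the valuation of the geometric-series sum \(S=(a^m-1)/(a-1)\) via a lifting-the-exponent argument.

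The one organizational difference worth noting is that the paper, after reducing \(n\) to a prime power \(\ell^e\), makes a further reduction on \(m\): since \(N_{\tau,m}\) factors through \(N_{\tau^{m/n},\,n}\), one may take \(m\) itself to be a power of \(\ell\). This buys two simplifications. First, because \((\Z/\ell\Z)^\times\) has order prime to \(\ell\) and \(k^m\equiv 1\), one gets \(k\equiv 1\pmod\ell\) automatically, so your ``\(p\nmid a-1\)'' case never arises. Second, the \(\ell\)-adic estimate \(v_\ell(k^m-1)\geq v_\ell(m)+v_\ell(k-1)\) can be stated once uniformly, without the separate \(p=2\) case split you carry out. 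Your approach, by contrast, keeps \(m\) general and compensates with explicit case analysis; the upshot is the same, and your treatment of the \(p=2\) cases is clean and complete.
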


\begin{proof}
By the Chinese Remainder Theorem, we may, and do, assume that \(n\) is a power of some prime \(\ell\), so that \(\Lambda\) is a cyclic module for the \(\ell\)-adic integers \(\Z_\ell\).  Since \(N_{\tau, m}\) factors through \(N_{\tau^{m/n}, n}\), we may, and do, assume that \(m\) is also a power of \(\ell\), as well as a multiple of \(n\) (or even that \(m\) equals \(n\), though we do not need this).

Choose an element \(k \in \Z_\ell\) such that \(\tau\) sends a generator \(\lambda\) of \(\Lambda\) to \(k\lambda\), hence acts as multiplication by \(k\) on all of \(\Lambda\).  Then \(N_{\tau, m}\) acts on \(\Lambda\) as multiplication by the \(\ell\)-adic integer \(\sum_{i = 0}^{m - 1} k^i\).

Since \(\tau^m\), which acts on \(\Lambda\) as multiplication by \(k^m\), is trivial, and the annihilator of \(\Lambda\) in \(\Z_\ell\) is \(n\Z_\ell\), we have that \(k^m\) belongs to \(1 + n\Z_\ell\).  Since \(m\) is a power of \(\ell\) and \(\Z_\ell/(1 + \ell\Z_\ell)\) has order relatively prime to \(\ell\), we conclude that \(k\) belongs to \(1 + \ell \Z_\ell\), and so that the \(\ell\)-adic valuation of \(k^m - 1\) is the sum of the \(\ell\)-adic valuations of \(m\) and \(k - 1\).
  That is, \(k^m - 1\) belongs to \(m(k - 1)\Z_\ell\), so \(\sum_{i = 0}^{m - 1} k^i = \frac{k^m - 1}{k - 1}\) belongs to \(m\Z_\ell \subseteq n\Z_\ell\), and hence multiplication  by \(\sum_{i = 0}^{m - 1} k^i \)  annihilates \(\Lambda\); that is, \(N_{\tau, m}\) is identically \(0\)  on \(\Lambda\).
\end{proof}

We now present a technical result, Proposition \ref{prop:elliptic-order}, on Weyl groups, whose proof seems to require case-by-case analysis.

The quantification in Proposition \ref{prop:elliptic-order} seems elaborate, but note that almost every irreducible root system has a cyclic fundamental group, so the result usually says that the order of \(w\) is divisible by the order of the fundamental group, and in the remaining case (of \(D_n\) with \(n\) even) just says that the order of \(w\) is divisible by \(2\).

\begin{proposition}\label{prop:elliptic-order}
 If \(\Phi\) is an irreducible root system and \(w\) is an elliptic element of \(W(\Phi)\), then the order of \(w\) is divisible by the order of every element of every subquotient of the fundamental group of \(\Phi\).
\end{proposition}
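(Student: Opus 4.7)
The plan is to establish the stronger uniform statement that the exponent of the full fundamental group $P/Q$ (where $P$ is the weight lattice and $Q$ is the root lattice of $\Phi$) divides the order $m$ of $w$. The proposition follows at once from this stronger claim, by the elementary observation that, for any abelian group $A$ of exponent $e$, every subquotient of $A$ has exponent dividing $e$, and in particular every element of every subquotient is annihilated by $e$; so its order divides $e$, and hence divides $m$.

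First, I would record the standard structural input from root-system theory: $W(\Phi)$ acts trivially on $P/Q$. This is because each simple reflection satisfies $s_\alpha(\lambda) = \lambda - \langle \lambda, \alpha^\vee\rangle\alpha \in \lambda + Q$ for every $\lambda \in P$, so each generator of $W(\Phi)$ preserves every coset of $Q$ in $P$. In particular, $w$ itself acts as the identity on $P/Q$.

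The main step is to prove $m \cdot (P/Q) = 0$. Put $V := Q \otimes_{\Z}\R$ and set $N_w := 1 + w + w^2 + \cdots + w^{m-1}$. Ellipticity of $w$ means that $1$ is not an eigenvalue of $w$ on $V$, so $(1-w)$ is an invertible operator on $V$. Since $(1-w)N_w = 1 - w^m = 0$, we conclude $N_w = 0$ on $V$. As $P$ embeds in $V$, the operator $N_w$ vanishes on the lattice $P$, and so induces the zero map on the quotient $P/Q$. But on $P/Q$, where $w$ acts as the identity, the induced endomorphism $N_w$ is simply multiplication by $m$. Therefore $m$ annihilates $P/Q$, i.e., the exponent of $P/Q$ divides $m$, which is exactly what was required.

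This argument avoids any case-by-case analysis of the irreducible types, and I do not anticipate any serious obstacle. The one point worth double-checking is the unique non-cyclic case $\Phi = D_{2\ell}$, where $P/Q \cong (\Z/2\Z)^2$: the exponent here is $2$, and the argument above delivers $2 \mid m$, which matches the proposition's assertion that every element of every subquotient has order at most $2$ dividing $m$. In every other irreducible type the fundamental group is cyclic, so its exponent equals its order, and the conclusion $|P/Q| \mid m$ follows.
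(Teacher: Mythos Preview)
Your proof is correct and is genuinely different from, and considerably cleaner than, the paper's argument. The paper explicitly remarks that the proof ``seems to require case-by-case analysis'' and proceeds accordingly: it reduces via the Borel--de Siebenthal algorithm to elements lying in no proper Weyl subgroup, and then reads off orders from Carter's tables, checking type by type that the required divisibility holds. Your argument replaces all of this with a single uniform observation: ellipticity forces the norm operator $N_w = 1 + w + \cdots + w^{m-1}$ to vanish on $V$, hence on $P$; since $W$ acts trivially on $P/Q$, this vanishing becomes the statement that multiplication by $m$ kills $P/Q$, so the exponent of $P/Q$ divides $m$. The passage to arbitrary subquotients is then immediate. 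What the paper's approach buys is some incidental sharper information (e.g., that elliptic elements in $F_4$ or $G_2$ have order divisible by $12$ or $6$, far beyond what the trivial fundamental group demands), but none of this is needed for the proposition as stated or for its application in Lemma~\ref{lem:Kottwitz-card}. Indeed, your norm-operator argument is essentially the same mechanism that the paper invokes at the end of the proof of Lemma~\ref{lem:Kottwitz-card} to conclude $N_{I_E}[\Lambda]=0$; you have simply recognized that this mechanism already proves the proposition itself, making the case analysis unnecessary.
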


\begin{proof}
Put \(W = W(\Phi)\).  We indicate below the possibilities, according to the Borel--de Siebenthal algorithm, for a maximal proper, but non-parabolic, Weyl subgroup \(W'\) of \(W\).

For \(W\) of type \(A_n\), the maximal proper Weyl subgroups of \(W\) are all parabolic.  Since the Weyl groups of types \(B_n\) and \(C_n\) are the same, we do not distinguish them.

We put \(B_1 = C_1 = A_1\) and \(D_2 = A_1 + A_1\), but do not consider \(D_1\), so that any occurrence of a symbol \(D_m\) carries the implicit constraint \(m > 1\).

\centerline{
\begin{tabular}{c|c}
Type of \(W\) & Type of \(W'\) \\ \hline
\(B_n\) or \(C_n\) & \(C_r + C_{n - r}\), \(D_n\) \\
\(D_n\) & \(D_r + D_{n - r}\) \\
\(E_6\) & \(A_1 + A_5\), \(3A_2\) \\
\(E_7\) & \(A_1 + D_6\), \(A_2 + A_5\), \(A_1 + 2A_3\), \(A_7\) \\
\(E_8\) & \(A_1 + A_2 + A_5\), \(A_1 + A_7\), \(A_1 + E_7\), \(A_2 + E_6\), \(A_3 + D_5\), \(2A_4\), \(A_8\), \(D_8\) \\
\(F_4\) & \(A_1 + A_3\), \(A_1 + C_3\), \(2A_2\), \(B_4\) \\
\(G_2\) & \(A_1 + A_1\), \(A_2\).
\end{tabular}
}

Recall that \(W\) is the Weyl group of \(\Phi\).
For each entry in the chart above,
if \(W'\) is the Weyl group of \(\Phi'\) and \(\Phi''\) is an irreducible component of \(\Phi'\), then
in each case except \(D_n\) with \(n\) even, the fundamental group of \(\Phi\) is (cyclic and) isomorphic to a subgroup of the fundamental group of \(\Phi''\).  Even in this exceptional situation, we have that every cyclic subgroup of the fundamental group of \(\Phi\) is isomorphic to a subgroup of the fundamental group of \(\Phi''\).

Therefore, we may, and do, assume that \(w\) belongs to no proper Weyl subgroup of \(W\).  Then, using \cite[p.~8, Lemma 10]{carter:weyl} to enumerate the possibilities and \cite[p.~23, Table 3]{carter:weyl} to compute their orders, we find:

\centerline{
\begin{tabular}{c|c}
Type of \(W\) & order of \(w\) is divisible by \ldots \\ \hline
\(A_n\) & \(n + 1\) \\ % \((t^{n + 1} - 1)/(t - 1)\)
\(B_n\) or \(C_n\) & \(2n\) \\ % \((t^{2n} - 1)/(t^n - 1)\)
\(D_n\) & \(2\gcd(n - 1, 2)\) \\ % \((t^{2r} - 1)(t^{2s} - 1)/((t^r - 1)(t^s - 1))\)
\(E_6\) & \(3\) \\ % \(\Phi_{12}(t)\Phi_3(t)\), \(\Phi_9(t)\), \(\Phi_6(t)^2\Phi_3(t)\)
\(E_7\) & \(2\) \\ % \(\Phi_{18}(t)\Phi_2(t)\), \(\Phi_{14}(t)\Phi_2(t)\), \(\Phi_{12}(t)\Phi_6(t)\Phi_2(t)\), \(\Phi_{10}(t)\Phi_6(t)\Phi_2(t)\), \(\Phi_6(t)^3\Phi_2(t)\)
\(E_8\) & \(1\) \\ % \(\Phi_{15}(t)\), \(\Phi_{24}(t)\), \(\Phi_{12}(t)^2\), \(\Phi_{18}(t)\Phi_6(t)\), \(\Phi_{30}(t)\), \(\Phi_{10}(t)^2\), \(\Phi_{12}(t)\Phi_6(t)^2\), \(\Phi_6(t)^4\) \\
\(F_4\) & \(12\) \\
\(G_2\) & \(6\).
\end{tabular}
}
The result follows.
\end{proof}

 \begin{lemma}\label{lem:Kottwitz-card}
 Suppose $k$ has characteristic zero and
$\bG$ is split and simple over \(K\).  Suppose $\rho \colon \bG \ra \bH$ is an isogeny. Let $\bT$ be a $K$-anisotropic maximal torus in $\bG$ and let $\bT'$ denote the corresponding torus in $\bH$ under $\rho$.   Let $E$ be the splitting field of $\bT$ over $K$, and suppose that $I_E := \Gal(E/K)$ is cyclic.
We have that  $\X_*(\bT)_{I_E}$ and $\X_*(\bT')_{I_E}$ have the same (finite) cardinality.
 \end{lemma}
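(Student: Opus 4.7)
My approach would be a short linear-algebra argument that bypasses the more elaborate machinery of Lemma~\ref{lem:cyclic-norm} and Proposition~\ref{prop:elliptic-order}. First, since $\rho$ is an isogeny, the induced map $\rho_{*}\colon \X_{*}(\bT)\hookrightarrow \X_{*}(\bT')$ is an $I_{E}$-equivariant injection with finite cokernel. Tensoring with $\Q$ makes it an isomorphism, so I may, and will, identify $\X_{*}(\bT)\otimes\Q$ and $\X_{*}(\bT')\otimes\Q$ with a single finite-dimensional $\Q[I_{E}]$-module $V$ in which both cocharacter lattices sit as full $\Z$-lattices.

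Let $\sigma$ be a generator of the cyclic group $I_{E}$. Because $\bT$ is $K$-anisotropic and $E$ is its splitting field, $\X^{*}(\bT)^{I_{E}}=0$, hence $\X_{*}(\bT)^{I_{E}}=0$, hence $V^{I_{E}}=0$. Since $\sigma$ acts with finite order on $V$, its minimal polynomial splits into distinct linear factors over $\bar{\Q}$, and the vanishing of $V^{I_{E}}$ says exactly that $1$ is not among the eigenvalues of $\sigma$. Therefore the $\Q$-linear endomorphism $\sigma-1$ of $V$ is invertible.

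The cyclicity of $I_{E}$ makes the coinvariants easy to describe: for any $I_{E}$-stable $\Z$-lattice $\Lambda\subset V$ one has $\Lambda_{I_{E}} = \Lambda/(\sigma-1)\Lambda$, and invertibility of $\sigma-1$ on $V$ forces $(\sigma-1)\Lambda$ to be a sublattice of $\Lambda$ of full rank. The standard lattice-index formula then gives
\[
\bigl|\Lambda/(\sigma-1)\Lambda\bigr| \;=\; \bigl|\det\nolimits_{V}(\sigma-1)\bigr|,
\]
a quantity which depends only on the $\Q$-linear map $\sigma-1\colon V\to V$ and not on the choice of $I_{E}$-stable lattice $\Lambda\subset V$. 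Applying this once with $\Lambda=\X_{*}(\bT)$ and once with $\Lambda=\X_{*}(\bT')$ gives $|\X_{*}(\bT)_{I_{E}}| = |\X_{*}(\bT')_{I_{E}}|$, as required.

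Honestly, there is no serious obstacle to carrying out this plan: the entire content is that the determinant of $\sigma-1$ on $V$ is a $V$-invariant, not a lattice-invariant. The only things worth double-checking are that $K$-anisotropy really does pass through the isogeny (it does, since $\rho^{*}\colon \X^{*}(\bT')\hookrightarrow \X^{*}(\bT)$ is $I_{E}$-equivariant and injective, so $\X^{*}(\bT')^{I_{E}}=0$ as well) and that the cyclicity hypothesis is used to identify $\Lambda_{I_{E}}$ with $\Lambda/(\sigma-1)\Lambda$. The hypothesis that $\bG$ is split and simple over $K$ plays no role in my argument; neither does the characteristic-zero hypothesis. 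This suggests that the authors may use their auxiliary results to give a more refined statement (for instance, computing the common cardinality explicitly via $\pi_{1}(\bG)$ and the elliptic Weyl element governing the action of $I_{E}$), but for the bare cardinality equality asserted, the determinant argument above suffices.
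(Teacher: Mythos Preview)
Your argument is correct and takes a genuinely different route from the paper's. The paper proceeds via the long exact sequence in group homology attached to the short exact sequence $0\to\X_*(\bT)\to\X_*(\bT')\to\Lambda\to 0$, and then has to show that the norm map $N_{I_E}$ vanishes on $\Lambda$. For that vanishing, it appeals to Proposition~\ref{prop:elliptic-order}, which is a case-by-case check through the irreducible root systems showing that the order of an elliptic Weyl element is divisible by the order of every element of every subquotient of the fundamental group; this is precisely where the hypotheses that $\bG$ be $K$-split and $K$-simple enter. Your determinant argument sidesteps all of this: once both lattices sit in the same $\Q[I_E]$-module $V$ and $\sigma-1$ is invertible on $V$, the index $|\Lambda/(\sigma-1)\Lambda|=|\det_V(\sigma-1)|$ is manifestly lattice-independent. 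What your approach buys is a shorter, classification-free proof valid for any pair of commensurable $I_E$-stable lattices (in particular, no simplicity or characteristic hypothesis is required). What the paper's approach buys is structural information about \emph{why} the obstruction vanishes in terms of the arithmetic of elliptic Weyl elements and the fundamental group, and it fits into the surrounding narrative about how the result can fail when $I_E$ is not cyclic (where the homological picture remains informative but your determinant identity no longer computes the coinvariants).
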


 \begin{proof}
 Since \(k\) has characteristic zero, the isogeny is separable.
Since \(\bG\) is \(K\)-split and \(K\)-simple, its absolute root system is irreducible.

By assumption, \(I_E\) is cyclic, say of order \(m\).  Choose a generator \(\tau\).

 We have an exact sequence
\[
\begin{tikzcd}
0 \arrow{r}&\X_*(\bT) \arrow{r}{\rho_*} &\ \X_*(\bT') \arrow{r} &\Lambda \arrow{r}&0,
\end{tikzcd}
\]
and $\Lambda$ is a quotient of the (absolute) fundamental group \(\X_*(\bT')/\Z\Phi^\vee(\bH, \bT')\) of $\bH$, hence a subquotient of the fundamental group of the absolute root system \(\Phi(\bH, \bT')\). The short exact sequence above yields the long exact sequence in homology
\[
\begin{tikzcd}
\cohom_1(I_E, \X_*(\bT')) \arrow{r} & \cohom_1(I_E, \Lambda) \arrow{r} & \X_*(\bT)_{I_E} \arrow{r}  & \X_*(\bT')_{I_E} \arrow{r} &\Lambda_{I_E} \arrow{r}&0.
\end{tikzcd}
\]
Since $I_E$ is cyclic of order \(m\) with generator \(\tau\), for every $\Z$-module $A$ on which $I_E$ acts we have $\cohom_1(I_E,A) = A^{I_E}/N_{I_E}[A]$  where $N_{I_E}(a) = \sum_{j=0}^{(m-1)} \tau^j(a)$ for $a \in A$.   Since $\bT$, and so $\bT'$, is $K$-anisotropic, we conclude that
$\cohom_1(I_E, \X_*(\bT'))  = 0$.   Thus, the long exact sequence becomes
\[
\begin{tikzcd}
0 \arrow{r} & \Lambda^{I_E}/N_{I_E}[\Lambda] \arrow{r} & \X_*(\bT)_{I_E} \arrow{r}  & \X_*(\bT')_{I_E} \arrow{r} &\Lambda_{I_E} \arrow{r}&0.
\end{tikzcd}
\]
Since
\[
\begin{tikzcd}
0 \arrow{r} & \Lambda^{I_E} \arrow{r} & \Lambda \arrow{r}{1 - \tau} & \Lambda \arrow{r}  &  \Lambda_{I_E} \arrow{r}&0
\end{tikzcd}
\]
is exact, we conclude that $\dabs{\Lambda^{I_E}} = \dabs{\Lambda_{I_E} }$.
  Thus, the index of $\X_*(\bT)_{I_E}$ in $ \X_*(\bT')_{I_E}$  is $\dabs{N_{I_E}[\Lambda]}$.

 Since $\bG$, hence $\bH$, is $K$-split, it must be the case that there is some elliptic element $w \in \absW$ such that the action of $\Gal(E/K)$ on $\X_*(\bT')$ satisfies $\tau (\lambda) = w(\lambda)$ for all $\lambda \in \X_*(\bT')$.  Then $m$ is the order of $w$, so Proposition \ref{prop:elliptic-order} gives that the order of every element of $\Lambda$ divides $m$.

Moreover, the action of \(I_E\) on the fundamental group of the absolute root system \(\Phi(\bH, \bT')\), hence on its subquotient \(\Lambda\), is trivial.
Thus \(N_{I_E}(\Lambda)\) is multiplication by \(m\), which, by assumption, divides the order of every element of \(\Lambda\); so \(N_{I_E}(\Lambda)\) equals \(0\).
\end{proof}

\begin{remark}
If $I_E$ is not cyclic, then  the cardinality of
 $\X_*(\bT)_{I_E}$ and  $\X_*(\bT')_{I_E}$ need not be the same, even if \(\Lambda\) is cyclic.   For example, suppose that $k$ has characteristic zero, the residue field of $K$ has characteristic $3$, and  $E$ is a Galois extension of $K$ with Galois group $S_3$.   Let $\bT$ be the maximal $k$-torus in $\mathrm{SL}_3$
determined by the standard action of $S_3$ on
$$\X_*(\bT) = \{\sum a_i e_i \, | \, a_1, a_2, a_3 \in \Z \text{ with } a_1 + a_2 + a_3 = 0\}.$$
Here $(e_i \,|\, 1 \leq i \leq 3)$ is the standard basis of $\Z^3$.
Specifically, \(\bT\) may be taken to be \(\ker(\operatorname{Res}_{E'/k}\operatorname{GL}_1 \to \operatorname{GL}_1)\), where \(E'\) is the fixed field of any one of the order-\(2\) elements of \(I_E\).
Let $\bT'$ denote the corresponding $k$-torus in $\mathrm{PGL}_3$ under the isogeny $\rho \colon \mathrm{SL}_3 \rightarrow \mathrm{PGL}_3$.  Then $\X_*(\bT)_{I_E}$ is trivial while $\X_*(\bT')_{I_E}$ is isomorphic to $\mathbb{Z}/3\mathbb{Z}$.
\end{remark}

\section{On \texorpdfstring{$-1$}{-1} invariants and Tits groups of a split simply connected group}  \label{sec:appendixone}

\begin{center}
   Ram Ekstrom
\end{center}
We continue to use the notation of the main body of this paper. Suppose $\bG$ is a simply connected $K$-split group.  This means that $\Esplits = K$ and $\barsigma = 1$.  Since $\bG$ is simply connected, $\X_*(\Ebtorus)$ is spanned by the coroots, ${\Phi}^\vee = \Phi^\vee(\bG,\Ebtorus)$.

Suppose $-1$ belongs to $\absW$ and let $n \in \titsW$ be a representative of $-1$.     In this note we investigate when the group of $n$-fixed points of $N = N_{\Egroup}(\Ebtorus)$ is a Tits group (with respect to some pinning).  We assume $n$ is tame.  Thanks to the next lemma, this  is equivalent to assuming $p$ is not two.

\begin{lemma}
$n^4 = 1$.
\end{lemma}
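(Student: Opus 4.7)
The plan is to read off the claim directly from the structure of the exact sequence
\[
1 \longrightarrow \tau \longrightarrow \titsW \stackrel{\pi}{\longrightarrow} \absW \longrightarrow 1
\]
recalled in \S\ref{sec:realization}. The element $n \in \titsW$ projects to $-1 \in \absW$, so $n^2$ projects to $1$ and therefore lies in the kernel $\tau$. The point is then that $\tau$ has exponent dividing $2$.

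To see this, I would observe that $\tau$ is defined as the subgroup of $\EbtorusErat$ generated by the elements $\check{a}(-1)$ for $a \in \Esimple$. Each generator satisfies
\[
\check{a}(-1)^2 = \check{a}((-1)^2) = \check{a}(1) = 1,
\]
so each has order dividing $2$; since the generators commute (they all lie in the abelian group $\EbtorusErat$), the group $\tau$ they generate is elementary abelian of exponent at most $2$.

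Putting these two facts together gives $n^4 = (n^2)^2 = 1$, as desired. There is no real obstacle here; the result is essentially a formal consequence of the Tits-group exact sequence and the relations $n_a^2 = \check{a}(-1)$ in the chosen pinning. The only thing worth flagging is that the argument uses nothing about $n$ being a specific lift of $-1$ (any lift works) and nothing about tameness of $n$ beyond what is already needed to speak sensibly about $\titsW$.
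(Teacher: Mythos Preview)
Your argument is correct and essentially identical to the paper's: the paper observes that $n^2 \in \titsW \cap \EbtorusErat = \tau$, writes $n^2 = \mu(-1)$ for some $\mu$ in the coroot lattice, and concludes $n^4 = \mu(-1)^2 = 1$. The only cosmetic difference is that you phrase the exponent-$2$ property of $\tau$ in terms of its generators, while the paper phrases it in terms of an arbitrary element $\mu(-1)$.
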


\begin{proof}
Since $(-1)^2 = 1$, we know $n^2 \in \titsW \cap \EbtorusErat$.   Thus, there exists $\mu \in \Z {\Phi}^{\vee}$ such that $n^2 = \mu(-1)$.  The result follows.
\end{proof}

Let $\ell$ denote the order of $n$; so $\ell$ is $2$ or $4$.
Let $\xi$ be $-1$ or $i$ depending on whether $\ell$ is  two or four.

\subsection{The structure of \texorpdfstring{$N^{nt}$ for any $t \in \EbtorusErat_0$}{nt fixed points of N for any t in the rational split torus}}
Recall that $\bG$ is simply connected and $K$-split.
Let $N  = N_{\Egroup}(\Ebtorus)$.

\begin{lemma}  \label{lem:allnbasicallysame}
For all $t \in \EbtorusErat$, the elements $n$ and $n t$ have the same order.  In fact, we have $(n t)^2 = n^2$.
\end{lemma}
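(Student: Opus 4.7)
The plan is to reduce both claims to a direct computation using the fact that the image of $n$ in $\absW$ is the longest element $-1$, which acts on $\Ebtorus$ by inversion.

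First, since $n \in N$ projects to $-1 \in \absW$, conjugation by $n$ sends $t \in \EbtorusErat$ to $t^{-1}$; equivalently, one has the commutation relation
\[
n t = t^{-1} n \qquad \text{for all } t \in \EbtorusErat.
\]
Using this, I expand
\[
(nt)^2 = n t \cdot n t = n \cdot (tn) \cdot t = n \cdot (n t^{-1}) \cdot t = n^2 \cdot t^{-1} t = n^2,
\]
which is the second assertion.

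For the first assertion I would derive the equality of orders as an immediate consequence. The preceding lemma shows $n^4 = 1$, so the order $\ell$ of $n$ is either $2$ or $4$. From $(nt)^2 = n^2$ one gets $(nt)^{2k} = n^{2k}$ for all $k \geq 1$, hence $(nt)^\ell = n^\ell = 1$, so $\mathrm{ord}(nt) \mid \ell$. Conversely, $nt \neq 1$ (otherwise $n = t^{-1} \in \EbtorusErat$, contradicting that $n$ maps to $-1 \neq 1$ in $\absW$), and if $\ell = 4$ then $(nt)^2 = n^2 \neq 1$. In both cases $\mathrm{ord}(nt) = \mathrm{ord}(n)$.

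There is no real obstacle here: the argument is essentially one line of computation, and the main step is just recognizing that the $(-1)$-action inverts $\EbtorusErat$ so the $t$'s telescope against each other inside $(nt)^2$.
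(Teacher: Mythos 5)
Your proof is correct and follows essentially the same route as the paper: the identity $(nt)^2=n^2$ via the commutation $\lsup{n}t=t\inv$ (the paper writes it as $(nt)^2=\lsup{n}t\cdot\lsup{n^2}t\cdot n^2=t\inv t\, n^2$), after which the equality of orders follows since $n^4=1$. Your extra paragraph just spells out the "result follows" step that the paper leaves implicit.
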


\begin{remark}
    Since $n$ is elliptic, $n$ and $nt$ are conjugate by an element of $\EbtorusErat$.  Hence $n$ and $nt$ have the same order.
\end{remark}

\begin{proof}
Note that
$$(n t)^2 =\lsup{n} t \cdot \lsup{n^2}t \cdot n^2 =  t\inv \cdot  t \cdot n^2 = n^2.$$
Since $n$ has order 2 or 4, the result follows.
\end{proof}

\begin{lemma}   \label{lem:natureofminusonefixed}
Fix $t \in \EbtorusErat_0$.
There is a pinning $(\bG, \Ebtorus, \bB, \{X^t_a\}_{a \in \Esimple})$ such that if
$n_a^t$ is the unique element of
$$N \, \bigcap \, \bU_{-a}(\Esplits) \exp(X^t_a) \bU_{-a}(\Esplits) = N \, \bigcap \,  \exp(X^t_a) \bU_{-a}(\Esplits) \exp(X^t_a)$$
for $a \in \Esimple$, then
$$N^{n t} = \langle  n^t_a \colon  a \in \Esimple \rangle.$$
That is, $N^{n t}$ is the Tits group for the pinning  $(\bG, \Ebtorus, \bB, \{X^t_a\}_{a \in \Esimple})$.
\end{lemma}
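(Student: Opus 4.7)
The plan is to reduce to the case \(t = 1\) by an inner automorphism, and then exhibit a Tits group inside \(N^n\) whose size matches \(N^n\) exactly. Since the appendix assumes \(n\) is tame, we have \(p \neq 2\); combined with \(K\) being strictly Henselian with algebraically closed residue field \(\ffc\), Hensel's lemma applied to \(x^2 - u\) gives that every \(u \in R_K^\times\) is a square. Because \(\bG\) is \(K\)-split and simply connected, \(\EbtorusErat_0 \cong (R_K^\times)^r\) (where \(r\) is the rank) is therefore \(2\)-divisible, so I fix \(t_0 \in \EbtorusErat_0\) with \(t_0^2 = t\). Using \(n t_0 n^{-1} = t_0^{-1}\), a short calculation yields \(t_0 (nt) t_0^{-1} = n\), so \(N^{nt} = t_0^{-1} N^n t_0\); thus it suffices to find a pinning for \(t = 1\) and then transport it by \(\operatorname{Ad}(t_0)^{-1}\).

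For \(t = 1\) and each \(a \in \Esimple\), introduce \(\eta_a := n n_a n^{-1} n_a^{-1}\). Since \(n, n_a \in \titsW\) and \(n n_a n^{-1}\) lifts \(s_a \in \absW\), we have \(\eta_a \in \titsW \cap \EbtorusErat = \tau\). The key structural observation is that because \(n\) represents \(-1\), it normalizes the rank-one subgroup \(\bG_a\) generated by \(\bU_{\pm a}\); hence \(n n_a n^{-1} \in \bG_a\), so \(\eta_a \in \bG_a \cap \EbtorusErat = \check{a}(K^\times)\). Intersecting with \(\tau\) (which consists of \(2\)-torsion) forces \(\eta_a \in \check{a}(\mu_2) = \{1, \check{a}(-1)\}\); write \(\eta_a = \check{a}(\epsilon_a)\) with \(\epsilon_a \in \mu_2\). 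Since \(p \neq 2\) implies \(\mu_4 \subset K^\times\), I may select \(c_a \in \mu_4\) with \(c_a^2 = \epsilon_a\) and set \(n_a^{(1)} := \check{a}(c_a) n_a\). A direct calculation gives \(n n_a^{(1)} n^{-1} = \check{a}(c_a^{-1} \epsilon_a) n_a = n_a^{(1)}\), so \(n_a^{(1)} \in N^n\), and by the recipe of Section~\ref{sec:realization} it is the Tits element attached to the pinning obtained by replacing \(X_a\) with \(c_a X_a\).

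To conclude for \(t = 1\), I compare cardinalities. The subgroup \(\langle n_a^{(1)} : a \in \Esimple\rangle\) is a Tits group, hence of order \(|\tau| \cdot |\absW| = 2^r |\absW|\). On the other hand, the cocycle calculation shows that, for any \(w \in \absW\), the lifts \(n_w s \in N^n\) are in bijection with the solutions \(s \in \EbtorusErat\) of \(s^2 = n n_w n^{-1} n_w^{-1} \in \tau\); because \(\mu_4 \subset K^\times\), the squaring map \(\mu_4^r \twoheadrightarrow \mu_2^r\) is surjective with kernel of order \(2^r\), so there are \(2^r\) solutions for each \(w\), giving \(|N^n| = 2^r |\absW|\) and hence equality \(\langle n_a^{(1)} \rangle = N^n\). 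Transporting back via \(\operatorname{Ad}(t_0)^{-1}\), I obtain \(N^{nt} = \langle n_a^t \rangle\) with \(n_a^t := t_0^{-1} n_a^{(1)} t_0 = \check{a}(c_a\, a(t_0)^{-1}) n_a\), which is the Tits element for the pinning \(X_a^t := c_a\, a(t_0)^{-1} X_a\). The main obstacle in this plan is the identification \(\eta_a \in \check{a}(\mu_2)\), whose proof rests on the elementary but essential fact that any lift of \(-1 \in \absW\) normalizes each rank-one subgroup \(\bG_a\); the remaining ingredients are Hensel lifting, a routine cocycle computation, and the cardinality count sketched above.
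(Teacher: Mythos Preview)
Your argument is correct and shares the paper's key observation: for each simple root $a$, the commutator $\eta_a = n\, n_a\, n^{-1} n_a^{-1}$ lies in $\bG_a \cap \tau = \check{a}(\mu_2)$, so a suitable rescaling of $X_a$ makes the associated Tits element $nt$-fixed. The packaging differs in two places. First, the paper handles general $t$ directly rather than reducing to $t=1$ via a square root $t_0$; your reduction is a pleasant simplification. Second, the paper replaces your cardinality count with the one-line conclusion $N^{nt} = \titsW^t \cdot (\EbtorusErat)^{nt} = \titsW^t \cdot (\EbtorusErat)^{n} = \titsW^t$: once the generators $n_a^t$ are $nt$-fixed, all of $\titsW^t$ is, so any $ws \in N^{nt}$ with $w \in \titsW^t$ forces $s \in (\EbtorusErat)^{n} = \mu_2(\EbtorusErat)$, and simple connectedness gives $\mu_2(\EbtorusErat) = \tau \subset \titsW^t$. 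This is shorter than the counting argument. One small slip in your write-up: the fixed-point condition for a lift $n_w s$ is $s^2 = n_w^{-1}(n\, n_w\, n^{-1})$, not $s^2 = n\, n_w\, n^{-1} n_w^{-1}$; the two differ by conjugation by $n_w$, so both lie in $\tau$ and your count of $2^r$ solutions per $w$ is unaffected.
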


\begin{proof}
If $a \in \Esimple$, then
    \begin{align*}
        \lsup{nt}n_a = \check{a}(a(t))^{-1} \cdot \lsup{n}n_a.
    \end{align*}
Recall that $n_a$ is the unique element of $N \cap \bU_{-a}(\Esplits) \exp(X_a) \bU_{-a}(\Esplits)$. Because $\lsup{n} n_a$ and $n_a$ have the same image in $\absW$ and $n$ normalizes $\bG_a$, we have that  $n_a^{-1}\cdot\lsup{n}n_a$ lies in both $\Egroup_a \cap \EbtorusErat$ and $\titsW \cap \EbtorusErat$. So $n_a^{-1}\cdot \lsup{n}n_a$ is in the image of $\check{a}$ and has order either one or two. In the former case $n_a$ is fixed by $n$ while in the latter $\lsup{n}n_a = n_a \check{a}(-1)$.
If $\lsup{n}n_a = n_a$, then
define $y_a^t \in R_{K}$ to be a square root of $a(t)$.    If  $\lsup{n}n_a = n_a \check{a}(-1)$, then define $y_a^t$ to be a square root of $-a(t)$.
 Observe that these elements exist since the polynomial $X^2 - a(t)$ has roots in $R_{\tilde{K}}^\times = R_K^\times$.  This is because,  modulo $\wp_{\Esplits}$, $X^2 - a(t)$ has simple roots over the algebraically closed residue field, $p$ is not equal to two, and $\Esplits$ is Henselian. Put $X^t_a = (y_a^t)^{-1} \cdot X_a$ and note that $n^t_a = \lsup{\check{a}(f_a^t)}n_a = n_a \check{a}(y^t_a)$ where $f_a^t$ is a square root of $(y_a^t)^{-1}$. We then calculate that if $\lsup{n}n_a = n_a$, then
    \begin{align*}
        \lsup{nt}n^t_a = \check{a}(a(t))^{-1}\cdot n_a\cdot \check{a}(y_a^t)^{-1} = n_a \cdot (\check{a}(a(t))\cdot  \check{a}(y_a^t)^{-1}) = n_a\check{a}(y^t_a) = n_a^t.
    \end{align*}
While if $\lsup{n}n_a = n_a \check{a}(-1)$, then we similarly find
    \begin{align*}
        \lsup{nt}n_a^t = n_a \cdot ( \check{a}(a(t)) \cdot \check{a}(-1) \cdot \check{a}(y_a^t)^{-1}) = n_a\check{a}(y_a^t) = n_a^t.
    \end{align*}
Consequently the group $\titsW^t =  \langle  n^t_a \colon  a \in \Esimple \rangle$ is a Tits group for the pinning  $(\bG, \Ebtorus, \bB, \{X^t_a\}_{a \in \Esimple})$ and  is fixed by $nt$. Since $N = \titsW^t \EbtorusErat$ and $\bG$ is simply connected, we find
    \[
        N^{nt} = \titsW^t \cdot (\EbtorusErat)^{nt} = \titsW^t \cdot (\EbtorusErat)^n = \titsW^t. \qedhere \]
\end{proof}

\begin{remark}  \label{rem:useofminusonefixed}
    Note that  $N^{n} = \titsW^1$, and it can happen that $\titsW^1 \neq \titsW$.
\end{remark}

\begin{remark}
Simply connectedness is necessary in the above lemma. For example for the non-simply connected group $\bG = \PGL_2$ with $\Ebtorus$ the diagonal torus, consider the matrices $$n =\begin{bmatrix}
0 & 1 \\
1 & 0
\end{bmatrix}, \, t_0 = \begin{bmatrix} -1 & 0 \\ 0 & 1 \end{bmatrix}.$$
Then $n=n_a$ is a nontrivial representative of the Weyl group $N_{\tilde{G}}(\Ebtorus)/\EbtorusErat$ and one calculates that $(N_{\tilde{G}}(\Ebtorus))^n$ is the four element group generated by $n$ and $t_0$. On the other hand, preserving the notation of the above lemma, one checks that since $n_a$ is fixed by $n_a$ we have that $y^1_a$ can be taken to be $1$, and $n_a^1 = n_a \check{a}(y^1_a) = n_a$. Consequently $\langle  n^1_a \colon  a \in \Esimple \rangle = \{1, n\}$ is a Tits group and is a proper subgroup of $(N_{\tilde{G}}(\Ebtorus))^n$.
\end{remark}

\subsection{Results about \texorpdfstring{$n$}{n}}

\begin{lemma}  \label{lem:A.2.1}
For all $t \in \EbtorusErat$ we have that
 $n^2 = (nt)^2$ is central.
\end{lemma}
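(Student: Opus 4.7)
The plan is to combine the two assertions by first using Lemma~\ref{lem:allnbasicallysame} to dispatch the equality $n^2 = (nt)^2$, and then to deduce centrality via a clean $\absW$-invariance argument.

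First, Lemma~\ref{lem:allnbasicallysame} directly gives $(nt)^2 = n^2$ for every $t \in \EbtorusErat$, so it remains to prove that $n^2$ lies in $\bZ$. To begin, I would note that since $n \in \titsW$ projects to $-1 \in \absW$ and the kernel of $\titsW \sra \absW$ is $\tau \leq \EbtorusErat$, the element $n^2$ lies in $\EbtorusErat$.

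Next I would exploit the fact that $-1 \in \absW$ is central: for any $w \in \absW$ with any lift $\tilde{w} \in N$, the conjugate $\tilde{w} n \tilde{w}^{-1}$ is another lift of $w \cdot (-1) \cdot w^{-1} = -1$, so it has the form $nt'$ for some $t' \in \EbtorusErat$. Applying Lemma~\ref{lem:allnbasicallysame} a second time yields
\[
\tilde{w} \, n^2 \, \tilde{w}^{-1} \;=\; (\tilde{w} n \tilde{w}^{-1})^2 \;=\; (nt')^2 \;=\; n^2.
\]
Hence conjugation by every element of $N$ fixes $n^2$, so, since $n^2 \in \EbtorusErat$ and the $N$-action on $\EbtorusErat$ factors through $\absW$, we conclude $n^2 \in \EbtorusErat^{\absW}$.

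The final step is to identify $\EbtorusErat^{\absW}$ with $\bZ$. This is where simple-connectedness is used: because $\bG$ is simply connected, the coroots $\{\check{\alpha} : \alpha \in \Esimple\}$ form a $\Z$-basis of $\X_*(\Ebtorus)$, so each $\check{\alpha}_i$ is an embedding of $\bG_m$ into $\Ebtorus$. Using the identity $s_i(t) = t \cdot \check{\alpha}_i(\alpha_i(t))^{-1}$ for simple reflections, $\absW$-invariance of $t$ forces $\check{\alpha}_i(\alpha_i(t)) = 1$ and hence $\alpha_i(t) = 1$ for every simple root $\alpha_i$; thus $\alpha(t) = 1$ for every root $\alpha$, and $t$ centralizes $\Ebtorus$ together with all root groups, so $t \in \bZ$. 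Applied to $t = n^2$ this gives the centrality claim.

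The ``main obstacle'' is really a bookkeeping point rather than a technical one: one has to recognize that the conclusion of Lemma~\ref{lem:allnbasicallysame} can be fed into itself, applied to the conjugated lift $\tilde{w}n\tilde{w}^{-1}$, in order to promote the trivial equality $(nt)^2 = n^2$ into $\absW$-invariance of $n^2$. Once this observation is made, the reduction $\EbtorusErat^{\absW} = \bZ$ under simple-connectedness is standard, and no computation with Tits' explicit formula $n_{w_0}^2 = (2\check{\rho})(-1)$ is required.
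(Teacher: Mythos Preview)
Your argument is correct and takes a genuinely different route from the paper.

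The paper proves centrality of $(nt)^2$ by invoking the specific pinning $\{X^t_a\}$ built in Lemma~\ref{lem:natureofminusonefixed}: since $nt$ fixes each $n^t_a$ and $\Ad((nt)^2)$ preserves $\bU_{-a}(\Esplits)$, the uniqueness characterization of $n^t_a$ forces $\Ad((nt)^2) X^t_a = X^t_a$ for every simple $a$, whence $(nt)^2$ fixes the whole pinning and is therefore central. Your proof bypasses Lemma~\ref{lem:natureofminusonefixed} entirely: you observe that any $N$-conjugate of $n$ again lifts $-1$, hence equals $nt'$ for some $t'\in\EbtorusErat$, and then feed Lemma~\ref{lem:allnbasicallysame} back into itself to get $\tilde w\,n^2\,\tilde w^{-1}=(nt')^2=n^2$; this places $n^2$ in $(\EbtorusErat)^{\absW}$, which you identify with $\bZ$ using injectivity of the simple coroots in the simply connected case. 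Your route is shorter and more self-contained, needing only Lemma~\ref{lem:allnbasicallysame} and the standard fact $(\EbtorusErat)^{\absW}=\bZ$; the paper's route, on the other hand, ties the result to the explicit Tits-group description of $N^{nt}$ already established, which is thematically consistent with the appendix but logically heavier for this particular lemma.
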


\begin{proof}
Fix $t \in \EbtorusErat$.   From Lemma~\ref{lem:allnbasicallysame}  we know that $n^2 = (nt)^2$.
It will be enough to show that for all $a \in \Esimple$ we have $\lsup{(nt)^2}X^t_a = X^t_a$.

Fix $a \in \Esimple$.
Since $\lsup{nt}n^t_a = n^t_a$ and $\Ad({(nt)^2}) \bU_{-a}(\Esplits) = \bU_{-a}(\Esplits)$, we have
$$N_{\Egroup}(\Ebtorus) \, \bigcap \,  \bU_{-a}(\Esplits) \exp(X^t_a) \bU_{-a}(\Esplits)  =
N_{\Egroup}(\Ebtorus) \, \bigcap \, \bU_{-a}(\Esplits) \exp(\lsup{(nt)^2}X^t_a) \bU_{-a}(\Esplits).$$
A calculation shows this can happen if and only if $\lsup{(nt)^2}X^t_a = X^t_a$.
\end{proof}

\begin{cor} \label{cor:A.2.2}
If $n$ has order four, then for all $t \in \EbtorusErat$ we have $n^2= (n t)^2 = \lambda_n(-1)$.
\end{cor}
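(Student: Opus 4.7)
The plan is to deduce this corollary directly from Lemma~\ref{lem:Asharpdone} of the main body together with the centrality established in Lemma~\ref{lem:A.2.1}. Because $\bG$ is $K$-split we have $\Esplits = K$ and $\barsigma = 1$, so the order $\ell_n$ of $n\barsigma$ in $\titsW \rtimes \EGal$ coincides with the order $\ell$ of $n$ itself. Under the hypothesis that $n$ has order four, we are therefore in the setting of Section~\ref{sec:markssec} with $\ell_n = 4$, and $\xi \in K$ can be taken to be a primitive fourth root of unity.

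First I would apply Lemma~\ref{lem:Asharpdone}: there exists $h \in \Egroup_{x_0,0}$ and $\lambda_n \in \X_*(\bA) = \X_*(\Ebtorus)$ such that
\[
h\inv n\, \sigma(h) = \lambda_n(\xi).
\]
Since $\sigma$ acts trivially on $\Egroup$ in the $K$-split case, this reads $h\inv n h = \lambda_n(\xi)$. Squaring both sides gives
\[
h\inv n^2 h \;=\; \lambda_n(\xi)^2 \;=\; \lambda_n(\xi^2) \;=\; \lambda_n(-1).
\]

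Next I would invoke Lemma~\ref{lem:A.2.1}, which asserts that $n^2$ is central in $\bG$. In particular $n^2$ is fixed by conjugation by $h$, so $h\inv n^2 h = n^2$, and combining with the previous display yields $n^2 = \lambda_n(-1)$. Finally, Lemma~\ref{lem:allnbasicallysame} gives $(nt)^2 = n^2$ for every $t \in \EbtorusErat$, and the desired equality $(nt)^2 = n^2 = \lambda_n(-1)$ follows.

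There is no substantive obstacle here: once one recognizes that the triviality of $\sigma$ in the split case collapses the $\sigma$-conjugacy relation of Lemma~\ref{lem:Asharpdone} into an ordinary conjugacy relation, the only point to verify is that $n^2$ is indeed conjugation-invariant, which is exactly what Lemma~\ref{lem:A.2.1} provides. The proof is essentially a two-line computation built on the earlier results.
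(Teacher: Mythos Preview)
Your proof is correct and follows essentially the same approach as the paper's: both invoke the relation $h\inv n h = \lambda_n(\xi)$ from Section~\ref{sec:markssec} (you give a bit more detail on why the $\sigma$-conjugacy collapses to ordinary conjugacy in the $K$-split case), square it to obtain $h\inv n^2 h = \lambda_n(-1)$, and then apply the centrality of $n^2$ from Lemma~\ref{lem:A.2.1} together with Lemma~\ref{lem:allnbasicallysame}.
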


\begin{proof}
Recall from Section~\ref{sec:markssec} that there is an $h \in \Egroup$ such that $h\inv n h = \lambda_n(i)$.   Thus,
$$\lambda_n(-1) = (h\inv n h)(h\inv n h) = h\inv n^2 h.$$
Since $n^2$ is central, the conclusion follows.
\end{proof}

\begin{lemma} \label{lem:Frfixed}
    If $\bG$ is $k$-quasi-split, $K$-split, then we may assume $\Fr(n) = n$.
\end{lemma}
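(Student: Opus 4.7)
The plan is to replace the given lift $n \in \titsW$ of $-1$ by the canonical Tits lift of the longest element $w_0 = -1 \in \absW$.  Because $\bG$ is $k$-quasi-split, I would first arrange that the pinning $(\bG, \Ebtorus, \bB, \{X_a\}_{a \in \Esimple})$ is $\Fr$-stable, so that the induced $\Fr$-action on $\titsW$ permutes the Tits generators according to $\Fr(n_a) = n_{\Fr(a)}$ for every $a \in \Esimple$.

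Next, I would invoke Tits' theorem that the assignment $w \mapsto n_w$, defined via $n_w := n_{a_{i_1}} \cdots n_{a_{i_\ell}}$ for any reduced expression $w = s_{a_{i_1}} \cdots s_{a_{i_\ell}}$ in $\absW$, yields a well-defined set-theoretic section of $\pi \colon \titsW \rightarrow \absW$ (i.e.\ is independent of the chosen reduced expression).  Together with the preceding paragraph this immediately upgrades to the $\Fr$-equivariance $\Fr(n_w) = n_{\Fr(w)}$ for every $w \in \absW$.  Since $-1 \in \absW$, it must coincide with the longest element $w_0$ (as $-1$ sends every positive root to a negative root), and $w_0$ is fixed by the diagram automorphism induced by $\Fr$ because any automorphism of the Dynkin diagram preserves $\Phi^+$, hence preserves $w_0$.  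Therefore $n_{w_0}$ is $\Fr$-fixed.

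Finally, I would observe that any two lifts of $-1$ in $\titsW$ differ by multiplication by an element of $\tau \leq \EbtorusErat$, and that Lemmas~\ref{lem:allnbasicallysame},~\ref{lem:natureofminusonefixed},~\ref{lem:A.2.1}, and Corollary~\ref{cor:A.2.2} are stated and proved uniformly for \emph{any} lift of $-1$ in $\titsW$.  Consequently replacing the original $n$ by $n_{w_0}$ loses no generality, and the replacement satisfies $\Fr(n) = n$, as desired.  The only real substance in the argument is the independence of the Tits lift from the reduced expression --- that is, Tits' original theorem --- which is then leveraged to transport the $\Fr$-equivariance from the generators $n_a$ to the entire canonical section.
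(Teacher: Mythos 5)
Your proposal is correct and follows essentially the same route as the paper: the paper takes the pinning to be a Chevalley--Steinberg system (which is exactly how one "arranges the pinning to be $\Fr$-stable" in the $k$-quasi-split, $K$-split setting), so that $\Fr(n_a)=n_{\Fr(a)}$, and then uses the independence of the canonical lift $n_w$ from the chosen reduced expression to conclude that the canonical lift of $-1=w_0$ is $\Fr$-fixed. Your closing remark that any two lifts of $-1$ differ by an element of $\tau$ and that the surrounding results hold for an arbitrary lift is a reasonable way to justify the "we may assume," consistent with how the lemma is used in the paper.
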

\begin{proof}
If $\bG$ is $k$-quasi-split and $K$-split, then we may assume $\{X_a\}_{a \in \Esimple}$ is a Chevalley-Steinberg system for $\bG$ (see \cite[Section 2.9]{kaletha-prasad:bruhat-tits}).   Note that we then have $\Fr(n_a)= n_{\Fr(a)}$ for all $a \in \Esimple$.

If $w \in \absW$ has a reduced decomposition $w = s_{a_1} \cdots s_{a_r}$ where $a_i \in \Esimple$, then the lift $n_w$ of $w$ to $\titsW$ defined by $n_w = n_{a_1} \cdots n_{a_r}$ is independent of the reduced decomposition of $w$ \cite[Proposition 2.9.11]{kaletha-prasad:bruhat-tits}. It thus follows that if $-1 \in \absW$ has a reduced decomposition $-1 = s_{a_1} \cdots s_{a_m}$, then $\Fr(n) = \Fr(n_{a_1} \cdots n_{a_m}) = n_{\Fr(a_1)} \cdots n_{\Fr(a_m)} = n$ since $s_{\Fr(a_1)} \cdots s_{\Fr(a_m)}$ is another reduced decomposition of $-1$. \qedhere
\end{proof}

\begin{remark}
If $\bG$ is not quasi-split, then it may be the case that $n$ cannot be Frobenius fixed. As an example, let $D$ be a central division algebra over $k$ of index $2$ and let $\bG = \mathrm{SL}_{1,D}$. Then our $n$ fixes exactly one vertex of the Bruhat-Tits building $\BB(G)$ while $\Fr$ fixes exactly the barycenter $x$ of an alcove $C$. This latter fact follows from the fact that since $\{x\}$ is the underlying set of $\BB(\bG, k)$, the reductive quotient $\bfG_x$ is defined over $\ff$ and therefore is quasi-split by Lang's theorem. And so a Borel subgroup $\bfB \subset \bfG_x$ defined over $\ff$ is a torus since $\bG$ is anisotropic, whence $\bfG_x$ is an $\ff$-torus. This implies that $x$ is an interior point of $C$ (necessarily the barycenter). As a consequence one concludes that $n$ cannot be Frobenius fixed.
\end{remark}

\subsection{Results about \texorpdfstring{$n_F$}{nF}}
Recall that $n_\Fr$ and $n_F = \lsup{\lambda_n\inv(\pi)}n_\Fr$ are defined in Definitions~\ref{defn:nfrob} and~\ref{defn:nfrob2}.
  By construction, $n_F \in N_{\bG(K_n)_{x_0,0}}(\Ebtorus)$ normalizes $N^{n \lambda_n(\xi)}$.  In this section we assume $\bG$ is $K$-split, $k$-quasi-split and simply connected.

From Lemma~\ref{lem:Frfixed}, we may and do assume $\Fr(n) = n$.
From Lemma~\ref{cor:acomputationofFr2} there exists $z \in Z_0 = \tilde{Z}_0$  such that
$\Fr \left[ N_q (n z \lambda_n(\xi)) \right] = n_F\inv (n z \lambda_n(\xi)) n_F$.  Since $-1 \in W$,  the center of $\bG$ is an elementary abelian two group and so
 $z^2 = 1$.   Since $Z_0 = Z_{0^+}$ and $p \neq 2$, we conclude that $z = 1$.   Thus  $\Fr \left[ N_q (n  \lambda_n(\xi)) \right] = n_F\inv (n \lambda_n(\xi)) n_F$.

\begin{lemma}
We have  $n_F \in N^{n \lambda_n(-1)}$.
\end{lemma}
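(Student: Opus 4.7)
My plan is to exploit the identity just established, namely
\[\Fr\bigl[N_q(n\lambda_n(\xi))\bigr] \;=\; n_F^{-1}\,(n\lambda_n(\xi))\,n_F,\]
by computing the left-hand side explicitly and recognising the outcome as $n\lambda_n(-1)$, whence $n_F$ centralises $n\lambda_n(-1)$.

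First I would note that since $\bG$ is $K$-split and we have arranged $\Fr(n)=n$, the element $n$ belongs to $\bG(K)$; moreover $\xi$, being a root of unity of order prime to $p$, lies in $K^{\times}$. Consequently $\sigma$ fixes $n\lambda_n(\xi)$, and so $N_q(n\lambda_n(\xi))$ collapses to the ordinary $q$-th power $(n\lambda_n(\xi))^q$. Next, Lemma~\ref{lem:allnbasicallysame} gives $(n\lambda_n(\xi))^2 = n^2$, and Lemma~\ref{lem:A.2.1} tells us that $n^2$ is central; since $q$ is odd I would then write $(n\lambda_n(\xi))^q = (n^2)^{(q-1)/2}\cdot n\lambda_n(\xi)$.

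Finally I would apply $\Fr$, using $\Fr(n)=n$ and the Galois action on the root of unity $\xi$. In the case $\ell=2$ this is immediate: $\xi=-1$, $n^2=1$, and $\Fr(-1)=-1$, so the expression collapses at once to $n\lambda_n(-1)$, giving $n_F \in N^{n\lambda_n(-1)}$ directly. In the case $\ell=4$ one has $n^2=\lambda_n(-1)$ by Corollary~\ref{cor:A.2.2}, and I would analyse the two subcases $q\equiv 1,\,3 \pmod 4$ separately, tracking $\Fr(i)$ in each, and exploiting the centrality of $\lambda_n(-1)=n^2$ so that the factors recombine to $n\lambda_n(-1)$.

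The hard part will be the $\ell=4$ case: the naive computation of $\Fr[(n\lambda_n(i))^q]$ lands on $n\lambda_n(\xi)=n\lambda_n(i)$, and bridging the gap to $n\lambda_n(-1)$ requires care. The idea is that because $\lambda_n(-1)$ is central one has $N^{n\lambda_n(-1)} = N^{n^3} = N^n$, and one may pass between the Tits groups $N^{n\lambda_n(\xi)}$ and $N^{n\lambda_n(-1)}$ by conjugating $n\lambda_n(\xi)$ via an element $\lambda_n(\eta)\in \Ebtorus$ with $\eta^{2}=-\xi$ (such an $\eta$ exists in $K$ since $K$ contains all roots of unity of order prime to $p$); one must then verify that $n_F$ commutes with this $\lambda_n(\eta)$, which will come down to a statement about the image $w_F$ of $n_F$ in $\absW$ acting on $\lambda_n$.
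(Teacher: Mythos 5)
Your computation is exactly the paper's: after noting $z=1$ and invoking Corollary~\ref{cor:acomputationofFr2}, both you and the paper observe that in the $K$-split setting $N_q(n\lambda_n(\xi))$ is the ordinary power $(n\lambda_n(\xi))^q$, evaluate it using $(n\lambda_n(\xi))^2=n^2$ (Lemma~\ref{lem:allnbasicallysame}) and the centrality of $n^2$ (Lemma~\ref{lem:A.2.1}, Corollary~\ref{cor:A.2.2}), and then apply $\Fr$ with $\Fr(n)=n$, $\Fr(\lambda_n)=\lambda_n$, splitting into $q\equiv 1,3 \pmod 4$ when $\ell=4$. In the case $\ell=2$ this finishes the proof just as you say, and in the case $\ell=4$ it yields, in both your write-up and the paper's, that $n_F$ centralizes $n\lambda_n(i)$, i.e.\ $n_F\in N^{n\lambda_n(\xi)}$.

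The divergence is your final bridging step, and as written it is a gap: to pass from $N^{n\lambda_n(\xi)}$ to $N^{n\lambda_n(-1)}=N^{n}$ (equality because $n\lambda_n(-1)=n^{3}=n^{-1}$) you need $n_F$ to commute with $\lambda_n(\eta)$ for an eighth root of unity $\eta$, equivalently $(w_F\lambda_n)(\eta)=\lambda_n(\eta)$, and nothing you have established implies this. All that $n_F\in N^{n\lambda_n(i)}$ gives is the relation $n_F\,n\,n_F^{-1}=n\,\lambda_n(i)\,(w_F\lambda_n)(i)^{-1}$, which does not force $w_F$ to fix $\lambda_n$ modulo the relevant torsion; so the commutation you defer to ``a statement about $w_F$ acting on $\lambda_n$'' is precisely the missing content, not a routine verification. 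You should know, however, that the paper does not perform this step either: in the order-four case its proof ends with the conclusion $n_F\in N^{n\lambda_n(i)}$, i.e.\ the $\xi$-version of the statement, and it is this version (the group $N^{n\lambda_n(\xi)}$ and its $\Fr$-twisted conjugacy classes) that is used in the concluding corollary of the appendix. So your proposal reproduces everything the paper actually proves; the literal claim with $\lambda_n(-1)$ when $n$ has order four would require an additional argument (for instance, showing $w_F\lambda_n=\lambda_n$), which neither you nor the paper supplies, and which is automatic only when $\ell=2$, where $\xi=-1$.
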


\begin{proof}
Since $n^2$ is central, we have $\Ad(n) \lambda_n(\xi) \cdot \Ad(n^2) \lambda_n(\xi)$ is trivial.  Thus, $N_q(n \lambda_n(\xi)) = \Ad(n^q)(\lambda_n (\xi)) \cdot n^q$.

Suppose first that $n^2 = 1$.  In this case we have $\xi = -1$, and so
$N_q(n \lambda_n(-1)) = \Ad(n^q)(\lambda_n (-1)) \cdot n^q = \Ad(n) \lambda_n(-1) \cdot n = \lambda_n(-1) n = n \Ad(n\inv) \lambda_n(-1)$.   Thus,
from Corollary~\ref{cor:acomputationofFr2} we conclude that
$$ n \lambda_n(-1)   =  n \lambda_n (-1)  = \Fr( N_q( n \lambda_n(-1) ) = (n_F)\inv \cdot (  n \lambda_n(-1) ) \cdot  n_F.$$
Consequently, $n_F \in N^{n \lambda_n(-1)}$.

Suppose now that $n$ has order four. In this case we can take $\xi = i$, and so
$N_q(n \lambda_n(i)) = \Ad(n^q)(\lambda_n (i)) \cdot n^q$.   We have two cases: $q  \equiv 1  \, (4)$ and $q  \equiv 3  \, (4)$

Suppose first that $q  \equiv 1  \, (4)$.
In this case $\Fr(i) = i$ and
$N_q(n \lambda_n(i)) = \Ad(n)(\lambda_n (i)) \cdot n = n \lambda_n (i)$.
Thus, from Corollary~\ref{cor:acomputationofFr2} we conclude that
$$ n \lambda_n(i) = n \lambda_n (i)  = \Fr[ N_q(( n  \lambda_n(i)))] = (n_F)\inv \cdot ( n \lambda_n(i) ) \cdot  n_F,$$
and so  $n_F \in N^{n \lambda_n(i)}$.

Suppose now that $q  \equiv 3  \, (4)$.  In this case $\Fr(i) = -i$ and
$N_q(n \lambda_n(i)) = \Ad(n^3)(\lambda_n (i)) \cdot n^3$.  From Lemma~\ref{lem:A.2.1} and Corollary~\ref{cor:A.2.2}, $n^2 = \lambda_n(-1)$ is central, so
$$N_q(n \lambda_n(i)) = \Ad(n) \lambda_n (i) \cdot n \lambda_n(-1) = \lambda_n(-i) n \lambda_n(-1) = n \cdot \Ad(n\inv) \lambda_n(-i) \cdot \lambda_n(-1) = n \lambda_n(-i).$$
Thus,
from Corollary~\ref{cor:acomputationofFr2} we conclude that
$$n \lambda_n(i)  = \Fr [ n \lambda_n(-i)]  = \Fr[N_q (n \lambda_n(i))]= (n_F)\inv \cdot (  n \lambda_n(i) )\cdot n_F,$$
and so,  $n_F \in N^{n \lambda_n(i)}$.
\end{proof}

The following result now follows from the above work and Corollaries~\ref{cor:scellipticsummation} and~\ref{cor:scellipticsummation2}.

\begin{cor}
 Suppose $\bG$ is a simply connected, $K$-split, $k$-quasi-split group and  $-1$ belongs to $\absW$.  Let $n \in \titsW$ be a lift of $-1$ such that $\Fr(n) = n$.   The set of $k$-stable classes in $\CTk$ is indexed by the conjugacy classes in $\absW$.
    If $\bT'$ is a $k$-torus in the conjugacy-class corresponding to $w' \in \absW$,  then, up to $G^{\Fr}$-conjugacy, the set of  $k$-embeddings of $\bT'$ into $\bG$ is indexed by $\bshA{}^{-1}_{w'  \Fr}$. Finally, the set of $\bG(k)$-classes in $\CTk$ is indexed by $N^{n \lambda_n(\xi)}_{\sim Fr}$.
 \qed
\end{cor}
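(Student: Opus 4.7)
The plan is to apply Corollary~\ref{cor:scellipticsummation} and Corollary~\ref{cor:scellipticsummation2} directly, once it has been verified that in this particular setting the element $w_\Fr$ (equivalently $n_F$) can be taken trivially. First I would set the stage: since $\bG$ is $K$-split and simply connected, $\Esplits = K$, $\Ebtorus = \bA$, and $\sigma$ acts trivially on $\absW$; the element $-1 \in \absW$ is $\sigma$-elliptic since it has no nonzero fixed vectors on $\X_*(\bA)$; and because $p \neq 2$ (required for $n$ to be tame, by the first lemma of this appendix) the element $n$ is tame. This places us squarely in the hypotheses of both corollaries with $w_\sigma = -1$.

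The second step is the main technical point, and the heart of the argument: I would show that with the choice $\Fr(n) = n$ provided by Lemma~\ref{lem:Frfixed}, one may choose $n_F$ to be the identity. The preceding discussion in this appendix has already established that $z = 1$ in the formulation of Corollary~\ref{cor:acomputationofFr2} (using that the center of $\bG$ has exponent $2$ while $p \neq 2$), and the subsequent lemma of this appendix shows that $n_F \in N^{n\lambda_n(\xi)}$. By Corollary~\ref{cor:wFrdetermined2}, $n_F$ is determined only up to left multiplication by an element of $N^{n\lambda_n(\xi)}$ (noting that $\sigma$ acts trivially). Left-multiplying by $n_F\inv$ shows that $n_F = 1$ is a permissible choice, and consequently $w_\Fr$ may be taken to be trivial in $\absW$.

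With $w_\Fr = 1$ and with $w_\sigma = -1$ central in $\absW$, Corollary~\ref{cor:scellipticsummation} tells us that the $k$-stable classes in $\CTk$ are parameterized by $(\absW^{-1})_{\sim_{w_\Fr\circ \Fr}} = \absW_{\sim_\Fr}$, i.e., the Frobenius-conjugacy classes in $\absW$ (these coincide with ordinary conjugacy classes precisely when $\Fr$ acts trivially on $\absW$, but the parameterization statement is uniform). The same corollary provides the parameterization of the $k$-embeddings of $\bT'$ by $\bshA^{-1}_{w' w_\Fr\circ \Fr} = \bshA^{-1}_{w' \Fr}$, since $w_\Fr = 1$. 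Finally, Corollary~\ref{cor:scellipticsummation2} with $n_F = 1$ delivers $N^{n\lambda_n(\xi)}_{\sim \Fr}$ as the parameter set for the $\bG(k)$-conjugacy classes in $\CTk$.

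The main obstacle is the second step, which is the assertion that $n_F$ (and hence $w_\Fr$) can be chosen to be trivial; once one has in hand both the containment $n_F \in N^{n\lambda_n(\xi)}$ (which is a small but nontrivial calculation using $\Fr(n) = n$, $n^2$ central, and the two cases according to the order of $n$) and the degree of indeterminacy in $n_F$ given by Corollary~\ref{cor:wFrdetermined2}, the remainder of the corollary is a direct specialization of the general parameterization results of Section~\ref{sec:resultsonKktori}.
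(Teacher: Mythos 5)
Your proposal is correct and takes essentially the same route as the paper, whose (terse) proof is exactly this specialization of Corollaries~\ref{cor:scellipticsummation} and~\ref{cor:scellipticsummation2} once the preceding appendix results ($\Fr(n)=n$, $z=1$, and $n_F \in N^{n\lambda_n(\xi)}$) are used to replace the twisted action $n_F\circ\Fr$ by $\Fr$. Your key untwisting step --- combining $n_F \in N^{n\lambda_n(\xi)}$ with the indeterminacy in Corollary~\ref{cor:wFrdetermined2} to take $n_F$, hence $w_\Fr$, trivial --- is the same move the paper itself makes in its examples (where it says ``we can assume $w_\Fr=1$''), the legitimacy of that choice resting on the change-of-base-point discussion around Lemma~\ref{lem:nottransportofstructureB}.
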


\begin{remark}  This Corollary and Lemma~\ref{lem:natureofminusonefixed}  were used in Example~\ref{ex:gtwo} to compute the number of rational classes in $\CTk$ for $\bT$ corresponding to $-1$ in the Weyl group of $\Gtwo$.  We also used this Corollary to double-check the results about the tori that correspond to $-1$ for $\SL_2$ and $\Sp_4$  in Examples~\ref{ex:sltwo} and  \ref{ex:sp4}.
\end{remark}

\end{document}